\theoremstyle{plain}
\newtheorem{claim}{Claim}[section]
\newtheorem{theorem}{Theorem}[section]
\newtheorem{lemma}[theorem]{Lemma}
\newtheorem{corollary}[theorem]{Corollary}
\newtheorem{proposition}[theorem]{Proposition}
\theoremstyle{remark}
\newtheorem{condition}{Condition}[section]
\newtheorem{remark}{Remark}
\def\tr{\mathop{\text{tr}}\kern.2ex}
\def\supp{\mathop{\text{supp}}}
\long\def\comment#1{}
\def\tr{\mathop{\text{Tr}}}
\def\cS{{\mathcal{S}}}
\newcommand{\bel}{\begin{eqnarray}\label}
	\newcommand{\eel}{\end{eqnarray}}
\newcommand{\bes}{\begin{eqnarray*}}
	\newcommand{\ees}{\end{eqnarray*}}
 \def\Corr{\operatorname{Corr}}
\let\emptyset\varnothing
\let\hat\widehat
\let\tilde\widetilde
\def\Var{\operatorname{Var}}
\def\EE{{\mathbb E}}
\def\supp{\mathop{\text{supp}\kern.2ex}}
\def\argmax{\mathop{\text{\rm arg\,max}}}
\def\tr{{\rm{Tr}}}
\def\supp{\mathop{\text{supp}}}
\def\tr{\mathrm{Tr}}
\def\##1\#{\begin{align}#1\end{align}}
\def\$#1\${\begin{align}#1\end{align}}
\newcommand\numberthis{\addtocounter{equation}{1}\tag{\theequation}}
\theoremstyle{plain}
\theoremstyle{mytheoremstyle}
\def\cN{{\mathcal{N}}}
\def\cL{\mathcal{L}}
\def\EE{\mathbb{E}}
\def\cE{\mathcal{E}}
\def\PP{\mathbb{P}}
\def\Ib{\mathbf{I}}
\def\RR{\mathbb{R}}
\begin{document}
	
\begin{frontmatter}
\title{Anti-Concentration Inequalities for the Difference of Maxima of Gaussian Random Vectors}
%\title{A sample article title with some additional note\thanksref{t1}}
\runtitle{Anti-Concentration for Gaussian maximum difference}
%\thankstext{T1}{A sample additional note to the title.}

\begin{aug}
%%%%%%%%%%%%%%%%%%%%%%%%%%%%%%%%%%%%%%%%%%%%%%%
%% Only one address is permitted per author. %%
%% Only division, organization and e-mail is %%
%% included in the address.                  %%
%% Additional information can be included in %%
%% the Acknowledgments section if necessary. %%
%% ORCID can be inserted by command:         %%
%% \orcid{0000-0000-0000-0000}               %%
%%%%%%%%%%%%%%%%%%%%%%%%%%%%%%%%%%%%%%%%%%%%%%%
\author[A]{\fnms{Alexandre}~\snm{Belloni}\ead[label=e1]{abn5@duke.edu}},
\author[B]{\fnms{Ethan X.}~\snm{Fang}\ead[label=e2]{ethan.fang@duke.edu}}
\and
\author[A,B]{\fnms{Shuting}~\snm{Shen}\ead[label=e3]{ss1446@duke.edu}}
%%%%%%%%%%%%%%%%%%%%%%%%%%%%%%%%%%%%%%%%%%%%%%
%% Addresses                                %%
%%%%%%%%%%%%%%%%%%%%%%%%%%%%%%%%%%%%%%%%%%%%%%
\address[A]{Fuqua School of Business,
Duke University\printead[presep={,\ }]{e1,e3}}

\address[B]{Department of Biostatistics \& Bioinformatics,
Duke University\printead[presep={,\ }]{e2}}

%\address[C]{WW FBA, Amazon{}}
\end{aug}

\begin{abstract}
We derive novel anti-concentration bounds for the difference between the maximal values of two Gaussian random vectors across various settings. Our bounds are dimension-free, scaling with the dimension of the Gaussian vectors only through the smaller expected maximum of the Gaussian subvectors. In addition, our bounds hold under the degenerate covariance structures, which previous results do not cover. In addition, we show that our conditions are sharp under the homogeneous component-wise variance setting,  while we only impose some mild assumptions on the covariance structures under the heterogeneous variance setting. We apply the new anti-concentration bounds to derive the central limit theorem for the maximizers of discrete empirical processes. Finally, we back up our theoretical findings with comprehensive numerical studies.
\end{abstract}

\begin{keyword}[class=MSC]
\kwd[Primary ]{60E15}
\kwd{60G15}
\kwd[; secondary ]{62E20}
\end{keyword}

\begin{keyword}
\kwd{Anti-concentration}
\kwd{L\'evy concentration function}
\kwd{Gaussian maximum difference}
\kwd{Gaussian comparison}
\kwd{central limit theorem}
\kwd{bootstrap approximation}
\end{keyword}

\end{frontmatter}
	\section{Introduction}\label{sec: intro}
The concentration patterns of random variables are pivotal in empirical process theory, and find applications to different communities such as statistics \cite{chen2020robinf}, probability~\cite{Chernozhukov2013ComparisonAA}, and discrete mathematics \cite{kwan2023anticonramsy}. For example, in high-dimensional statistical analysis, the celebrated work \citep{cck2013aos} derives the anti-concentration bounds of the maximum of Gaussian random vectors to establish central limit theorems for 
 % various statistics,
 % including 
 the maximum of a sum of high-dimensional random vectors. Such results provide the foundation for constructing confidence regions through Gaussian and other bootstrap approximations under the high-dimensional setting, which find many important applications \cite{Chernozhukov2013ComparisonAA, CHERNOZHUKOV20163632, cck2017aop, Deng2017BeyondGA, belloni2018high}. This motivated several other works to generalize the results under different settings \citep{Deng2017BeyondGA,zhou2018gaussiancomprandmat,cck2022improvedbootstrap,kato2023wildbtstrp,liu2023lagrangian,zhou2023smoothedquantreg}.

% In high-dimensional analysis, the construction of confidence regions through Gaussian and other bootstrap approximations attracts substantial interest following the seminal work on high-dimensional central limit theorems for statistics including the maximum of high-dimensional random vectors \citep{cck2013aos}. In particular, such results unlock the construction of simultaneous confidence intervals even when there are more parameters than the sample size \cite{Chernozhukov2013ComparisonAA, CHERNOZHUKOV20163632, cck2017aop, Deng2017BeyondGA, belloni2018high}. Several other works further generalize the results under different settings/applications \citep{Deng2017BeyondGA,zhou2018gaussiancomprandmat,cck2022improvedbootstrap,kato2023wildbtstrp,zhou2023smoothedquantreg}. 

%As discussed in \citep{cck2013aos}, to establish such Gaussian or other approximation results, a key step is to derive 
Our focus is to study the anti-concentration bounds that characterize how fast different statistics concentrate,
%To measure how fast such concentration is, 
and we rely on the L\'evy concentration function~\citep{levy1954} to characterize the concentration, which is defined as 
$$ \mathcal{L}(Y, \varepsilon) := \sup_{t\in \mathbb{R}} \mathbb{P}(|Y-t|\leq \varepsilon),$$
where $\varepsilon\in\RR_+$, and $Y$ is the random variable of interest.
For example, suppose that $Y$ is the maximum of a Gaussian random vector. \cite{CHERNOZHUKOV20163632} shows that $ \mathcal{L}(Y, \varepsilon) \leq 2\varepsilon (\sqrt{ 2\log p} + 2) / \underline{\sigma}$, where
% $C>0$ is a constant, and
$\underline{\sigma}^2$ is the minimal component-wise variance of the Gaussian random vector. 
Other works derive sharper bounds for such anti-concentration results in different forms. Specifically, \cite{Chernozhukov2013ComparisonAA,Kuchibhotla2021cltimp,Giessing2023AnticoncentrationOS} derive dimension-free bounds that depend on the expectation or the variance of the maximum component of the Gaussian random vector, allowing for generalization of the results to infinite-dimensional scenarios.

We establish new anti-concentration bounds for the difference of the maxima of two Gaussian random vectors. Let $X = (X_1,...,X_p)^\top \in \RR^{p}$ be a Gaussian random vector with marginal means
 $\{\mu_i\}_{i=1}^p$ and variances $\{\sigma^2_i\}_{i=1}^p$,  $\{\cA,\cB\}$ be a nontrivial partition of $[p] = \{1,\ldots, p\}$ such that both $\cA$ and $\cB$ are nonempty,
 % with $1 \le |\cA| \le p-1$. 
 and $(M_{\cA}, M_{\cB}) =  (\max_{i \in \cA} X_i, \max_{j \in \cB} X_j)$. We aim to bound the L\'{e}vy concentration function of $M_{\cB} - M_{\cA}$, namely: 
\begin{equation}\label{eq:basic:bound}
\mathcal{L}(M_{\cB} - M_{\cA}, \varepsilon) =  \sup_{t \in \RR}\PP(|M_{\cB} - M_{\cA} - t| \le \varepsilon), \quad \text{for any   } \varepsilon > 0.
\end{equation}
This difference in maxima appears naturally when we are interested in the maximizers of some empirical processes \citep{imaizumi2021gaussian}.  We also note that \eqref{eq:basic:bound}  plays a crucial role in establishing the validity of bootstrap approximations for the distribution of the maxima difference of sums of (non-Gaussian) random vectors (see Section~\ref{sec: application}).
% Note that to establish central limit theorems using Gaussian approximation techniques for smooth functions, we may follow the framework in previous works \cite{cck2013aos, cck2022improvedbootstrap} to establish central limit theorems for this difference, where an appropriate anti-concentration bound specifically for the Gaussian maximum difference is unknown to the best of our knowledge. This paper bridges this gap by developing a new anti-concentration inequality tailored to this scenario.
% ,\st{ but to the best of our knowledge, existing work does not establish anti-concentration bounds for this case despite its importance. } 
% {\red Indeed a key step establishing this is the anti-concentration bounds we are studying here.}

To establish the anti-concentration bounds, we  bound the density function of the difference of the maxima of two Gaussian random vectors.
% under weak assumptions.
%Our result generalizes existing works under strictly weaker assumptions. In particular, 
Under the assumption that the minimum eigenvalue of the covariance matrix of $X$ is strictly positive, \cite{imaizumi2021gaussian} develops a conditional anti-concentration inequality, where they bound the density by conditioning on one of the Gaussian random vectors and applying the anti-concentration bound for a single Gaussian maximum in \cite{CHERNOZHUKOV20163632} to the other Gaussian random vector. Thus, the bound scales with the minimal eigenvalue of the full covariance matrix.
Although the minimum eigenvalue is bounded away from zero in many settings, as we will discuss later, this assumption does not hold in some applications of our interest.  Our result does not require such a minimal eigenvalue assumption. Indeed, even if the covariance matrix has zero eigenvalues, 
%where the conditional anti-concentration bounds become infinite by the earlier work, 
the L\'evy concentration function remains well-behaved.\footnote{ Let $\xi_1,\xi_2$ be  i.i.d. $\cN(0,1)$ random variables and define $X_1 = \xi_1$, $X_2 = \xi_2$, $X_3=\frac{1}{\sqrt{2}}(\xi_1-\xi_2)$ and $X_4 = \frac{1}{\sqrt{2}}(\xi_1+\xi_2)$. Then $\sigma_i^2=1, i=1,\ldots,4$ and $\lambda_{\min}(\mathbb{E}[XX^{\top}])=0$, while ${ \sup_{t\in\mathbb{R}}} \ \mathbb{P}\big( | \max\{ X_3,X_4\} - \max\{X_1,X_2\}  - t |\leq \varepsilon\big) \leq  { (8/\sqrt{\pi})} \varepsilon$.} One such setting of interest is when $X_\cA=\Gamma_\cA Z$ and $X_\cB=\Gamma_\cB Z$, where $Z$ is a $d$-dimensional Gaussian random vector and $\Gamma_\cA,\Gamma_\cB$ are two $p\times d$ matrices with $p>d$. In this case, the minimal eigenvalue assumption is violated, rendering prior results inapplicable. In comparison, our results still hold, which sheds new insights into anti-concentration patterns.

\subsection{Major contributions}
Our primary theoretical contribution is to establish new anti-concentration bounds for the difference between the maxima of two Gaussian vectors. { 
A significant challenge in this context is the non-convexity of the function that maps a vector to the difference of the maximum values in two parts of the vector,}
%its maximum difference between partitions, 
rendering existing techniques for establishing anti-concentration bounds inapplicable. To address this, we develop new proof strategies. Central to our approach is a new smoothing technique, where we transform the density into disjoint integrals over centered Gaussian variables, and then utilize the derivative of a conditional probability for Gaussian random vectors as an intermediate bound to achieve the final anti-concentration bound.

Our new anti-concentration bounds hold under generic conditions.  The bound is dimension-free and depends on the minimum of the expectation of the two Gaussian maxima.
Indeed, if one of the maxima concentrates fast enough due to the much higher subset dimension, this maximum behaves essentially like a constant, and the anti-concentration is governed by the variation in the other maximum. 
% Furthermore, we explore applications of our results to various settings. 
Under the condition that all components have equal variances, i.e., \(\sigma_i=\sigma\) for all \(i \in [p]\), our conditions are broadly applicable. In particular, we have that
%when  there is no positive perfect correlation between the two subsets $\cA$ and $\cB$, the following bound holds that
\begin{equation}\label{eq: bound normalize}
    \cL(M_{\cB} - M_{\cA}, \varepsilon ) \le C \cdot \frac{\varepsilon /{ \sigma} }{1 -\bar\rho}\cdot\min\big\{\EE \big(\max_{i \in \cA } |X_i - \mu_i|  { /\sigma }\big) , \EE \big(\max_{j \in \cB} |X_j - \mu_j|/ \sigma \big)\big\} ,
\end{equation}
where \(C > 0\) is a constant independent of \(p\), and \(\bar\rho = \max_{i \in \cA, j \in \cB} \Corr(X_i,X_j)\) is the highest correlation across the subsets.
% {\red between-partition }. 
In the case of unequal marginal variances, our bound scales with the reciprocal of \(\min_{i\in\mathcal{A}, j\in \mathcal{B}}\sigma_i - \sigma_{ij}/\sigma_i\) or \(\min_{j\in\mathcal{B}, i\in \mathcal{A}}\sigma_j - \sigma_{ij}/\sigma_j\) (see Theorem~\ref{thm: anti con} for details). { This term is similar to the minimal marginal standard deviation in the anti-concentration bounds for the single Gaussian maximum \cite{Chernozhukov2013ComparisonAA}. }
% our bounds also scale based on the reciprocal of the minimum value of \(1- \Corr(X_i,X_j)\), for \(i \in \cA, j \in \cB\). 
For the case where \(\Corr(X_i, X_j) = 1\) for some \(i \in \cA, j \in \cB\), a bound in the form of \eqref{eq: bound normalize} cannot hold. Instead, we show that the L\'evy concentration function satisfies \(\mathcal{L}(M_{\cB} - M_{\cA}, \varepsilon) \leq C_X \varepsilon + \zeta_X\),{ where \(C_X\) is the concentration rate that depends on the covariance structure of \(X\), and \(\zeta_X\) is the residual probability of \(X_i\) and \(X_j\) being the maxima, which is negligible in many settings (see Corollary~\ref{col: anti con equal var cor 1} and Proposition~\ref{prop: lwr bd case}). } % for which we provide examples.

% {\red For application of our results, we adapt the Gaussian approximation techniques}
% For illustrative application of our results, 

Combining our new anti-concentration bounds with the Gaussian approximation techniques in \cite{cck2022improvedbootstrap}, we derive a new central limit theorem (CLT) and bootstrap approximation for the distribution of the maximizers of empirical processes on a discrete domain.
% Under the same scaling condition as in \cite{cck2022improvedbootstrap}, an application of our anti-concentration bounds establishes comparable Gaussian and bootstrap approximation rates for the empirical maximizers as in earlier works \citep{cck2013aos,Deng2017BeyondGA,cck2017aop,cck2022improvedbootstrap}.
  Specifically, consider a normalized empirical process \(Y = \{Y_j\}_{j=1}^p\) with each component \(Y_j = n^{-1/2}\sum_{i=1}^n (\xi_{ij} +a_j)\) standardized to unit variance, where \(\xi_{ij}\) are independent centered random variables, and \(a_j\)'s are deterministic constants. For a subset $\cA \subsetneq [p]$, the probability that \(\argmax_{j \in [p]} Y_j\) is a subset of \(\cA\) is  characterized by the probability of a  difference of maxima being positive:
$$
\PP(\argmax_{j \in [p]} Y_j \subseteq \cA) = \PP(\max_{i \in \cA} Y_i - \max_{j \in [p]\setminus\cA} Y_j > 0).
$$
% any partition \([p] = \cA \cup \cB\) devoid of positive unit correlations between \(\cA\) and \(\cB\). 
By using a maximizer of a Gaussian process \(X = (X_j)_{j=1}^p\) mirroring \(Y\)'s mean and covariance structure,
 %approximates the distribution of the maximizer of \(Y\), and 
 we have 
\begin{equation*}
    \begin{aligned}
       & \big|\PP(\textstyle\argmax_{j \in [p]} Y_j \subseteq \cA) - \PP(\textstyle\argmax_{j \in [p]} X_j \subseteq \cA)  \big| \\
       & \quad = \Big|\PP(\max_{i \in \cA} Y_i - \max_{j \in [p]\setminus\cA} Y_j > 0) - \PP(\max_{i \in \cA} X_i - \max_{j \in [p]\setminus\cA} X_j > 0)\Big|\\
&\quad \le \frac{C}{1 -\bar\rho} \cdot \min\Big\{\EE \big(\max_{i \in \cA } |X_i - \mu_i|  \big), \EE \big(\max_{j \in [p]\setminus \cA} |X_j - \mu_j| \big)  \Big\}\left(\frac{\log^3 (pn) }{n}\right)^{1/4},
    \end{aligned}
\end{equation*}
where \(\bar\rho = \max_{i \in \cA, j \in [p] \backslash \cA} \Corr(Y_i,Y_j)\) is the highest correlation between the components in different subsets.
% Comment/check if bounds are dimension-free
% Comment/check if bounds are dimension-free
% Comment/check if bounds are dimension-free

%For comprehensive details under broader covariance structures, refer to Section~\ref{sec: application}.

% Comment/check if bounds are dimension-free

\subsection{Literature review}
Studies of anti-concentration inequalities for maximal statistics predominantly focus on the distribution of continuous random variables, particularly those related to functions of Gaussian random variables \citep{carbery2001distributional, nazarov2003maximal, klivans2008gaussurf, Ylvisarer1965expectzero, kengo2019chianticon, Ylvisaker1968tangencies, Chernozhukov2013ComparisonAA}. \cite{Chernozhukov2013ComparisonAA} sets a precedent by bounding the L\'evy concentration function of the maximum of centered Gaussian processes. \cite{Chernozhukov2013ComparisonAA} utilizes { the monotonicity of a component }of the density function to establish a dimension-free bound, which scales inversely with the minimal component-wise standard deviation. An alternative approach for non-centered Gaussian processes appears in \cite{CHERNOZHUKOV20163632}, where Nazarov's inequality is used to bound the concentration at the intersection boundaries of half-spaces. 
\cite{Deng2017BeyondGA} advances these bounds by replacing the inverse of the minimal standard deviation with the inverse of a function of ordered standard deviations, which can significantly exceed the minimal standard deviation under specific conditions. \cite{Giessing2023AnticoncentrationOS} introduces a novel proof strategy that relies on the \(s\)-concavity of the Gaussian measure, providing upper and lower bounds for the anti-concentration of Gaussian order statistics based on the variance. These developments in characterizing anti-concentration patterns have been extensively applied to establish central limit theorems for various statistics, contributing to recent advances in uncertainty quantification of stochastic processes \citep{CHERNOZHUKOV20163632, kato2020bootstrap, imaizumi2021gaussian}, the improvement of the Berry–Esseen bounds in high dimensions \citep{lopes2020decayclt, Kuchibhotla2021cltimp, lopes2022mbsqrtn}, the bootstrap approximation for regression estimates and random matrices \citep{pan2021quantreg, chen2020robinf,yao2023bsteigen, lopes2024improved}, and the construction of confidence bands in non-parametric settings \citep{confidencebands2014aos, kato2018cbdecon, kato2019cbnonpar}.

Anti-concentration inequalities are also of interest in combinatorial applications such as random graphs. For example, \cite{FOX_KWAN_SAUERMANN_2021} investigates the anti-concentration of polynomials of Bernoulli random variables within a combinatorial context to study subgraph counts in random graphs. \cite{kwan2023anticonramsy} explores anti-concentration bounds for edge statistics in \(C\)-Ramsey graphs, focusing on the number of edges within random vertex subsets of such graphs. Additionally, \cite{rudelson2008littlewood} offers precise estimates for the Littlewood-Offord problem \citep{littlewood1938} by characterizing the anti-concentration of weighted sums of independent random variables with bounded variance and limited third moments. \citep{tao2006littlewood, Razborov2013realadv, meka2016anticonpolyn} further extend the Littlewood-Offord problem to higher-degree polynomials.

Recall that, except for \cite{imaizumi2021gaussian}, all existing works do not apply to the difference of maximum values of two Gaussian random vectors. Compared to \cite{imaizumi2021gaussian}, our bounds rely on (pairwise) correlations rather than the minimal eigenvalue of the covariance matrix.
% and the minimal eigenvalue assumption may not hold.

\vspace{3pt}

\noindent{\bf Paper organization.} 
The rest of the paper is organized as follows. Section~\ref{sec: main thm results} presents the main theoretical results on the anti-concentration inequalities under homogeneous and heterogeneous variance conditions, respectively. Section~\ref{sec: application} applies the anti-concentration results to establish the validity of bootstrap approximations for estimating the distribution of maximizers of empirical processes. Section~\ref{sec: simu} conducts numerical simulations to evaluate the anti-concentration bounds under various settings. The proofs for the theoretical results are provided in Section~\ref{sec: proofs}, followed by concluding remarks in Section~\ref{sec: conclusion}.

\vspace{3pt}

\noindent{\bf Notations.}
For real numbers $x, y \in \RR$, denote $x \vee y := \max(x,y)$ and $x \wedge y := \min(x,y)$, and define $(x)_+ = x \vee 0$ . For an integer $p$, denote by $[p]$ the index set $\{1,\ldots, p\}$. For a vector $v \in \RR^p$, denote by $\|v\|_2 = (\sum_{j=1}^p v_j^2)^{1/2}$ the Euclidean norm and by $\|v\|_{\infty} = \max_{j \in [p]} |v_j|$ the infinity norm. For a matrix $M \in \RR^{p \times q}$, given subsets of indices \(\cA \subseteq [p]\) and \(\cB \subseteq [q]\), denote $M_{\cA \cB} = (M_{ij})_{i \in \cA, j \in \cB} $, and when $\cA$ and $\cB$ coincide, denote $M_{\cA} = M_{\cA \cA}$. We define \(\II\{\cdot\}\) as the indicator function, which takes the value of 1 if the condition inside \(\{\cdot\}\) is met and 0 otherwise. For ease of notation, we denote \(\II_{\cS} (i) = \II\{i \in \cS\}\) for a given index \(i\) and index set~\(\cS\). For two positive sequences \(x_n\) and \(y_n\), denote \(x_n \lesssim y_n\) if there exists a constant \(C > 0\) independent of \(n\) such that \(x_n \le C y_n\).
% $v_{\cS} = (v_i)_{i \in \cS}$ subvector corresponding to $\cS$

\section{Anti-concentration inequalities}\label{sec: main thm results}

In this section, we establish new anti-concentration bounds. We begin with the case where the Gaussian vectors possess homogeneous component-wise variances. Then, we generalize the analysis to encompass the case with heterogeneous variances.

\subsection{Anti-concentration under homogeneous {component-wise} variances}\label{sec: eq var}
Our first theorem provides a dimension-free anti-concentration bound under the homogeneous variance setting. 
%where we derive a dimension-free bound for the L\'evy concentration function.
 We provide the proof of Theorem~\ref{col: anti con equal var no cor 1} in Section~\ref{sec: proof col anti con eq var no cor 1} (we note that the proof relies on Theorem~\ref{thm: anti con}, whose proof is self-contained).

%We present the main anti-concentration results for the equal variance case in the following theorem, which provides a dimension-free bound for the L\'evy concentration function when no pair of Gaussian components from different subsets is perfectly positively correlated.
\begin{theorem}\label{col: anti con equal var no cor 1} Let $(X_1, X_2, \ldots, X_p)^{\top}$ be a  Gaussian random vector with mean $\mu = (\mu_1,...,\mu_p)^\top$ and equal variances, $\Var(X_i) = \sigma^2$ for all $i\in [p]$.  Let $\{\cA,\cB\}$ be a nontrivial partition of $[p]$. Define $M_{\cA} = \max_{i \in \cA} X_i$ and $M_{\cB} = \max_{j \in \cB} X_j$.
% , $\bar\lambda_1 = |\max_{i \in \cA} \mu_i - \min_{i \in \cA} \mu_i |/2$ and $\bar\lambda_2 = |\max_{i \in \cB} \mu_i - \min_{i \in \cB} \mu_i|/2$.
 % $\bar\lambda = \min_{\mu \in \RR} \max_{i \in \cA} |\mu_i - \mu| \vee \min_{\mu \in \RR} \max_{i \in \cB} |\mu_i - \mu| $. 
 % If $\Corr(X_i,X_j) < 1$ for any $i \in \cA$ and $j \in \cB$, 
 Then for any $\varepsilon >0$, we have that
 % there exists a fixed constant $C > 0$ such that 
    \begin{equation}\label{eq: anti con max diff equal var col}
        % \sup_{t \in \RR}\PP\Big(\big| M_{\cB} - M_{\cA} - t\big| \le \varepsilon \Big) 
        \cL(M_{\cB} - M_{\cA}, \varepsilon) \le  \min\Big\{\EE \big(\max_{i \in \cA } |X_i - \mu_i| /\sigma \big) , \EE \big(\max_{j \in \cB} |X_j - \mu_j|  /\sigma\big)\!\!  \Big\} \cdot \frac{7\varepsilon}{(1 -\bar\rho)\sigma},
    \end{equation} 
    where $\bar{\rho} = \max_{i \in \cA, j \in \cB} \Corr(X_i, X_j)$.
    % Besides, at $t = 0$, we have 
    % \begin{equation}\label{eq: anti con max diff equal var col t 0}
    %    \PP\Big(\big| M_{\cB} - M_{\cA}\big| \le \varepsilon \Big) \le C  \left\{\EE \left(\max_{i \in \cA} |X_i - \mu_i| +\bar\lambda_1 \right) \wedge \EE \left(\max_{i \in \cB} |X_i - \mu_i|  + \bar\lambda_2 \right)    \right\}\frac{\varepsilon}{(1 -  \bar\rho)\sigma^2}.
    % \end{equation}
\end{theorem}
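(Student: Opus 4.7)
The plan is to derive Theorem~\ref{col: anti con equal var no cor 1} as a specialization of the more general heterogeneous-variance bound, Theorem~\ref{thm: anti con}. According to the discussion in the introduction, the general bound scales with the reciprocal of either $\min_{i\in\cA,\,j\in\cB}(\sigma_i - \sigma_{ij}/\sigma_i)$ or its symmetric counterpart with the roles of $i$ and $j$ exchanged, while its numerator already features the minimum of the two expected maxima that appear on the right-hand side of \eqref{eq: anti con max diff equal var col}. The task, therefore, is to simplify the covariance denominator in the equal-variance regime and to track the absolute constant.

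First, I would rewrite the covariance factor using $\sigma_i = \sigma$ and $\sigma_{ij} = \Corr(X_i,X_j)\sigma^2$, which yields
\[
\sigma_i - \frac{\sigma_{ij}}{\sigma_i} \;=\; \sigma\bigl(1-\Corr(X_i,X_j)\bigr)\;\ge\;\sigma(1-\bar\rho),
\]
uniformly over $i \in \cA$, $j \in \cB$. Because the expression is symmetric in $(i,j)$ once $\sigma_i = \sigma_j$, the companion bound obtained by swapping the two indices collapses to the same quantity, so no strength is lost by taking the minimum of the two heterogeneous denominators. Substituting the lower bound $\sigma(1-\bar\rho)$ into Theorem~\ref{thm: anti con} and regrouping factors of $\sigma$ so that each expected maximum appears in its normalized form $\EE(\max |X_i - \mu_i|/\sigma)$ --- as required by \eqref{eq: anti con max diff equal var col} --- recovers the stated shape of the bound.

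Finally, I would trace the absolute constant through the specialization to confirm it can be taken equal to $7$. Since the reduction is purely algebraic --- no probabilistic estimate is invoked beyond Theorem~\ref{thm: anti con} --- no multiplicative losses are incurred, and the constant in Theorem~\ref{thm: anti con} transfers directly.

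The main obstacle in this entire program is of course Theorem~\ref{thm: anti con} itself, whose proof must cope with the non-convexity of the map $x \mapsto \max_{j\in\cB} x_j - \max_{i\in\cA} x_i$ via the new smoothing technique highlighted in the introduction; relative to that, the equal-variance corollary treated here is pure bookkeeping.
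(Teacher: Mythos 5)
Your reduction works only on part of the theorem's domain of validity, and this is precisely where the paper's proof has to do real work. Theorem~\ref{thm: anti con} carries the explicit hypothesis $|{\rm Corr}(X_i,X_j)|<1$ for all $i\in\cA$, $j\in\cB$, i.e.\ it excludes perfect \emph{negative} cross-correlation as well as perfect positive cross-correlation. The equal-variance statement you are proving only needs $\bar\rho=\max_{i\in\cA,j\in\cB}\Corr(X_i,X_j)<1$ (otherwise the right-hand side of \eqref{eq: anti con max diff equal var col} is vacuous), so pairs with $\Corr(X_i,X_j)=-1$ across the partition are allowed and must be handled. Your specialization --- substituting $\sigma_i=\sigma$, $\sigma_{ij}=\Corr(X_i,X_j)\sigma^2$, noting $C_{\cA,\cB}=\sigma(1-\bar\rho)>0$ and that conditions (A) and (B) both hold so the minimum of the two expectations can be taken --- is fine, but only proves the theorem under the extra assumption that no cross-pair is perfectly negatively correlated. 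The paper's proof treats exactly this residual case: it isolates the sets $\cN_\cA,\cN_\cB$ of indices perfectly negatively correlated across the partition (Lemma~\ref{lm: NA NB equiv}), splits the probability into three pieces via the events $\cE_1,\cE_2$, applies Theorem~\ref{thm: anti con} to the two pieces where the offending indices are not the maximizers, and invokes a separate anti-concentration bound (Lemma~\ref{lm: anti con equal var no cor 1}, proved via a density analysis off and on the degenerate line $y=-x+\bar c$) for the piece where $M_\cB-M_\cA$ reduces to $\max_i(-X_i+c_i)-\max_i X_i$.

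A secondary sign that something is missing is the constant: if the theorem really were "pure bookkeeping" on top of Theorem~\ref{thm: anti con}, you would obtain the constant $2$, not $7$. The $7$ in the statement is exactly the sum $2+2+3$ arising from the three-way decomposition above, with $3$ coming from the perfectly-negatively-correlated block. So your argument, where it applies, is correct (and even sharper), but it does not prove the theorem as stated; to close the gap you would need an additional argument for cross-correlations equal to $-1$, which is the substantive content of Section~\ref{sec: proof col anti con eq var no cor 1} and Appendix~\ref{sec: proof tech lms}.
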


\begin{remark}
     The bound in \eqref{eq: anti con max diff equal var col} indicates that the L\'evy concentration scales with the smaller expected maximum of the two normalized Gaussian random vectors, and is invariant to shifting of the means. 
\end{remark}
 In the presence of perfect positive correlations between the subsets, the following corollary characterizes the bounds of the L\'evy concentration function with a stationary residual probability.
\begin{corollary}\label{col: anti con equal var cor 1}
%Let $(X_1, X_2, \ldots, X_p)^{\top}$ be a Gaussian random vector with mean $(\mu_1,...,\mu_p)^\top$  and a homogeneous marginal variance $\sigma^2 > 0$. For the nontrivial partition $\{\cA, \cB\}$ of $[p]$, let $M_{\cA} = \max_{i \in \cA} X_i$ and $M_{\cB} = \max_{j \in \cB} X_j$.
% , $\bar\lambda_1 = |\max_{i \in \cA}\mu_i - \min_{i \in \cA}\mu_i|/2$ and $\bar\lambda_2 = |\max_{i \in \cB}\mu_i - \min_{i \in \cB}\mu_i|/2$.
% $\bar\lambda_1 = \min_{\mu \in \RR} \max_{i \in \cA} |\mu_i - \mu|$ and $\bar\lambda_2 = \min_{\mu \in \RR} \max_{i \in \cB} |\mu_i - \mu| $. 
Under the setting of Theorem~\ref{col: anti con equal var no cor 1}, for any $\delta \in (0, 1)$, we define the set
\begin{equation} \label{eqn:Nd}
\cN_{\delta} = \{i \in \cA: \exists j \in \cB \text{ such that } \Corr(X_i, X_j) \ge  1 - \delta \},
\end{equation}
% $$
% \cN_{\cB} = \{i \in \cB: \exists j \in \cA \text{ such that } \Corr(X_i, X_j) = 1\},
% $$
and for $\delta \in (0,1)$ such that $\cN_{\delta} \ne \cA$, we define 
% the associated metric
    \begin{equation} \label{eqn:omegad}
     \omega_{\delta}  = \left\{ \begin{array}{ll}
              \exp \Big\{-\frac{1}{8\sigma^2 }\Big(\EE\big[ \displaystyle\max_{i \in \cA \backslash \cN_{\delta}} X_i \big]- \EE\big[ \displaystyle\max_{j \in \cN_{\delta}} X_{j}\big] \Big)_+^2  \Big\}, \quad & \,\text{if \ $\cN_{\delta} \ne \emptyset$,} \\
              0, \quad& \, \text{if \ $\cN_{\delta} = \emptyset$.}
            \end{array} \right.
    \end{equation}

    % $$
    % \omega_2 := \EE \max_{i \in \cB \backslash \cN_{\cB}} (X_i - \mu_i) - \EE \max_{i' \in \cN_{\cB}} (X_{i'} - \mu_{i'}) - 2 \bar\lambda_2.
    % $$
    % Denote $\omega = \omega_1 \vee \omega_2$. If $\omega > 0$, 
    Suppose that $\{\delta \in (0,1): \cN_{\delta} \ne \cA\} \ne \emptyset$. 
    Then for any $\varepsilon > 0$, we have that
    % there exists a fixed constant $C > 0$ such that 
    \begin{equation}\label{eq: anti con max diff equal var col cor 1}
    \begin{aligned}
        % \sup_{t \in \RR}\PP\Big(\!\big| M_{\cB} - M_{\cA} - t\big| \le \!\varepsilon \!\Big)  
        & \cL(M_{\cB} - M_{\cA}, \varepsilon) \\
        & \ \le  \inf_{\delta: \cN_{\delta} \ne \cA}\! \Big \{ \!\min\!\! \Big\{ \EE (\max_{i \in \cA\backslash\cN_{\delta}} \!\!{|X_i - \mu_i| }/{\sigma} ) , \EE (\max_{j \in \cB} \!{|X_j - \mu_j| }/{\sigma} )\! \Big\} \! \cdot \!\frac{7\varepsilon}{\delta\sigma}  + 2 \omega_{\delta} \!\Big \},
        % \exp\!\Big(\!-\frac{\omega_{\delta}^2}{8 \sigma^2}\Big)\!\!\bigg\},
    \end{aligned}
    \end{equation} 
    where the result also holds when $\cA$ and $\cB$ are swapped.
    % \begin{equation}\label{eq: anti con max diff equal var col cor 1}
    % \begin{aligned}
    %     \sup_{t \in \RR}\PP\Big(\!\big| M_{\cB} - M_{\cA} - t\big| \le \!\varepsilon \!\Big)  \le & 7 \left\{\!\EE \!\left(\!\max_{i \in \cA} |X_i - \mu_i|  \right) \!\wedge\! \EE \!\left(\!\max_{i \in \cB} |X_i - \mu_i|  \right)  \right\}\!\frac{\varepsilon}{(1 - \bar\rho)\sigma^2} \\
    %     &  + 4\exp\!\big(-\frac{\omega^2}{8 \sigma^2}\big),
    % \end{aligned}
    % \end{equation} 
    % where 
    % % $\bar\lambda = \bar\lambda_1 \vee \bar\lambda_2$ and 
    % $\bar{\rho} = \max_{i \in \cA\backslash\cN_{\cA}, j \in \cB} \Corr(X_i, X_j) \vee \max_{i \in \cA, j \in \cB\backslash\cN_{\cB}} \Corr(X_i, X_j)$.
\end{corollary}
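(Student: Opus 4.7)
The plan is to fix $\delta\in(0,1)$ with $\cN_\delta\neq\cA$ and decompose according to whether the maximum over $\cA$ is attained outside of $\cN_\delta$. Setting $M_{\cA\setminus\cN_\delta}=\max_{i\in\cA\setminus\cN_\delta}X_i$ (well defined since $\cA\setminus\cN_\delta\neq\emptyset$) and, when $\cN_\delta\neq\emptyset$, $M_{\cN_\delta}=\max_{i\in\cN_\delta}X_i$, we have $M_\cA=\max\{M_{\cA\setminus\cN_\delta},M_{\cN_\delta}\}$, so
\begin{equation*}
\PP(|M_\cB-M_\cA-t|\leq\varepsilon)\leq\PP(|M_\cB-M_{\cA\setminus\cN_\delta}-t|\leq\varepsilon)+\PP(M_{\cN_\delta}>M_{\cA\setminus\cN_\delta}),
\end{equation*}
with the second term absent and $\omega_\delta=0$ when $\cN_\delta=\emptyset$. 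The point of peeling off $\cN_\delta$ is that every $i\in\cA\setminus\cN_\delta$ now satisfies $\Corr(X_i,X_j)<1-\delta$ for all $j\in\cB$ by the definition~\eqref{eqn:Nd}.

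For the first term, the plan is to apply Theorem~\ref{col: anti con equal var no cor 1} to the Gaussian subvector $(X_i)_{i\in(\cA\setminus\cN_\delta)\cup\cB}$ with the nontrivial partition $\{\cA\setminus\cN_\delta,\cB\}$. The homogeneous-variance structure $\Var(X_i)=\sigma^2$ passes to any subvector, and the relevant cross-subset correlation $\bar\rho$ is now at most $1-\delta$, so $1/(1-\bar\rho)\leq 1/\delta$. This produces precisely the first summand in~\eqref{eq: anti con max diff equal var col cor 1}.

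For the second term, I plan to invoke Gaussian Lipschitz concentration (Borell--TIS). Each of $M_{\cN_\delta}$ and $M_{\cA\setminus\cN_\delta}$ is the maximum of finitely many affine functions of an i.i.d.\ standard Gaussian vector, with Lipschitz constant equal to the largest marginal standard deviation, namely $\sigma$. Hence
\begin{equation*}
\PP(M_{\cN_\delta}\geq\EE M_{\cN_\delta}+u)\leq e^{-u^2/(2\sigma^2)},\qquad \PP(M_{\cA\setminus\cN_\delta}\leq\EE M_{\cA\setminus\cN_\delta}-u)\leq e^{-u^2/(2\sigma^2)}.
\end{equation*}
Choosing $u=(\EE M_{\cA\setminus\cN_\delta}-\EE M_{\cN_\delta})_+/2$ and union-bounding yields $\PP(M_{\cN_\delta}>M_{\cA\setminus\cN_\delta})\leq 2\omega_\delta$, matching~\eqref{eqn:omegad}. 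Taking the infimum over admissible $\delta$ and repeating the argument with the roles of $\cA$ and $\cB$ interchanged (i.e.\ symmetrically isolating the high-correlation indices inside $\cB$) yields the full bound~\eqref{eq: anti con max diff equal var col cor 1}.

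The main obstacle I anticipate is conceptual rather than technical: one must identify the correct set $\cN_\delta$ to isolate so that the reduced partition satisfies the hypothesis $\bar\rho\leq 1-\delta$ required by Theorem~\ref{col: anti con equal var no cor 1}, and then recognize that the residual event $\{M_\cA=M_{\cN_\delta}\}$ is controlled by how far apart the expectations of the two Gaussian maxima sit (a standard Borell--TIS estimate). Once this decomposition is in place, the first term is a direct citation of Theorem~\ref{col: anti con equal var no cor 1} applied to a subvector and the second is an immediate two-sided Gaussian concentration bound, so no further work is needed.
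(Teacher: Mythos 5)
Your proposal is correct and follows essentially the same route as the paper: peel off $\cN_\delta$ so that the reduced partition $\{\cA\setminus\cN_\delta,\cB\}$ has cross-correlations below $1-\delta$ and cite Theorem~\ref{col: anti con equal var no cor 1} (giving the $7\varepsilon/(\delta\sigma)$ term), then control the residual event $\{M_{\cN_\delta}\ge M_{\cA\setminus\cN_\delta}\}$ by two-sided Gaussian Lipschitz concentration with constant $\sigma$ at deviation $D_\delta/2$, exactly as the paper does via its log-Sobolev lemma (your Borell--TIS invocation is the same estimate), and finally take the infimum over admissible $\delta$ and swap $\cA$ and $\cB$. The only cosmetic difference is that the paper treats the case $D_\delta\le 0$ separately (the bound is then trivial since $\omega_\delta=1$), which your choice $u=(D_\delta)_+/2$ handles implicitly.
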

\begin{proof}
 We provide the proof in Section~\ref{sec: proof col anti con eq var cor 1}.
 \end{proof}
\begin{remark}
   The condition $\{\delta \in (0,1): \cN_{\delta} \ne \cA\} \ne \emptyset$ rules out the case where all the random variables in partition $\cA$ are perfectly positively correlated with elements in $\cB$.  Essentially, the construction of the set \(\cN_{\delta}\) with the threshold \(\delta > 0\) removes the components with a correlation exceeding \(1-\delta\) with elements in the other subset, so that for the remaining components, the gap \(1 - \max_{i \in \cA \backslash \cN_{\delta}, j \in \cB} \Corr(X_i, X_j)\) is at least $\delta$, and applying Theorem~\ref{col: anti con equal var no cor 1} yields a non-trivial bound. This gives the first term on the right-hand side (RHS) of \eqref{eq: anti con max diff equal var col cor 1}, which provides a concentration bound. Meanwhile, the second term on the RHS of \eqref{eq: anti con max diff equal var col cor 1} 
   is a residual term, which 
   is the probability that the maximizer within set \(\cA\) falls within the set \(\cN_{\delta}\). 
   It can be seen that the first concentration bound term decreases as \(\delta\) increases, while the second residual term increases with \(\delta\), as a larger \(\delta\) enlarges the set \(\cN_{\delta}\).
\end{remark}
 The following proposition provides an illustrative example of the residual probability when the distribution of the Gaussian random vector is exchangeable. We provide the proof in Section~\ref{sec: proof prop lwr bd case}.
\begin{proposition}[Lower Bound Case]\label{prop: lwr bd case} 
For some integers $1 \le k <m$ and $p = 2 m - k$, let $(X_1,...,X_{p})^\top$ be a Gaussian random vector with an equal mean $\mu_0$ and equal variance $\sigma^2$, and suppose that $\bar\rho:= \max_{i,j \in [p], i \ne j}\Corr(X_i, X_j) < 1$. Furthermore, assume that the distribution of the random vector is invariant with respect to any permutation of its components, i.e.,
% for all $(x_1, \ldots, x_{p}) \in \RR^{p}$ and $\pi \in \Pi^{[p]}$, we have
\begin{equation}\label{eq: exchange dist}
    \phi(x_1,  \ldots, x_{p}) = \phi\big(x_{\pi(1)}, \ldots, x_{\pi({p})}\big), \quad \text{for all $(x_1, \ldots, x_{p}) \in \RR^{p}$ and $\pi \in \Pi^{[p]}$,}
\end{equation}
where $\phi(x_1, x_2, \ldots, x_{p})$ denotes the joint distribution of $(X_1,...,X_{p})^\top$ at $(x_1, x_2, \ldots, x_{p})$,  and $\Pi^{[p]}$ denotes the set of all bijections from $[p]$ to itself. Define the sets $\cA = [m]$, $\cB = [p] \backslash [m - k]$ 
% and $\cN = \cA \cap \cB$, 
and let $M_{\cA} = \max_{i \in \cA} X_i$ and $M_{\cB} = \max_{j \in \cB} X_j$. Then for any $\varepsilon > 0$, we have that
% there exists a constant $C > 0$ such that
\begin{equation}\label{eq: lwr bd case}
\begin{aligned}
   \frac{k}{p} & \le
   % \sup_{t \in \RR}\!\PP\big(|M_{\cB} - M_{\cA} - t| \!\le \!\varepsilon \big) 
   \cL(M_{\cB} - M_{\cA}, \varepsilon) \\
   &\le  \min \Big\{ \EE \big(\max_{i \in \cA\backslash \cN} \!{|X_i - \mu_0|}/{\sigma}   \big)  , \EE \big( \max_{j \in \cB\backslash\cN} \!{|X_j - \mu_0|}/{\sigma}   \big)\Big\}\cdot  \!\frac{7\varepsilon}{(1-\bar\rho)\sigma} + \frac{4k}{p + k }.
   \end{aligned}
\end{equation}
Moreover, it holds that
\begin{equation}\label{eq: lwr bd case t > eps}
\begin{aligned}
   & \sup_{t \in \RR: |t| > \varepsilon} \PP\big(|M_{\cB} - M_{\cA} - t|\le \varepsilon \big) \\
    & \qquad  \le\min\Big\{ \EE \big(\max_{i \in \cA} {|X_i - \mu_0|}/{\sigma} \big)  , \EE \big( \max_{j \in \cB} {|X_j - \mu_0|}/{\sigma}  \big) \Big\} \cdot \frac{14\varepsilon}{(1-\bar\rho)\sigma} .
\end{aligned}
\end{equation}
\end{proposition}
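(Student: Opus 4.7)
My plan is to decompose the event $\{|M_\cB - M_\cA - t| \leq \varepsilon\}$ according to where the argmax inside each of $\cA$ and $\cB$ is located, separating contributions from the overlap $\cN := \cA \cap \cB = \{m-k+1,\ldots,m\}$ (of cardinality $k$). The exchangeability assumption \eqref{eq: exchange dist}, together with $\bar\rho<1$ (which ensures ties occur with probability zero), lets us compute the probability that the argmax over any fixed subset $\cS \subseteq [p]$ lies in any prescribed index subset exactly as a ratio of cardinalities. The lower bound in \eqref{eq: lwr bd case} follows immediately by taking $t=0$: the event that the global maximizer lies in $\cN$ has probability $|\cN|/p = k/p$, and on this event $M_\cA = M_\cB = \max_{i \in [p]} X_i$ so $M_\cB - M_\cA = 0$; hence $\cL(M_\cB - M_\cA, \varepsilon) \geq \PP(|M_\cB - M_\cA| \leq \varepsilon) \geq k/p$.

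For the upper bound in \eqref{eq: lwr bd case}, define $F_\cA := \{\argmax_{i\in\cA} X_i \in \cA\setminus\cN\}$ and $F_\cB := \{\argmax_{j\in\cB} X_j \in \cB\setminus\cN\}$. Exchangeability within each subset yields $\PP(F_\cA^c) = k/m$ and $\PP(F_\cB^c) = k/m$, which sum to $2k/m = 4k/(p+k)$ since $p+k=2m$. On $F_\cA \cap F_\cB$ we have $M_\cA = \max_{i\in\cA\setminus\cN} X_i$ and $M_\cB = \max_{j\in\cB\setminus\cN} X_j$, so
\[
\PP(|M_\cB - M_\cA - t| \leq \varepsilon) \leq \PP\bigl(|\max_{j\in\cB\setminus\cN} X_j - \max_{i\in\cA\setminus\cN} X_i - t| \leq \varepsilon\bigr) + \PP(F_\cA^c) + \PP(F_\cB^c).
\]
Since $\{\cA\setminus\cN,\cB\setminus\cN\}$ is a genuinely disjoint partition of $[p]\setminus\cN$ carrying the same homogeneous variance $\sigma^2$ and cross-correlations bounded by $\bar\rho$, Theorem~\ref{col: anti con equal var no cor 1} bounds the first term by the first summand on the right-hand side of \eqref{eq: lwr bd case}, and combining yields the claimed inequality.

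For \eqref{eq: lwr bd case t > eps}, I use a tighter reduction based on the sign of $M_\cB - M_\cA$. If $t>\varepsilon$ and $|M_\cB - M_\cA - t|\leq \varepsilon$, then $M_\cB - M_\cA > 0$, which forces the argmax within $\cB$ to lie in $\cB\setminus\cA = \cB\setminus\cN$ (any $j^* \in \cN \subseteq \cA$ would give $M_\cA \geq X_{j^*} = M_\cB$). Hence $M_\cB = \max_{j\in\cB\setminus\cA} X_j$ on this event, and the probability is bounded by $\cL\bigl(\max_{j\in\cB\setminus\cA} X_j - M_\cA,\, \varepsilon\bigr)$; since $\{\cA, \cB\setminus\cA\}$ is a genuine partition of $[p]$, Theorem~\ref{col: anti con equal var no cor 1} now applies, and I absorb $\EE(\max_{j\in\cB\setminus\cA}|X_j-\mu_0|/\sigma)$ into the larger $\EE(\max_{j\in\cB}|X_j-\mu_0|/\sigma)$ by monotonicity. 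The symmetric case $t<-\varepsilon$ is handled via the mirrored partition $\{\cA\setminus\cB,\cB\}$, and a union bound over the two sign cases accounts for the factor $14$. The main obstacle throughout is that $\cA$ and $\cB$ are not disjoint, so Theorem~\ref{col: anti con equal var no cor 1} cannot be invoked on $(\cA,\cB)$ directly; the exchangeability hypothesis is what lets me pinpoint the residual probability attributable to $\cN$ exactly (rather than merely bound it), and the matching lower bound $k/p$ certifies that this residual cannot be removed.
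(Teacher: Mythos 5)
Your lower bound and your proof of the first upper bound in \eqref{eq: lwr bd case} coincide with the paper's argument: same overlap set $\cN=\cA\cap\cB$, same use of exchangeability (with $\bar\rho<1$ guaranteeing a.s.\ unique maximizers) to get $\PP(\text{global argmax}\in\cN)=k/p$ and $\PP(F_\cA^c)=\PP(F_\cB^c)=k/m=2k/(p+k)$, and the same reduction on $F_\cA\cap F_\cB$ to the disjoint pair $\{\cA\backslash\cN,\cB\backslash\cN\}$ followed by Theorem~\ref{col: anti con equal var no cor 1}. For \eqref{eq: lwr bd case t > eps} you take a genuinely different, and in fact slightly sharper, route: the paper covers the sample space by $\cE$, $\cE_1^c$, $\cE_2^c$ (argmax of $\cA$ resp.\ $\cB$ outside $\cN$), kills the $\cE$-term via $\{|t|\le\varepsilon\}=\emptyset$, and applies Theorem~\ref{col: anti con equal var no cor 1} twice (to $\{\cA\backslash\cN,\cB\}$ and $\{\cA,\cB\backslash\cN\}$), which is where its constant $14$ comes from; you instead use the sign of $t$ to argue that, say for $t>\varepsilon$, the event forces $M_\cB>M_\cA$, hence $M_\cB=\max_{j\in\cB\backslash\cN}X_j$, so a single application of Theorem~\ref{col: anti con equal var no cor 1} to the genuine partition $\{\cA,\cB\backslash\cN\}$ of $[p]$ suffices (and symmetrically for $t<-\varepsilon$). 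Since for a fixed $t$ only one sign case is active, your argument actually yields the constant $7$ rather than $14$ — your remark that a ``union bound over the two sign cases accounts for the factor $14$'' is unnecessary (and slightly misleading), but harmless, as $7\le 14$ and the claimed inequality follows a fortiori. All correlation and variance hypotheses needed for Theorem~\ref{col: anti con equal var no cor 1} on the reduced partitions are inherited from $\bar\rho<1$ and the homogeneous variance, and absorbing $\EE(\max_{j\in\cB\backslash\cN}|X_j-\mu_0|/\sigma)$ into $\EE(\max_{j\in\cB}|X_j-\mu_0|/\sigma)$ by monotonicity is valid, so I see no gap.
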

%The proof is deferred to Section~\ref{sec: proof prop lwr bd case}.
\begin{remark}
     By \eqref{eq: lwr bd case}, we have that the L\'evy concentration function is lower bounded by the ratio of the components with across-partition correlation equal to 1, and when the number of the perfectly positively correlated components $k$ is sufficiently small relative to the number of components~$p$, the residual probability is negligible compared with the concentration rate. Furthermore, the bound in \eqref{eq: lwr bd case t > eps} shows that the difference of the maxima still achieves the typical anti-concentration rate for $t$ bounded away from 0.
\end{remark}

\subsection{Anti-concentration under heterogeneous variance conditions}\label{sec: uneq var}
Next, we provide anti-concentration results for Gaussian random vectors with heterogeneous variances. 
Under some structural assumptions on the covariance matrix, we establish the following dimension-free bound for the L\'evy concentration function.

\begin{theorem} \label{thm: anti con}
Let  $(X_1,...,X_p)^\top$ be a  Gaussian random vector with $\EE(X_i) = \mu_i$  and  
$\Var(X_i) = \sigma^2_i>0$ for $i \in [p]$. For a nontrivial partition $\{\cA,\cB\}$ of $[p]$, define $M_\cA = \max_{i\in\cA} X_i$ and $M_\cB = \max_{j\in\cB} X_j$.
 % and $[p] = \cA \cup \cB$ is an arbitrary partition of $[p]$.
 % Define { $\bar\lambda_{\cA} = |\max_{i \in \cA} \mu_i - \min_{i \in \cA} \mu_i |/2$, $\bar\lambda_{\cB} = |\max_{i \in \cB} \mu_i - \min_{i \in \cB} \mu_i|/2
 % % \min_{\mu \in \RR} \max_{i \in \cA} |\mu_i - \mu| \vee  \min_{\mu \in \RR} \max_{i \in \cB} |\mu_i - \mu|
 % $},
 % % $\underline\sigma = \min_{i \in [p]} \sigma_i$
 % and $\bar\sigma = \max_{i \in [p]} \sigma_i$. 
 Assume that  $|{\rm Corr}(X_i, X_{j}) | < 1$ for any $i \in \cA$ and $j \in \cB$, %$(\max_{S, S' \in \cA} \sigma_S^{-2} \sigma_{S S'} \vee \max_{S, S' \in \cB} \sigma_S^{-2} \sigma_{S S'} ) \le 1$
 % and define the quantities $C_{\cA, \cB}$ and $C_{\cB, \cA}$ as 
 % $$
 % C_{\cA, \cB} = \left\{\begin{array}{ll}
 %    \! \! \max_{j \in \cB}\frac{1}{\sigma_j - \max_{i \in \cA} { \sigma_{i j} }/{\sigma_j}},  & \, \text{if $\max_{j,j' \in \cB} \frac{\sigma_{j j'}}{\sigma_j^{2}} \le 1$ and $ \min_{j \in \cB}(\sigma_j - \max_{i \in \cA}\frac{\sigma_{i j}} { \sigma_j} )> 0$} \\
 %     \!\! + \infty, & \, \text{otherwise}
 % \end{array}\right.
 % $$
 % and  
 %  $$
 % C_{\cB, \cA} = \left\{\begin{array}{ll}
 %    \! \! \max_{i \in \cA}\frac{1}{\sigma_i - \max_{j \in \cB} { \sigma_{i j} }/{\sigma_i}},  & \, \text{if $\max_{i,i' \in \cA} \frac{\sigma_{i i'}}{\sigma_i^{2}} \le 1$ and $ \min_{i \in \cA}(\sigma_i - \max_{j \in \cB}\frac{\sigma_{i j}} { \sigma_i} )> 0$} \\
 %     \!\! + \infty, & \, \text{otherwise}
 % \end{array}\right.
 % $$
 and one of the following two conditions is satisfied:
 \begin{enumerate}[label=(\Alph*)]
     \item { $\max_{j,j' \in \cB} \sigma_j^{-2} \sigma_{j j'} \le 1$, and $C_{\cA, \cB} = \min_{j \in \cB, i \in \cA}(\sigma_j - \sigma_j^{-1} \sigma_{i j} )> 0$,}
     \item  $\max_{i,i' \in \cA} \sigma_i^{-2} \sigma_{i i'} \le 1$, and $C_{\cA, \cB} = \min_{i \in \cA, j \in \cB}(\sigma_i - \sigma_i^{-1} \sigma_{i j} )> 0$,
 \end{enumerate}
 \noindent where $\sigma_{ij} =  \EE [(X_j - \mu_j) (X_i - \mu_i) ]$. Then for any $\varepsilon >0$, we have that
 % there exists a fixed constant $C > 0$ such that 
    \begin{equation}\label{eq: anti con max diff}
        % \sup_{t \in \RR}\PP\Big(\big| M_{\cB} - M_{\cA} - t\big| \le \varepsilon \Big) 
        \cL(M_{\cB} - M_{\cA}, \varepsilon) \le 2 \cdot
        % \left\{
     \EE \big({\max_{{\ell} \in \cS }{|X_{\ell} - \mu_{\ell} |} /{\sigma_{\ell}}} \big)
     % \bigwedge  C_{\cA, \cB}  \EE \left[ {\max_{{ j} \in \cB }\frac{|X_j - \mu_j |} {\sigma_j}} \right]
        % + \bar\lambda_{\cS} 
        % + \bar\sigma \sqrt{0 \vee \log(\bar\sigma/\varepsilon)} 
        % \right\}
        \frac{\varepsilon}{    C_{\cA, \cB}},
    \end{equation}
    where the set $\cS = \cB$ when condition (A) is satisfied,  and $\cS = \cA$ when condition (B) is satisfied.
    % and $\bar\sigma_{\cS}^2 := \max_{i \in \cS} \sigma_i^2$.
\end{theorem}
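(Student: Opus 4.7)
I focus on condition (A); condition (B) follows by the symmetric argument with $\cA$ and $\cB$ swapped. The plan begins by decomposing over the argmax in $\cB$:
\[
\PP(|W - t| \le \varepsilon) \;=\; \sum_{j \in \cB} \PP\big(|W - t| \le \varepsilon, \, X_j = M_\cB\big).
\]
For each fixed $j \in \cB$, I write $X_k = \mu_k + a_{kj}(X_j - \mu_j) + \tilde X_k$ with $a_{kj} = \sigma_{kj}/\sigma_j^2$, so that $\tilde X := (\tilde X_k)_{k \ne j}$ is independent of $U_j := X_j - \mu_j \sim \cN(0,\sigma_j^2)$. On the event $\{X_j = M_\cB\}$,
\[
W \;=\; X_j - M_\cA \;=\; \min_{i \in \cA}\bigl[(1 - a_{ij}) U_j + (\mu_j - \mu_i) - \tilde X_i\bigr],
\]
and condition (A) gives $1 - a_{ij} \ge C_{\cA,\cB}/\sigma_j$ for all $i \in \cA$. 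Thus, for fixed $\tilde X_\cA$, the map $U_j \mapsto W$ is strictly increasing with slope $\ge C_{\cA,\cB}/\sigma_j$, so the event $\{|W - t|\le \varepsilon\}$ confines $U_j$ to an interval of length at most $2\varepsilon\sigma_j/C_{\cA,\cB}$. The argmax event $\{X_j = M_\cB\}$ further forces $U_j \ge U^*_j := \max_{j' \in \cB \setminus \{j\},\; a_{j'j} < 1}(\mu_{j'} - \mu_j + \tilde X_{j'})/(1 - a_{j'j})$, a function only of $\tilde X$; indices $j'$ with $a_{j'j} = 1$ (allowed under (A)) contribute a $\tilde X_{j'}$-only constraint that is passively preserved by the computation.

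Since $U_j$ is independent of $\tilde X$ and the density $\phi_{U_j}$ is unimodal and symmetric about $0$, its supremum over $[(U^*_j)_+, \infty)$ equals $\phi_{U_j}((U^*_j)_+)$; integrating out $U_j$ therefore yields
\[
\PP\big(|W - t|\le\varepsilon, \, X_j = M_\cB \mid \tilde X\big) \;\le\; \frac{2\varepsilon}{\sqrt{2\pi}\, C_{\cA,\cB}}\,\exp\!\bigl(-(U^*_j)_+^2/(2\sigma_j^2)\bigr).
\]
Taking expectations and summing over $j$ reduces the theorem to the inequality
\[
\sum_{j \in \cB} \EE\,\exp\!\bigl(-(U^*_j)_+^2/(2\sigma_j^2)\bigr) \;\le\; \sqrt{2\pi}\;\EE\,\max_{j \in \cB}\frac{|X_j - \mu_j|}{\sigma_j}.
\]
To establish this, I exploit a Stein-type identity. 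Under condition (A), the function $F_j(y) := \PP(X_{j'} \le y \text{ for all } j' \ne j \mid X_j = y)$ equals $\PP(U^*_j \le y - \mu_j)$ (restricted by the $\tilde X$-only constraints) and is non-decreasing in $y$; substituting $u = y - \mu_j$, using $u\,\phi_{U_j}(u) = -\sigma_j^2\,\phi_{U_j}'(u)$, and integrating by parts over $[0, \infty)$, the boundary term at $0$ contributes $\PP(U^*_j \le 0)$, which combines with the interior integral $\int_0^\infty e^{-u^2/(2\sigma_j^2)} f_{U^*_j}(u)\,du$ into precisely $\EE\exp(-(U^*_j)_+^2/(2\sigma_j^2))$. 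This yields
\[
\EE\Bigl[\tfrac{(X_j - \mu_j)_+}{\sigma_j}\,\mathbf{1}\{X_j = M_\cB\}\Bigr] \;=\; \frac{1}{\sqrt{2\pi}}\,\EE\,\exp\!\bigl(-(U^*_j)_+^2/(2\sigma_j^2)\bigr).
\]
Summing over $j$, since the events $\{X_j = M_\cB\}_{j \in \cB}$ partition the sample space almost surely, the left side becomes $\EE[(Z_{J^*})_+]$, where $J^* := \argmax_{j\in\cB} X_j$ and $Z_j := (X_j - \mu_j)/\sigma_j$, and this is bounded by $\EE\max_{j \in \cB}|Z_j|$.

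The main obstacle is identifying and verifying this Stein-type identity. The $(\cdot)_+$ truncation in the exponent is not incidental but precisely the structure forced by integrating by parts on the half-line $[0,\infty)$: the boundary contribution at $0$ and the interior integral combine to match the supremum of $\phi_{U_j}$ over $[(U^*_j)_+, \infty)$ used in the density bound. Without this matching, one either loses a dimension-dependent factor from naive density bounds or breaks the Stein identity (which, if applied to $|X_j - \mu_j|$ on all of $\RR$, would produce a boundary term at $0$ with the wrong sign). A secondary subtlety is the case $a_{j'j} = 1$ for some $j' \in \cB$, allowed by condition (A) and contributing only $\tilde X$-only constraints; these are folded into $F_j$ as multiplicative factors that do not affect the monotonicity used in the integration by parts.
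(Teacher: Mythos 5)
Your route is genuinely different from the paper's and, at its core, sound. The paper works with a density representation for $(M_{\cA},M_{\cB})$, the change of variables $u=y-x$, a comparison of the form $g_{u,j}\le(1-\max_{i\in\cA}\sigma_j^{-2}\sigma_{ij})^{-1}\,dH_{u,j}/dx$, and integration by parts. You instead decompose over which $j\in\cB$ attains $M_{\cB}$, project everything onto $X_j$, and use that on this event, given the residuals $\tilde X$, $W=M_{\cB}-M_{\cA}$ is increasing in $U_j$ with slope at least $C_{\cA,\cB}/\sigma_j$ (the cross-block half of condition (A)), while the within-block half of (A) turns the argmax constraint into a one-sided truncation $U_j\ge U_j^*$; the Gaussian identity $\int_a^\infty u\,\phi_{U_j}(u)\,du=\sigma_j^2\phi_{U_j}(a)$ then converts the truncated-density bound into $\EE[(X_j-\mu_j)_+\sigma_j^{-1}\mathbf{1}\{X_j=M_{\cB}\}]$, and summing recovers the same final quantity as the paper, $\EE[\sum_{j\in\cB}\mathbf{1}\{X_j=M_{\cB}\}|X_j-\mu_j|/\sigma_j]\le\EE\max_{j\in\cB}|X_j-\mu_j|/\sigma_j$, with the same constant $2$. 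This avoids the paper's density lemmas entirely and is more elementary.

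There is, however, a concrete gap in the displayed chain: the handling of indices $j'\in\cB$ with $a_{j'j}=1$ (i.e.\ $\sigma_{jj'}=\sigma_j^2$), which condition (A) permits, and more generally of perfect correlation inside $\cB$, which the hypotheses do not exclude. Your conditional bound and your Stein-type identity both drop the $\tilde X$-only event $E_j=\bigcap_{j':a_{j'j}=1}\{\tilde X_{j'}\le\mu_j-\mu_{j'}\}$. The correct identity is $\EE[(X_j-\mu_j)_+\sigma_j^{-1}\mathbf{1}\{X_j=M_{\cB}\}]=(2\pi)^{-1/2}\EE[\mathbf{1}_{E_j}\exp(-(U_j^*)_+^2/(2\sigma_j^2))]$; without the indicator you only get an inequality, and it points the wrong way for your reduction. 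Indeed, the displayed inequality $\sum_{j\in\cB}\EE\exp(-(U_j^*)_+^2/(2\sigma_j^2))\le\sqrt{2\pi}\,\EE\max_{j\in\cB}|X_j-\mu_j|/\sigma_j$ is false as stated: if $\cB$ consists of $m\ge 3$ exact copies of one variable (allowed, since only cross-block correlations are restricted), every $a_{j'j}=1$, every $U_j^*=-\infty$, the left side equals $m$ while the right side equals $2$; in the same configuration your claim that the events $\{X_j=M_{\cB}\}$ partition the sample space almost surely also fails. The repair is exactly the step the paper takes at the outset of its proof and which your write-up omits: remove within-block duplicates (if $\Var(X_j-X_{j'})=0$ for $j,j'\in\cB$, discard one according to the sign of the constant shift), so that ties within $\cB$ are null, and carry $\mathbf{1}_{E_j}$ through the conditional bound, the identity, and the final summation. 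With those two amendments your argument closes and yields \eqref{eq: anti con max diff}.
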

\begin{proof}
Before proving Theorem~\ref{thm: anti con}, we need the following lemma that characterizes the joint probability density function for $(M_{\cA}, M_{\cB})$, and we provide the proof in  Appendix~\ref{sec: proof lm joint max dens}.

\begin{lemma}\label{lm: joint max dens}
    Let $(X_1,...,X_p)^\top$ be a  Gaussian random vector with $\EE(X_i) = \mu_i$  and  
$\Var(X_i) = \sigma^2_i$ for $i \in [p]$.  For a nontrivial partition $\{\cA,\cB\}$ of $[p]$, define $M_\cA = \max_{i\in\cA} X_i$ and $M_\cB = \max_{j\in\cB} X_j$.
    % $[p] = \cA \cup \cB$ is a partition of $[p]$.
    
    \noindent (1) Suppose that $\sigma^2_i > 0$ for all $i \in [p]$, ${\rm Var} (X_i - X_j) > 0$ for any $i \ne j$, and $|{\rm Corr}(X_i, X_j) | < 1$ for any $i \in \cA$ and $j \in \cB$, then the joint distribution of $ (M_{\cA}, M_{\cB})$  is absolutely continuous with respect to the Lebesgue measure of $\RR^2$, and a version of its density is 
\begin{equation}\label{eq: max joint den}
    f(x, y) = \sum_{i \in \cA} \sum_{j \in \cB} \PP \Big(\max_{i' \in \cA \backslash \{i\}} X_{i'} \le x, \max_{j' \in \cB \backslash \{j\}} X_{j'} \le y \Big| X_i = x, X_j  = y \Big) \phi_{i,j}(x, y),
\end{equation}
where $\phi_{i,j}(\cdot,\cdot)$ is the joint density function of $(X_i, X_j)$, $i \in \cA, j \in \cB$. 

\noindent (2) If for all $i,j \in \cA$ and  $i \ne j$, either $\Var(X_i - X_j) > 0$ or $\EE (X_i - X_j)  \ne 0$, and $\sigma_i^2 > 0$ for all $i \in \cA$,
% $\II_{\cA}(S)\cdot \II_{\cA}(S') + \II_{\cB}(S)\cdot \II_{\cB}(S') +  \II_{\cC}(S)\cdot \II_{\cC}(S') = 1$, 
then given any $y \in \RR$, $F_y (x):= \PP(M_{\cA} \le x, M_{\cB} \le y)$ is absolutely continuous with respect to the Lebesgue measure on $\RR$, and  a version of its partial derivative is
\begin{equation}\label{eq: max joint part x}
\begin{aligned}
   \frac{\partial F_y (x)}{\partial x} =  \sum_{i \in \cA} \PP \Big(\max_{i' \in \cA \backslash \{i\}} X_{i'} \le x, \max_{j \in \cB } X_j \le y \Big| X_i  = x \Big) \phi_{i}(x),
\end{aligned}
\end{equation}
where $\phi_{i}(\cdot)$ is the marginal density function for $X_i$.
\end{lemma}
Note that when $\cB = \emptyset$, the partial derivative in \eqref{eq: max joint part x} reduces to the density function of $\PP(\max_{i \in [p]} X_i \le x)$. 
    Without loss of generality, we assume that condition (A) holds. 
% First note that for any given $\lambda_1, \lambda_2 \in \RR$ and $t \in \RR$, we have $\max_{i \in \cA} X_i= \max_{i \in \cA} (X_i- \lambda_1) + \lambda_1$, $\max_{j \in \cB} X_{j} = \max_{j \in \cB} (X_{j} - \lambda_2) + \lambda_2$, and in turn 
% \begin{align*}
%     & \PP\big(\big|\max_{j \in \cB} X_{j} - \max_{i \in \cA} X_i- t\big| \le \varepsilon \big) = \PP\big(\big| \max_{j \in \cB} (X_{j} - \lambda_2) - \max_{i \in \cA} (X_i- \lambda_1) - t + \lambda_2 - \lambda_1\big| \le \varepsilon \big) \\
%     & \le \sup_{t' \in \RR}\PP\big(\big| \max_{j \in \cB} (X_{j} - \lambda_2) - \max_{i \in \cA} (X_i- \lambda_1) - t'\big| \le \varepsilon \big).
% \end{align*}
% % where $t' = t$ when $\lambda_1 = \lambda_2$.  
% Hence if we take
% $$
% \lambda_1 = \arg\min_{\mu \in \RR} \max_{i \in \cA} |\mu_i - \mu| = (\max_{i \in \cA} \mu_i + \min_{i \in \cA} \mu_i)/2,
% $$
% $$
% \text{and }\quad  \lambda_2 = \arg\min_{\mu \in \RR} \max_{i \in \cB} |\mu_i - \mu| = (\max_{i \in \cB} \mu_i + \min_{i \in \cB} \mu_i)/2,
% $$
% % and for bounding \eqref{eq: anti con max diff}, we will take $\lambda_1 = \mu_{\cA}^*$ and $\lambda_2 = \mu_{\cB}^*$, and for bounding \eqref{eq: anti con max diff t 0}, we will take $\lambda_1 = \lambda_2 = \mu_{\cB}^*$. Then 
% without loss of generality, we can assume that $\{X_i\}_{i \in [p]}$ have means $\{\mu_i - \lambda_1\}_{i \in \cA}$ and $\{\mu_i - \lambda_2\}_{i \in \cB}$, and for the convenience of presentation, we slightly abuse the notation and denote $\mu_i - \lambda_k$ by $\mu_i$, where $k = 1,2$, $i = 1, \ldots, p$. 
Also, without loss of generality, we assume that ${\rm Var}(X_i- X_{j}) > 0$ for all $i \ne j$. Indeed, the inequality holds for $i \in \cA$ and $j \in \cB$ by the assumption that $|{\rm Corr}(X_i, X_{j}) | < 1$ for any $i \in \cA$ and $j \in \cB$. In addition, for any $i,j \in \cA$ or $i,j \in \cB$ such that $i \ne j$, if ${\rm Var}(X_i- X_{j}) = 0$, there exits a constant $c \in \RR$ such that $X_i= X_{j} + c$ almost surely. Then, we can remove either $X_i$ or $ X_{j}$ based on the sign of $c$. We can repeat removing the components until ${\rm Var}(X_i- X_{j}) > 0$ holds for all $i \ne j$.

Then by Lemma~\ref{lm: joint max dens}, for any $t \in \RR$, we have 
\begin{align}
    & \PP\Big(\big|\max_{j \in \cB} X_{j} -\max_{i \in \cA} X_i- t\big| \le \varepsilon \Big) = \int_{|y - x - t| \le \varepsilon } f(\max_{i \in \cA} X_i= x  , \max_{j \in \cB} X_{j} = y ) d x d y \notag\\
    &  =  \int_{|y - x  - t| \le \varepsilon } \sum_{i \in \cA, j \in \cB} \!\!\bigg\{\PP \Big(\max_{i' \in \cA \backslash \{i\}} \!X_{i'} \le x, \max_{j' \in \cB \backslash \{j\}}\! X_{j'} \le y \Big| X_i  = x, X_j = y \Big)\notag \\
    &\quad \quad \quad \quad  \quad \quad \quad \quad \quad \times \phi_{i,j}(x, y) \bigg\} dx dy \notag\\
    % & \overset{u := y - x}{ =\joinrel= } \!\! \!\! 
    &=\!\!\!\sum_{i \in \cA,j \in \cB} \int_{t-\varepsilon}^{t+ \varepsilon}\!\!\!\int_{ \RR}\!\bigg\{ \!\PP \Big(\max_{i'\in \cA \backslash \{i\}} X_{i'} \le x, \max_{j' \in \cB \backslash \{j\}} X_{j'} \le x + u \Big| X_i  = x, X_j - X_i  =  u\! \Big) \notag\\
    & \quad \quad \quad \quad \quad\quad \quad  \quad   \times \phi_{i,j}(x, x + u) |J| \bigg\} dx du \notag\\
    % & = \sum_{S, S'} \int_{|u| \le \varepsilon} \phi_{S,S'}(u)\int_{x \in \RR} \PP \Big(\max_{\tilde{S} \in \cA,\tilde{S}' \in \cB } X_{i'} \vee (X_{j'} - u) \le x \Big| X_{i'}  = x, X_{j'}  - X_{i'}  =  u \Big) \phi_{S, S'}(x| u)  dx du, 
        & = \sum_{i \in \cA, j \in \cB}  \int_{t-\varepsilon}^{t+ \varepsilon}\!\!\!\int_{ \RR}\bigg\{ \PP \Big(\max_{i' \in \cA, j' \in \cB } \!X_{i'}\! \vee (X_{j'} - u) \le x \Big| X_i  = x, X_j \!- \!X_i  =  u \Big) \label{eq: anti con ini int}\\
        & \quad \quad \quad \quad  \quad \quad \quad \quad  \quad \quad \quad  \times \phi_{i,j}(x, x + u) \bigg\} dx du,\notag
\end{align}
% as $  \phi_{S,S'}(u) = O(1)$, 
% it suffices to bound the integral
% $$
% \int_{- \infty}^{+ \infty} \sum_{S, S'}  \PP \Big(\max_{\tilde{S} \in \cA,\tilde{S}' \in \cB } X_{i'} \vee (X_{j'} - u) \le x \Big| X_{i'}  = x, X_{j'}  - X_{i'}  =  u \Big) \phi_{S, S'}(x| u)  dx .
% $$
where $J$ is the Jacobian matrix {mapping $(x,u):= (x,y-x) \mapsto (x,y)$} {so that $|J|=1$}. Note that we can write  
\begin{align*}
    & \int_{\RR} \sum_{i \in \cA} \sum_{j \in \cB}  \PP \Big(\max_{i' \in \cA, j' \in \cB } X_{i'} \vee (X_{j'} - u) \le x \Big| X_i  = x, X_j - X_i  =  u \Big) \phi_{i,j}(x, x+ u)  dx \notag\\
    & = \!\sum_{j \in \cB}  
    % \bigg \{ 
    \int_{\RR}\sum_{i \in \cA}  \PP \Big(\max_{i' \in \cA,j' \in \cB } X_{i'} \!\!\vee\!\! (X_{j'} - u) \!\le x \Big| X_i  \!=\! x, X_j \!=\! x + u \Big) \phi_{i| j}(x| x+ u) \phi_{ j}( x+ u)   dx ,
    % & \quad +  \int_{- \infty}^{+ \infty} \sum_{S \in \cB}  \PP \Big(\max_{\tilde{S} \in \cA,\tilde{S}' \in \cB } X_{i'} \vee (X_{j'} - u) \le x \Big| X_i  = x + u, X_j = x + u \Big) \phi_{S| S'}(x + u| x+ u) \phi_{ S'}( x+ u)   dx \bigg\},
\end{align*}
where $\phi_{i|j} (\cdot | \cdot)$ denotes the density of $X_i$ conditional on $X_{j}$.
% and the density of $\max_{j \in \cB} X_{j} -\max_{i \in \cA} X_i$ is bounded everywhere:
% \begin{align*}
%    & \left|\int_{- \infty}^{+ \infty} \sum_{i \in \cA} \sum_{j \in \cB}  \PP \Big(\max_{i' \in \cA, j' \in \cB } X_{i'} \vee (X_{j'} - u) \le x \Big| X_i  = x, X_j - X_i  =  u \Big) \phi_{i,j}(x, x+ u)  dx \right| \\
%     & \le \sum_{i \in \cA} \sum_{j \in \cB}  \int_{- \infty}^{+ \infty}  \phi_{i,j}(x, x+ u)  dx = \sum_{i \in \cA} \sum_{j \in \cB}  \phi_{X_j - X_i} (u) < +\infty, \quad \text{for all $u \in \RR$,}
% \end{align*}
% where $\phi_{X_j - X_i} (\cdot)$ denotes the marginal density for $X_{j} - X_i$.
Then for any $j \in \cB$ and $u \in \RR$, we let
% and by assumption $\{\xi_{i'}\}_{\tilde{S} \in [p] \backslash \{S'\}}$ conditional on $X_{j}$ are still multivariate Gaussian with non-degenerate marginal distributions.

% {\red 
% Define $V_{i'} = X_{i'} - \mu_{i'} - \Sigma_{S'}^{-1} \Sigma_{S' \tilde{S}} (X_{j}  - \mu_{j}) $. Note that if the following equation holds, 
% $$
% V_{i'} + \mu_{i'} + \Sigma_{S'}^{-1} \Sigma_{S' \tilde{S}} (x + u  - \mu_{j}) -  u \cdot \II\{\tilde{S} \in \cB \} = V_{j'} + \mu_{j'} + \Sigma_{S'}^{-1} \Sigma_{S' \tilde{S}'} (x + u  - \mu_{j}) -  u \cdot \II\{\tilde{S}' \in \cB \} ,
% $$
% then $V_{i'}  = V_{j'} $, and hence $ \Sigma_{S' \tilde{S}}  \ne \Sigma_{S' \tilde{S}'} $, because else ${\rm Corr} (\xi_{i'}, X_{j'} ) = 1$ and this is contradictory to our assumption. Hence the equation may only hold at 
% $$
% x = (\Sigma_{S' \tilde{S}} - \Sigma_{S' \tilde{S}'})^{-1} \left( \Sigma_{S'}(\mu_{j'}  - \mu_{i'} ) + u \cdot (\II\{\tilde{S} \in \cB \} - \II\{\tilde{S}' \in \cB \}) \right) - u + \mu_{j},
% $$
% which indicates that $X_{i'} -  u \cdot \II\{\tilde{S} \in \cB \} \ne X_{j'} -  u \cdot \II\{\tilde{S}' \in \cB \} $ conditional on $X_{j} = x + u$ for any $\tilde{S} \ne \tilde{S}'$ and a.e. $x \in \RR$.
% }
% Hence by Lemma~\ref{lm: joint max dens}, we know that the density of {\red $\max_{\tilde{S} \in \cA,\tilde{S}' \in \cB \backslash \{S'\}} X_{i'} \vee (X_{j'} - u)$ conditional on $X_{j} = x + u$ at $x$ is } 
\begin{align}
       g_{u, j} (x)  &=  \sum_{i \in \cA} \PP \Big(\max_{i' \in \cA, j' \in \cB } X_{i'} \!\vee\! (X_{j'} - u) \!\le\! x \Big| X_i  = x , X_j = x + u \Big) \phi_{i| j}(x | x+ u);\label{eq: g_u}\\
      H_{u,j}(x) &= \PP\left(\max_{i' \in \cA, j' \in \cB \backslash \{j\}} X_{i'} \vee (X_{j'} - u) \le x | X_{j} = x+u\right). \label{eq: H_u}
\end{align}
% \begin{align}
%      % f_{u, S'} (x)  &=  \sum_{i \in \cA} \bigg\{ \PP \Big(\max_{\tilde{S} \in \cA,\tilde{S}' \in \cB } X_{i'} \vee (X_{j'} - u) \le x \Big| X_i  = x + u \cdot \II\{S \in \cB\}, X_j = x + u \Big) \\
%      % & \quad \times \phi_{S| S'}(x + u \cdot \II\{S \in \cB\}| x+ u)\bigg\}, \quad \text{and} \\
%   \\ 
% \end{align}
% {\red \bf explain why when $h(x,y)$ we can easily get the derivative buy when $h(x)$ harder (x y moving together).}
The following lemma establishes the relationship between  $g_{u,j}(\cdot)$ and $H_{u,j} (\cdot)$
% two claims 
 for a.e. $u \in \RR$ and every $j \in \cB$ (we provide its proof in Appendix~\ref{sec: proof claim 1}).
\begin{lemma}\label{claim: 251}
Let $g_{u,j}(\cdot)$ and $H_{u,j} (\cdot)$ be the functions defined in \eqref{eq: g_u} and \eqref{eq: H_u} respectively.   For every $j \in \cB$ and a.e. $u \in \RR$, we have 
    \begin{equation}\label{eq: claim g = 0}
         g_{u, j}(x) = 0, \quad \text{for all } x < x_{j}(u),
    \end{equation}
    and 
    \begin{equation}\label{eq: claim g < H}
        g_{u, j}(x) \le \Big(1- \max_{i \in \cA} \sigma_{j}^{-2} \sigma_{i j}\Big)^{-1} \frac{d H_{u,j}(x)} { d x}, \quad  \text{for all } x > x_{j}(u),
    \end{equation}
where $x_{j}(u) \in \RR$ is a predetermined value {and might take the value of $-\infty$ depending on the covariance structure (in which case \eqref{eq: claim g = 0} is trivial and \eqref{eq: claim g < H} holds for all $x \in \RR$)}.\footnote{The claim is nontrivial because $H_{u,j} (x)$ is not a (conditional) cumulative function due to the tangling roles of $x$ in both the quantile determination and the conditioning on $X_j$. Specifically, if we consider the function $G_{u,j} (x,y) := \PP\left(\max_{i' \in \cA, j' \in \cB \backslash \{j\}} X_{i'} \vee (X_{j'} - u) \le x | X_{j} = y+u\right)$, then $d H_{u,j}(x) / d x= \partial_x G_{u,j}(x,x) + \partial_y G_{u,j}(x,x) $. While the first component $\partial_x G_{u,j}(x,x)$ gives rise to an upper bound of $g_{u, j} (x)$ by applying Lemma~\ref{lm: joint max dens}, the second component $\partial_y G_{u,j}(x,x)$ highly depends on the covariance structure of $X$.}
\end{lemma}
% \begin{itemize}
% %     \item \textbf{Claim 1: } for every $j \in \cB$ and a.e. $u \in \RR$, {$H_{u,j}(x)$ is non-decreasing on $x \in \RR$};
%     % \item    
%    \item \noindent \textbf{Claim 2.5.1: } 
% \end{itemize}

\noindent Applying Lemma~\ref{claim: 251}, since $C_{\cA,\cB} > 0$ by condition~(A),
 we have that the following holds for a.e. $u \in \RR$,
{\small
\begin{align*}
    & \int_{- \infty}^{+ \infty} \sum_{i \in \cA} \sum_{j \in \cB}  \PP \Big(\max_{i' \in \cA, j' \in \cB } X_{i'} \vee (X_{j'} - u) \le x \Big| X_i  = x, X_j - X_i  =  u \Big) \phi_{i,j}(x, x+ u)  dx \notag\\
    & =  \sum_{j \in \cB} 
    \int_{- \infty}^{+ \infty}  \phi_{j}( x+ u)  g_{u, j}(x)  dx  \overset{\rm (a)}{=} \sum_{j \in \cB} 
    \int_{x_{j}(u)}^{+ \infty}  \phi_{j}( x+ u)  g_{u, j}(x)  dx \notag \\
    & \overset{\rm (b)}{\le}  \sum_{j \in \cB} 
    \int_{ x_{j}(u)}^{+\infty} \phi_{ j}( x+ u) \Big(1- \max_{i \in \cA} \sigma_{j}^{-2} \sigma_{ij}\Big)^{-1} d H_{u,j}(x) \notag\\
    & = \sum_{j \in \cB} \!\Big(1- \max_{i \in \cA} \frac{\sigma_{ij}}{\sigma_{j}^{2} }\Big)^{-1} \!\!\!\int_{ x_{j}(u)}^{+\infty} \! \phi_{j} (x+ u) d \PP\Big(\max_{i' \in \cA, j' \in \cB \backslash \{j\}} X_{i'} \vee (X_{j'} - u) \le x | X_{j} = x+u\Big)\notag\\
    & \overset{\rm (c)}{=} \sum_{j \in \cB} \!\Big(1- \max_{i \in \cA} \frac{\sigma_{ij}}{\sigma_{j}^{2} }\Big)^{-1}  \Big\{ \Big[\phi_{j} (x+ u) \cdot \PP\Big(\max_{i' \in \cA, j' \in \cB } X_{i'} \vee (X_{j'} - u) \le x | X_{j} = x+u\Big) \Big]\Big |_{x_{j}(u)}^{+\infty} \\
    % & \quad + \phi_{j} (x+ u) \cdot \PP(\max_{i' \in \cA, j' \in \cB } X_{i'} \vee (X_{j'} - u) \le x | X_{j} = x+u) \Big |_{-\infty}^{x_{j}(u)^-} \notag\\
    & \quad - \int_{ x_{j}(u)}^{+\infty}\PP\Big(\max_{i' \in \cA, j' \in \cB } X_{i'} \vee (X_{j'} - u) \le x | X_{j} = x+u\Big) d \phi_{j} (x+ u) \Big\}, \notag
    \end{align*}
    }where equality (a) follows from \eqref{eq: claim g = 0}, inequality (b) follows from \eqref{eq: claim g < H}, and equality (c) follows by integration by parts.
  % Rearranging terms for display \eqref{eq: dens cont}, 
Denote by ${\rm RHS}_{(c)}$ the term on the right hand side of equality (c), then we further have
    \begin{align*}
    {\rm RHS}_{(c)}
     % \eqref{eq: dens cont}  
     % & = \sum_{j \in \cB} \!\Big(1- \max_{i \in \cA} \sigma_{j}^{-2} \sigma_{ij}\Big)^{-1}  \!\Big\{ \phi_{j} (x+ u) \cdot \PP(\max_{i' \in \cA, j' \in \cB } X_{i'} \vee (X_{j'} - u) \le x | X_{j} = x+u) \Big |_{x_{j}(u)^+}^{x_{j}(u)^-} \notag\\
    % & \quad - \int_{\RR}\PP(\max_{i' \in \cA, j' \in \cB } X_{i'} \vee (X_{j'} - u) \le x | X_{j} = x+u) d \phi_{j} (x+ u) \Big\} \notag\\
    & \overset{\rm (d)}{\le} \! \sum_{j \in \cB}  \int_{x_{j}(u)}^{+ \infty} \!\PP(\max_{i' \in \cA, j' \in \cB } X_{i'} \!\vee \! (X_{j'} - u) \!\le x | X_{j} = x+u) \frac{x+u - \mu_{j}}{\sigma_{j}^2 - \displaystyle\max_{i \in \cA}  \sigma_{ij}} \phi_{j} (x+ u) d x \notag\\
    &  \overset{\rm (e)}{\le}   \sum_{j \in \cB}   \int_{x_{j}(u)}^{+ \infty} \PP(\max_{j' \in \cB } X_{j'}  \le x + u| X_{j} = x+u ) \left| \frac{x+u- \mu_{j}}{\sigma_{j}^2 - \max_{i \in \cA}  \sigma_{ij}} \right| \phi_{j} (x+ u) d x  \notag\\
    &  \le   \sum_{j \in \cB}   \int_{-\infty}^{+ \infty} \PP(\max_{j' \in \cB } X_{j'}  \le x + u| X_{j} = x+u ) \left| \frac{x+u- \mu_{j}}{\sigma_{j}^2 - \max_{i \in \cA}  \sigma_{ij}} \right| \phi_{j} (x+ u) d x  \notag\\
    & \overset{\rm (f)}{\leq} C_{\cA, \cB}^{-1} \sum_{j \in \cB}   \int_{- \infty}^{+ \infty} \PP(\max_{j' \in \cB } X_{j'}  \le \tilde{x}| X_{j} = \tilde{x} ) |\tilde{x} - \mu_j| /\sigma_j \phi_{j} (\tilde{x}) d \tilde{x}, 
    \end{align*}
where inequality (d) follows from the fact that
    $$
   \Big[\phi_{j} (x+ u) \cdot \PP(\max_{i' \in \cA, j' \in \cB } X_{i'} \vee (X_{j'} - u) \le x | X_{j} = x+u) \Big] \Big |_{x_{j}(u)}^{+\infty} \le 0, \quad \text{for all } j \in \cB,
    $$
    % that $H_{u,j}(x)$ has a non-negative jump at $x = x_j(u)$ by Lemma~\ref{claim: 251}. 
      inequality (e) holds as $\max_{j' \in \cB } (X_{j'} - u) \le \max_{i' \in \cA, j' \in \cB } X_{i'} \vee (X_{j'} - u)$, and inequality (f) follows by taking $\tilde{x} = x+u$ and the definition of $C_{\cA,\cB} = \min_{j \in \cB, i \in \cA} (\sigma_j -  \sigma_j^{-1}\sigma_{ij})$. Furthermore, let ${\rm RHS}_{(f)}$ be the term on the right hand side of inequality (f), and we have
    \begin{align*}
    {\rm RHS}_{(f)} & =  C_{\cA, \cB}^{-1} \sum_{j \in \cB}   \int_{- \infty}^{+ \infty} \EE\Big(\II\{\max_{j' \in \cB \backslash\{j\} } X_{j'}  \le X_{j} \} |X_j - \mu_j|/\sigma_j \, \Big| X_{j} = \tilde{x} \Big)  \phi_{j} (\tilde{x}) d \tilde{x}  \\
    & =  C_{\cA, \cB}^{-1} \,\,  \EE\left[ \sum_{j \in \cB} \II\{\max_{j' \in \cB \backslash\{j\} } X_{j'}  \le X_{j} \} |X_j - \mu_j| /\sigma_j \right]  \\
    & \le C_{\cA, \cB}^{-1}  \Big\{ \EE\Big[  \max_{j \in \cB}\Big|\frac{X_j - \mu_j}{\sigma_j}\Big| \sum_{j \in \cB} \II\{\max_{j' \in \cB \backslash\{j\} } X_{j'}  < X_{j} \}\Big] \\
    &\quad \quad \quad \quad + \EE \Big[  \max_{j \in \cB}\Big|\frac{X_j - \mu_j}{\sigma_j}\Big| \sum_{j \in \cB} \II\{\max_{j' \in \cB \backslash\{j\} } X_{j'}  = X_{j} \}\Big] \Big\} \\
    & \overset{\rm (g)}{\le} C_{\cA, \cB}^{-1} 
    % \bigg\{ 
    \EE \Big( \max_{j \in \cB}\Big|\frac{X_j - \mu_j}{\sigma_j}\Big|\Big),
    % +\! \sum_{j \in \cB} \EE \Big( \max_{j \in \cB}\Big|\frac{X_j - \mu_j}{\sigma_j}\Big|^2 \Big)^{1/2} \PP \Big(\max_{j' \in \cB \backslash\{j\} } X_{j'}  = X_{j} \Big)^{1/2} \bigg\}\\
    % & \overset{\rm (h)}{=} C_{\cA, \cB}^{-1}  \EE \Big( \max_{j \in \cB}\Big|\frac{X_j - \mu_j}{\sigma_j}\Big| \Big),
    % C_{\cA, \cB}^{-1}  \left(\sum_{j \in \cB}   \int_{- \infty}^{+ \infty} \PP(\max_{j' \in \cB } X_{j'}  \le x | X_{j} = x ) | x | \phi_{j} (x) d x + \bar\lambda_{\cB} \right) = C_{\cA, \cB}^{-1} \left\{\EE |\max_{i \in \cB } X_{i} | + \bar\lambda_{\cB} \right\}\\
    % & \overset{\rm (f)}{\le} C_{\cA, \cB}^{-1} \left\{\EE |\max_{i \in \cB }( X_{i} - \mu_i )| + 2\bar\lambda_{\cB} \right\},
\end{align*}
% \textcolor{blue}{AB:Tentative
% \begin{align*}
%     & \overset{\rm (c)}{\le}  \sum_{j \in \cB}  \int_{- \infty}^{+ \infty} \PP(\max_{i' \in \cA, j' \in \cB } X_{i'} \vee (X_{j'} - u) \le x | X_{j} = x+u) \frac{x+u - \mu_{j}}{\sigma_{j}^2 - \max_{i \in \cA}  \sigma_{ij}} \phi_{j} (x+ u) d x\\
%     & \leq \sum_{j \in \cB} \frac{1}{\sigma_{j}^2 - \max_{i \in \cA}  \sigma_{ij}} \int_{- \infty}^{+ \infty} \PP(\max_{ j' \in \cB } X_{j'} \le x+u | X_{j} = x+u) |x+u-\mu_j| \phi_{j} (x+ u) d x\\
%     & = \sum_{j \in \cB} \frac{1}{\sigma_{j}^2 - \max_{i \in \cA}  \sigma_{ij}} \int_{- \infty}^{+ \infty} \PP(\max_{ j' \in \cB } X_{j'} \le \tilde x | X_{j} = \tilde x ) |\tilde x-\mu_j| \phi_{j} (\tilde x) d \tilde x\\
%     & \leq  C_{\cA, \cB}^{-1}\sum_{j \in \cB}  \int_{- \infty}^{+ \infty} \PP(\max_{ j' \in \cB } X_{j'} \le \tilde x | X_{j} = \tilde x ) |\tilde x-\mu_j| \phi_{j} (\tilde x) d\tilde{x} \\
%     & =  C_{\cA, \cB}^{-1}  \mathbb{E}\left[ \sum_{j \in \mathcal{B}} 1\{ X_j\geq \max_{j'\in\mathcal{B}}X_{j'}\}|X_j- \mu_j|  \right]\\
%         &  \overset{\rm (e)}{\le} C_{\cA, \cB}^{-1} \left\{\EE \max_{j' \in \cB } |X_{j'} - \mu_{j'}|  \right\},
% \end{align*}}
{where inequality (g) holds by the Cauchy–Schwarz inequality and the fact that  
$$
\PP \Big(\max_{j' \in \cB \backslash\{j\} } X_{j'}  = X_{j} \Big) = 0, \quad \text{for any $j \in \cB$.}
$$ 
% $$\sum_{j \in \cB} \II\{\max_{j' \in \cB \backslash\{j\} } X_{j'}  < X_{j} \} \le 1,$$
% and the equality (h) is a result of the following facts
% \begin{align*}
% & \EE \Big( \max_{j \in \cB} \big[ |X_j - \mu_j|^2/\sigma_j^2 \big] \Big) \overset{\rm (h1)}{<} + \infty;\\
%      & \PP \Big(\max_{j' \in \cB \backslash\{j\} } X_{j'}  = X_{j} \Big) \le \sum_{j' \in \cB \backslash \{j\} } \PP(X_j - X_{j'} = 0) \overset{\rm (h2)}{=} 0,
% \end{align*}
% where (h1) is a result of Theorem~7.1 and equation~(7.4) in \cite{ledoux2001concentration}, and (h2) is by the fact that $\Var(X_j - X_{j'}) > 0$ for any $j \ne j'$.
% by noting that by triangle inequality we have 
% $$
% \max_{j \in \cB}|x + u - \mu_j | \le |x + u| + \max_{j \in \cB}|\mu_j| = |x + u| + \bar\lambda_{\cB},
% $$
% and the inequality (f) follows from that
% \begin{align*}
%     \max_{i \in \cB} X_i & \le \max_{i \in \cB} (X_i - \mu_i) + \max_{i \in \cB} \mu_i \le |\max_{i \in \cB} (X_i - \mu_i)| + \bar\lambda_{\cB};\\
%     \max_{i \in \cB} X_i & \ge  X_{i^*} =  X_{i^*} - \mu_{i^*} + \mu_{i^*} = \max_{i \in \cB} (X_i - \mu_i)+ \mu_{i^*} \ge -|\max_{i \in \cB} (X_i - \mu_i)| - \bar\lambda_{\cB},
% \end{align*}
% where $i^* \in \argmax_{i \in \cB} (X_i - \mu_i)$.
 Hence applying the above bound to the integral in \eqref{eq: anti con ini int}, for any $t \in \RR$, we have that }
$$
     \PP\Big(\big| \max_{j \in \cB} X_{j} - \max_{i \in \cA} X_i- t\big| \le \varepsilon \Big) \le 2 \EE \big( \max_{j \in \cB}|X_j - \mu_j| /\sigma_j \big)
     % \left\{\EE \big|{\max_{i \in \cB} (X_i - \mu_i ) } \big|
     % + {|t| + \varepsilon} 
     % \right\}
     \frac{\varepsilon}{C_{\cA, \cB}},
     % & \lesssim {\red (1 - C_{\cA, \cB})^{-1}  \left(\frac{\bar\sigma}{\underline\sigma}\sqrt{K \log n} + \bar\lambda /\underline\sigma \right)\varepsilon .}
$$
and \eqref{eq: anti con max diff} follows by taking the supremum over $t$.
\end{proof}
\begin{remark} 
The condition (A) or (B) of Theorem~\ref{thm: anti con} is satisfied when one subset has a homogeneous component-wise variance that is greater than the component-wise variances of the other subset. For example, condition (A) is satisfied when we have $\sigma_j^2 = 1$ for all $j \in \cB$, and  $\sigma_i^2 \le 1$ for any $i \in \cA$. In \cite{Chernozhukov2013ComparisonAA}, for the Gaussian random vector with heterogeneous marginal variances, an upper bound of the concentration function is obtained by scaling the random variables to have homogeneous component-wise variances, whereas in our case, the generalization of results from the equal variance case to the unequal variance case via scaling is nontrivial. For example, consider \(p = 1\), and let \(M_{\cA} = M_{\cB} = X_1 \sim \mathcal{N}(\mu, \sigma^2)\). Then the concentration behavior of \(M_{\cB} - M_{\cA} = X_1 - X_1 = 0\) and the scaled difference \(M_{\cB} - \frac{1}{2} M_{\cA} = X_1 - \frac{1}{2} X_1 = \frac{1}{2} X_1 \sim \mathcal{N}(\mu, \sigma^2/4)\) will be fundamentally different. Specifically, while \(\mathcal{L}(M_{\cB} - M_{\cA}, \varepsilon) = 1\) for any \(\varepsilon > 0\), the scaled difference has a well-bounded L\'evy concentration function \(\mathcal{L}(M_{\cB} - \frac{1}{2} M_{\cA}, \varepsilon) \le \sqrt{8/\pi} \sigma^{-1} \varepsilon\). This indicates that for the case of the maximum difference, scaling can fundamentally change the L\'evy concentration function and is not applicable for generalizing the results from the equal variance case to the unequal variance case.
\end{remark}

The bound in \eqref{eq: anti con max diff} gives a dimension-free bound, similar to the case of the single Gaussian maximum \citep{Chernozhukov2013ComparisonAA}. Even though our approach is developed for the difference of maxima, it yields the following comparable bounds for the case of the single maximum. We provide the proof in Appendix~\ref{sec: proof col single max anti con}.
\begin{corollary}\label{col: single max anti con}
    Let $(X_1, \ldots, X_p)^{\top}$ be a Gaussian random vector with $\EE(X_j) = \mu_j$ and $\Var(X_j) = \sigma_j^2 > 0$ for $j \in [p]$. Then for any $\varepsilon > 0$, we have
    \begin{equation}\label{eq: single max anti con}
        \cL(\textstyle\max_{j \in [p]} X_j, \varepsilon) \le 2 \EE(\textstyle \max_{j \in [p]} |X_j - \mu_j| /\sigma_j) \cdot \varepsilon/\underline\sigma,
    \end{equation}
    where $\underline\sigma = \min_{j \in [p]} \sigma_j$.
\end{corollary}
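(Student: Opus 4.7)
My plan is to replicate the strategy used for Theorem~\ref{thm: anti con}, specialized to the single-maximum setting. First, I would invoke Lemma~\ref{lm: joint max dens}(2) with $\cA = [p]$ and $\cB = \emptyset$; the remark immediately after the lemma notes that in this regime the partial derivative in \eqref{eq: max joint part x} coincides with the density of $M := \max_{j \in [p]} X_j$, so
\[
f_M(x) \;=\; \sum_{j \in [p]} \PP\!\left(\max_{j' \ne j} X_{j'} \le x \,\Big|\, X_j = x\right) \phi_j(x),
\]
and $\cL(M,\varepsilon) = \sup_t \int_{t-\varepsilon}^{t+\varepsilon} f_M(x)\,dx$. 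Following the routine reduction at the start of the proof of Theorem~\ref{thm: anti con}, I may assume $\Var(X_i - X_j) > 0$ for all $i \ne j$, which in particular rules out ties so that $\sum_{j\in[p]} \II\{X_j = M\} \le 1$ a.s.

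The core of the argument would mirror the integration-by-parts step in the proof of Theorem~\ref{thm: anti con}, with the simplification that there is no second maximum and therefore no coupling variable $u$. For each $j$, I would integrate by parts against $\phi_j$, using $\phi_j'(x) = -(x-\mu_j)\sigma_j^{-2}\phi_j(x)$ to transfer the derivative from $\phi_j$ onto the conditional probability. In the proof of Theorem~\ref{thm: anti con} this step was prefaced by Lemma~\ref{claim: 251}, whose factor $(1-\max_{i\in\cA}\sigma_j^{-2}\sigma_{ij})^{-1}$ arose from across-subset coupling; here that factor is absent because all indices lie in a single subset, so the only quantity appearing in the denominator after integration by parts is $\sigma_j$. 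Tracing the same chain of inequalities used there (bounding the resulting conditional probability by the unconditional maximum CDF at the relevant point and recognizing the integrand as a joint density), I expect to obtain, uniformly in $t$,
\[
\int_{t-\varepsilon}^{t+\varepsilon} \PP\!\left(\max_{j' \ne j} X_{j'} \le x \,\Big|\, X_j = x\right) \phi_j(x)\,dx \;\le\; \frac{2\varepsilon}{\sigma_j}\,\EE\!\left[\II\{X_j = M\}\,\frac{|X_j - \mu_j|}{\sigma_j}\right].
\]

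Summing over $j$, bounding $\sigma_j^{-1} \le \underline{\sigma}^{-1}$, and invoking the disjointness of the events $\{X_j = M\}$ from the reduction above, I would conclude
\[
\cL(M,\varepsilon) \;\le\; \frac{2\varepsilon}{\underline{\sigma}}\,\EE\!\left[\sum_{j \in [p]} \II\{X_j = M\}\,\frac{|X_j - \mu_j|}{\sigma_j}\right] \;\le\; \frac{2\varepsilon}{\underline{\sigma}}\,\EE\!\left[\max_{j \in [p]} \frac{|X_j - \mu_j|}{\sigma_j}\right],
\]
which is precisely \eqref{eq: single max anti con}. The main obstacle is justifying the integration-by-parts step when $\PP(\max_{j'\ne j} X_{j'}\le x\mid X_j = x)$ is not monotone in $x$: its $x$-derivative combines a nonnegative quantile-density contribution with a conditioning-shift contribution whose sign depends on $\sigma_{jj'}/\sigma_j^2$. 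Unlike in Theorem~\ref{thm: anti con}, however, the absence of an external partition means no structural condition on the covariance matrix is required; the sign ambiguity gets absorbed into boundary terms that vanish by Gaussian tail decay, yielding the claimed dimension-free bound.
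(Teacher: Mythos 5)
Your route is not the paper's (the paper proves the corollary by first scaling, $\PP(|\max_j X_j - t|\le\varepsilon)\le\PP(|\max_j(X_j-t)/\sigma_j|\le\varepsilon/\underline\sigma)$, then augmenting the unit-variance vector with an independent block of i.i.d.\ standard Gaussians, applying Theorem~\ref{thm: anti con} to that two-block partition, and using superconcentration of the auxiliary maximum), and more importantly it has a genuine gap: your key intermediate inequality is false in general. The factor $(1-\max_{i\in\cA}\sigma_j^{-2}\sigma_{ij})^{-1}$ in Lemma~\ref{claim: 251} does \emph{not} arise only from ``across-subset coupling.'' Condition (A) of Theorem~\ref{thm: anti con} also contains the within-block requirement $\max_{j,j'\in\cB}\sigma_j^{-2}\sigma_{jj'}\le 1$, precisely because the conditioning-shift contribution $\partial_y G$ in the footnote to Lemma~\ref{claim: 251} involves the coordinates lying in the \emph{same} block as $j$: if some regression slope $\sigma_j^{-2}\sigma_{jj'}$ exceeds $1$ (perfectly possible with heterogeneous variances), the map $x\mapsto H_j(x)=\PP(\max_{j'\ne j}X_{j'}\le x\mid X_j=x)$ can be \emph{decreasing} on a region, the analogue of \eqref{eq: claim g < H} fails, and the offending term is an integrand term of indefinite sign, not a boundary term killed by Gaussian tails. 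In the single-maximum setting all other coordinates play the role of same-block members, so dropping the structural condition is exactly what breaks.

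Concretely, take $X_1\sim\cN(0,1)$, $X_2=2X_1-c$ (slope $2>\sigma_1^2$; perturb by a tiny independent noise if you want nondegeneracy), and $X_3=\tau Z$ with $Z$ independent standard normal and $c^2\ll\tau\ll c\ll1$. Then $H_1(x)=\PP(X_2\le x, X_3\le x\mid X_1=x)\approx\II\{0<x\le c\}$, so $\EE[\II\{X_1=M\}|X_1|/\sigma_1]\approx\phi(0)c^2/2$, while for $t\approx c/2$ and small $\varepsilon$ one has $\int_{t-\varepsilon}^{t+\varepsilon}H_1(x)\phi_1(x)\,dx\approx2\varepsilon\,\phi(0)$. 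Your claimed per-$j$ bound (even with $\sigma_j$ replaced by $\underline\sigma=\tau$) is then of order $\varepsilon\,\phi(0)c^2/\tau\ll\varepsilon\,\phi(0)$, so the inequality fails by an arbitrarily large factor; only the sum over $j$, controlled through $\EE\max_j|X_j-\mu_j|/\sigma_j$ and $\underline\sigma$, remains true. The paper's scaling step is what restores the needed within-block condition (after normalizing to unit variances, all covariances are correlations and hence $\le1=\sigma_j^2$), and the augmentation by an i.i.d.\ block is what converts the difference-of-maxima theorem into a statement about a single maximum; your proposal needs both of these ingredients (or an entirely different density argument in the spirit of \cite{Chernozhukov2013ComparisonAA}) to be repaired.
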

In particular, when $\Var(X_j) = \sigma^2 > 0$ for $j \in [p]$, \eqref{eq: single max anti con} yields the upper bound $2 \mathbb{E}[\max_{j\in [p]}|X_j-\mu_j|/\sigma] \varepsilon/\sigma$ for $ \cL(\textstyle\max_{j \in [p]} X_j, \varepsilon) $, while \cite{Chernozhukov2013ComparisonAA} obtained the upper bound $4 \mathbb{E}[\max_{j\in [p]} (X_j-\mu_j)/\sigma + 1] \varepsilon/\sigma$ % and \cite{Chernozhukov2013ComparisonAA} obtained the upper bound $\sqrt{12}\varepsilon/\sqrt{\Var(M_{\cB}) + \varepsilon^2/12}$ 
using a different proof. 
% \begin{remark}
%     Consider the case when $\sigma_S = 1$ and $\mu_i = 0$ for all $i \in [p]$. When there exist $S \in \cA$ and $S' \in \cB$ such that $${\rm Corr}(X_i, X_{j}) = 1$$ and consider $|\cA| = |\cB| = 2$. 
% \end{remark}
%{ The detailed proof is given in Section~\ref{sec: proof thm anti con}. }

% When the covariance matrix of the multivariate Gaussian random vector is non-degenerate, 
By Corollary~\ref{col: single max anti con}, we can derive an alternative upper bound for the L\'evy concentration function of the difference of two maxima of Gaussian random vectors in the next proposition.
\begin{proposition}\label{col: min eigen anti con}
     Let $(X_1,...,X_p)^\top { = (X_{\cA}^{\top} , X_{\cB}^{\top})}^{\top} \sim \cN(\mu, \Sigma)$ be a  Gaussian random vector.
     % and assume that there exists $\underline\lambda > 0$ such that $\lambda_{\min} (\Sigma) \ge \underline\lambda$. Let $[p] = \cA \cup \cB$ be a partition of $[p]$, and denote by $X_{\cA} = (X_i)_{i \in \cA}$ and $X_{\cB} = (X_j)_{j \in \cB}$ the corresponding subvectors of $X$ . Then let $\mu_{\cA} = \EE (X_{\cA})$, $\mu_{\cB} = \EE (X_{\cB})$, $\Sigma_{\cS} = \EE[(X_{\cS} - \mu_{\cS})(X_{\cS} - \mu_{\cS})^{\top}]$ for $\cS = \cA, \cB$ and $\Sigma_{\cA\cB} = \Sigma_{\cB\cA}^{\top} = \EE[(X_{\cA} - \mu_{\cA})(X_{\cB} - \mu_{\cB})^{\top}]$.
     Also, let $(\tilde{X}_1,...,\tilde{X}_p)^\top \sim \cN(0, \tilde\Sigma)$ be a Gaussian random vector that satisfies $(\tilde\Sigma_{ij})_{i,j \in \cA} = \Sigma_{\cA} - \Sigma_{\cA \cB} \Sigma_{\cB}^{-1} \Sigma_{\cB \cA}$ and $(\tilde\Sigma_{ij})_{i,j \in \cB} = \Sigma_{\cB} - \Sigma_{\cB \cA} \Sigma_{\cA}^{-1} \Sigma_{\cA \cB}$. 
     % and $\tilde\Sigma_{ij} = \tilde\Sigma_{ji} = 0$ for any $i\in \cA$ and $j \in \cB$.
     Suppose $\min_{j \in [p]} \tilde\sigma_j >0$, where $\tilde\sigma_j^2 = \Var(\tilde{X}_j)$ for $j \in [p]$.  Then for any $\varepsilon > 0$, we have 
     \begin{equation}\label{eq: min eigen anti con}
         \cL(M_{\cB} - M_{\cA}, \varepsilon) \le 2 (\min_{j \in [p]} \tilde\sigma_j)^{-1} \min\Big\{ \EE \big(\max_{i \in \cA} |\tilde{X}_i|/\tilde\sigma_i\big),  \EE \big(\max_{j \in \cB} |\tilde{X}_j|/\tilde\sigma_j\big)\Big\} \cdot \varepsilon,
     \end{equation}
     where $M_{\cA} = \max_{i \in \cA} X_i$ and $M_{\cB} = \max_{j \in \cB} X_j$.
     % and $\tilde\sigma_j = \tilde\Sigma_{jj}$ for $j \in [p]$.
\end{proposition}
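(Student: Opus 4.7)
The plan is to reduce the problem to the single-maximum anti-concentration bound in Corollary~\ref{col: single max anti con} by a conditioning argument on one of the two subvectors, exploiting the fact that the conditional covariance of $X_\cA$ given $X_\cB$ (resp. $X_\cB$ given $X_\cA$) is exactly the $\cA$-block (resp.\ $\cB$-block) of $\tilde\Sigma$ introduced in the statement.

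First I would fix $t\in\RR$ and write
\[
\PP(|M_\cB - M_\cA - t|\le \varepsilon) = \EE\bigl[\PP(|M_\cB - M_\cA - t|\le \varepsilon \mid X_\cB)\bigr].
\]
Conditional on $X_\cB = x_\cB$, the value $M_\cB(x_\cB)=\max_{j\in\cB}(x_\cB)_j$ is a deterministic constant, while $X_\cA \mid X_\cB = x_\cB$ is Gaussian with mean $m(x_\cB) := \mu_\cA + \Sigma_{\cA\cB}\Sigma_\cB^{-1}(x_\cB - \mu_\cB)$ and covariance $\Sigma_\cA - \Sigma_{\cA\cB}\Sigma_\cB^{-1}\Sigma_{\cB\cA}$, which by construction equals $(\tilde\Sigma_{ij})_{i,j\in\cA}$. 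Hence, given $X_\cB = x_\cB$, the random variable $M_\cA$ has the same law as $\max_{i\in\cA}(m_i(x_\cB) + \tilde X_i)$, where $\tilde X_\cA$ is the $\cA$-marginal of the $\tilde X$ constructed in the statement.

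Next, setting $t'(x_\cB) := t - M_\cB(x_\cB)$, the conditional probability equals
\[
\PP\bigl(\bigl|{\textstyle\max_{i\in\cA}}(m_i(x_\cB)+\tilde X_i) + t'(x_\cB)\bigr|\le \varepsilon\bigr),
\]
which is bounded by $\cL(\max_{i\in\cA}(m_i(x_\cB)+\tilde X_i),\varepsilon)$. Applying Corollary~\ref{col: single max anti con} to this single Gaussian max (whose marginal means are $m_i(x_\cB)$ and whose marginal variances are $\tilde\sigma_i^2$) gives
\[
\cL\bigl({\textstyle\max_{i\in\cA}}(m_i(x_\cB)+\tilde X_i),\varepsilon\bigr)\le 2\,\EE\bigl({\textstyle\max_{i\in\cA}}|\tilde X_i|/\tilde\sigma_i\bigr)\cdot\varepsilon/\min_{i\in\cA}\tilde\sigma_i,
\]
a bound independent of $x_\cB$. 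Since $\min_{i\in\cA}\tilde\sigma_i \ge \min_{j\in[p]}\tilde\sigma_j>0$ by hypothesis, integrating over $X_\cB$ and then taking $\sup_t$ produces the first candidate bound in \eqref{eq: min eigen anti con}.

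Finally, I would swap the roles of $\cA$ and $\cB$ (conditioning on $X_\cA$ instead), which is valid because the construction of $\tilde\Sigma$ is symmetric: the conditional covariance of $X_\cB$ given $X_\cA$ coincides with $(\tilde\Sigma_{ij})_{i,j\in\cB}$. This yields the analogous bound with $\cA$ replaced by $\cB$, and taking the minimum of the two bounds gives \eqref{eq: min eigen anti con}. I don't expect a serious obstacle here: the only subtlety is verifying that the L\'evy concentration bound given by Corollary~\ref{col: single max anti con} is uniform in the conditioning value $x_\cB$, which follows because both the conditional covariance and the resulting bound depend on $x_\cB$ only through the mean vector $m(x_\cB)$, and the supremum over $t$ inside the L\'evy functional absorbs this shift.
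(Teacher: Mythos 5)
Your proposal is correct and follows essentially the same route as the paper: condition on one subvector so its maximum is deterministic, note that the conditional law of the other subvector is Gaussian with covariance equal to the corresponding block of $\tilde\Sigma$, apply Corollary~\ref{col: single max anti con} to get a bound uniform in the conditioning value (since the mean shift is absorbed by the supremum over $t$), and then swap the roles of $\cA$ and $\cB$ to take the minimum. The paper phrases this via the residual vector $X_\cB-\mu_\cB-\Sigma_{\cB\cA}\Sigma_\cA^{-1}(X_\cA-\mu_\cA)$ being independent of $X_\cA$, which is the same fact you use in conditional-distribution language.
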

\begin{proof}
For any $t \in \RR$ and $\varepsilon > 0$, we have that
\begin{align*}
    &\PP(|\max_{j \in \cB} X_j - \max_{i \in \cA} X_i - t| \le \varepsilon ) = \EE \big( \PP(|\max_{j \in \cB} X_j - \max_{i \in \cA} X_i - t| \le \varepsilon\,\, | X_{\cA}) \Big)\\
    & \quad \overset{\rm (a)}{\le} \EE \big( \sup_{t' \in \RR }\PP(|\max_{j \in \cB} X_j - t'| \le \varepsilon \,\,| X_{\cA}) \Big) \\
    &\quad \overset{\rm (b)}{\le} 2 \EE\big( \EE\big( \max_{j \in \cB} |\tilde{X}_j|/\tilde\sigma_j \,\, \big| X_{\cA}\big) \varepsilon / \min_{j \in \cB} \tilde\sigma_j\big) \\
    & \quad \overset{\rm (c)}{=} 2  \EE\big( \max_{j \in \cB} |\tilde{X}_j|/\tilde\sigma_j \big) \varepsilon / \min_{j \in \cB} \tilde\sigma_j \le 2 (\min_{j \in [p]} \tilde\sigma_j)^{-1} \EE\big( \max_{j \in \cB} |\tilde{X}_j|/\tilde\sigma_j \big) \varepsilon,
\end{align*}
where $(\tilde{X}_j)_{j \in \cB} = X_{\cB} - \mu_{\cB} - \Sigma_{\cB \cA} \Sigma_{\cA}^{-1} (X_{\cA} - \mu_{\cA})$ is the residual vector that is independent of $X_{\cA}$ and $\mu_{\cA} = \EE (X_{\cA})$.  Inequality~(a) follows from the fact that $\max_{i \in \cA} X_i$ is deterministic conditional on $X_{\cA}$, inequality~(b) follows from Corollary~\ref{col: single max anti con}  utilizing the conditional distribution of $X_{\cB}$ on $X_{\cA}$, and equality~(c) follows from the independence between $\tilde{X}_{\cB}$ and $X_{\cA}$.
% $\tilde\sigma_j = ( \Sigma_{\cB} - \Sigma_{\cB \cA} \Sigma_{\cA}^{-1} \Sigma_{\cA \cB})_{jj}$ for $j \in \cB$,
% and the last inequality is by Proposition~1 in \cite{imaizumi2021gaussian}.
Similarly, by conditioning on $X_{\cB}$, we have 
$$
\PP(|\max_{j \in \cB} X_j - \max_{i \in \cA} X_i - t| \le \varepsilon )  \le 2 (\min_{j \in [p]} \tilde\sigma_j)^{-1} \EE\big( \max_{j \in \cA} |\tilde{X}_j|/\tilde\sigma_j \big) \varepsilon,
$$
where  $(\tilde{X}_j)_{j \in \cA} \sim \cN(0,\Sigma_{\cA} - \Sigma_{\cA \cB} \Sigma_{\cB}^{-1} \Sigma_{\cB \cA} )$.
% and $\tilde\sigma_j = (\Sigma_{\cA} - \Sigma_{\cA \cB} \Sigma_{\cB}^{-1} \Sigma_{\cB \cA})_{jj}$ for $j \in \cA$. 
Hence combining the above results, we have that \eqref{eq: min eigen anti con} holds. 
\end{proof}
% Note that by Proposition~1 in \cite{imaizumi2021gaussian}, if  the covariance matrix $\Sigma$ is non-singular, we have the lower bound $\min_{j \in [p]} \tilde\sigma_j^2 \ge  \lambda_{\min} (\Sigma)$. Thus, we have that 
Proposition~\ref{col: min eigen anti con} is a refinement of Lemma~5 in \cite{imaizumi2021gaussian}, which shows
$$
  \max_{\substack{\cA \subsetneq [p]\\\cB = [p] \backslash \cA}}\cL(M_{\cB} - M_{\cA}, \varepsilon) \le \frac{2\varepsilon}{\sqrt{\underline\lambda}} (\sqrt{2\log p} + 2), \quad  \underline\lambda := \min_{\substack{\cA \subsetneq [p]\\\cB = [p] \backslash \cA}} \min_{j \in \cA}(\Sigma_{\cA} - \Sigma_{\cA \cB} \Sigma_{\cB}^{-1} \Sigma_{\cB \cA})_{jj},
$$
and they lower bound $\underline\lambda$ by $\lambda_{\min}(\Sigma)$ for positive definite covariance matrices.
While Lemma~5 in \cite{imaizumi2021gaussian} aims to obtain uniform bound over all $\cA \subsetneq [p]$, \eqref{eq: min eigen anti con} is more adaptive to specific subsets and allows for possibly sharper rates when $|\cA| \ll |\cB|$, e.g., when $|\cA| = O(1)$, the bound in \eqref{eq: min eigen anti con} scales with a constant instead of $\sqrt{\log p}$ as in \cite{imaizumi2021gaussian}.

\begin{remark}
We compare the bound in \eqref{eq: min eigen anti con}  with the bound in \eqref{eq: anti con max diff}. The most favorable scenario for \eqref{eq: min eigen anti con} occurs when \( p = 2\), $X = (X_1, X_2)^{\top}$, \( \sigma_1 = \sigma_2 = 1 \) and \( \Sigma_{12} = \rho < 1 \). Then the density of $M_{\cB} - M_{\cA}$ is upper bounded by $(1/\sqrt{4\pi})(1-\rho)^{-1/2}$. In comparison, the upper bound in \eqref{eq: anti con max diff} gives \( \sqrt{8/\pi} (1-\rho)^{-1}\varepsilon \), while the upper bound in \eqref{eq: min eigen anti con} yields \( \sqrt{8/\pi} \sqrt{(1-\rho)(1+\rho)}\varepsilon \). Hence, it can be seen that relative to the true density, our bound in \eqref{eq: anti con max diff} is conservative by a factor of $O(\sqrt{1-\rho} )$, whereas the bound in \eqref{eq: min eigen anti con} carries an excess factor of $\sqrt{1+\rho}$. Thus, in non-degenerate scenarios, our bound in \eqref{eq: anti con max diff} is tighter when \( \rho < 0 \), and the bound in \eqref{eq: min eigen anti con} is tighter when \( \rho > 0 \). It remains an open problem whether it is possible to close the gap of the extra factor \(\sqrt{1-\bar\rho}\) when $p > 2$. We further investigate the tightness of the bound with respect to \(1-\bar\rho\) for $p > 2$ through numerical studies in Appendix~\ref{sec: app add plot}. However, it is clear that $\lambda_{\min} (\Sigma)$ does not prevent \eqref{eq: anti con max diff} from delivering quite useful bounds.
 % On the other hand, under the uniform variance assumption, our results provide an anti-concentration bound that remains valid even in scenarios where the covariance matrix is degenerate or has a small minimum eigenvalue. 
 % Hence, it can be seen that Corollary~\ref{col: min eigen anti con} might potentially provide better bounds than Theorem~\ref{thm: anti con} when the covariance matrix is well-structured. Yet, in many cases, we will be confronted with an ill-conditioned covariance matrix with minimal eigenvalue converging to 0, where Theorem~\ref{thm: anti con} provides a better characterization of the anti-concentration behaviors.
\end{remark}
\section{Application: Central limit theorems for the distribution of the maximizers of empirical processes}\label{sec: application}
% Following the anti-concentration bounds in the previous section,
Here, we establish the Gaussian approximation for the distribution of the maximizers of empirical processes indexed by a discrete set. Earlier works mainly focus on establishing Gaussian approximations for the supremum of empirical processes \cite{cck2013aos, CHERNOZHUKOV20163632, Deng2017BeyondGA}. However, in some recent applications, such as combinatorial inferences \cite{liu2023lagrangian, neykov2019combinatorial, le2019combinatorial, imaizumi2021gaussian}, the main interest lies in the maximizers of some empirical processes. Therefore, by casting the maximizing event as the difference of two maxima,
% for illustrative application of the anti-concentration results in Section~\ref{sec: main thm results},  
we integrate our new anti-concentration bounds with existing Gaussian approximation techniques to establish novel central limit theorems for this application. In particular, our central limit theorems cover the setting where the covariance matrix is singular, such as when  $X_\cA=\Gamma_\cA Z$ and $X_\cB=\Gamma_\cB Z$, where $Z$ is a $d$-dimensional Gaussian random vector and $\Gamma_\cA,\Gamma_\cB \in \RR^{p\times d}$ with $p>d$. 

 Specifically, let $\xi_1, \ldots, \xi_n \in \RR^p$ be independent random vectors with the same mean $\mu = \EE (\xi_i)$  and possibly different covariance matrices $\Sigma_i = \EE \big[(\xi_i - \mu)(\xi_i - \mu)^{\top}\big]$ for $i\in[n]$. 
% such that $\max_{i \in [N], j \in [p]} \EE |\xi_{ij}|^3 < \infty$.
Meanwhile,  let $\eta_1, \ldots, \eta_n \in \RR^p$ be independent  Gaussian random vectors with $\eta_i \sim \cN(\mu, \Sigma_i)$ for $i = 1, \ldots, n$. We define 
\begin{equation*}
    X = n^{-1/2}\sum_{i=1}^n (\eta_i - \mu + a) , \quad Y = n^{-1/2}\sum_{i=1}^n (\xi_i - \mu + a) ,
\end{equation*}
% $ X = n^{-1/2}\sum_{i=1}^n (\eta_i - \mu + a) $ and $Y = n^{-1/2}\sum_{i=1}^n (\xi_i - \mu + a) $, 
where $a \in \RR^p$ is a known deterministic vector, and denote by $\Sigma^* = \frac{1}{n}\sum_{i=1}^n \Sigma_i
% \EE\big((X - \sqrt{n} a)(X - \sqrt{n} a)^{\top}\big) = \EE\big((Y - \sqrt{n} a)(Y - \sqrt{n} a)^{\top}\big)
$ with $\sigma_j^2 = \Sigma^*_{jj}$ for $j \in [p]$. Here we add the shifting vector \(a\) to allow for more general non-centered distributions, such as when location information is incorporated to perform multiple testing of moment inequalities as in \cite{bai2019practical}.
% $ \bar\mu = \EE(X) = n^{-1/2}\sum_{i=1}^n \mu_i$, 
% $\sigma_j^2 = \Var(X_j)$ ($j \in [p]$) and $\sigma_{ij} = \EE[(X_i - \sqrt{n} a_i)(X_j - \sqrt{n} a_j)],\, i,j \in [p]$.
Let $\{\cA,\cB\}$ be a nontrivial partition of $[p]$. Next, we let 
    $$
    M_{\cA}^X = \max_{j \in \cA} X_j, \quad M_{\cB}^X = \max_{j \in \cB} X_j, \quad M_{\cA}^Y = \max_{j \in \cA} Y_j, \quad M_{\cB}^Y = \max_{j \in \cB} Y_j.
    $$
    Our objective is to estimate the probability that the maximum component of $Y$  lies within the set $\cA$ with the probability that the maximum component of 
 $X$ lies within the same set $\cA$. To do that, we need to control:
    \begin{equation}\label{eq: equiv argmax}
        \begin{aligned}
            & \left| \PP(\textstyle\argmax_{j \in [p]} Y_j \subseteq \cA) - \PP( \textstyle\argmax_{j \in [p]} X_j \subseteq \cA)\right| \\
            &\ = \left| \PP(M_{\cA}^Y - M_{\cB}^Y > 0) - \PP( M_{\cA}^X - M_{\cB}^X  > 0)\right|,
        \end{aligned}
    \end{equation}
which connects with the probability of the difference of the maxima being positive.
    Further, as the mean and the covariance matrix of $X$ are usually unknown in practice, we construct the multiplier bootstrap random vector to approximate $X$:
    \begin{equation}\label{eq: multiplier X}
         \hat{X} = \frac{1}{\sqrt{n}} \sum_{i=1}^n [e_i (\xi_i - \bar\xi) + a], 
    \end{equation}
    where $\bar\xi = \frac{1}{n}\sum_{i=1}^n  \xi_i$ and $e_i \overset{\rm i.i.d.}{\sim} \cN(0,1)$ are independent of $\xi_{1:n} := \{\xi_i\}_{i=1}^n$.
    Then we will control
    % also aim to characterize the difference in the following probabilities, which is the bootstrap equivalent of \eqref{eq: equiv argmax},
    \begin{equation}\label{eq: equiv argmax mb}
        \begin{aligned}
          & \left| \PP(\textstyle\argmax_{j \in [p]} Y_j \subseteq \cA) - \PP( \textstyle\argmax_{j \in [p]} \hat{X}_j \subseteq \cA | \xi_{1:n})\right|\\
          &\  = \left| \PP(M_{\cA}^Y - M_{\cB}^Y > 0) - \PP( M_{\cA}^{\hat{X}} - M_{\cB}^{\hat{X}}  > 0)\right|,
    \end{aligned}
    \end{equation}
 where $M_{\cA}^{\hat{X}} = \max_{j \in \cA} \hat{X}_j$ and $M_{\cB}^{\hat{X}} = \max_{j \in \cB} \hat{X}_j$. Without loss of generality, by the constructions of $X$, $Y$ and $\hat{X}$, we assume that $\mu = \mathbf{0}$. 

The application of the anti-concentration theorems in Section~\ref{sec: main thm results} is essential for bounding \eqref{eq: equiv argmax} and \eqref{eq: equiv argmax mb}. To facilitate our discussion, we introduce some conditions on the covariance structure.
    % To construct an upper bound on the quantity in \eqref{eq: equiv argmax}, we adopt the notations in \cite{cck2017aop} and define $ M_{n}(\delta)  =  M_{n, X}(\delta) +  M_{n, Y}(\delta)$, where 
    % \begin{align*}
    %     M_{n, X}(\delta) & = \frac{1}{n} \sum_{i=1}^n \EE \left[\max_{j \in [p]} |\eta_i - \mu_i|^3 \II\{\max_{j \in [p]} |\eta_i - \mu_i| > \delta \sqrt{n}/(4 \log p)\}\right],\\
    %     M_{n, Y}(\delta) & = \frac{1}{n} \sum_{i=1}^n \EE \left[\max_{j \in [p]} |\xi_i - \mu_i|^3 \II\{\max_{j \in [p]} |\xi_i - \mu_i| > \delta \sqrt{n}/(4 \log p)\}\right].
    % \end{align*}
    % Then the following proposition characterizes the asymptotic distribution of the maximizer of $Y$.
    \begin{condition}[Covariance condition]\label{cond: anti-con cov}
        Let $Z \in \RR^p$ be a random vector with covariance matrix $\Sigma$ and let $\sigma_j^2 = \Sigma_{jj}$ for $j = 1, \ldots, p$. We say $Z$ satisfies the covariance condition with respect to the nontrivial partition $\{\cA, \cB\}$ of $[p]$ if 
        % $\max_{i \in \cA, j \in \cB} \Corr(Z_i, Z_j) < 1$ when $\sigma_j^2 = \sigma^2$ for all $j \in [p]$. When there exists $i,j \in [p]$ such that $\sigma_i^2 \ne \sigma_j^2$, we assume $\max_{i \in \cA, j \in \cB} |\Corr(Z_i, Z_j)| < 1$, and
        one of the following conditions holds,
        \begin{enumerate}[label=(\Alph*)]
     \item $\max_{j,j' \in \cB} \sigma_{j}^{-2} \Sigma_{j j'} \le 1$, and $C_{\cA, \cB} := \min_{j \in \cB, i \in \cA}(\sigma_{j} - \sigma_{j}^{-1} \Sigma_{i j} )> 0$,
     \item  $\max_{i,i' \in \cA} \sigma_{i}^{-2} \Sigma_{i i'} \le 1$, and $C_{\cA, \cB} := \min_{i \in \cA, j \in \cB}(\sigma_{i} -  \sigma_{i}^{-1} \Sigma_{i j} )> 0$,
 \end{enumerate}
 and that $\max_{i \in \cA, j \in \cB} |\Corr(Z_i, Z_j)| < 1$ when there exists $i,j \in [p]$ such that $\sigma_i^2 \ne \sigma_j^2$.
    \end{condition}
   % \begin{remark}
   %     Note that when $\sigma^2_j = \sigma^2$ for all $j \in [p]$, the conditions (A) and (B) are equivalent and can be reduced to $\max_{i \in \cA, j \in \cB} \Corr(Z_i, Z_j) < 1$. The subsequent results also hold for both $\cS_{\rm p} = \cA$ and  $\cS_{\rm p} = \cB$ under the uniform variance case. 
   % \end{remark}
 %    \begin{remark}
 %       Together with \eqref{eq: equiv argmax}, we have that  \eqref{eq: gauss comp rate} implies
 %  \begin{equation*}
 %  % \label{eq: gauss comp rate argmax}
 % \begin{aligned}
 %     & \sup_{ v \in \RR^p} \left|\PP\Big(\argmax_{j \in \cA}(V_j + v_j) \subseteq \cA \Big) - \PP\Big(\argmax_{j \in \cA}(Z_j + v_j) \subseteq \cA \Big)  \right|\\
 %     & \quad \le K { \cdot} \frac{\EE[\max_{k \in \cS_{\rm p}}|Z_k|/\sigma_k]}{C_{\cA,\cB}}\sqrt{\Delta \log p},
 % \end{aligned}
 % \end{equation*}
 % which is of potential interest for perturbation analysis for the maximizers.
 %    \end{remark}
   Next, we bound \eqref{eq: equiv argmax} and \eqref{eq: equiv argmax mb} in the following theorem, and we provide the proof in Section~\ref{sec: proof sec application}.
\begin{theorem}\label{prop: clt gaussian approx rate}
   Let  $\xi_1, \ldots, \xi_n \in \RR^p$ be independent random vectors with $\EE (\xi_i) = \mu$ and $\Sigma_i = \EE \big[(\xi_i - \mu)(\xi_i - \mu)^{\top}\big]$ for $i\in[n]$, and 
% such that $\max_{i \in [N], j \in [p]} \EE |\xi_{ij}|^3 < \infty$.
 let $\eta_1, \ldots, \eta_n \in \RR^p$ be independent  Gaussian random vectors with $\eta_i \sim \cN(\mu, \Sigma_i)$ for $i \in [n]$. Define $X = n^{-1/2}\sum_{i=1}^n (\eta_i - \mu + a)$ and $Y = n^{-1/2}\sum_{i=1}^n (\xi_i - \mu + a) $.
Assume $\Sigma^* = \frac{1}{n}\sum_{i=1}^n \Sigma_i$ satisfies the covariance Condition~\ref{cond: anti-con cov} for a nontrivial partition $\{\cA, \cB\}$ of $[p]$, and suppose that there exist a $B_n \ge 1$ and a constant~$b_0$ such that we have
   \begin{equation}\label{eq: exp rate sample}
        \EE\big(\exp(|\xi_{ij}|/B_n)\big) \le 2, \quad \text{for all } i \in [n], j \in [p],
   \end{equation}
   and 
   \begin{equation}\label{eq: mom rate sample}
       \frac{1}{n} \sum_{i=1}^n \EE(\xi_{ij}^4) \le B_n^2 b_0^2, \quad \text{for all } j \in [p].
   \end{equation}
   Furthermore, assume that  $ B_n^2 \log^5(pn) \le c n$ for a small enough constant $c > 0$.
   Then, there exists a constant  $C >0$  depending only on $b_0$ such that
   \begin{equation}\label{eq: CLT rate}
       \begin{aligned}
       & \sup_{s \in \RR} \left| \PP(M_{\cA}^Y - M_{\cB}^Y \le s) - \PP( M_{\cA}^X - M_{\cB}^X  \le s) \right|\\
       &\quad \le C \cdot \frac{\EE \big(\max_{\ell \in \cS_{\rm p}} |X_{\ell} - \EE( X_{\ell} )|/\sigma_{\ell}\big)}{C_{\cA, \cB}} \left(\frac{B_n^2 \log^3(pn)}{n}\right)^{1/4},
       \end{aligned}
   \end{equation}
  where  $\cS_{\rm p} = \cB$ when  (A) of Condition~\ref{cond: anti-con cov} is satisfied, and $\cS_{\rm p} = \cA$ when (B) of Condition~\ref{cond: anti-con cov} is satisfied. Meanwhile,  for the multiplier bootstrap random vector $\hat{X}$ as defined in \eqref{eq: multiplier X}, with probability at least $1-1/(2n^4) - 1/n - 3 ({B_n^2 \log^3(pn) /n})^{1/2}$, we have that  \begin{equation}\label{eq: CLT rate 1}
       \begin{aligned}
       & \sup_{s \in \RR} \left| \PP(M_{\cA}^Y - M_{\cB}^Y \le s) - \PP( M_{\cA}^{\hat{X}} - M_{\cB}^{\hat{X}}  \le s \,|\, \xi_{1:n})\right| \\
       & \quad \le C \cdot \frac{\EE [\max_{\ell \in \cS_{\rm p}} |X_{\ell} - \EE(X_{\ell})|/\sigma_{\ell}]}{C_{\cA, \cB}} \left(\frac{B_n^2 \log^3(pn)}{n}\right)^{1/4}.
       \end{aligned}
   \end{equation}
   \end{theorem}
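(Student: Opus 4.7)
Both \eqref{eq: CLT rate} and \eqref{eq: CLT rate 1} follow a three-ingredient recipe: a soft-max and indicator smoothing, the Gaussian approximation for smooth test functions from \cite{cck2022improvedbootstrap}, and the anti-concentration bound of Theorem~\ref{thm: anti con}. Write $W^Y = M_{\cA}^Y - M_{\cB}^Y$ and $W^X = M_{\cA}^X - M_{\cB}^X$. The main obstacle is that the map $x\mapsto \max_{j\in\cA}x_j-\max_{j\in\cB}x_j$ is neither smooth nor convex, so existing hyperrectangle-type high-dimensional CLTs do not apply directly. We therefore approximate each maximum by the soft-max $S_\beta(x_\cS)=\beta^{-1}\log\sum_{j\in\cS}\exp(\beta x_j)$, which satisfies $0\le S_\beta(x_\cS)-\max_{j\in\cS}x_j\le \beta^{-1}\log p$, and replace the indicator $\II\{\cdot\le s\}$ by a $C^3$ surrogate $\psi_\gamma$ with $\|\psi_\gamma^{(k)}\|_\infty \lesssim \gamma^{-k}$ for $k=1,2,3$. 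Let $h_{\beta,\gamma,s}$ denote the resulting composite smooth functional on $\RR^p$ that approximates $\II\{\max_{j\in\cA}(\cdot)_j-\max_{j\in\cB}(\cdot)_j\le s\}$.

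\textbf{Step 1 (reduction to smooth test functions via $X$-side anti-concentration).} Standard sandwiching between $h_{\beta,\gamma,s\pm(\gamma+2\beta^{-1}\log p)}$ and $\II\{W^Z\le s\}$ for $Z\in\{X,Y\}$ yields
\begin{equation*}
\sup_{s\in\RR}|\PP(W^Y\le s)-\PP(W^X\le s)|\,\le\, \sup_{s\in\RR}|\EE h_{\beta,\gamma,s}(Y)-\EE h_{\beta,\gamma,s}(X)| + 2\,\cL\!\big(W^X,\; \gamma+2\beta^{-1}\log p\big).
\end{equation*}
Crucially, only the Gaussian (i.e., $X$-side) anti-concentration is required. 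Under Condition~\ref{cond: anti-con cov}, Theorem~\ref{thm: anti con} applied to $X$ yields $\cL(W^X,\varepsilon)\le K\varepsilon$ where $K = 2\,\EE(\max_{\ell\in\cS_{\rm p}}|X_\ell-\EE X_\ell|/\sigma_\ell)/C_{\cA,\cB}$.

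\textbf{Step 2 (smooth Gaussian approximation and bootstrap).} The composite $h_{\beta,\gamma,s}$ lies in $C^3(\RR^p)$ with mixed partial derivatives bounded by a polynomial $\Phi(\beta,\gamma^{-1})$. Under the exponential-moment \eqref{eq: exp rate sample} and fourth-moment \eqref{eq: mom rate sample} conditions, the smooth-function Lindeberg/Stein-type bound of \cite{cck2022improvedbootstrap} gives $\sup_s|\EE h_{\beta,\gamma,s}(Y)-\EE h_{\beta,\gamma,s}(X)|\lesssim \Phi(\beta,\gamma^{-1})\,B_n\sqrt{\log(pn)/n}$. Combining with Step~1 and optimizing the smoothing parameters (with $\gamma\asymp \beta^{-1}\log p$) produces the rate $(B_n^2\log^3(pn)/n)^{1/4}$ in \eqref{eq: CLT rate}, multiplied by $K$. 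For \eqref{eq: CLT rate 1}, condition on $\xi_{1:n}$: then $\hat X$ is Gaussian with covariance $\hat\Sigma=n^{-1}\sum_i(\xi_i-\bar\xi)(\xi_i-\bar\xi)^\top$. Sub-exponential matrix concentration under \eqref{eq: exp rate sample} yields $\|\hat\Sigma-\Sigma^*\|_\infty \lesssim B_n\sqrt{\log(pn)/n}$ off an event of probability at most $1/(2n^4)+1/n+3(B_n^2\log^3(pn)/n)^{1/2}$. On this good event, the covariance Condition~\ref{cond: anti-con cov} and the constant $C_{\cA,\cB}$ persist (up to multiplicative constants) for $\hat\Sigma$, so Theorem~\ref{thm: anti con} applies to $M_{\cA}^{\hat X}-M_{\cB}^{\hat X}$ conditionally, and a Gaussian comparison bound for smooth test functionals via Slepian/Stein interpolation controls $|\EE[h_{\beta,\gamma,s}(\hat X)\mid \xi_{1:n}]-\EE h_{\beta,\gamma,s}(X)|$ by $\Phi(\beta,\gamma^{-1})\|\hat\Sigma-\Sigma^*\|_\infty$. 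Repeating the Step~1 reduction conditionally on $\xi_{1:n}$ and combining with \eqref{eq: CLT rate} by the triangle inequality yields \eqref{eq: CLT rate 1}. The hardest part of the argument is isolating the anti-concentration dependence on the Gaussian $X$ (rather than on $Y$ or $\hat X$, whose $\cL$-function we would not know how to bound without a second layer of Gaussian approximation), which is precisely what the one-sided sandwich in Step~1 achieves and what makes Theorem~\ref{thm: anti con} applicable in the clean form in which it was proved.
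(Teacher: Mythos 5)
There is a genuine gap, and it sits exactly where you wave your hands in Step~2. You compare $Y$ and $X$ directly with a single smooth-function Lindeberg/Stein step and assert an error of order $\Phi(\beta,\gamma^{-1})\,B_n\sqrt{\log(pn)/n}$ that, after optimizing the smoothing, yields $(B_n^2\log^3(pn)/n)^{1/4}$. This does not work: the third moments of $\xi_i$ and of the Gaussian $\eta_i$ do not match, so the leading Lindeberg/interpolation remainder is third order, of size $n^{-1/2}\cdot(\text{third-derivative sums})\cdot B_n$, not $n^{-1/2}$ times a second-order factor. Even if you exploit the key identity $\partial_j\partial_k\varphi^v_s(x)=\partial_j\partial_k\varphi^v_s(x)\,h^v_{s,\delta}(x,0)$ (second derivatives supported on a slab of probability $\lesssim r_n\delta$ by anti-concentration) — a trick your outline never invokes — the balance $K\delta+\delta^{-2}(\log p)\cdot K\delta\cdot B_n\sqrt{\log(pn)/n}$ or its third-order analogue optimizes to an $n^{-1/6}$-type rate, not $n^{-1/4}$. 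The $n^{-1/4}$ rate in \eqref{eq: CLT rate} is obtained in the paper only by (i) routing the comparison through the Beta-multiplier bootstrap vector $X^*$ of \eqref{eq: X beta mult}, whose standardized ${\rm Beta}(1/2,3/2)$ weights match the empirical third moments so that, on the event $\cA_n$, the third-moment mismatch between $Y$ and $X^*$ satisfies \eqref{eq: m3 cond} with the small rate $B_n^2\log^{3/2}(pn)/\sqrt{n}$, and (ii) applying the iterative randomized Lindeberg scheme of \cite{cck2022improvedbootstrap} (Lemma~\ref{prop: modified thm 3.1}, with Conditions~\ref{cond: fourth moment}--\ref{cond: anti-con rate} and a recursion over $O(\log n)$ rounds), together with (iii) the Stein-kernel comparison Lemma~\ref{lm: gauss comp rate} to pass from $X^*$ (and from $\hat X$) to $X$, which gives $K\sqrt{\Delta\log p}$ with $\Delta\asymp B_n\sqrt{\log(pn)/n}$ and hence the fourth-root rate. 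Your proposal contains none of (i)--(ii), so the claimed rate is not justified by your argument; at best it would deliver a $(B_n^2\log^{q}(pn)/n)^{1/6}$-type bound.

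A secondary issue: for \eqref{eq: CLT rate 1} you propose to verify Condition~\ref{cond: anti-con cov} for $\hat\Sigma$ on the good event and apply Theorem~\ref{thm: anti con} to $M_{\cA}^{\hat X}-M_{\cB}^{\hat X}$ conditionally. The perturbation $\|\hat\Sigma-\Sigma^*\|_\infty\lesssim B_n\sqrt{\log(pn)/n}$ only controls $C_{\cA,\cB}$ up to an \emph{additive} error, so "persists up to multiplicative constants" is unjustified when $C_{\cA,\cB}$ is small. The paper avoids this entirely: anti-concentration is used only for the Gaussian $X$ (Claims~\ref{claim: CLT rate X_star X}--\ref{claim: CLT rate X X_hat} transfer it to $X^*$ and $\hat X$ through the comparison bounds, and Lemma~\ref{lm: gauss comp rate} requires the covariance condition only for the reference Gaussian $Z=X$). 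Your Step~1 observation that only $X$-side anti-concentration should be needed is correct and consistent with the paper; the missing ingredients are the third-moment-matching bootstrap intermediary and the iterative Lindeberg machinery that actually produce the stated rate.
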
 

The proof of Theorem~\ref{prop: clt gaussian approx rate} follows the iterative randomized Lindeberg technique from \cite{cck2022improvedbootstrap} with an application of the anti-concentration bounds developed in the previous section.  Moreover, it smooths the function as follows:
% $\{X_i\}_{i\in\NN}, \{Y_i\}_{i\in\NN}$, 
% we aim to bound the distance, 
$$
\sup_{s \in \RR, v \in \RR^p} \left|\PP\Big(\cM_{\cA}^v(Y_n) - \cM_{\cB}^v(Y_n) \le s\Big) - \PP\Big(\cM_{\cA}^v(X_n) - \cM_{\cB}^v(X_n) \le s\Big)  \right|,
$$
where  $X_n$ and $Y_n$ are two $p$-dimensional random vectors with distributions depend on~$n$, and $\cM_{\cS}^v(x) = \max_{j \in \cS} (x_j + v_j)$ for $x \in \RR^p$ and $\cS \in\{ \cA, \cB\}$.
To bound this, we first construct a smooth approximation function $\varphi_s^v: \RR^p \rightarrow \RR$ for the indicator function of the difference of the two maxima such that for all $ (x,v,s) \in \RR^p \times \RR^p \times \RR$, we have
$$
\II\{\cM_{\cA}^v(x) - \cM_{\cB}^v(x) \le s-\delta\} \le \varphi_s^v(x) \le  \II\{\cM_{\cA}^v(x) - \cM_{\cB}^v(x) \le s + 2\delta\} ,
$$
where \(\delta > 0\) is a tuning parameter controlling the accuracy of approximation. Then, by the triangle inequality, we bound the distance by
\begin{align*}
    &\sup_{s \in \RR, v \in \RR^p} \left|\PP\Big(\cM_{\cA}^v(Y_n) - \cM_{\cB}^v(Y_n) \le s\Big) - \PP\Big(\cM_{\cA}^v(X_n) - \cM_{\cB}^v(X_n) \le s\Big)  \right|\\
    & \quad \le %\underbrace{
    \sup_{s \in \RR, v \in \RR^p} \left|\PP\Big(\cM_{\cA}^v(X_n) - \cM_{\cB}^v(X_n) \le s\Big) - \PP\Big(\cM_{\cA}^v(X_n) - \cM_{\cB}^v(X_n) \le s + 3\delta\Big)  \right|
    %}_{\rm I} 
    \\
    & \quad \quad + \sup_{v \in \RR^p, s \in \RR}\left|\EE [\varphi_s^v(Y_n)] - \EE [\varphi_s^v(X_n)]\right|. %}_{\rm II},
\end{align*}
where we bound the first term by applying the anti-concentration bounds developed in Theorem~\ref{col: anti con equal var no cor 1} and Theorem~\ref{thm: anti con}, and we bound the second term on the RHS by the iterative randomized Lindeberg method in \cite{cck2022improvedbootstrap}.

\begin{remark}
    The smoothing and the anti-concentration results developed in the previous section can be applied to achieve more general bootstrap approximation results following the arguments in~\cite{cck2022improvedbootstrap}. We refer interested readers to \cite{cck2022improvedbootstrap} for details.
\end{remark}

\section{Numerical experiments}\label{sec: simu}
We conduct thorough numerical experiments under several different settings to empirically benchmark the anti-concentration bounds developed earlier. We also compare our bounds with \cite{imaizumi2021gaussian} empirically, where the authors derive an anti-concentration bound by conditioning on one of the two Gaussian random vectors, with the L\'evy concentration function scaling with the minimal eigenvalue of the covariance matrix.

Define $\cA = \{1,\ldots, p/2\}$ and $\cB = \{p/2 + 1, \ldots, p\}$. Denote by $X_{\cA}$ and $X_{\cB}$ the  Gaussian random vectors corresponding to the sets $\cA$ and $\cB$, respectively. We generate $X_{\cA}$ and $X_{\cB}$ by 
\begin{equation}\label{eq: simu Xa Xb gen}
    X_{\cA} = \Gamma_{\cA} Z + \mu_{\cA}, \text{ and } X_{\cB} = \Gamma_{\cB} Z + \mu_{\cB}, \quad X_{\cA}, X_{\cB} \in \RR^{p/2},
\end{equation}
where $Z \sim \cN(\mathbf{0}, \Ib_d)$ is a $d$-dimensional standard Gaussian random vector, and we pre-specify $\mu_{\cA}, \mu_{\cB} \in \RR^{p/2}$ and \(\Gamma_{\cA}, \Gamma_{\cB}\in\RR^{p/2 \times d}\) for different settings. Letting $M_{\cA} = \max_{i \in \cA} X_i$ and $M_{\cB} = \max_{j \in \cB}X_j$, we  evaluate the L\'evy concentration function $\cL(M_{\cB} - M_{\cA}, \varepsilon)$ under different covariance structures. 

We repeat the generation scheme 5,000 times for each setting and report the empirical performance. Specifically, to evaluate $\cL(M_{\cB} - M_{\cA}, \varepsilon)$, we present the empirical distribution of $\PP(|M_{\cB} - M_{\cA} - t| \le \varepsilon)$ over a range of $t$, where we compute the empirical probabilities over 1,000 equidistant points of $t$ from the minimum to the maximum realizations of $M_{\cB} - M_{\cA}$ and present the maximum empirical probability. Next, to evaluate the dependence of $\cL(M_{\cB} - M_{\cA}, \varepsilon)$ on $p$, we plot  the normalized interval length $\sqrt{\log p} \varepsilon / \hat{\sigma}$ versus
$\cL(M_{\cB} - M_{\cA}, \varepsilon)$, where {$\hat{\sigma} = p^{-1} \sum_{i=1}^p \sigma_i$ }is the average standard deviation for $\{X_i\}_{i=1}^p$. In Sections~\ref{sec: simu eq} and~\ref{sec: simu uneq}, we consider the cases where $d<p$, and in Section~\ref{sec: simu comp}, we compare our bounds with that in \cite{imaizumi2021gaussian} under the full rank setting.

\subsection{Homogeneous variance case}\label{sec: simu eq}

We begin by discussing the homogeneous variance case. The entries of the matrices \(\Gamma_{\cA}\) and \(\Gamma_{\cB}\) are drawn independently from a standard Gaussian distribution. To ensure homogeneous variance across ultivariate Gaussian random vectors \(X_{\mathcal{A}}\) and \(X_{\mathcal{B}}\), we normalize the row norms of \(\Gamma_{\cA}\) and \(\Gamma_{\cB}\) to unity. We consider dimensions \(p \in \{2000, 3000, 4000, 5000\}\), and change the number of overlapping rows between \(\Gamma_{\cA}\) and \(\Gamma_{\cB}\) to assess the impact of violating the condition \(\max_{i \in \mathcal{A}, j \in \mathcal{B}} \operatorname{Corr}(X_i, X_j) < 1\) on the L\'evy concentration function.

 % We first provide the benchmark anti-concentration results of the single Gaussian maximum $\max_{j \in [p]} X_j = \max\{M_{\cA}, M_{\cB}\}$ with $\max_{i,j \in [p], i \ne j} \Corr(X_i, X_j) < 1$. Figure~\ref{fig: reference single max}(a) in Appendix~\ref{sec: app add plot} shows that $\cL(\max_{j \in [p]} X_j,\varepsilon)$ grows linearly with $\sqrt{\log p} \varepsilon/\hat\sigma$, and Figure~\ref{fig: reference single max}(b) in Appendix~\ref{sec: app add plot} provides the empirical density of the normalized $\max_{j \in [p]} X_j$. {\color{red}EF: If we talk about it first, shall we move the figures here? Or, only mention this at the end of this subsection?}

We first consider $M_{\cB} - M_{\cA}$ when $\max_{i \in \cA,j \in \cB} \Corr(X_i, X_j) < 1$. We present the empirical results in Figure~\ref{fig: equal var cor < 1}. In particular, for the centered case that $(X_i)_{i=1}^p$ with $\mu_{\cA} = \mu_{\cB} = \mathbf{0}$, 
 Figure~\ref{fig: equal var cor < 1}(a) shows that $\cL(M_{\cB} - M_{\cA}, \varepsilon)$ scales linearly with $\sqrt{\log p} \varepsilon / \hat{\sigma}$, and 
Figure~\ref{fig: equal var cor < 1}(b) provides the empirical densities of normalized $M_\cA - M_\cB$. We then consider the case of non-centered $(X_i)_{i=1}^p$, where we draw each $\mu_i$ independently from an exponential distribution with rate 1. We provide the results in Figures~\ref{fig: equal var cor < 1}(c) and~(d), and we observe that the results are similar to the centered case. This is consistent with our theoretical findings.

%Figure~\ref{fig: equal var cor < 1} presents the results when $\max_{i \in \cA,j \in \cB} \Corr(X_i, X_j) < 1$ by setting the number of overlapping rows $K$ between $\Gamma_{\cA}$ and $\Gamma_{\cB}$ to be 0. Figure~\ref{fig: equal var cor < 1}~(a) and (b) provide the results for centered $\{X_i\}_{i=1}^p$ ($\mu_{\cA} = \mu_{\cB} = \mathbf{0}$). We can see from  Figure~\ref{fig: equal var cor < 1}~(a) that the empirical measure of $\cL(M_{\cB} - M_{\cA}, \varepsilon)$ scales well with $\sqrt{\log p} \varepsilon / \hat{\sigma}$.
% where {\red $\hat{\sigma} = p^{-1} \sum_{i=1}^p \sigma_i$ }is the mean standard deviation for $\{X_i\}_{i=1}^p$. Figure~\ref{fig: equal var cor < 1}~(c) and (d) provide the results for non-centered $\{X_i\}_{i=1}^p$, where we draw $\mu_i$'s independently from an exponential distribution with a rate parameter of 1. No significant impact of the non-zero means is observed in the anti-concentration result, which is consistent with our theoretical anti-concentration rate that is independent of the Gaussian means. 

Then, we consider the extreme case where \(\max_{i \in \cA, j \in \cB} \Corr(X_i, X_j) = 1\) with centered \((X_i)_{i=1}^p\), and we provide the empirical results in Figure~\ref{fig: equal var cor = 1}. Figures~\ref{fig: equal var cor = 1}(a) and (b) consider the case where the number of overlapping rows between \(\Gamma_{\cA}\) and \(\Gamma_{\cB}\) is \(K = p/8\), and Figures~\ref{fig: equal var cor = 1}(c) and (d)
consider the case of varying $K \in \{100, 200, 300, 400\}$ with $p  =  4000$. 
By the density plots in Figures~\ref{fig: equal var cor = 1}(b) and (d), we observe that the distributions of $M_\cB - M_\cA$ concentrate towards zero. Also, by  
Figure~\ref{fig: equal var cor = 1}(c), we find that the intercepts of the lines scale approximately in proportion with~$K$.

To verify whether \(\cL(M_{\cB} - M_{\cA}, \varepsilon)\) scales with the lesser of \(\EE(\max_{i \in \cA} |X_i - \mu_i|)\) and \(\EE(\max_{j \in \cB} |X_j - \mu_j|)\), as indicated by Theorem~\ref{col: anti con equal var no cor 1} when \(\cA\) and \(\cB\) are of different sizes, we conduct an additional experiment. In this experiment, we define \(\cA = \{1, \ldots, k_0\}\) and \(\cB = \{k_0+1, \ldots, p\}\) with \(\mu_i = 0\) for all \(i \in [p]\), and examine how \(\cL(M_{\cB} - M_{\cA}, 0.05)\) scales with \(p\) when \(k_0\) is fixed at \{100, 200, 300, 400\}. We present the results in Figure~\ref{fig:levy scale with p}. We can see that the L\'evy concentration function plateaus after \(p \ge 2 k_0\), i.e., \(|\cB| \ge |\cA|\). This observation is consistent with Theorem~\ref{col: anti con equal var no cor 1}.

%Figure~\ref{fig: equal var cor = 1} illustrates outcomes for centered \(\{X_i\}_{i=1}^p\) when \(\max_{i \in \cA, j \in \cB} \Corr(X_i, X_j) = 1\). Figure~\ref{fig: equal var cor = 1}~(a) and (b) examine cases with the number of overlapping rows between \(\Gamma_{\cA}\) and \(\Gamma_{\cB}\) set at \(K = p/8\). Figure~\ref{fig: equal var cor = 1}~(c) and (d) focus on the impact of varying \(K\) within \{100, 200, 300, 400\}, with \(p\) fixed at 4000. The plot reveals a notable trend of concentration around zero in the distribution of \(M_{\cB} - M_{\cA}\), with the intercept shift of \(\cL(M_{\cB} - M_{\cA}, \varepsilon)\) exhibiting a proportional relationship with \(K\). Contrastingly, the concentration diminishes for values diverging from zero, which is consistent with theoretical expectations. In Figure~\ref{fig: equal var cor = 1}~(e) and (f), we further study the anti-concentration results when the variances of the positively correlated random variables are unequal. Specifically, we set $K = p/8$ and scale the norm of the rows in $\Gamma_{\cB}$ overlapping with $\Gamma_{\cA}$ to be equal to 1/2. No significant concentration is observed across $p \in \{2000, 3000, 4000, 5000\}$, which suggests a potential relaxation of the anti-concentration conditions when there are positive perfect correlations present between subsets $\cA$ and $\cB$. 

\begin{figure}[htbp]
    \centering
    \begin{tabular}{cc}
         (a) $\cL(M_{\cB} - M_{\cA}, \varepsilon)$ when $\mu_{\cA} = \mu_{\cB} = \mathbf{0}$& (b) Density curves when $\mu_{\cA} = \mu_{\cB} = \mathbf{0}$ \\
         \!\!\!\!\includegraphics[width = 202 pt]{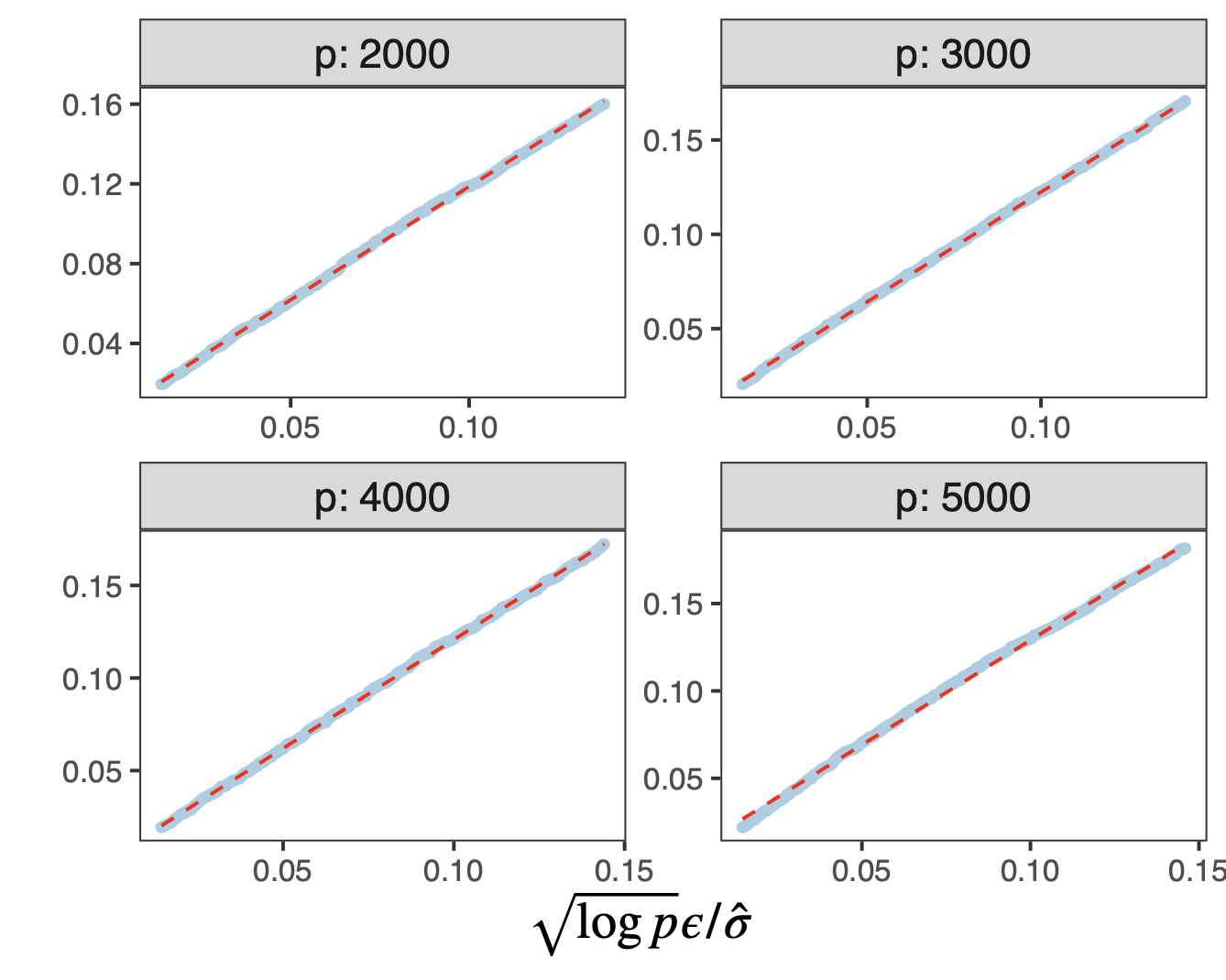}&  \!\!\!\!\!\includegraphics[width = 202 pt]{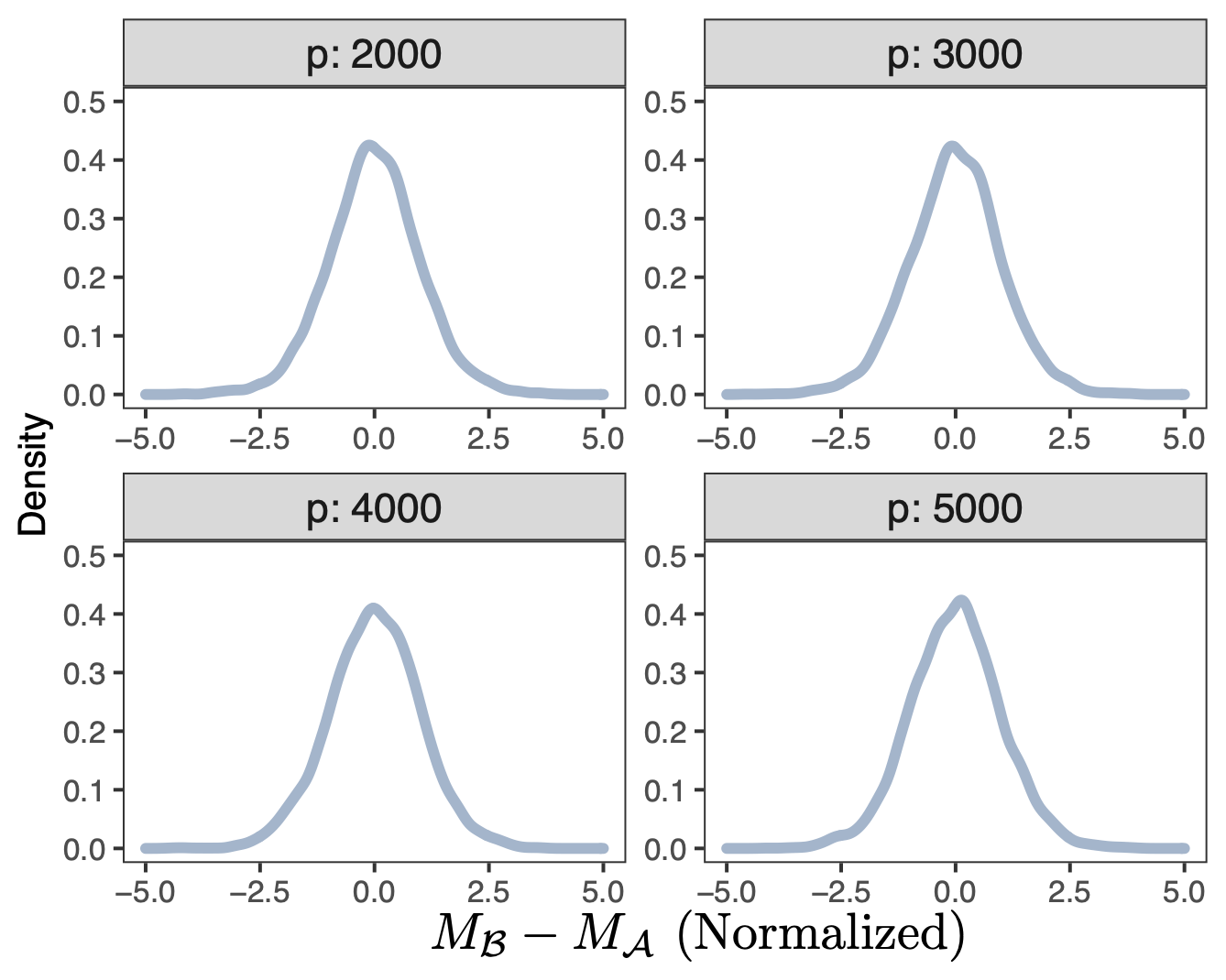}\\
         (c) $\cL(M_{\cB} - M_{\cA}, \varepsilon)$ when $\mu_{\cA}, \mu_{\cB} \ne \mathbf{0}$& (d) Density curves when $\mu_{\cA} , \mu_{\cB} \ne \mathbf{0}$ \\
         \!\!\!\!\includegraphics[width = 202 pt]{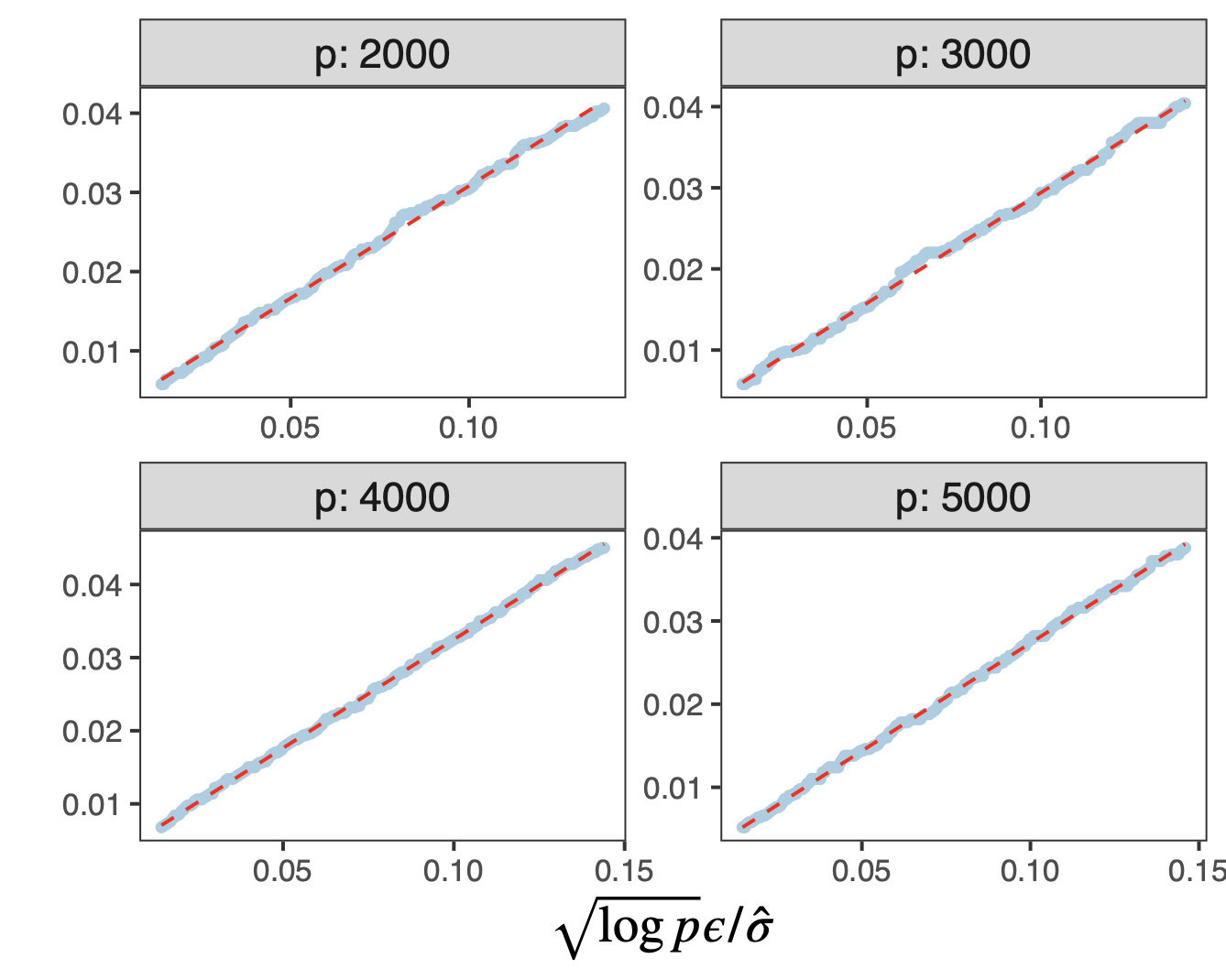}&  \!\!\!\!\!\includegraphics[width = 202 pt]{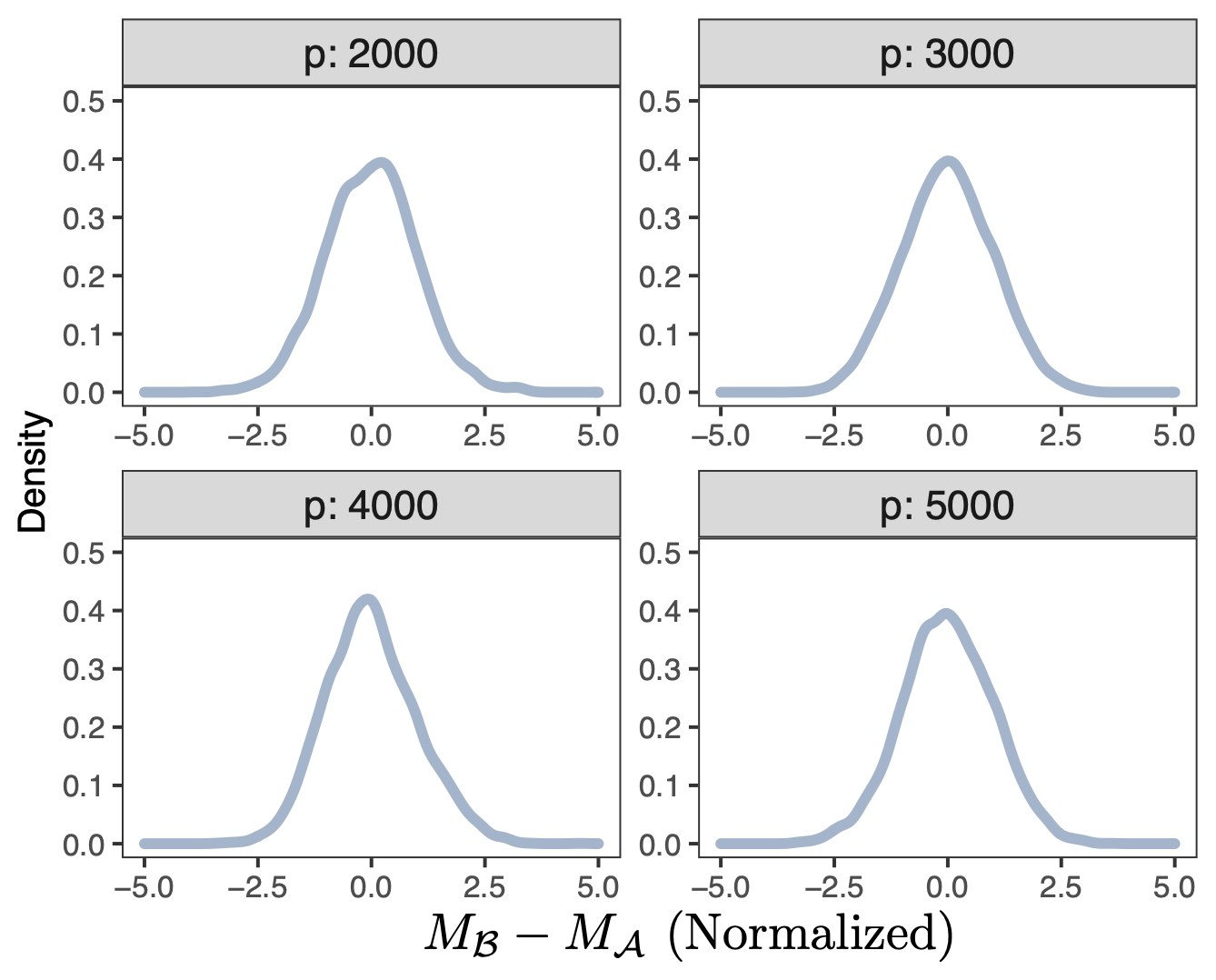}
    \end{tabular}
    \caption{Anti-concentration results of $M_{\cB} - M_{\cA}$ for the homogeneous variance case with $\max_{i \in \cA,j \in \cB} \Corr(X_i, X_j) < 1$. Panels (a) and (c) show that the empirical evaluation of the L\'evy concentration function scales linearly with $\varepsilon$ under both the centered and uncentered cases. Panels (b) and (d) present the density curves, where $M_{\cB} - M_{\cA}$ is normalized by the sample mean and standard deviation for comparability. Empirically, we have $\max_{i \in \cA,j \in \cB} \Corr(X_i, X_j) = \{0.916, 0.913, 0.923, 0.924\}$ for $p = \{2000, 3000, 4000, 5000\}$ respectively.}\label{fig: equal var cor < 1}
\end{figure}
\begin{figure}[htbp]
    \centering
    \begin{tabular}{cc}
         (a) $\cL(M_{\cB} - M_{\cA}, \varepsilon)$ with varied $p$& (b) Density curves with varied $p$\\
         \!\!\!\!\includegraphics[width = 202 pt]{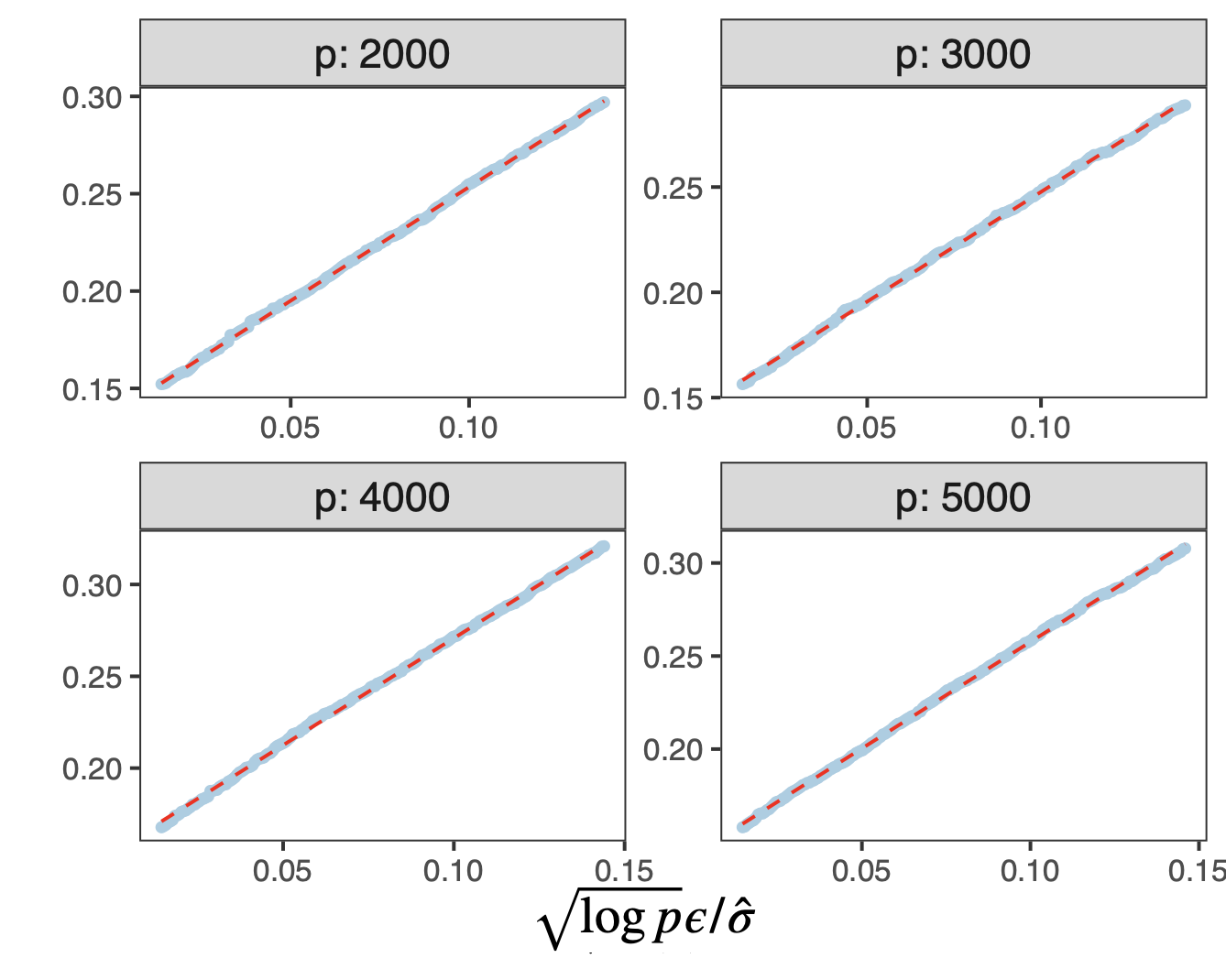}&  \!\!\!\!\!\includegraphics[width = 202 pt]{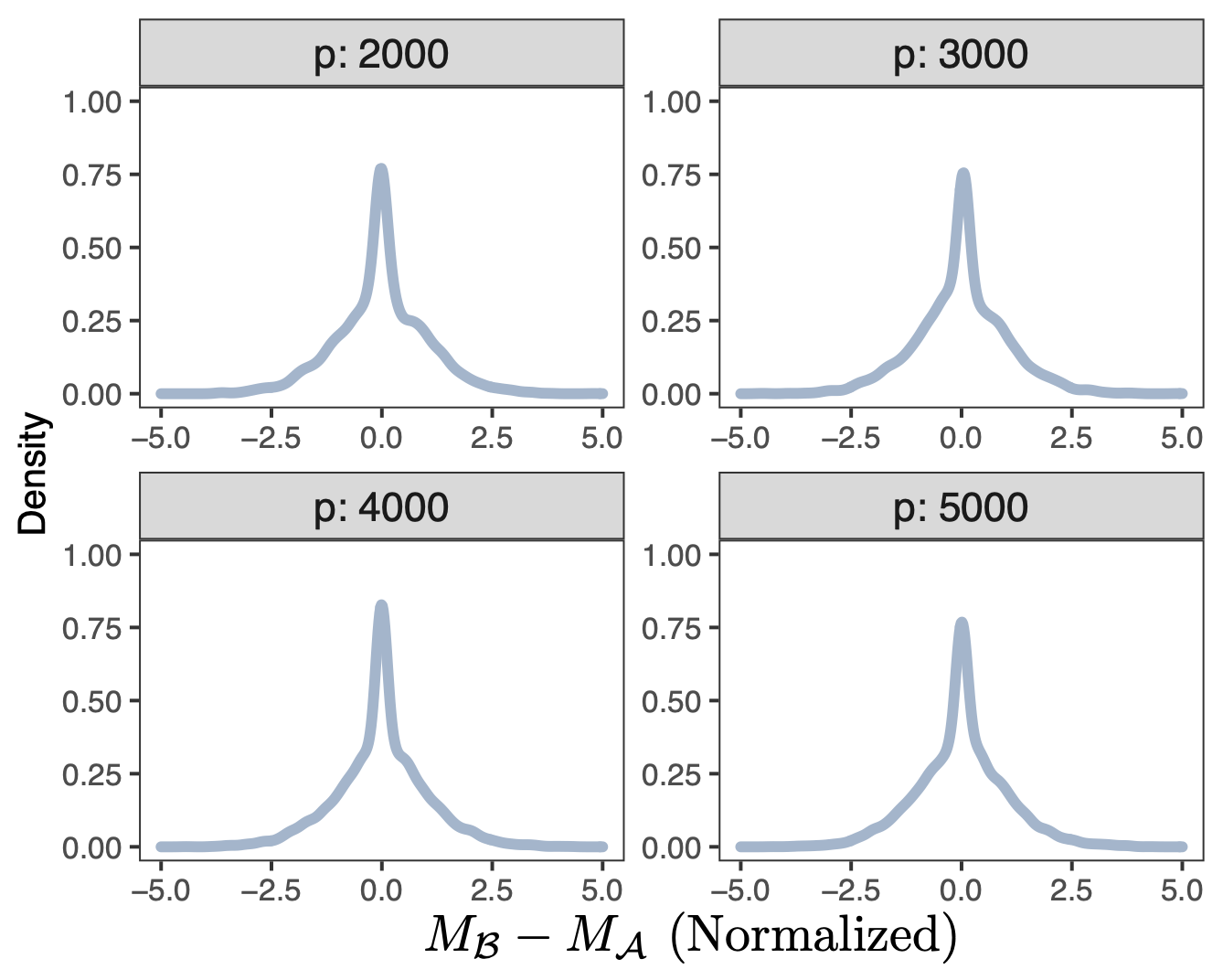}\\
         (c) $\cL(M_{\cB} - M_{\cA}, \varepsilon)$ with varied $K$ & (d) Density curves with varied $K$\\
         \!\!\!\!\includegraphics[width = 202 pt]{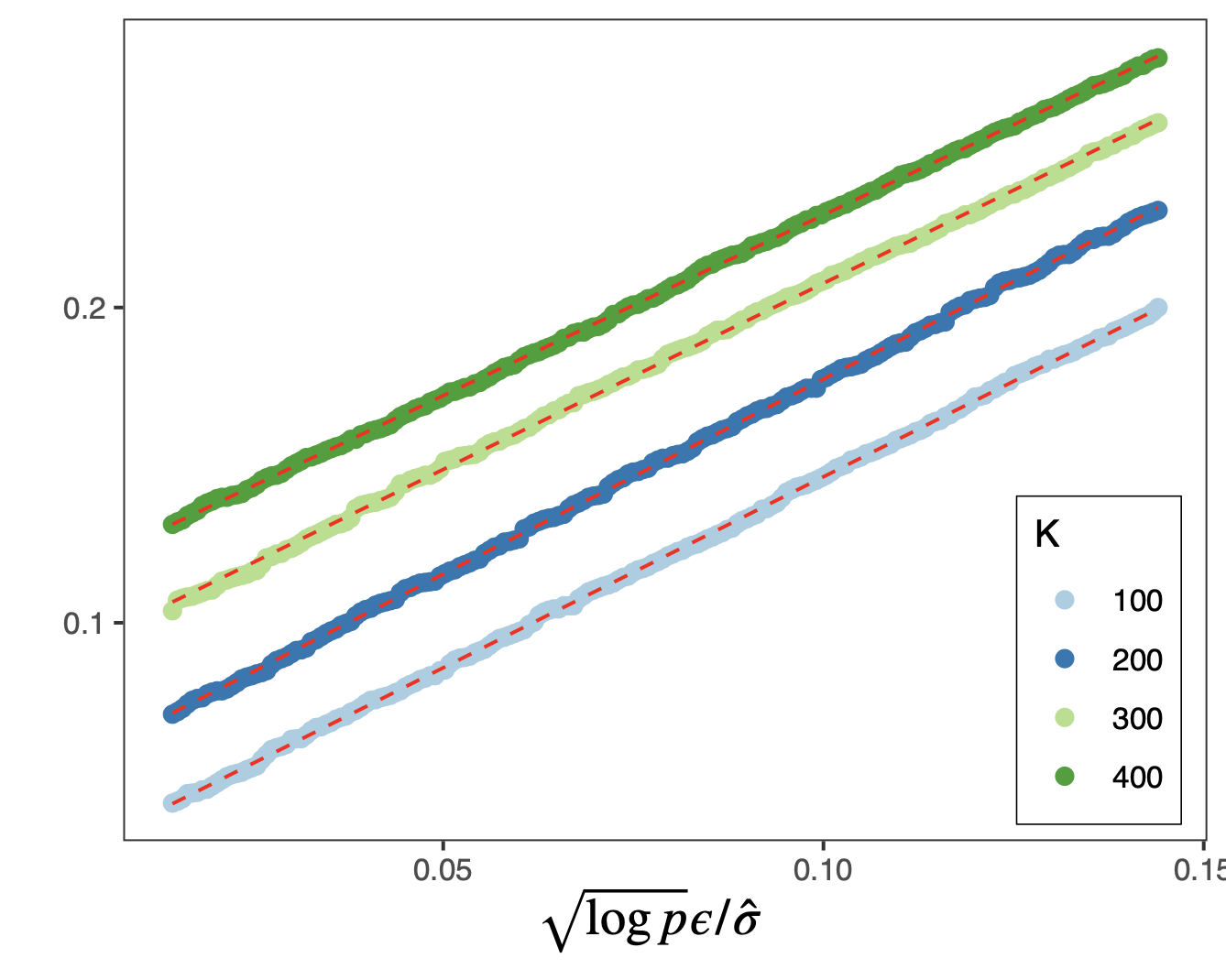}&  \!\!\!\!\!\includegraphics[width = 202 pt]{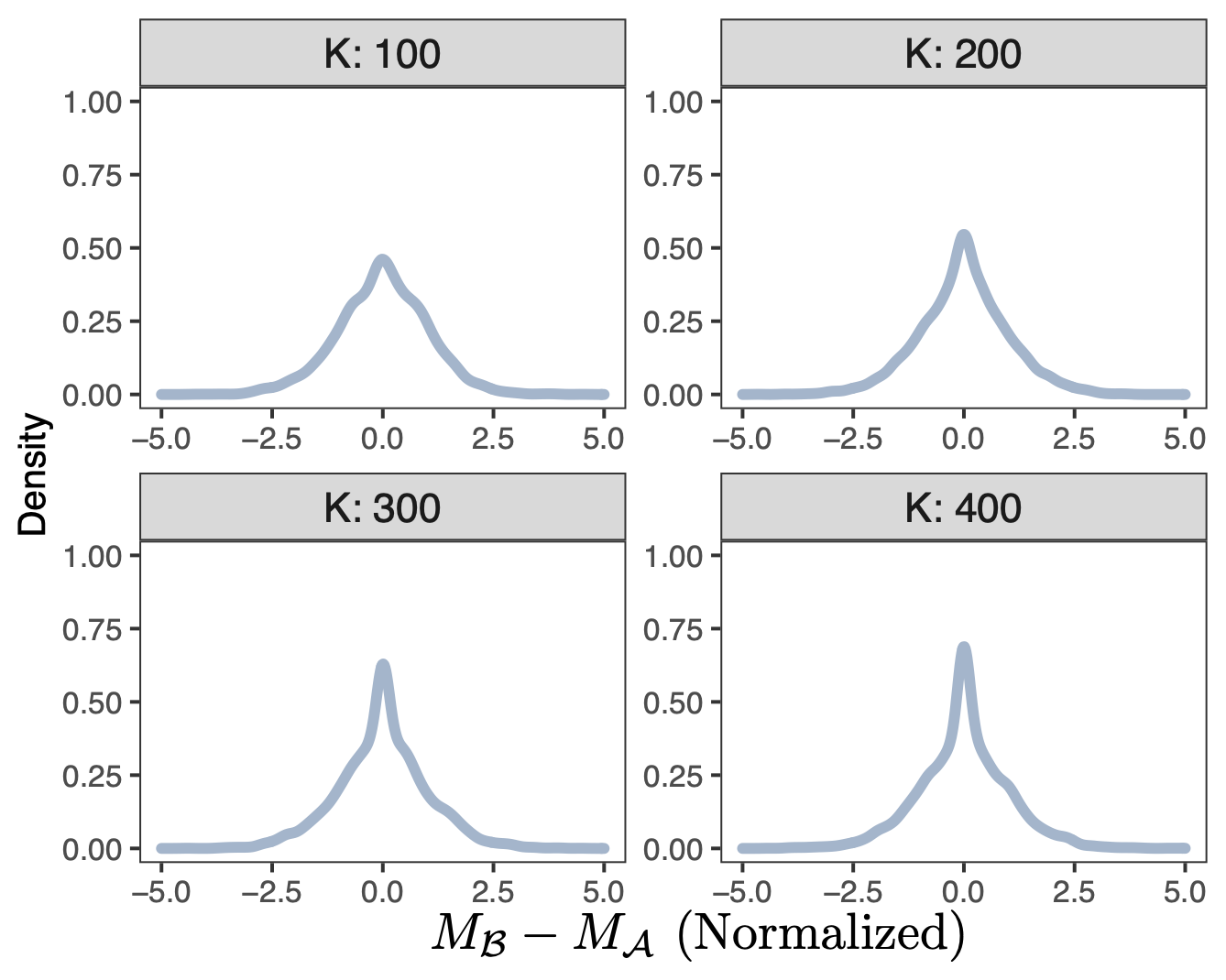}
    \end{tabular}
    \caption{Anti-concentration results of $M_{\cB} - M_{\cA}$ for the homogeneous variance case with $\max_{i \in \cA,j \in \cB} \Corr(X_i, X_j) = 1$. Panels (a) and (b) evaluate the anti-concentration behaviors at different $p$ when the number of overlapping components between $X_{\cA}$ and $X_{\cB}$ is $K = p /8$; Panels (c) and (d) evaluate the anti-concentration behaviors for varying $K$ at $p = 4000$.
    }\label{fig: equal var cor = 1}
\end{figure}
\begin{figure}
    \centering
    \includegraphics[width = 290 pt]{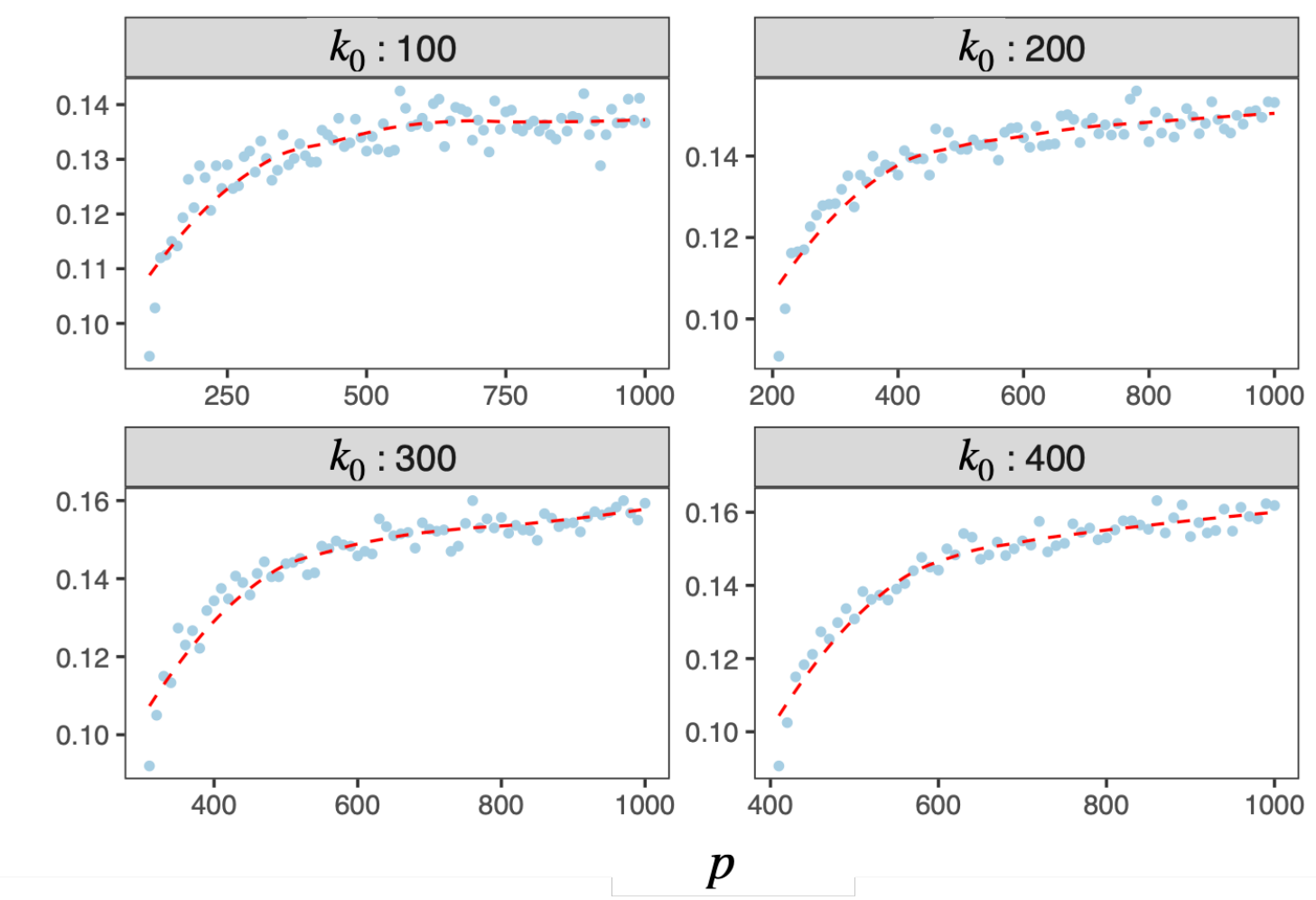}
    \caption{Scaling of \(\cL(M_{\cB} - M_{\cA}, 0.05)\) with \(p\) when \(\cA = [k_0]\) and \(\cB = [p] \backslash [k_0]\) at \(k_0 \in \{100, 200, 300, 400\}\) for centered \((X_i)_{i=1}^p\) with unit component-wise variances. The empirical evaluation of \(\cL(M_{\cB} - M_{\cA}, 0.05)\) plateaus after \(p \ge 2k_0\), i.e., \(|\cB| \ge |\cA|\). This suggests that the Lévy concentration function scales with the lesser of \(\EE(\max_{i \in \cA} |X_i - \mu_i|)\) and \(\EE(\max_{j \in \cB} |X_j - \mu_j|)\) when \(\cA\) and \(\cB\) are of different sizes, which is consistent with Theorem~\ref{col: anti con equal var no cor 1}.}
    \label{fig:levy scale with p}
\end{figure}
\subsection{Heterogeneous variance case}\label{sec: simu uneq}
We evaluate the anti-concentration behaviors under the heterogeneous variance setting with the covariance conditions on $C_{\cA, \cB}$ in Theorem~\ref{thm: anti con} satisfied and violated, respectively, i.e., $C_{\cA, \cB} > 0$ and $C_{\cA,\cB} \le 0$. We take $\mu_{\cA} = \mu_{\cB} = \mathbf{0}$ in this subsection. 

We first consider the case where the covariance condition is satisfied that $\max_{i,j \in \cB} \sigma_i^{-2} \sigma_{i j} \le 1$, and $C_{\cA, \cB} = \min_{j \in \cB, i \in \cA} (\sigma_j - \sigma_j^{-1} \sigma_{i j}) > 0$. In particular, we draw the entries of $\Gamma_{\cA}$ and $\Gamma_{\cB}$ independently from the standard Gaussian distribution. We then normalize the row norms of $\Gamma_{\cB}$ to 1 and scale $\Gamma_{\cA}$ by the inverse of its maximal row norm. In this case, $\sigma_j^2 = 1$ for $j \in \cB$ and $\sigma_i^2 \le 1$ for $i \in \cA$. We present the results in Figure~\ref{fig: unequal var Cs not violated}. In the design illustrated by Figure~\ref{fig: unequal var Cs not violated}, we have that $\cL(M_{\cB} - M_{\cA}, \varepsilon)$ scales linearly with $\sqrt{\log p} \varepsilon/\hat\sigma$ when the covariance condition is satisfied.

Next, we consider the case in which the covariance condition is violated. That is, when $\max_{i \in \cA, j \in \cB} \Corr(X_i, X_j) < 1$, and $\max\{\min_{j \in \cB, i \in \cA} (\sigma_j - \sigma_j^{-1} \sigma_{i j} ), \min_{i \in \cA, j \in \cB} (\sigma_i - \sigma_i^{-1} \sigma_{i j} ) \}\le 0$. Specifically, we let  $\cV_{\cA} = \{i \in \cA: \sigma_i - \max_{j \in \cB}\sigma_i^{-1} \sigma_{i j}  \le 0\}$ and $\cV_{\cB} = \{j \in \cB: \sigma_j - \max_{i \in \cA}\sigma_j^{-1} \sigma_{i j}  \le 0\}$ be the sets of indices violating the condition of $C_{\cA, \cB}>0$ in sets $\cA$ and $\cB$, respectively, and let $\nu_{\cA} = |\cV_{\cA}|/|\cA|$ and $\nu_{\cB} = |\cV_{\cB}|/|\cB|$ be the corresponding violation ratios.
We then let $m_{\cA} = |\cV_{\cA}|^{-1} \sum_{i \in \cV_{\cA}} (\sigma_i - \max_{j \in \cB}\sigma_i^{-1} \sigma_{i j})$ and $m_{\cB} = |\cV_{\cB}|^{-1} \sum_{j \in \cV_{\cB}} (\sigma_j - \max_{i \in \cA}\sigma_j^{-1} \sigma_{i j})$. We consider the full rank and equal correlation setting by letting $\Corr(X_i, X_j) = 0.9$ for all $i \ne j$. Then, we consider different $\{\sigma_j\}_{j \in [p]}$ by scaling the row norms of $\Gamma_{\cA}$ and $\Gamma_{\cB}$ for different violation ratios 
$\nu_{\cA}$ and $\nu_{\cB}$.

We present the empirical results in Figure~\ref{fig: unequal var Cs violated} for different settings. In Figures~\ref{fig: unequal var Cs violated}(a) and (b), we set $\sigma_j^2 = 1$ for all $j \in [p]$ as the benchmark. In Figures~\ref{fig: unequal var Cs violated}(c) and (d), we set 
$$
(\sigma^2_i)_{i \in \cA} = (\sigma^2_j)_{j \in \cB} = (0.9 \cdot \mathbf{1}_{p/4}^{\top}, \,  \mathbf{1}_{p/8}^{\top}, \, 10 \cdot \mathbf{1}_{p/8}^{\top})^{\top},
$$
and we have $\nu_{\cA} = \nu_{\cB} = 0.75$ and $m_{\cA} = m_{\cB} = -8.067$. In Figures~\ref{fig: unequal var Cs violated}(e) and (f), we set 
$$
(\sigma^2_i)_{i \in \cA} = (\sigma^2_j)_{j \in \cB} =  (0.9 \cdot \mathbf{1}_{3p/8}^{\top}, \,  \mathbf{1}_{p/16}^{\top}, \,  15 \cdot \mathbf{1}_{p/16}^{\top})^{\top},
$$
and we have $\nu_{\cA} = \nu_{\cB} = 0.875$ and $m_{\cA} = m_{\cB} = - 12.586$. By Figures~\ref{fig: unequal var Cs violated}(b), (d), and (f), as the violation ratios $\nu_{\cA}$ and $\nu_{\cB}$ increase, we observe a higher concentration. However, this may be due to the reduction in the smallest marginal variance $\min_{j \in [p]} \sigma_j^2$ after the normalization of $M_{\cA} - M_{\cB}$. Meanwhile, Figures~\ref{fig: unequal var Cs violated}(a), (c), and (e) show that the L\'evy concentration function still scales linearly with $\sqrt{\log p} \varepsilon/\hat\sigma$.

Finally, we consider the case where \(\max_{i \in \cA, j \in \cB} \Corr(X_i, X_j) = 1\) with centered \((X_i)_{i=1}^p\), and the perfectly correlated random variables from different subsets have unequal variances. In this setup, we set \(K = p/8\) and scale the norm of the overlapping rows in \(\Gamma_{\cA}\) and \(\Gamma_{\cB}\) to 1 and 1/2, respectively. We provide empirical results in Figures~\ref{fig: unequal var cor = 1}(a) and (b). In this case, we do not observe the concentrations in the empirical density plot as in the homogeneous variance cases. 
\begin{figure}[htbp]
    \centering
    \begin{tabular}{cc}
         (a) $\cL(M_{\cB} - M_{\cA}, \varepsilon)$ & (b) Density curves \\
         \!\!\!\!\includegraphics[width = 202 pt]{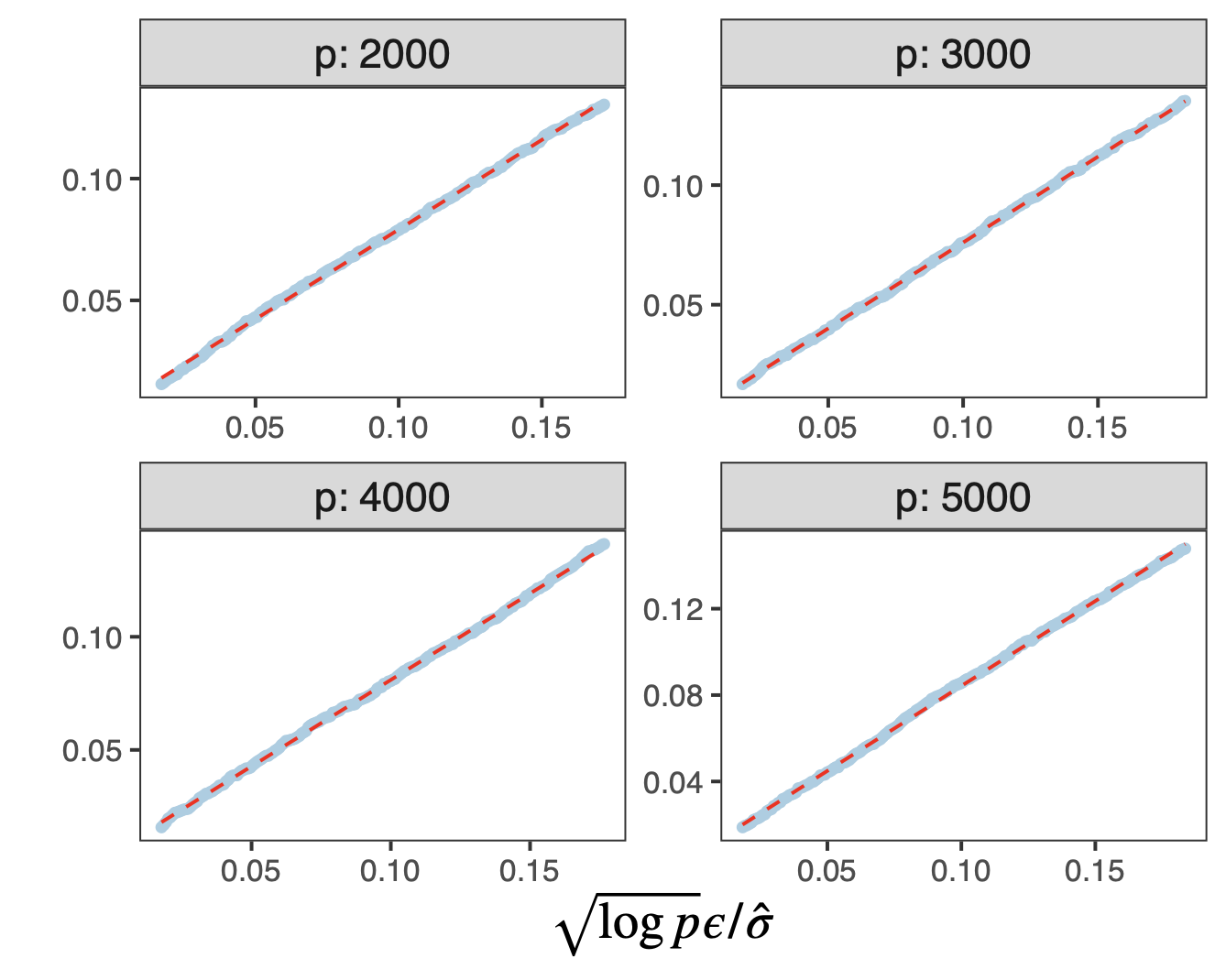}&  \!\!\!\!\!\includegraphics[width = 202 pt]{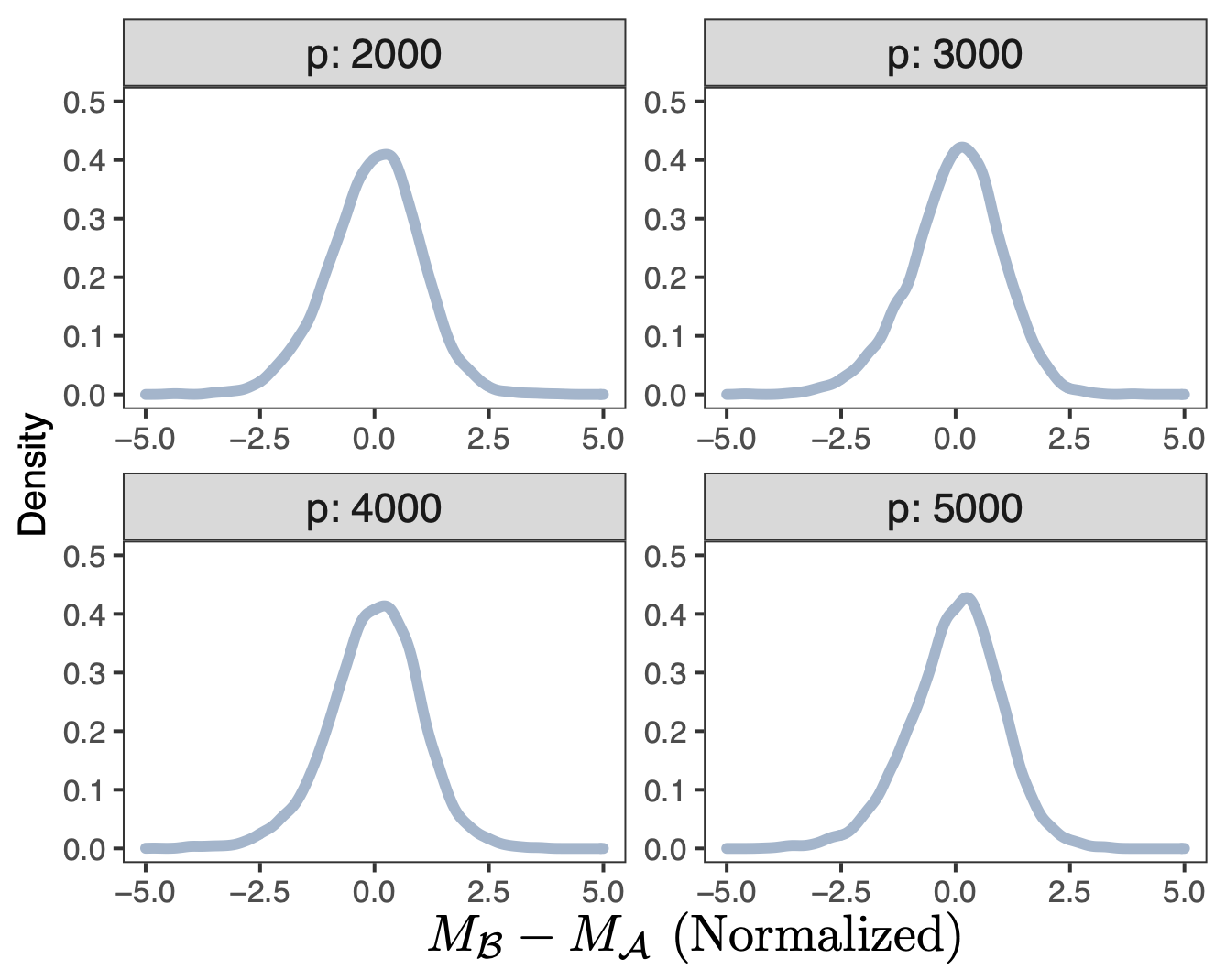}
    \end{tabular}
    \caption{Anti-concentration results of $M_{\cB} - M_{\cA}$ for the heterogeneous variance case when the covariance condition is satisfied, i.e., $\max_{i,j \in \cB} \sigma_i^{-2} \sigma_{i j} \le 1$, and $C_{\cA, \cB} = \min_{j \in \cB, i \in \cA} (\sigma_j - \sigma_j^{-1} \sigma_{i j} ) > 0$. Panel (a) shows that the empirical evaluation of the L\'evy concentration function still scales linearly with $\varepsilon$ under the heterogeneous variance case when the condition for $C_{\cA, \cB}$ is satisfied; Panel (b) presents the density curves for the normalized difference of Gaussian maxima, and there is no significant concentration.  For $p \in \{2000, 3000, 4000, 5000\}$, we have $\max_{i \in \cA,j \in \cB} \Corr(X_i, X_j) \in \{0.876, 0.926, 0.932, 0.923\}$  respectively. }\label{fig: unequal var Cs not violated}
\end{figure}

% \begin{figure}[htbp]
%     \centering
%     \begin{tabular}{cc}
%          (a) $\cL(M_{\cB} - M_{\cA}, \varepsilon)$ & (b) Density curves \\
%          \!\!\!\!\includegraphics[width = 202 pt]{plot_unequal_Cs_vio.pdf}&\!\!\!\!\!  \includegraphics[width = 202 pt]{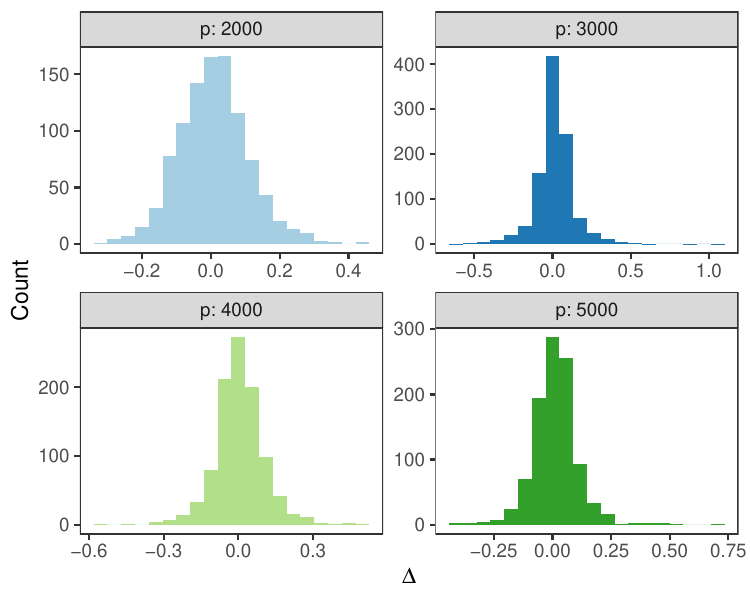}
%     \end{tabular}
%     \caption{Anti-concentration results of $M_{\cB} - M_{\cA}$ for the unequal variance case with $\min_{i \in \cB} (\sigma_i - \max_{j \in \cA}\sigma_i^{-1} \sigma_{i j} ) \vee \min_{i \in \cA} (\sigma_i - \max_{j \in \cB}\sigma_i^{-1} \sigma_{i j} ) \le 0$ and $\max_{i \in \cA, j \in \cB} \Corr(X_i, X_j) < 1$. The ratio }\label{fig: unequal var Cs violated}
% \end{figure}
\begin{figure}[htbp]
    \centering
    \begin{tabular}{cc}
         (a) $\cL(M_{\cB} - M_{\cA}, \varepsilon)$ ($\nu_{\cA} = \nu_{\cB} = 0$) & (b) Density curves ($\nu_{\cA} = \nu_{\cB} = 0$)  \\
         \!\!\!\!\includegraphics[width = 180 pt]{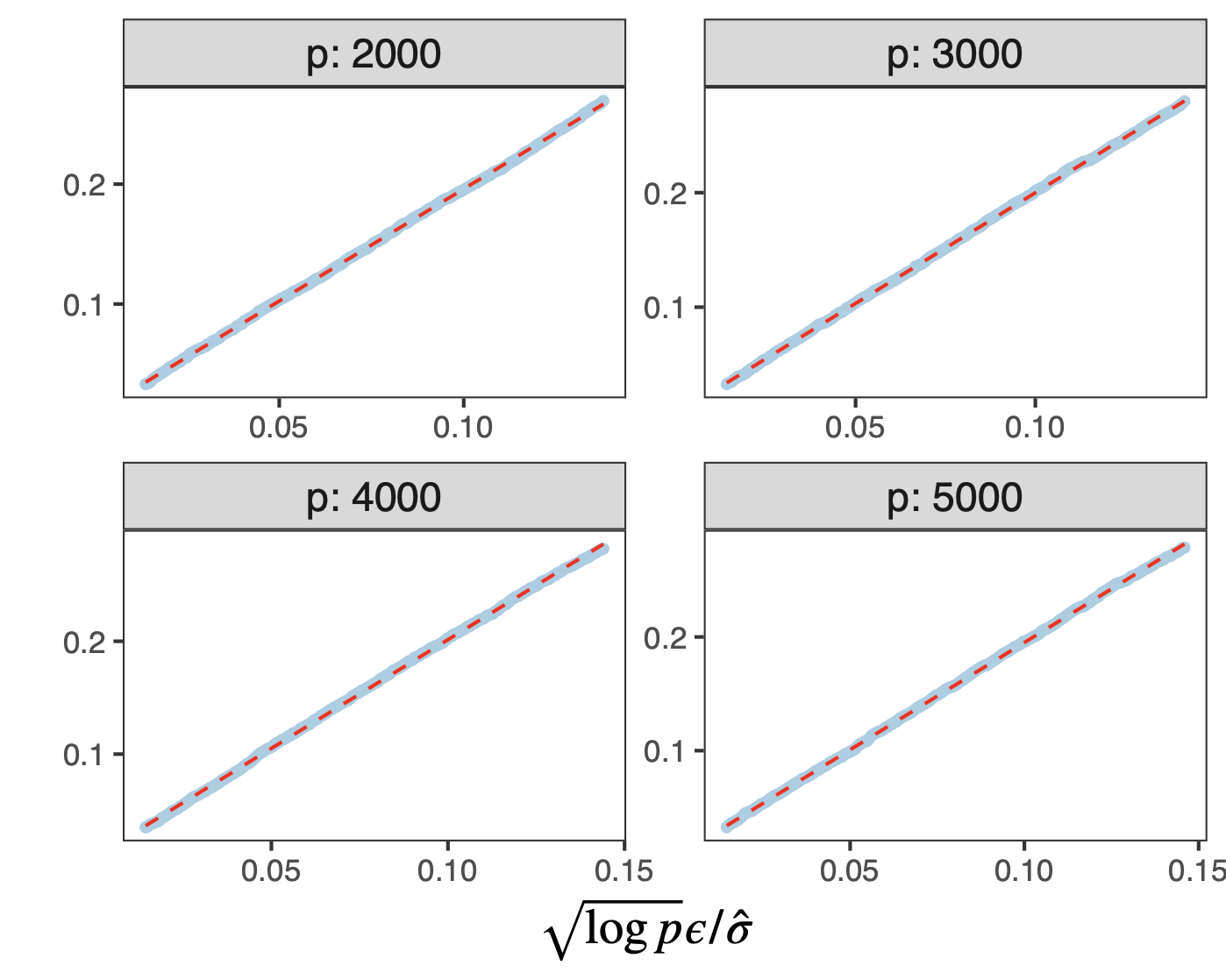}&\!\!\!\!\!  \includegraphics[width = 180 pt]{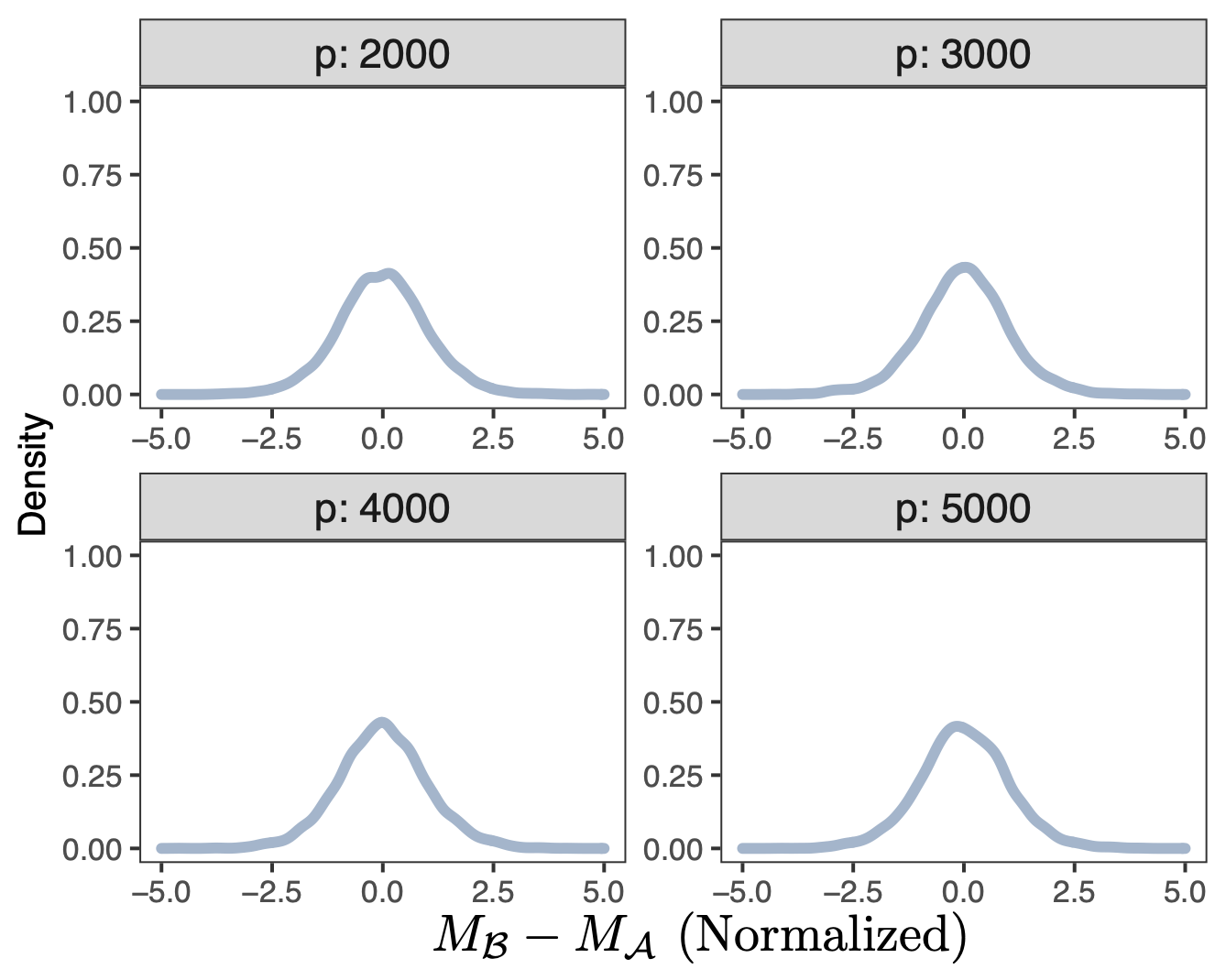}\\
         % (c) $\cL(M_{\cB} - M_{\cA}, \varepsilon)$ at $n=p/16$ & (d) Density curves at $n=p/16$ \\
         % \!\!\!\!\includegraphics[width = 202 pt]{0.2_2_2_2_2_2_2_2_plot.pdf}&\!\!\!\!\!  \includegraphics[width = 202 pt]{0.2_2_2_2_2_2_2_2_hist.pdf}\\
         (c) $\cL(M_{\cB} - M_{\cA}, \varepsilon)$ at ($\nu_{\cA} = \nu_{\cB} = 0.75$)  & (d) Density curves ($\nu_{\cA} = \nu_{\cB} = 0.75$)  \\
         \!\!\!\!\includegraphics[width = 180 pt]{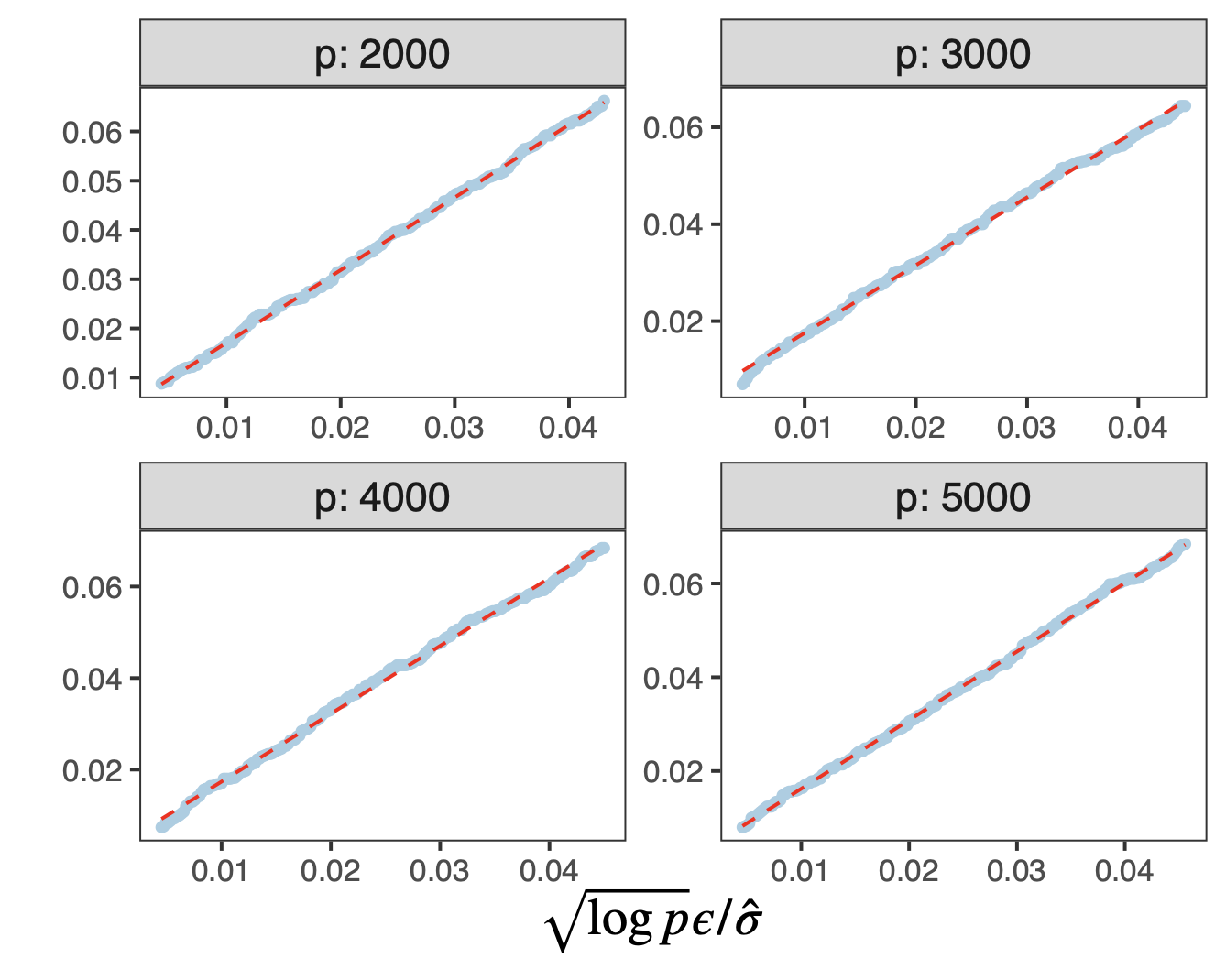}&\!\!\!\!\!  \includegraphics[width = 180 pt]{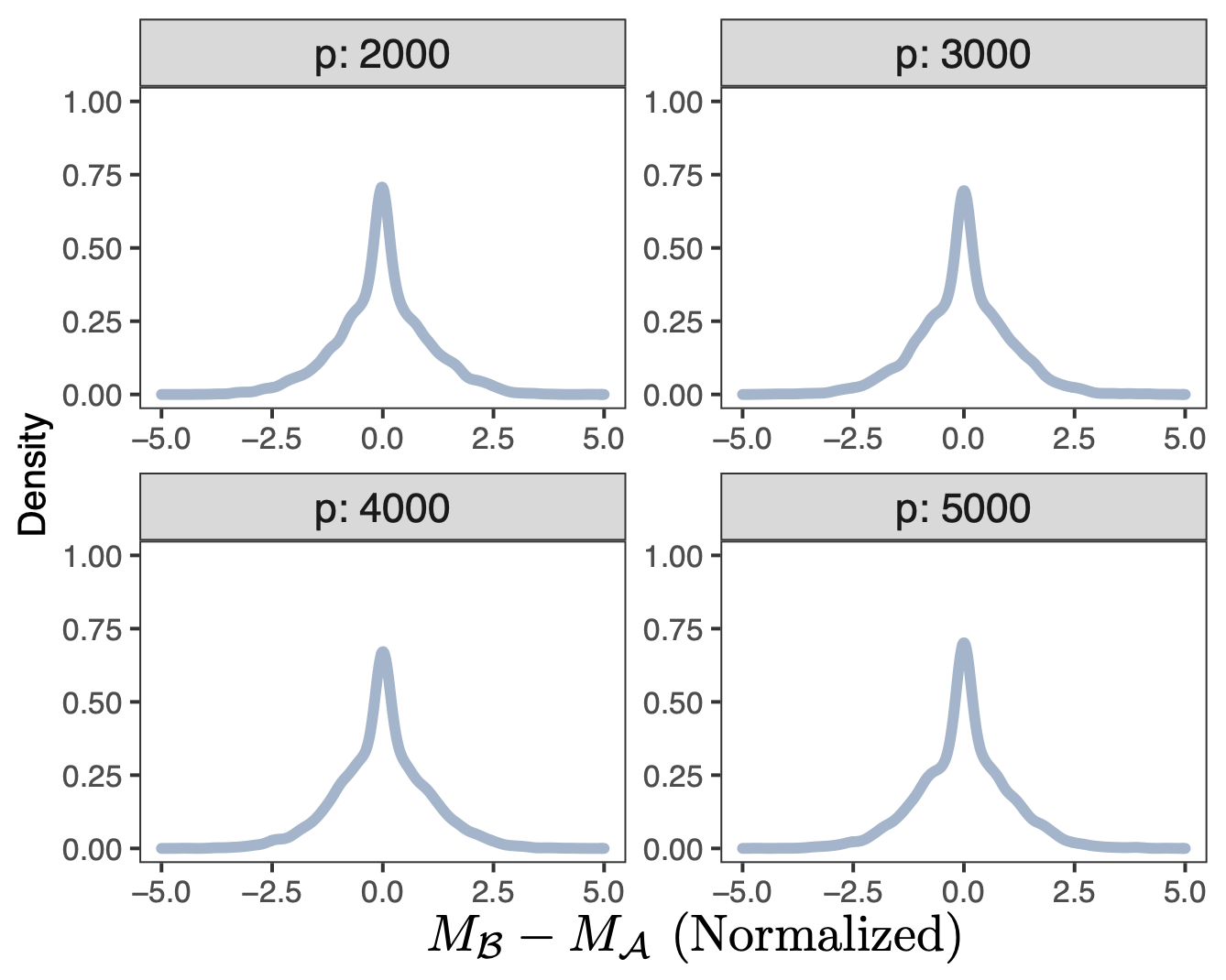}\\
         (e) $\cL(M_{\cB} - M_{\cA}, \varepsilon)$ ($\nu_{\cA} = \nu_{\cB} = 0.875$)  & (f) Density curves ($\nu_{\cA} = \nu_{\cB} = 0.875$) \\
         \!\!\!\!\includegraphics[width = 180 pt]{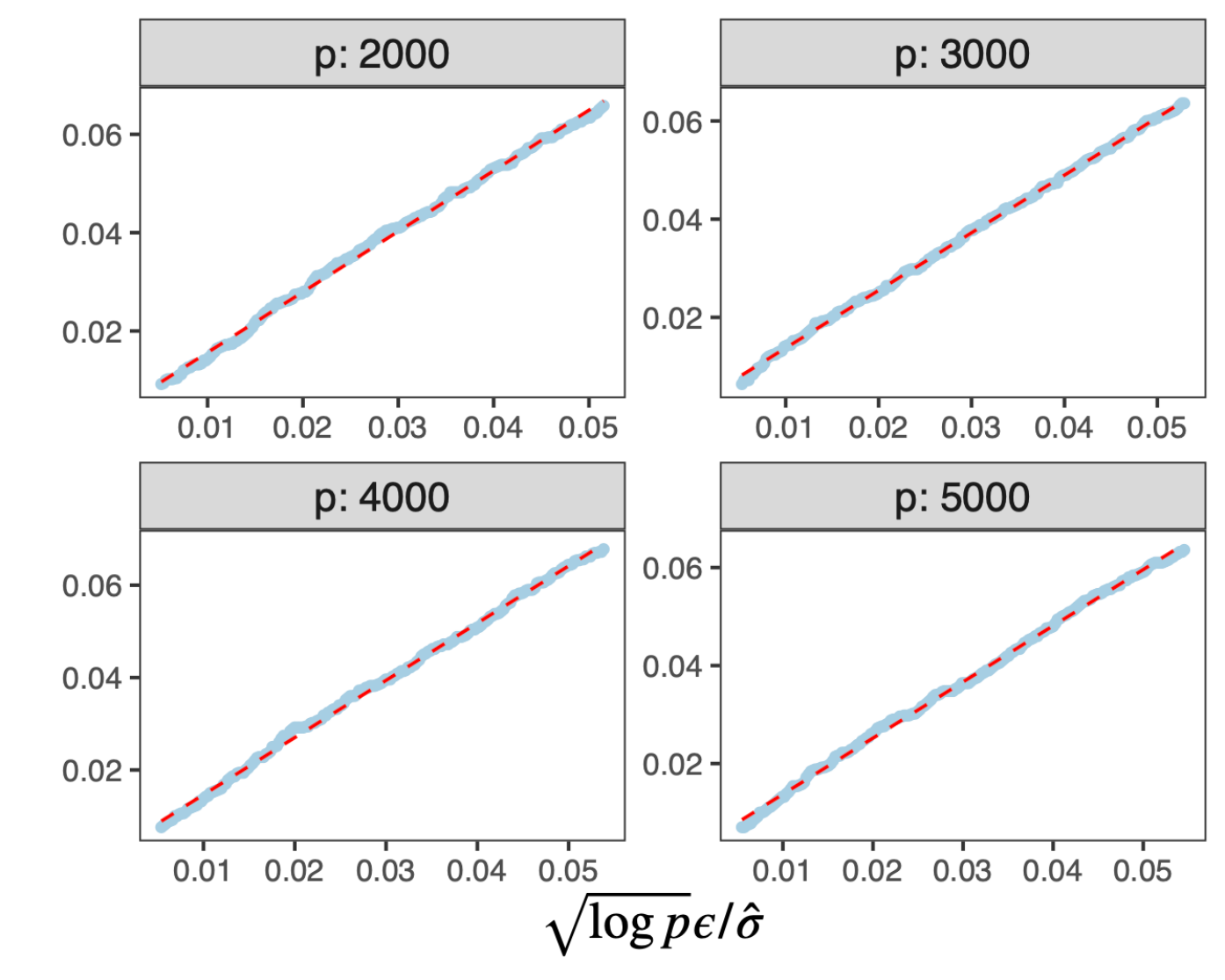}&\!\!\!\!\!  \includegraphics[width = 180 pt]{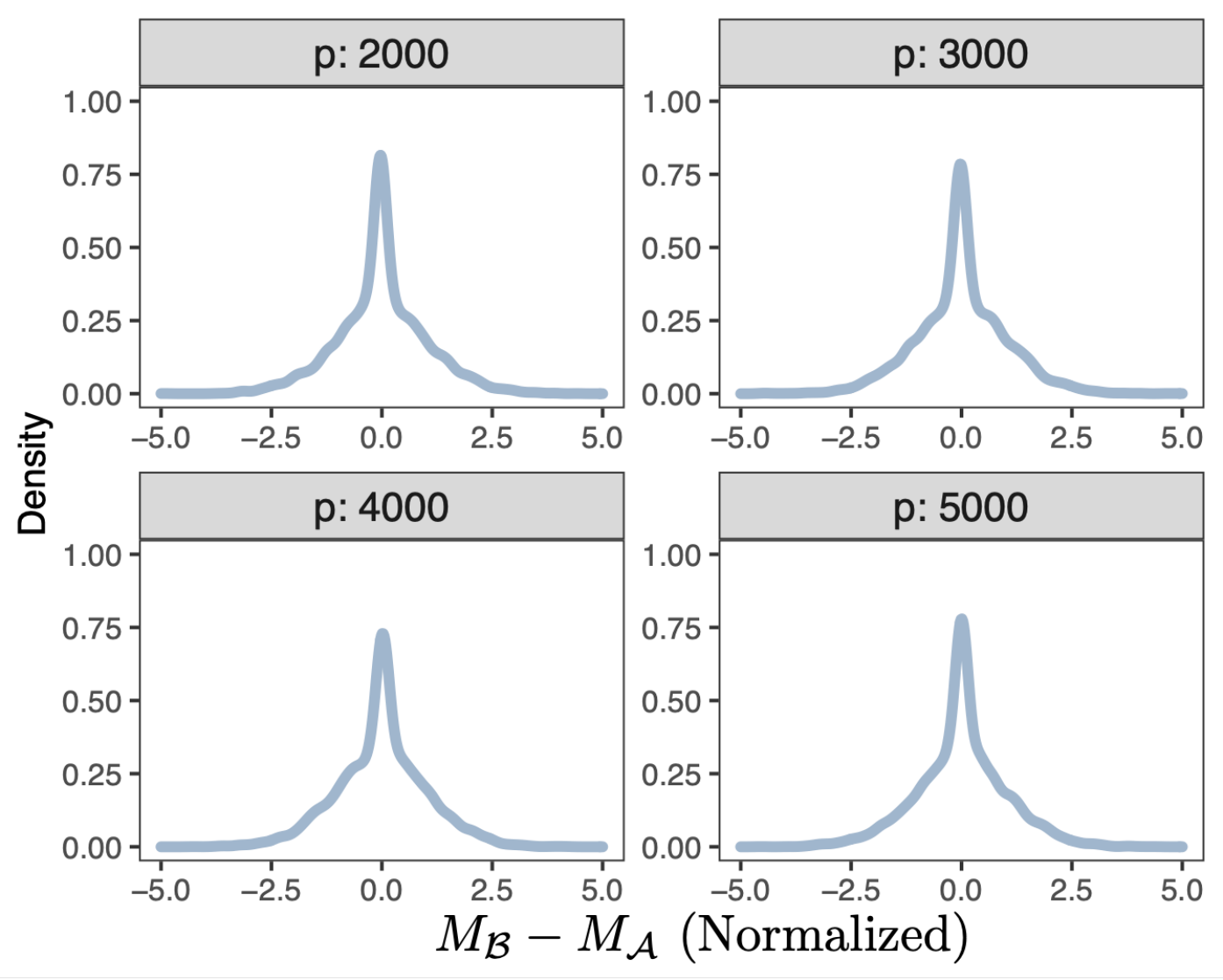}
    \end{tabular}
    \caption{Anti-concentration results for \(M_{\cB} - M_{\cA}\) in the heterogeneous variance case when the covariance condition is violated, i.e., \(\max\{\min_{j \in \cB, i \in \cA} (\sigma_j - \sigma_j^{-1} \sigma_{i j} ) , \min_{i \in \cA, j \in \cB} (\sigma_i - \sigma_i^{-1} \sigma_{i j} ) \}\le 0\) and \(\max_{i \in \cA, j \in \cB} \Corr(X_i, X_j) = 0.9\). Panels (a) and (b) serve as benchmarks, with \(\nu_{\cA} = \nu_{\cB} = 0\) for all \(p\) configurations, showing no notable concentration phenomena. In panels (c) and (d), the violation ratios are \(\nu_{\cA} = \nu_{\cB} = 0.75\), and the violation means are \(m_{\cA} = m_{\cB} = -8.067\) for all \(p\) setups. The density curves in panel (d) show a more marked concentration around 0 compared to panel (b); panels (e) and (f) present the results at \(\nu_{\cA} = \nu_{\cB} = 0.875\) and \(m_{\cA} = m_{\cB} = -12.586\) for all configurations of \(p\), exhibiting a higher concentration around 0 than observed in panel (d).}\label{fig: unequal var Cs violated}
\end{figure}
\begin{figure}[htbp]
    \centering
    \begin{tabular}{cc}
         (a) $\cL(M_{\cB} - M_{\cA}, \varepsilon)$ & (b) Density curves \\
         \!\!\!\!\includegraphics[width = 202 pt]{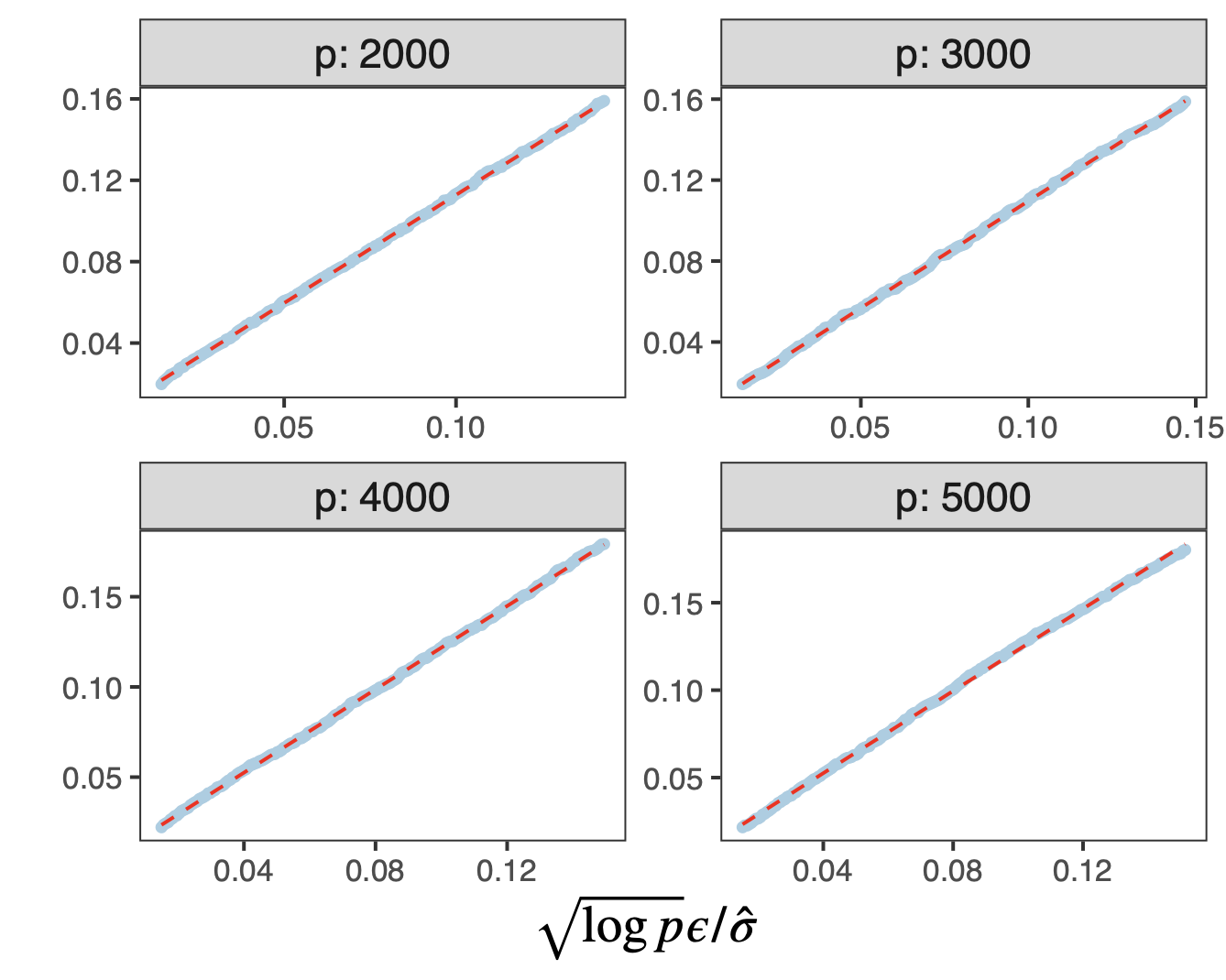}&  \!\!\!\!\!\includegraphics[width = 202 pt]{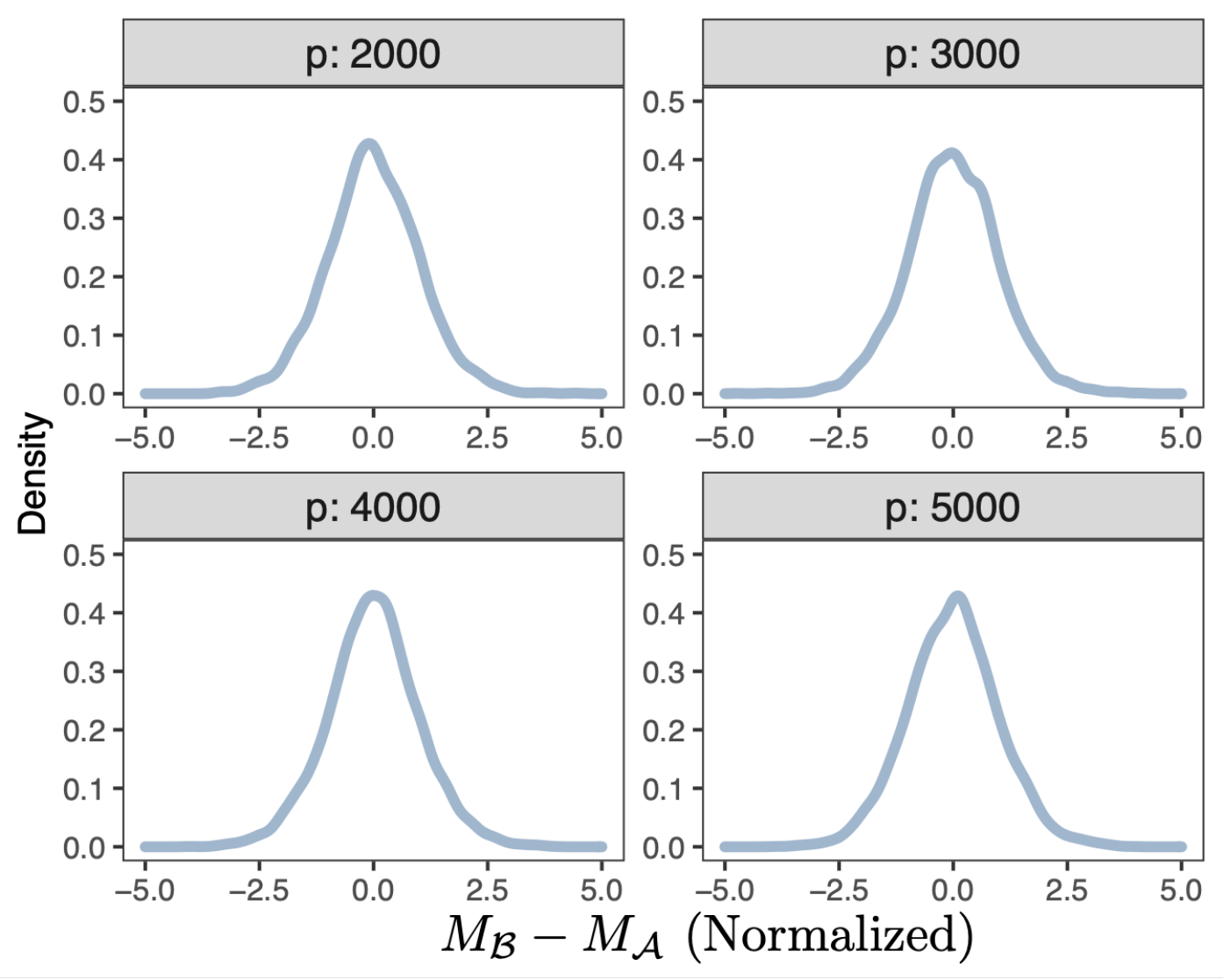}
    \end{tabular}
    \caption{Anti-concentration results for \(M_{\cB} - M_{\cA}\) in the unequal variance case where \(\max_{i \in \cA, j \in \cB} \Corr(X_i, X_j) = 1\) and the variances of the overlapping components are different. We set \(K = p / 8\), scaling the overlapping random variables in \(\cA\) to have a unit variance and those in \(\cB\) to have a constant variance of \(\sigma^2 = 1/2\). No significant concentrations in the density function are observed.
    }\label{fig: unequal var cor = 1}
\end{figure}
\subsection{Empirical comparison of the theoretical bounds with literature}\label{sec: simu comp}

We compare the ratio of $\cL(M_{\cB} - M_{\cA}, \varepsilon)/\varepsilon$ with the theoretical bounds in Theorem~\ref{thm: anti con}, Proposition~\ref{col: min eigen anti con}  and Lemma~5 of \cite{imaizumi2021gaussian}. As discussed earlier in Section~\ref{sec: intro}, there are examples where the previous bounds in literature are not informative. Here we consider designs that highlight the difference even in well-behaved cases  where 
% In order to compute the theoretical bound in \cite{imaizumi2021gaussian} using the minimal eigenvalue of the covariance matrix, 
the covariance matrix of $(X_i)_{i=1}^p$ is non-degenerate. We let $\Sigma^0 = \Gamma \Gamma^{\top} + \Ib_p $, where $\Gamma \in \RR^{p \times \frac{p}{10}}$ is a matrix with entries independently sampled from standard Gaussian distribution. We take $\Sigma = \diag(\Sigma^0 )^{-1/2} \Sigma^0 \diag(\Sigma^0 )^{-1/2} $ as the covariance matrix for $(X_{\cA}^\top, X_{\cB}^\top)^\top$, and let $\mu_{\cA} = \mu_{\cB} = \mathbf{0}$. We consider $p \in \{2000, 3000, 4000, 5000\}$. For each setting, we compute the theoretical bound for the ratio by Theorem~\ref{thm: anti con} as $2 \cdot \max\big\{\EE [\max_{i \in \cA}|X_i |], \EE [\max_{j \in \cB}|X_j |]\big\}/ (1 - \bar\rho)$, where 
% $\bar\EE_n$ stands for the empirical mean, and 
$\bar\rho = \max_{i \in \cA, j \in \cB} \Corr(X_i, X_j)$. The theoretical bound by Proposition~\ref{col: min eigen anti con} is $2 \cdot \max\big\{\EE [\max_{i \in \cA}|\tilde{X}_i /\tilde\sigma_i|], \EE [\max_{j \in \cB}|\tilde{X}_j /\tilde\sigma_j|]\big\}/ \min_{j \in [p] }\tilde\sigma_j$, where $\tilde{X} \sim \cN(0,\tilde\Sigma)$ with $\tilde\Sigma = \diag (\Sigma_{\cA} - \Sigma_{\cA \cB} \Sigma_{\cB}^{-1} \Sigma_{\cB \cA}, \Sigma_{\cB} - \Sigma_{\cB \cA} \Sigma_{\cA}^{-1} \Sigma_{\cA \cB})$ and $\tilde\sigma^2_j = \tilde\Sigma_{jj}$ for $j \in [p]$. We compare the bounds by Theorem~\ref{thm: anti con} and Proposition~\ref{col: min eigen anti con} with the bound of Lemma~5 in \cite{imaizumi2021gaussian}, $2 (\sqrt{2\log p} + 2)/\sigma_{\min}$, where $\sigma_{\min}^2 = \lambda_{\min} (\Sigma_{X})$. Table~\ref{tab:compare} shows the results at \(\varepsilon = 0.05\). Theorem~\ref{thm: anti con} provides a tighter bound by bounding the concentration function based only on pair-wise correlations instead of relying on the minimal eigenvalue of the covariance matrix.
% under the setting of a full-rank covariance matrix with a small minimal eigenvalue.
\begin{table}[H]
    \centering
     \caption{Comparison of theoretical bounds with \cite{imaizumi2021gaussian} against empirical ratio of $\cL(M_{\cB} - M_{\cA}, \varepsilon)/\varepsilon$ at $\varepsilon = 0.05$. }
    \label{tab:compare}
    \begin{tabular}{c|c|c|c|c}
    \hline
    \hline
        $p$ & $\cL(M_{\cB} - M_{\cA}, \varepsilon)/\varepsilon$ & Theorem~\ref{thm: anti con} & Proposition~\ref{col: min eigen anti con}  & $2 (\sqrt{2\log p} + 2)/\sigma_{\min}$   \\
         \hline
         \hline
2000 & 1.872 & 10.295 & 100.890 & 196.078 \\
3000 & 1.816 & 9.754 & 123.743 & 238.469 \\
4000 & 1.960 & 9.678 & 142.134 & 270.224 \\
5000 & 1.916 & 9.477 &162.975 & 305.929 \\
%          2000 & 1.880 & 10.290 & 196.078\\
% 3000 & 1.960 & 9.658 & 233.370\\
% 4000 & 1.960 & 9.642 & 272.987\\
% 5000 & 2.044 & 9.470 & 304.007\\
\hline
\hline
    \end{tabular}
\end{table}
\iffalse{
\begin{figure}[htbp]
    \centering
    \includegraphics[width = 240 pt]{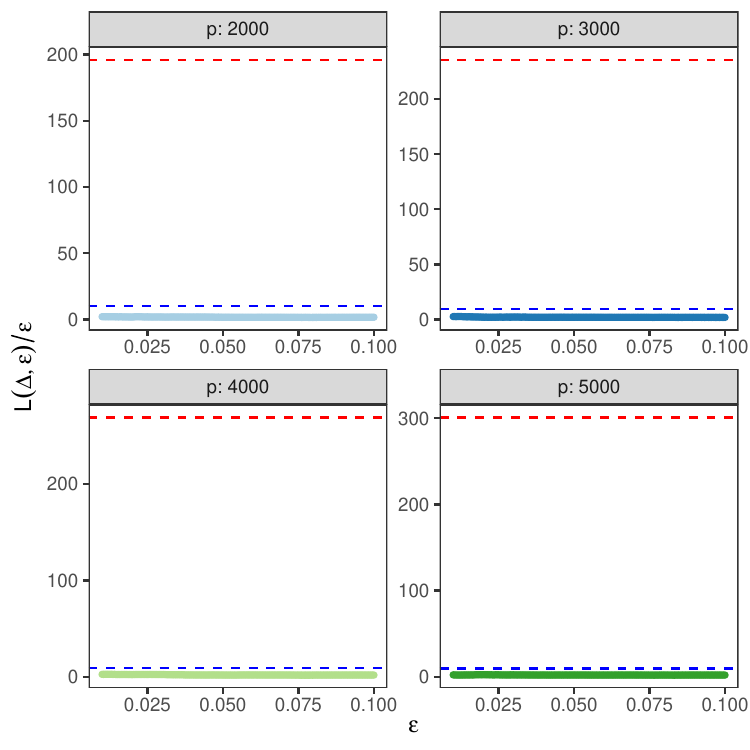}
    \caption{Comparison of theoretical bounds with \cite{imaizumi2021gaussian}, where the blue dashed line represents the theoretical bound from Theorem~\ref{thm: anti con} in our paper, and the red dashed line represents the theoretical bound from \cite{imaizumi2021gaussian}.}
    \label{fig: bounds comp}
\end{figure}}\fi
\section{Proofs of main theoretical results}\label{sec: proofs}
We provide the technical proofs for the main results in Sections~\ref{sec: main thm results} and~\ref{sec: application}. 

\subsection{Proof of Theorem~\ref{col: anti con equal var no cor 1}}\label{sec: proof col anti con eq var no cor 1}
% We first provide two lemmas that are essential for proving Theorem~\ref{col: anti con equal var no cor 1}, where we provide the proofs in  Appendix~\ref{sec: proof tech lms}.
% Given the two lemmas above, we present the proof of  Theorem~\ref{col: anti con equal var no cor 1}.
% \begin{proof}[\bf Proof of Theorem~\ref{col: anti con equal var no cor 1}]
% We will show \eqref{eq: anti con max diff equal var col} only, and \eqref{eq: anti con max diff equal var col t 0} can be shown with trivial modifications. 
% With similar arguments as in the proof of Theorem~\ref{thm: anti con}, without loss of generality, we can assume $\{X_i\}_{i \in \cA}$ have means $\{\mu_i - \mu_1^*\}_{i \in \cA}$ and $\{X_i\}_{i \in \cB}$ have means $\{\mu_i - \mu_2^*\}_{i \in \cB}$, where $\mu_1^* = \arg\min_{\mu \in \RR} \max_{i \in \cA} |\mu_i - \mu|$ and $\mu_2^* = \arg\min_{\mu \in \RR} \max_{i \in \cB} |\mu_i - \mu|$. We slightly abuse the notation and denote $\mu_i - \mu_j^*$ by $\mu_i$ for notational convenience ($i=1,\ldots,p$, $j = 1,2$). Also 
Note that when \(\bar{\rho} = \max_{i \in \cA, j \in \cB} \Corr(X_i, X_j) = 1\), the right-hand side of \eqref{eq: anti con max diff equal var col} goes to infinity. Therefore, we only prove the results when \(\bar{\rho} = \max_{i \in \cA, j \in \cB} \Corr(X_i, X_j) < 1\). Without loss of generality, suppose that there is no perfect positive correlation within $\cA$ or $\cB$ that $\Corr(X_i, X_j) < 1$ for any $i,j \in \cA$ or $i, j \in \cB$ and $i \ne j$. If there exists a pair $(X_i, X_j)$ in the same set that is perfectly positively correlated, because $\sigma_i = \sigma_j = \sigma$, there exists a constant $c$ such that $X_i \equiv X_j + c$, and we can remove $X_i$ or $X_j$ according to the sign of $c$ without changing the value of $M_{\cA}$ or $M_{\cB}$. Denote the sets of indices with perfect negative correlation with another variable in the other set as
    \begin{equation}\label{eq: def Na Nb}
\begin{aligned}
    \cN_{\cA} &= \{i \in \cA: \exists j \in \cB \text{ such that } \Corr(X_i , X_j) = -1\}, \\
    \cN_{\cB} &= \{i \in \cB: \exists j \in \cA \text{ such that } \Corr(X_i , X_j) = -1\}.
\end{aligned}
    \end{equation}
 If $\cN_\cA$ and $\cN_\cB$ are empty, the result directly follows from Theorem~\ref{thm: anti con}. Hence we suppose that $\cN_\cA$ and $\cN_\cB$ are nonempty. Lemma~\ref{lm: NA NB equiv} establishes that there is a one-to-one mapping between $\cN_{\cA}$ and $\cN_{\cB}$, and the Gaussian random variables within each set are independent.
% The following lemma establishes the relationship between $\cN_{\cA}$ and $\cN_{\cB}$, as well as the correlation structure within them. {\color{red}EF: Put this lemma into appendix. You only need to refer the result when you need it.}
% The proof of Lemma~\ref{lm: NA NB equiv} is in Appendix~\ref{sec: proof tech lms}. 
% {(\color{red}EF: For Appendix, I suggest we reorganize a bit. In particular, each proof should be a subsection.)}{\purple SS: No problem. I will reorganize.}
For ease of presentation, we assume that $\cA \ne \cN_{\cA}$ and $\cB \ne \cN_{\cB}$ (the results when either $\cA = \cN_{\cA}$ or $\cB = \cN_{\cB}$ follow by similar arguments). We define the events
    $$
    \cE_1 = \{\max_{i \in \cN_{\cA}} X_i \ge \max_{i' \in \cA \backslash \cN_{\cA}} X_{i'}\}, \quad
    \cE_2 = \{\max_{j \in \cN_{\cB}} X_j \ge \max_{j' \in \cB \backslash \cN_{\cB}} X_{j'}\}.
    $$
    Then for any $t \in \RR$, we have
    \begin{align*}
         \PP\Big(\!\big| M_{\cB} \!- \!M_{\cA} \!- \!t\big|\! \le \!\varepsilon \!\Big) & \le  \underbrace{\PP\Big( \!\!\{ \big| M_{\cB} - M_{\cA} - t\big| \le \varepsilon \} \!\cap\! \cE_1^c \Big)}_{\rm I_1} +  \underbrace{\PP\Big(\!\! \{ \big| M_{\cB} - M_{\cA} - t\big| \le \varepsilon \}\! \cap\! \cE_2^c \Big)}_{\rm I_2} \\
        & \quad + \underbrace{\PP\Big( \{ \big| M_{\cB} - M_{\cA} - t\big| \le \varepsilon \} \cap \{\cE_1 \cap \cE_2 \}\Big)}_{\rm I_3}.
    \end{align*}
    We then bound the terms  ${\rm I_1}$, ${\rm I_2}$ and ${\rm I_3}$. For ${\rm I_1}$, we have 
    \begin{align*}
        {\rm I_1} &= \PP \Big(\big\{\big|\max_{i \in \cA \backslash \cN_{\cA}} X_i - \max_{j \in \cB} X_j - t\big| \le \varepsilon \big\} \cap \cE_1^c \Big) \le \PP \Big(\big|\max_{i \in \cA \backslash \cN_{\cA}} X_i - \max_{j \in \cB} X_j - t\big| \le \varepsilon \Big).
    \end{align*}
    By the definition of $\cN_{\cA}$ in \eqref{eq: def Na Nb}, for any $i \in \cA \backslash \cN_{\cA}$ and $j \in \cB$, it holds that $|\Corr(X_i, X_j)| < 1$.  Hence conditions (A) and (B) of Theorem~\ref{thm: anti con} are both satisfied since $\sigma_j - \sigma_j^{-1} \sigma_{ij} = \sigma - \sigma  \cdot \Corr(X_i, X_j) > 0$ for any $i,j \in [p]$ and $i \ne j$. Then applying Theorem~\ref{thm: anti con}, we have that 
    % there exists a fixed constant $C > 0$ such that 
    \begin{align*}
       {\rm I}_1 &\le 2  \Big\{\EE \big(\max_{i \in \cA \backslash \cN_{\cA}} |X_i - \mu_i|  \big) \wedge \EE \big(\max_{i \in \cB} |X_i - \mu_i|  \big)  \Big\}\frac{\varepsilon}{(1 -\bar\rho_1)\sigma^2}\\
       & \le 2  \Big\{\EE \big(\max_{i \in \cA } |X_i - \mu_i|  \big) \wedge \EE \big(\max_{i \in \cB} |X_i - \mu_i|  \big)  \Big\}\frac{\varepsilon}{(1 -\bar\rho)\sigma^2},
    \end{align*}
where 
% $$
% \bar\lambda_1 = \min_{\mu \in \RR} \max_{i \in \cA \backslash\cN_{\cA}} |\mu_i - \mu| \vee \min_{\mu \in \RR} \max_{i \in \cB} |\mu_i - \mu| \le \bar\lambda,
% $$ 
% and 
$\bar\rho = \max_{i \in \cA, j \in \cB} \Corr(X_i, X_j)$ and $\bar{\rho}_1 = \max_{i \in \cA \backslash\cN_{\cA}, j \in \cB} \Corr(X_i, X_j) \le \bar\rho$. Following similar arguments, we have 
$$
{\rm I}_2 \le 2 \Big\{\EE \big(\max_{i \in \cA } |X_i - \mu_i|  \big) \wedge \EE \big(\max_{i \in \cB} |X_i - \mu_i|  \big)  \Big\}\frac{\varepsilon}{(1 -\bar\rho)\sigma^2}.
% \left\{\EE \left(\max_{i \in [p]} |X_i - \mu_i|  \right) + \bar\lambda + \sigma \sqrt{0 \vee \log(\sigma/\varepsilon)}  \right\}\frac{\varepsilon}{(1 -\bar\rho)\sigma^2}.
$$
For the term ${\rm I_3}$, applying Lemma~\ref{lm: anti con equal var no cor 1}, we have that 
\begin{align*}
    {\rm I}_3  &= \PP \Big(\!\!\big\{\big|\max_{i \in \cN_{\cA}} X_i - \max_{j \in \cN_{\cB}} X_j - t\big| \le \varepsilon \big\} \!\cap\! \{\cE_1 \cap \cE_2 \}\!\!\Big) \overset{\rm (a)}{\le} \PP \Big(\big|\max_{i \in \cN_{\cA}} X_i - \max_{j \in \cN_{\cB}} X_j - t\big| \le \varepsilon \Big)\\
    & \overset{\rm (b)}{=} \PP \Big(\big|\max_{i \in \cN_{\cA}} X_i - \max_{j \in \cN_{\cA}} (-X_i + c_i)- t\big| \le \varepsilon \big) \overset{\rm (c)}{\le} 3  \EE \Big(\max_{i \in \cN_{\cA}} |X_i - \mu_i|  \Big)\frac{\varepsilon}{(1 +  \underline\rho_3)\sigma^2}\\
    & \overset{\rm (d)}{\le} 3 \Big\{\EE \big(\max_{i \in \cA } |X_i - \mu_i|  \big) \wedge \EE \big(\max_{i \in \cB} |X_i - \mu_i|  \big)  \Big\}\frac{\varepsilon}{(1 -\bar\rho)\sigma^2},
\end{align*}
where inequality~(a) follows by dropping the intersection, equality~(b) holds by  Lemma~\ref{lm: NA NB equiv}, inequality~(c) follows from Lemma~\ref{lm: anti con equal var no cor 1}, and inequality~(d) follows from $\max_{i \in \cN_{\cA}} |X_i - \mu_i| = \max_{i \in \cN_{\cB}} |X_i - \mu_i|$ due to the equivalence in \eqref{eq: equal NA NB} of Lemma~\ref{lm: NA NB equiv}, and that
% $\bar\lambda_3 = \min_{\mu \in \RR} \max_{i \in \cN_{\cA} } |\mu_i - \mu| \vee \min_{\mu \in \RR} \max_{i \in \cN_{\cB}} |\mu_i - \mu| \le \bar\lambda$ and 
$$
\underline{\rho}_3\! := \min_{i , j \in \cN_{\cA}} \Corr(X_i, X_j) = \min_{i \in \cN_{\cA}, j \in \cN_{\cB}} ( - \Corr(X_i, X_j) \!) = - \!\!\!\max_{i \in \cN_{\cA}, j \in \cN_{\cB}} \!\Corr(X_i, X_j)  \ge - \bar\rho.
$$
Combining the three bounds above, we have that \eqref{eq: anti con max diff equal var col} holds. 
% \end{proof}
\subsection{Proof of Corollary~\ref{col: anti con equal var cor 1}}\label{sec: proof col anti con eq var cor 1}
% Similar as in the proof of Theorem~\ref{col: anti con equal var no cor 1}, 
Without loss of generality, we assume that $\Corr(X_i,X_j) < 1$ for any $i,j \in \cA$ or $i, j \in \cB$ and $i \ne j$.
% Also note that for any given $\lambda_1, \lambda_2 \in \RR$ and $t \in \RR$, we have $\max_{i \in \cA} X_i= \max_{i \in \cA} (X_i- \lambda_1) + \lambda_1$, $\max_{j \in \cB} X_{j} = \max_{j \in \cB} (X_{j} - \lambda_2) + \lambda_2$, and in turn 
% \begin{align*}
%     & \PP\big(\big|\max_{j \in \cB} X_{j} - \max_{i \in \cA} X_i- t\big| \le \varepsilon \big) = \PP\big(\big| \max_{j \in \cB} (X_{j} - \lambda_2) - \max_{i \in \cA} (X_i- \lambda_1) - t + \lambda_2 - \lambda_1\big| \le \varepsilon \big) \\
%     & \le \sup_{t' \in \RR}\PP\big(\big| \max_{j \in \cB} (X_{j} - \lambda_2) - \max_{i \in \cA} (X_i- \lambda_1) - t'\big| \le \varepsilon \big),
% \end{align*}
% and hence without loss of generality we can assume that $\{X_i\}_{i \in \cA}$ have means $\{\mu_i - \mu_1^*\}_{i \in \cA}$ and $\{X_i\}_{i \in \cB}$ have means $\{\mu_i - \mu_2^*\}_{i \in \cB}$, where $\mu_1^* = \arg\min_{\mu \in \RR} \max_{i \in \cA} |\mu_i - \mu|$ and $\mu_2^* = \arg\min_{\mu \in \RR} \max_{i \in \cB} |\mu_i - \mu|$. We slightly abuse the notation and denote $\mu_i - \mu_j^*$ by $\mu_i$ for notational convenience ($i=1,\ldots,p$, $j = 1,2$). 
    % Besides, without loss of generality, we assume that $\omega_1 \ge \omega_2$ and $\omega = \omega_1$.
To prove the claim \eqref{eq: anti con max diff equal var col cor 1}, it suffices to show that, for any $\delta \in (0,1)$ such that the corresponding $\cN_\delta$ defined in \eqref{eqn:Nd} satisfies $\cN_{\delta} \ne \cA$, it holds that
\begin{equation}\label{eq: anti con max diff equal var col cor 1 0}
    \begin{aligned}
        % \sup_{t \in \RR}\PP\Big(\!\big| M_{\cB} - M_{\cA} - t\big| \le \!\varepsilon \!\Big)  
        \cL(M_{\cB} - M_{\cA}, \varepsilon) \le  \left\{ \EE \!\left(\!\max_{i \in \cA\backslash\cN_{\delta}} \!\!\frac{|X_i - \mu_i| }{\sigma} \right) \!\wedge\! \EE \!\left(\!\max_{i \in \cB} \!\frac{|X_i - \mu_i| }{\sigma} \right)  \right\} \cdot \!\frac{7\varepsilon}{\delta\sigma}  + 2 \omega_{\delta}.
        % \exp\!\Big(\!-\frac{\omega_{\delta}^2}{8 \sigma^2}\Big)\!\!\bigg\},
    \end{aligned}
    \end{equation} 
    
First, we consider the case $\cN_\delta = \emptyset$. Then, for any $\delta \in (0,1)$, we have
$$
\bar\rho = \textstyle\max_{i \in \cA, j \in \cB} \Corr(X_i, X_j) < 1 - \delta,
$$
and the assumptions of Theorem~\ref{col: anti con equal var no cor 1} are satisfied. Applying Theorem~\ref{col: anti con equal var no cor 1}, together with the fact $\bar\rho < 1 - \delta$, we have that \eqref{eq: anti con max diff equal var col cor 1 0} holds as desired.

Next, consider the case $\cN_{\delta} \ne \emptyset$ and $D_{\delta} = \EE (\max_{i \in \cA \backslash \cN_{\delta}} X_i )- \EE (\max_{j \in \cN_{\delta}} X_{j} ) \le 0$. We have that $\omega_\delta = 1$ by its definition in \eqref{eqn:omegad}, and \eqref{eq: anti con max diff equal var col cor 1 0} holds since its right-hand side is larger than 1.

    Next,  consider the case  of $\delta \in (0,1)$ such that $D_{\delta}  = \EE (\max_{i \in \cA \backslash \cN_{\delta}} X_i )- \EE (\max_{j \in \cN_{\delta}} X_{j} )  > 0$. We first define the event 
    $$
    \cE = \Big\{\max_{i \in \cN_{\delta}} X_i \ge \max_{i' \in \cA \backslash \cN_{\delta}} X_{i'}\Big\}.
    $$
   For any subset \(\cS \subseteq [p]\), let \(X_{\cS} = (X_i)_{i \in \cS}\) be the subvector corresponding to \(\cS\). Note that we can write \(X_{\cS} = \Sigma_{\cS}^{1/2} Z_{\cS} + \mu_{\cS}\), where \(\mu_{\cS} = \EE X_{\cS}\), $\Sigma_{\cS} = \EE[(X_{\cS} - \mu_{\cS})(X_{\cS} - \mu_{\cS})^{\top}]$, \(\Sigma_{\cS}^{1/2}\) is a symmetric matrix that satisfies \(\Sigma_{\cS} = \big(\Sigma_{\cS}^{1/2}\big)^2 \), $Z_{\cS} = \big(\Sigma_{\cS}^{1/2}\big)^{-1}(X_{\cS} - \mu_{\cS})$, and the components of \(Z_{\cS}\in\RR^{|\cS|}\) are independent standard Gaussian random variables. Further,  for any $x, y \in\RR^{|\cS|}$, by the Cauchy-Schwarz inequality we have that
   $$
  \left|\max_{i \in \cS} ( \Sigma_{\cS}^{1/2} x + \mu_{\cS})_i - \max_{i \in \cS} ( \Sigma_{\cS}^{1/2} y + \mu_{\cS})_i\right| \le \max_{i \in \cS}\|(\Sigma_{\cS}^{1/2} )_i\|_2 \|x - y\|_2 = \sigma \|x-y\|_2,
   $$
   where $(\Sigma_{\cS}^{1/2} )_i$ denotes the $i$-th row of $\Sigma_{\cS}^{1/2}$. Hence we know that both $\max_{i \in \cS} X_i$ and $-\max_{i \in \cS} X_i$ are $\sigma$-Lipschitz with respect to $Z_{\cS}$.  By the log-Sobolev inequality (see Lemma~\ref{lm: log-sob ineq Gaussian} in Appendix~\ref{sec: log sob ineq}), for any $r > 0$, we have 
   \begin{equation}\label{eq: log sob ineq}
       \begin{aligned}
            &\PP\big(\max_{i \in \cS} X_i - \EE \max_{i \in \cS} X_i \ge r \big) \le \exp\Big(-\frac{r^2}{2\sigma^2}\Big),\\
            \text{and} \quad &\PP(\max_{i \in \cS} X_i - \EE \max_{i \in \cS} X_i \le -r ) \le \exp\Big(-\frac{r^2}{2\sigma^2}\Big).
       \end{aligned}
   \end{equation}
    Hence, for $D_{\delta}  > 0$, we have that 
    \begin{align*}
        \PP(\cE^c) & \ge \PP\left\{\left\{ \max_{i \in \cN_{\delta}} X_i \le \EE \max_{i \in \cN_{\delta}} X_i + \frac{D_{\delta}}{2}\right\} \cap \left\{\max_{i \in \cA \backslash \cN_{\delta}} X_i \ge \EE \max_{i \in \cA \backslash \cN_{\delta}} X_i - \frac{D_{\delta}}{2}\right\}\right\}\\
        & \ge 1 - \PP \left\{ \max_{i \in \cN_{\delta}} X_i \ge \EE \max_{i \in \cN_{\delta}} X_i + \frac{D_{\delta}}{2}\right\}  - \PP \left\{\max_{i \in \cA \backslash \cN_{\delta}} X_i \le \EE \max_{i \in \cA \backslash \cN_{\delta}} X_i - \frac{D_{\delta}}{2}\right\} \\
        % & \ge 1 - \PP \left\{ \max_{i \in \cN_{\cA}} X_i \ge \EE \max_{i \in \cN_{\cA}} (X_i -\mu_i) + \max_{i \in \cN_{\cA}} (\mu_i) + \frac{\omega}{2}\right\} \\
        % & \quad - \PP\left\{\max_{i \in \cA \backslash \cN_{\cA}} X_i \le \EE \max_{i \in \cA \backslash \cN_{\cA}} (X_i - \mu_i) \!-\! \max_{i \in \cA \backslash\cN_{\cA}} (-\mu_i) - \frac{\omega}{2}\right\}\\
        % & \overset{\rm(a)}{\ge}\! 1 \!- \!\PP\! \left\{ \!\max_{i \in \cN_{\cA}} (X_i - \mu_i)
        % \!\ge \!\EE \!\max_{i \in \cN_{\cA}} (X_i -\mu_i) \!+\! \frac{\omega}{2}\right\} \!- \!\PP\!\left\{\max_{i \in \cA \backslash \cN_{\cA}} (X_i - \mu_i )\!\le\! \EE \!\max_{i \in \cA \backslash \cN_{\cA}}\! (X_i - \mu_i)  \!-\! \frac{\omega}{2}\right\}\\
        & \ge 1 - 2 \exp \left(-\frac{D_{\delta}^2}{8\sigma^2}\right) = 1 - 2 \omega_{\delta},
    \end{align*}
    where the last inequality holds by \eqref{eq: log sob ineq}.
    % where inequality (a) follows from the fact that 
    % $$
    % \max_{i \in \cN_{\cA}} X_i \le \max_{i \in \cN_{\cA}} (X_i - \mu_i) +  \max_{i \in \cN_{\cA}} \mu_i, \quad \max_{i \in \cA \backslash \cN_{\cA}} (X_i - \mu_i) \le \max_{i \in \cA \backslash \cN_{\cA}}  X_i + \max_{i \in \cA \backslash \cN_{\cA}} (-\mu_i),
    % $$
    % and inequality (b) follows from Theorem~7.1 in \cite{ledoux2001concentration}.
    Then, for any $t \in \RR$ and $\varepsilon > 0$, we have that 
    \begin{align*}
        \PP(|M_{\cB} - M_{\cA} - t| \le \varepsilon ) &= \PP\big(\{|M_{\cB} - M_{\cA} - t| \le \varepsilon \} \cap \cE \big)+\PP\big(\{|M_{\cB} - M_{\cA} - t| \le \varepsilon \} \cap \cE^c \big) \\
       & \quad\le \PP(\cE) + \PP\big( \big|\max_{j \in \cB} X_j - \max_{i' \in \cA \backslash \cN_{\delta}} X_{i'} - t\big| \le \varepsilon\big) \\
       & \quad \le 2\omega_{\delta} +  7\left\{\!\EE \!\left(\!\max_{i \in \cA} |X_i - \mu_i|  \right) \wedge \EE \!\left(\!\max_{j \in \cB} |X_j - \mu_j|  \right) \right\}\!\frac{\varepsilon}{\delta\sigma^2},
    \end{align*}
    % where $\bar\rho_1 = \max_{i \in \cA \backslash \cN_{\cA}, j \in \cB} \Corr(X_i, X_j) \le \bar\rho$, and 
    where the first inequality holds by definitions and $\Corr(X_i, X_j) < 1 - \delta $ for any $i \in \cA\backslash\cN_{\delta}$ and $j \in \cB$, 
    the second inequality holds by Theorem~\ref{col: anti con equal var no cor 1}. Then \eqref{eq: anti con max diff equal var col cor 1 0}  follows,  and \eqref{eq: anti con max diff equal var col cor 1} holds by taking the infimum on $\delta$ that satisfies $\cN_{\delta} \ne \cA$, which concludes our proof. 
\subsection{Proof of Proposition~\ref{prop: lwr bd case}}\label{sec: proof prop lwr bd case}
Without loss of generality, we assume that $\mu_0 = 0$. 
We begin with showing the first inequality of  \eqref{eq: lwr bd case}. Recall that by definition, $\cL(M_{\cB}-M_{\cA},\varepsilon) = \sup_{t\in\RR}\PP(|M_{\cB} - M_{\cA} - t|\le \varepsilon)$. Define $\cN = \cA \cap \cB = [m] \backslash [m-k]$, and the events 
$$
\cE_1 = \left\{\max_{i \in \cN} X_i \ge \max_{i \in \cA \backslash \cN} X_i \right\}, \quad \cE_2 = \left\{\max_{j \in \cN} X_j \ge \max_{j \in \cB \backslash \cN} X_j \right\}, \quad \text{and } \quad \cE = \cE_1 \cap \cE_2 ,
$$
where we note that $ \cE = \left\{\max_{i \in \cN} X_i \ge \max_{i \in [p] \backslash \cN} X_i \right\}$. Taking $t = 0$,  we have
\begin{equation}\label{eq: LHS eq lwr bd case}
   \begin{aligned}
    & \PP\big(|M_{\cB} - M_{\cA}| \le \varepsilon \big) \ge \PP\big(\big\{ |M_{\cB} - M_{\cA}| \le \varepsilon \big\} \cap \cE \big) \\
    & = \PP\big(\{|\textstyle\max_{i \in \cN} X_i - \textstyle\max_{i \in \cN} X_i|\le \varepsilon \} \cap \cE \big) \\
    &=  \PP\big(\{0 \le \varepsilon \} \cap \cE \big) = \PP(\cE), 
\end{aligned} 
\end{equation}
 where the first equality holds because under the event $\cE$, we have that $M_{\cB} = M_{\cA} = \max_{i \in \cN} X_i$.
 Note that by the exchangeability of $(X_1,\ldots,X_{p})^{\top}$ in \eqref{eq: exchange dist} and the assumption that $\bar\rho:= \max_{i,j \in [p], i \ne j}\Corr(X_i, X_j) < 1$,  for any subset $\cS \subseteq [p]$ with $|\cS| \ge 2$, we have that the maximizer of $\{X_i\}_{i \in \cS}$ is unique with probability 1, and for any $j \in \cS$, we have
 \begin{equation}\label{eq: sym S}
     \PP(\textstyle\arg\max_{i \in \cS} X_i = j) = 1/|\cS|.
 \end{equation}
Therefore, taking $\cS = [p]$, we have
\begin{equation}\label{eq: sym E}
    \PP(\cE) = \PP\Big(\argmax_{i \in [p]} X_i \in \cN\Big) = \sum_{j \in \cN} \PP\Big(\argmax_{i \in [p]} X_i = j \Big) = \frac{k}{p}, 
\end{equation}
which combined with \eqref{eq: LHS eq lwr bd case} gives the first inequality of \eqref{eq: lwr bd case}.

 Next, we prove the second inequality of \eqref{eq: lwr bd case}. Following similar arguments to \eqref{eq: sym E}, by taking $\cS = \cA$ and $\cS = \cB$ in \eqref{eq: sym S}, respectively, we have $\PP(\cE_1) = \PP(\cE_2) = k / m = 2k / (p + k)$.  
% The following lemma helps to characterize the distribution of the maximizer when the distribution of $\{X_i\}_{i=1}^{p}$ is exchangeable. 
% \begin{lemma}\label{lm: exchange S argmax prob}
%     Let $(X_1,\ldots,X_{p})^{\top}$ be a Gaussian random vector with a homogeneous component-wise mean $\mu_0$ and equal marginal variance $\sigma^2$, and assume that the distribution of the random vector is exchangeable and satisfies \eqref{eq: exchange dist}.  Let $\cS \subseteq [p]$ be an arbitrary subset of $[p]$ with $|\cS| \ge 2$, then the maximizer of $\{X_i\}_{i \in \cS}$ is unique with probability 1, and for any $j \in \cS$, we have $\PP(\textstyle\arg\max_{i \in \cS} X_i = j) = 1/|\cS|$.
% \end{lemma}
% \begin{proof}
%     See Appendix~\ref{sec: proof lm exchange S argmax prob}.
% \end{proof}
% Applying Lemma~\ref{lm: exchange S argmax prob},
% Define the events:
% $$
% \cE_1 = \left\{\max_{i \in \cN} X_i \ge \max_{i \in \cA \backslash \cN} X_i \right\}, \quad \cE_2 = \left\{\max_{i \in \cN} X_i \ge \max_{i \in \cB \backslash \cN} X_i \right\}, 
% $$
% and it can be seen that $\cE_1 \cap \cE_2 = \cE$. 
Then, for any $t \in \RR$ and $\varepsilon > 0$, we have that
\begin{align*}
    & \PP\big(|M_{\cB} - M_{\cA} - t| \le \varepsilon \big)\\
    &\quad \le  \PP\big(\!\big\{ |M_{\cB} \!-\! M_{\cA} \!-\! t| \le \varepsilon \big\} \!\cap \!(\cE_1^c \cap \cE_2^c) \big) + \PP(\cE_1 ) + \PP(\cE_2) \\
    &\quad \le  \PP\left(\Big|\max_{i \in \cB \backslash \cN} X_i - \max_{i \in \cA \backslash \cN} X_i- t\Big| \le \varepsilon\right)  + \PP(\cE_1) + \PP(\cE_2) \\
    &\quad \le 7  \left\{\EE \!\Big[\max_{i \in \cA\backslash\cN} \!|X_i - \mu_0|  \!\Big]  \wedge \EE \!\Big[ \max_{i \in \cB\backslash\cN} \!|X_i - \mu_0|  \!\Big]  \right\} \frac{\varepsilon}{(1-\bar\rho)\sigma^2} + \frac{2k}{m},
\end{align*}
where the second inequality holds due to the fact that under the event $\cE_1^c \cap \cE_2^c$, we have $M_{\cB} = \max_{i \in \cB \backslash \cN} X_i $ and $M_{\cA} = \max_{i \in \cA \backslash \cN} X_i $, and the last inequality holds by Theorem~\ref{col: anti con equal var no cor 1} and plugging in $\PP(\cE_1) = \PP(\cE_2) = k / m$.

% where $C > 0$ is a fixed constant that does not change with $t$. 
Finally, we prove \eqref{eq: lwr bd case t > eps}.  For any $t \in \RR$ such that $|t| > \varepsilon$, we have
\begin{align*}
    & \PP\big(|M_{\cB} - M_{\cA} - t| \le \varepsilon \big)\\
    & \le  \PP\big(\!\big\{ |M_{\cB} \!-\! M_{\cA} \!-\! t| \!\le \!\varepsilon \big\} \!\cap \!\cE\big) \!+ \!\PP\big(\!\big\{ |M_{\cB} \!-\! M_{\cA} \!-\! t| \!\le \!\varepsilon \big\} \!\cap \!\cE_1^c\big) \!\\
    & \quad +\! \PP\big(\!\big\{ |M_{\cB} \!-\! M_{\cA} \!-\! t| \!\le \!\varepsilon \big\} \!\cap \!\cE_2^c\big) \\
    & \le  \PP\left(\Big|\max_{i \in \cB  } X_i - \max_{i \in \cA \backslash \cN} X_i- t\Big| \le \varepsilon\right) +  \PP\left(\Big|\max_{i \in \cB \backslash \cN} X_i - \max_{i \in \cA  } X_i- t\Big| \le \varepsilon\right) +\PP(|t| \le \varepsilon)\\
    & \le 14  \left\{\EE \!\Big[\max_{i \in \cA} \!|X_i - \mu|  \!\Big]  \wedge \EE \!\Big[ \max_{i \in \cB} \!|X_i - \mu|  \!\Big]  \right\} \frac{\varepsilon}{(1-\bar\rho)\sigma^2},
\end{align*}
where the second inequality holds by the fact that under the event $\cE$, we have
$$
\big\{ |M_{\cB} - M_{\cA} - t| \le \varepsilon \big\} = \big\{ |\max_{i \in \cN} X_i - \max_{i \in \cN} X_i - t| \le \varepsilon \big\}  = \big\{ |t| \le \varepsilon \big\} ,
$$
and the last one holds by Theorem~\ref{col: anti con equal var no cor 1} and the assumption $|t|>\varepsilon$. Thus,   \eqref{eq: lwr bd case t > eps} follows.

\subsection{Proofs of results in Section~\ref{sec: application}}\label{sec: proof sec application}
We provide the proof of 
% Lemma~\ref{lm: gauss comp rate} and
Theorem~\ref{prop: clt gaussian approx rate} in Section~\ref{sec: application}. Before we delve into the proof, we provide some auxiliary lemmas to be used in the proof.

We first provide a comparison inequality using the Stein kernel or the difference between the maxima of two random vectors, which is a modified version of Theorem~3.2 in \cite{cck2022improvedbootstrap}. 
    \begin{lemma}\label{lm: gauss comp rate}
       Let \(\cC_b^2 (\cdot) : \RR^p\rightarrow \RR\) be the class of twice continuously differentiable functions, with both the functions and their first and second-order partial derivatives bounded. Let $V \in \RR^p$ be a centered random vector,  and suppose that there exists a mensurable function (Stein kernel) $\tau: \RR^p \rightarrow \RR^{p \times p}$ such that 
        \begin{equation}\label{eq: stein kernel cond}
            \sum_{j=1}^p \EE(\partial_j \varphi(V) V_j) = \sum_{j,k=1}^p \EE(\partial_j \partial_k \varphi(V) \tau_{jk}(V)), \quad \forall \varphi \in \cC_b^2 (\cdot). 
        \end{equation} 
        Let $Z \in \RR^p$ be a centered Gaussian random vector with covariance matrix $\Sigma$, and let $\{\cA,\cB\}$ be a nontrivial partition of $[p]$. Suppose that $Z$ satisfies Condition~\ref{cond: anti-con cov} for the partition $\{\cA,\cB\}$. Then, we have that
 \begin{equation}\label{eq: gauss comp rate}
 \begin{aligned}
     & \sup_{s \in \RR, v \in \RR^p} \left|\PP\Big(\cM_{\cA}^v(V) - \cM_{\cB}^v(V) \le s\Big) - \PP\Big(\cM_{\cA}^v(Z) - \cM_{\cB}^v(Z) \le s\Big)  \right|\\
     & \quad \le K \cdot \frac{\EE(\max_{\ell \in \cS_{\rm p}}|Z_{\ell}|/\sigma_{\ell})}{C_{\cA,\cB}}\sqrt{\Delta \log p},
 \end{aligned}
 \end{equation}
 where  $\cM_{\cS}^v(x) = \max_{j \in \cS} (x_j + v_j)$ for $x \in \RR^p$ and $\cS \in\{ \cA, \cB\}$, $K > 0$ is a universal constant, and $\Delta = \EE(\max_{j,k \in [p]} |\tau_{jk}(V) - \Sigma_{jk}|)$. 
    \end{lemma}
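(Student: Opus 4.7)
\textbf{Proof proposal for Lemma~\ref{lm: gauss comp rate}.}

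The plan is to mimic the smoothing-plus-interpolation strategy of Theorem~3.2 in \cite{cck2022improvedbootstrap}, replacing the role of the single-maximum anti-concentration bound by our new bound for the difference of maxima, Theorem~\ref{thm: anti con}. Fix a parameter $\delta>0$ to be optimized later. For each $(s,v)$ I would construct $\varphi_s^v\in \cC_b^2(\RR^p)$ approximating the indicator $\II\{\cM_\cA^v(x)-\cM_\cB^v(x)\le s\}$ in the sandwich sense
\$
\II\{\cM_\cA^v(x)-\cM_\cB^v(x)\le s-\delta\} \le \varphi_s^v(x) \le \II\{\cM_\cA^v(x)-\cM_\cB^v(x)\le s+2\delta\},
\$
using the usual log-sum-exp soft-max with inverse-temperature $\beta\asymp \delta^{-1}\log p$ (one for each of $\cM_\cA^v$ and $\cM_\cB^v$) composed with a smooth cutoff at level $s$. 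This $\varphi_s^v$ enjoys the standard Lipschitz/Hessian bounds in terms of $\beta$ from \cite{cck2022improvedbootstrap}, in particular a control of the mixed partial derivatives $\partial_j\partial_k\varphi_s^v$.

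Second, the triangle inequality gives
\begin{align*}
&\bigl|\PP(\cM_\cA^v(V)-\cM_\cB^v(V)\le s)-\PP(\cM_\cA^v(Z)-\cM_\cB^v(Z)\le s)\bigr| \\
&\quad \le \bigl|\EE\varphi_s^v(V)-\EE\varphi_s^v(Z)\bigr| \\
&\qquad + \sup_{s',v}\bigl|\PP(\cM_\cA^v(Z)-\cM_\cB^v(Z)\le s')-\PP(\cM_\cA^v(Z)-\cM_\cB^v(Z)\le s'+3\delta)\bigr|.
\end{align*}
The second term is exactly a L\'evy-type concentration for $\cM_\cA^v(Z)-\cM_\cB^v(Z)$ over an interval of length $3\delta$; since shifting $Z$ by the deterministic $v$ does not change the covariance, Condition~\ref{cond: anti-con cov} still holds, so Theorem~\ref{thm: anti con} applies and yields an upper bound of order $\delta\cdot \EE[\max_{\ell\in\cS_{\rm p}}|Z_\ell|/\sigma_\ell]/C_{\cA,\cB}$.

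Third, for the smoothing error I would use the iterative randomized Lindeberg argument from Section~3 of \cite{cck2022improvedbootstrap} adapted to the Stein-kernel setting: combining \eqref{eq: stein kernel cond} with Gaussian integration by parts applied along a path interpolating between $V$ and $Z$, the difference $\EE\varphi_s^v(V)-\EE\varphi_s^v(Z)$ is expressed as an expectation of $\sum_{j,k}\partial_j\partial_k\varphi_s^v(\cdot)\bigl(\tau_{jk}(V)-\Sigma_{jk}\bigr)$ along the interpolation. Bounding $|\tau_{jk}(V)-\Sigma_{jk}|$ by its maximum gives a factor of $\Delta$, while the randomized Lindeberg refinement replaces a naive $\sum_{j,k}\|\partial_j\partial_k\varphi_s^v\|_\infty$ (which would be of order $\beta^2$) by a quantity of order $\beta\sqrt{\Delta}$, using that the symmetric Hessian of the log-sum-exp has trace-norm uniformly bounded by $\beta$. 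This yields a smoothing error of order $\beta\sqrt{\Delta}\asymp \delta^{-1}\sqrt{\Delta\log p}$. Optimizing by choosing $\delta^2\asymp \sqrt{\Delta\log p}\cdot C_{\cA,\cB}/\EE[\max_{\ell\in\cS_{\rm p}}|Z_\ell|/\sigma_\ell]$ and then substituting back delivers the claimed bound $K\cdot\EE[\max_{\ell\in\cS_{\rm p}}|Z_\ell|/\sigma_\ell]\cdot C_{\cA,\cB}^{-1}\sqrt{\Delta\log p}$.

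The main obstacle is Step~3: although $\varphi_s^v$ is a direct analogue of the smoothed max, it is now a smoothed \emph{difference} of two soft-maxes, and the mixed Hessian has cross-terms between indices in $\cA$ and indices in $\cB$. I would need to verify that the same Frobenius/operator-type bounds used in \cite{cck2022improvedbootstrap} for $\|\nabla^2\varphi_s^v\|$ remain valid for this composite smoothing and then trace through the iterative Lindeberg bound carefully to confirm that the resulting Stein-kernel error still scales like $\beta\sqrt{\Delta}$ rather than $\beta^2\Delta$. Steps 1, 2, and the anti-concentration piece are comparatively routine once our Theorem~\ref{thm: anti con} is in hand.
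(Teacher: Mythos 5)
Your Steps 1 and 2 (soft-max smoothing with $\beta\asymp\delta^{-1}\log p$ and the sandwich/triangle decomposition, with the $3\delta$-window term handled by the anti-concentration theorems applied to the shifted Gaussian) match the paper. The genuine gap is in your Step 3. First, the ``iterative randomized Lindeberg'' argument plays no role here: in the paper it is used only for the sum comparison (Lemma~\ref{prop: modified thm 3.1}), whereas the present lemma is proved by a single Gaussian interpolation $\Psi(t)=\EE[\varphi_s^v(\sqrt{t}V+\sqrt{1-t}Z)]$ combined with the Stein identity \eqref{eq: stein kernel cond}. Second, your claimed bound of order $\beta\sqrt{\Delta}$ for the Hessian contribution is unjustified and cannot be correct: $\Delta$ enters only through $\tau_{jk}(V)-\Sigma_{jk}$, and the deterministic bound on the Hessian sum of the smoothed max-difference is $\sum_{j,k}|\partial_j\partial_k\varphi_s^v|\lesssim\delta^{-2}\log p$ (Lemma~\ref{lm: clt third dev bd}), not $\beta^2$ and certainly not something involving $\sqrt{\Delta}$. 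Third, even granting your claimed smoothing error $\delta^{-1}\sqrt{\Delta\log p}$, the optimization you propose does not yield the statement: writing $r=\EE(\max_{\ell\in\cS_{\rm p}}|Z_\ell|/\sigma_\ell)/C_{\cA,\cB}$ and taking $\delta^2\asymp\sqrt{\Delta\log p}/r$, both terms are of order $\sqrt{r}\,(\Delta\log p)^{1/4}$, which exceeds the target $r\sqrt{\Delta\log p}$ precisely in the regime where the target is nontrivial (i.e.\ $r\sqrt{\Delta\log p}<1$), so \eqref{eq: gauss comp rate} would not follow.

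The missing idea, which is what makes the rate $r\sqrt{\Delta\log p}$ attainable, is that the smoothing error itself must acquire a factor $r\delta$ from anti-concentration, in addition to the Hessian bound. In the paper one observes that $\partial_j\partial_k\varphi_s^v$ vanishes unless the (suitably rescaled and shifted) difference of maxima lies in an $O(\delta)$-band around $s$, i.e.\ $\partial_j\partial_k\varphi_s^v(\sqrt{t}V+\sqrt{1-t}Z)=\partial_j\partial_k\varphi_s^v(\sqrt{t}V+\sqrt{1-t}Z)\,h^{v'}_{s',\delta'}(Z,0)$ with $v',s',\delta'$ rescaled by $\sqrt{1-t}$; since $Z$ is independent of $V$ and satisfies Condition~\ref{cond: anti-con cov}, Theorems~\ref{col: anti con equal var no cor 1} and~\ref{thm: anti con} give $\EE[h^{v'}_{s',\delta'}(Z,0)\,|\,V]\lesssim r\delta/\sqrt{1-t}$ uniformly in $V$. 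Combining this with $\sum_{j,k}|\partial_j\partial_k\varphi_s^v|\lesssim\delta^{-2}\log p$ and $\max_{j,k}|\tau_{jk}(V)-\Sigma_{jk}|$ yields $|\Psi'(t)|\lesssim r\,\delta^{-1}\Delta\log p/\sqrt{1-t}$, hence after integrating in $t$ a smoothing error linear in $\Delta$ carrying the extra factor $r\delta$; balancing $r\,\delta^{-1}\Delta\log p$ against the sandwich term $r\delta$ via $\delta=\sqrt{\Delta\log p}$ gives exactly \eqref{eq: gauss comp rate}. Without this support-of-the-Hessian plus conditional anti-concentration step, your argument cannot reach the stated rate.
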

\begin{proof}
    See Appendix~\ref{sec: proof lm gauss comp rate}.
\end{proof}
Next, we provide a tailored version of Theorem~3.1 in \cite{cck2022improvedbootstrap}. We modify the statistics and anti-concentration conditions in \cite{cck2022improvedbootstrap} to our setting.
Let $V_1, \ldots, V_n, Z_1, \ldots, Z_n \in \RR^p$ be independent zero-mean random vectors, and $B_n \ge 1$ is the rate defined in \eqref{eq: exp rate sample} and \eqref{eq: mom rate sample}. We impose the following conditions on the random vectors.
\begin{condition}\label{cond: fourth moment}
    There exists a constant  $C > 0$ such that for all $j \in [p]$, we have  
    $$
    \frac{1}{n} \sum_{i=1}^n \EE [V_{ij}^4 + Z_{ij}^4] \le C B_n^2.
    $$
\end{condition}
\begin{condition}\label{cond: tail prob}
    There exists a constant $C \ge 1$ such that for all $i \in [n]$, we have  
    $$
    \PP\big(\|V_i\|_{\infty} \vee \|Z_i\|_{\infty} > C B_n \log (pn)\big) \le 1/n^4. 
    $$
\end{condition}
\begin{condition}\label{cond: 8th inf mm bd}
    There exists a constant $C > 0$ such that for all $i \in [n]$, we have 
    $$
   \EE\big[\|V_i\|_{\infty}^8 + \|Z_i\|_{\infty}^8  \big] \le C B_n^8 \log^8 (pn).
    $$
\end{condition}
\begin{condition}\label{cond: anti-con rate}
    Define $S_n^Z = \frac{1}{\sqrt{n}} \sum_{i=1}^n Z_i$ and let $\{\cA,\cB\}$ be the non-trivial partition of $[p]$ specified in Theorem~\ref{prop: clt gaussian approx rate}. There exists a constant $C > 0$, and deterministic sequences $\varepsilon_n \ge 0$ and $ r_n \ge  1$ such that for
    % all $v \in \RR^p$, $s \in \RR$ and 
    any $t \in (0,\infty)$, we have  
    $$
    \sup_{v \in \RR^p, s \in \RR}\!\Big\{ \!\PP\Big(\!\cM_{\cA}^v(S_n^Z) - \cM_{\cB}^v(S_n^Z) \le s + t \Big) - \PP\Big(\cM_{\cA}^v(S_n^Z) - \cM_{\cB}^v(S_n^Z) \le s \Big)\!\!\Big\} \le C (r_n t + \varepsilon_n),
    $$
  where $\cM_{\cS}^v(x) := \max_{j \in \cS} (x_j + v_j)$ for $x \in \RR^p$, and $v \in \RR^p$ stands for an arbitrary shift in the mean of the random vector.
\end{condition}
Conditions~\ref{cond: fourth moment}-\ref{cond: 8th inf mm bd} correspond to the Conditions~V, P and B given in Section~3.1 of \cite{cck2022improvedbootstrap}, and Condition~\ref{cond: anti-con rate} is the anti-concentration condition for $S_n^Z$ corresponding to Condition~A in \cite{cck2022improvedbootstrap}, which is tailored to fit our setting. 
% Same as in \cite{cck2022improvedbootstrap}, the constants $C_v, C_e, C_b$ and $C_a$ are fixed and do not change with the indices. 
Then we present a modification of Theorem~3.1 in \cite{cck2022improvedbootstrap} (see Appendix~\ref{sec: proof prop modified thm 3.1} for the proof).

\begin{lemma}\label{prop: modified thm 3.1}
Let $V_1, \ldots, V_n, Z_1, \ldots, Z_n \in \RR^p$ be independent centered random vectors, and suppose that Conditions~\ref{cond: fourth moment}-\ref{cond: anti-con rate} hold. Assume that
    \begin{equation}\label{eq: m2 cond}
        \max_{j,k \in [p]} \left| \frac{1}{\sqrt{n}} \sum_{i=1}^n \big(\EE (V_{ij} V_{ik}) -\EE (Z_{ij} Z_{ik}) \big)\right| \le C_0 B_n \sqrt{\log (pn)},
    \end{equation}
    and 
    \begin{equation}\label{eq: m3 cond}
        \max_{j,k,\ell \in [p]} \left| \frac{1}{\sqrt{n}} \sum_{i=1}^n \big(\EE (V_{ij} V_{ik} V_{i\ell}) -\EE (Z_{ij} Z_{ik} Z_{i\ell}) \big)\right|\le C_0 B_n^2 \sqrt{\log^3 (pn)},
    \end{equation}
    for some constant $C_{0} > 0$. Then letting  $S_n^V = \frac{1}{\sqrt{n}}\sum_{i=1}^n V_i$ and $S_n^Z = \frac{1}{\sqrt{n}} \sum_{i=1}^n Z_i$, under the condition that $B_n^2 \log^5(pn) \le c_0 n$ for some small enough constant $c_0 > 0$, we have 
    \begin{equation}\label{eq: CLT initial results}
        \begin{aligned}
            & \sup_{v \in \RR^p, s \in \RR} \left|\PP\Big(\cM_{\cA}^v(S_n^V) - \cM_{\cB}^v(S_n^V) \le s  \Big) - \PP\Big(\cM_{\cA}^v(S_n^Z) - \cM_{\cB}^v(S_n^Z) \le s \Big)\right|\\
            & \quad \le C_1 \left(r_n \left(\frac{B_n^2 \log^3(pn)}{n}\right)^{1/4}+ \varepsilon_n\right),
        \end{aligned}
    \end{equation}
    where $C_1 > 0$ is a constant only depending on $c_0, C_0$ and the constants in Conditions~\ref{cond: fourth moment}-\ref{cond: anti-con rate}.
    % {\red \bf the constants in conditions 5.1-5.4} $C_v$, $C_e$, $C_b$, $C_a$ and $C_m$,    
\end{lemma}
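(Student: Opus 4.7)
The plan is to adapt the iterative randomized Lindeberg scheme of \cite{cck2022improvedbootstrap} to the non-convex test function $x \mapsto \cM_{\cA}^v(x) - \cM_{\cB}^v(x)$. The key observation is that although this map is not convex, it is the difference of two soft-max functionals, each of which admits the standard log-sum-exp smoothing with well-controlled partial derivatives. I therefore proceed via the classical smoothing-plus-matching decomposition: (i) replace the indicator with a smooth surrogate at scale $\delta$, (ii) control the surrogate using moment matching up to order $3$, and (iii) absorb the smoothing error through the anti-concentration hypothesis (Condition~\ref{cond: anti-con rate}).

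For step (i), I would fix a tuning parameter $\delta > 0$ and a smoothing level $\beta \asymp \delta^{-1}\log p$, define $F_{\beta,\cS}^v(x) := \beta^{-1}\log\sum_{j \in \cS} \exp(\beta(x_j + v_j))$ for $\cS \in \{\cA,\cB\}$, and compose with a $\cC^\infty$ cutoff $\psi:\RR\to[0,1]$ satisfying $\psi \equiv 1$ on $(-\infty,0]$ and $\psi\equiv 0$ on $[1,\infty)$, setting
\[
\varphi_{s}^v(x) \;=\; \psi\Bigl(\delta^{-1}\bigl(F_{\beta,\cA}^v(x) - F_{\beta,\cB}^v(x) - s\bigr)\Bigr).
\]
A routine calculation using $0 \le F_{\beta,\cS}^v(x) - \cM_{\cS}^v(x) \le \beta^{-1}\log p$ then gives the sandwich
\[
\II\bigl\{\cM_{\cA}^v(x) - \cM_{\cB}^v(x) \le s-\delta\bigr\} \;\le\; \varphi_{s}^v(x) \;\le\; \II\bigl\{\cM_{\cA}^v(x) - \cM_{\cB}^v(x) \le s+2\delta\bigr\}
\]
for $\beta$ chosen as above. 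By the triangle inequality, the quantity to bound in \eqref{eq: CLT initial results} is at most
\[
\sup_{v,s}\bigl|\EE\varphi_{s}^v(S_n^V) - \EE\varphi_{s}^v(S_n^Z)\bigr| \;+\; C\bigl(r_n\delta + \varepsilon_n\bigr),
\]
where the second term comes from applying Condition~\ref{cond: anti-con rate} to $S_n^Z$ at the two endpoints $s-\delta$, $s+2\delta$.

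For step (ii), I would invoke the iterative randomized Lindeberg argument of \cite[proof of Thm.~3.1]{cck2022improvedbootstrap} verbatim, since it operates only through bounds on the partial derivatives of $\varphi_s^v$ up to order four. The softmax weights $\pi_j^\cA(x) \propto \exp(\beta(x_j+v_j))$ form a probability vector on $\cA$, so $\partial_j F_{\beta,\cA}^v = \pi_j^\cA$ and its higher derivatives have $\ell_1$-norm bounds polynomial in $\beta$. Coupling this with the Lipschitz and higher derivative bounds on $\psi$, one obtains $\|\partial^\alpha \varphi_s^v\|_\infty \lesssim \delta^{-|\alpha|}\beta^{|\alpha|-1}$ for the relevant multi-indices, which is exactly the shape of derivative bound used in \cite{cck2022improvedbootstrap}. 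Plugging these into their randomized replacement scheme, together with Conditions~\ref{cond: fourth moment}–\ref{cond: 8th inf mm bd} and the moment matching hypotheses \eqref{eq: m2 cond}–\eqref{eq: m3 cond}, yields
\[
\sup_{v,s}\bigl|\EE\varphi_{s}^v(S_n^V) - \EE\varphi_{s}^v(S_n^Z)\bigr| \;\lesssim\; \frac{B_n^2 \log^3(pn)}{n\,\delta^3} \;+\; \text{(lower order)}.
\]

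Finally, for step (iii) I would balance the smoothing error $r_n\delta$ against the matching error $B_n^2\log^3(pn)/(n\delta^3)$, giving the optimal choice $\delta \asymp (B_n^2\log^3(pn)/n)^{1/4}\, r_n^{-1/4}$ and the announced rate $r_n(B_n^2\log^3(pn)/n)^{1/4} + \varepsilon_n$; the subsidiary terms are dominated once $B_n^2\log^5(pn)\le c_0 n$ with $c_0$ small. The main obstacle is verifying that the derivative bounds for the soft-max-of-difference surrogate $\varphi_s^v$ are compatible with the multi-scale moment-matching estimates used in \cite{cck2022improvedbootstrap}; because the soft-max weights $\pi^\cA,\pi^\cB$ are supported on the disjoint index sets $\cA,\cB$, the cross terms never couple, and essentially the same $L^1 \to L^\infty$ bookkeeping used by \cite{cck2022improvedbootstrap} for a single soft-max carries over to the difference, so this obstacle is surmountable without new structural input.
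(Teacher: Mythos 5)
Your proposal follows essentially the same route as the paper's proof: the paper also smooths the max-difference indicator by composing the log-sum-exp difference $\zeta_\beta^v$ (with $\beta=\delta^{-1}\log p$) with a smooth cutoff, re-establishes the derivative and support bounds for this surrogate (Lemma~\ref{lm: clt third dev bd}), re-runs the iterative randomized Lindeberg recursion of \cite{cck2022improvedbootstrap} with Condition~\ref{cond: anti-con rate} absorbing the $O(\delta)$ smoothing error, and picks $\delta \asymp (B_n^2\log^3(pn)/n)^{1/4}$. The only differences are bookkeeping details you already defer to \cite{cck2022improvedbootstrap}: the paper needs partial-derivative bounds up to fifth (not fourth) order, and the final rate comes from resolving a $d$-indexed recursion over roughly $\log n$ levels with a level-dependent $\delta_d$ rather than from your single balancing step.
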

We then prove Theorem~\ref{prop: clt gaussian approx rate}.
\begin{proof}[\bf Proof of Theorem~\ref{prop: clt gaussian approx rate}]
    Recall that due to the constructions of $X$, $Y$ and $\hat{X}$, without loss of generality, we can assume $\EE(\xi_i) = \mu =  \mathbf{0}$ for $i \in [n]$. Similarly to the proof of Theorem~2.1 in \cite{cck2022improvedbootstrap}, we denote by $\cA_n$ the event that (33) and (35)-(37) in \cite{cck2022improvedbootstrap} hold, i.e.,  $\cA_n = \cA_{n,1} \cap \cA_{n,2} \cap \cA_{n,3} \cap \cA_{n,4}$, where
    \begin{align*}
        \cA_{n,1} &= \big\{\textstyle\max_{i \in [n]}\|\xi_i\|_{\infty} \le 5 B_n \log (pn) \big\} ,\\
        \cA_{n,2} &= \big\{\textstyle\max_{j \in [p]} \frac{1}{n} \textstyle\sum_{i=1}^n (\xi_{ij} - \bar\xi_j)^4 \le 2 B_n^2 b_0^2 \big\},\\
        \cA_{n,3} & = \big\{  \textstyle\max_{j,k \in [p]} \big|n^{-1/2}\textstyle\sum_{i=1}^n [ (\xi_{ij} - \bar\xi_j) (\xi_{ik} - \bar\xi_k) - (\Sigma_i)_{jk} ]\big| \le C B_n \sqrt{\log (pn)} \big\},\\
        \cA_{n,4} & = \Big\{  \!\max_{j,k,\ell \in [p]} \Big|\frac{1}{\sqrt{n}}\sum_{i=1}^n [ (\xi_{ij} - \bar\xi_j) (\xi_{ik} - \bar\xi_k) (\xi_{i\ell} - \bar\xi_{\ell}) - \EE[\xi_{ij}\xi_{ik}\xi_{i\ell}] ]\Big| \!\le\! C B_n^2 \sqrt{\log^3 (pn)} \Big\},
    \end{align*}
% where we replace the event in (35) of \cite{cck2022improvedbootstrap} by {\color{red}EF: What is $b_0$? No $j$ here, do you mean $i$? $\xi$ is not defined.}{\purple SS: the notations were defined in the proposition. I'll restate below.}
%     $$
%     \frac{1}{n} \sum_{i=1}^n (\xi_{ij} - \bar\xi_j)^4 \le 2 B_n^2 b_0^2, \quad \text{for all } j \in [p],
%     $$
    where $C > 0$ is a universal constant, $B_n \ge 1$ and $b_0 > 0$ are defined in \eqref{eq: exp rate sample} and \eqref{eq: mom rate sample}, and $\bar\xi = \frac{1}{n}\sum_{i=1}^n \xi_i$. Also, we let
    \begin{equation}\label{eq: nu rates clt}
         \nu_{n,1} = \frac{\EE [\max_{\ell \in \cS_{\rm p}} |X_{\ell} - \EE(X_{\ell})|/\sigma_{\ell}]}{C_{\cA, \cB}} \left(\frac{B_n^2 \log^3(pn)}{n}\right)^{1/4} \!\!\!
 , \quad \nu_{n,2} = \sqrt{\frac{B_n^2 \log^3 (pn)}{n}},
    \end{equation}
 and, without loss of generality, we assume $1/(2 n^4) + 1/n + 3 \nu_{n,2}  < 1$ by taking the constant $C >0$ in \eqref{eq: CLT rate} large enough. Then by Lemma~4.1 and Lemma~4.2 in \cite{cck2022improvedbootstrap}, we have that 
 \begin{equation} \label{eqn:AnP}
 \PP (\cA_n) \ge 1 - 1/(2 n^4) - 1/n - 3 \nu_{n,2} > 0.
 \end{equation}
 Then we will show that \eqref{eq: CLT rate} and \eqref{eq: CLT rate 1} hold conditioning on the event $\cA_n$ via triangle inequality. Specifically,  as in \cite{cck2022improvedbootstrap}, we define the multiplier bootstrap random vector generated from independent standardized ${\rm Beta}(1/2, 3/2)$ random variables,
 \begin{equation}\label{eq: X beta mult}
     X^* = \frac{1}{\sqrt{n}} \sum_{i =1}^n [\kappa_i (\xi_i -\bar\xi) + a], 
 \end{equation}
 where $\kappa_i = \Var(\kappa_i^0)^{-1/2}(\kappa_i^0 - \EE(\kappa_i^0) )$ and $\kappa_i^0 \overset{\rm i.i.d.}{\sim} {\rm Beta} (1/2, 3/2)$ are independent of $\xi_{1:n} = \{\xi_i\}_{i=1}^n$. %Then $\EE (\kappa_i^3) = 1$.
We propose the following claims, based on which we establish  \eqref{eq: CLT rate} and  \eqref{eq: CLT rate 1} via the triangle inequality:
\begin{claim}\label{claim: CLT rate X_star X}
Under the event $\cA_n$, the following holds,
    \begin{equation}\label{eq: CLT rate X* X}
         \sup_{v \in \RR^p, s \in \RR}\left| \PP(\cM_{\cA}^v({X^*}) - \cM_{\cB}^v({X^*}) \!\le s \,|\, \xi_{1:n}) - \PP( \cM_{\cA}^v({X})\! - \!\cM_{\cB}^v({X} ) \le s)\right| \lesssim \nu_{n,1}.
\end{equation}
\end{claim}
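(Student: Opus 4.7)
The plan is to apply Lemma~\ref{prop: modified thm 3.1} conditional on the $\sigma$-algebra generated by $\xi_{1:n}$, taking as summands $V_i = \kappa_i(\xi_i - \bar\xi)$ on the bootstrap side and $Z_i = \eta_i - \mu$ on the Gaussian side. Writing $X^* = S_n^V + \sqrt{n}\,a$ and $X = S_n^Z + \sqrt{n}\,a$ with $S_n^V = n^{-1/2}\sum_{i=1}^n V_i$ and $S_n^Z = n^{-1/2}\sum_{i=1}^n Z_i$, the common deterministic shift $\sqrt{n}\,a$ can be absorbed into the free vector $v \in \RR^p$ appearing in $\cM_\cA^v$ and $\cM_\cB^v$. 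Hence the left-hand side of \eqref{eq: CLT rate X* X} equals
$$
\sup_{v \in \RR^p,\, s \in \RR}\Bigl|\PP\bigl(\cM_\cA^v(S_n^V) - \cM_\cB^v(S_n^V) \le s \mid \xi_{1:n}\bigr) - \PP\bigl(\cM_\cA^v(S_n^Z) - \cM_\cB^v(S_n^Z) \le s\bigr)\Bigr|.
$$

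First I would verify Conditions~\ref{cond: fourth moment}--\ref{cond: 8th inf mm bd} and the moment-matching hypotheses \eqref{eq: m2 cond}--\eqref{eq: m3 cond} on the event $\cA_n$. Since the standardized $\mathrm{Beta}(1/2,3/2)$ multiplier $\kappa_i$ is bounded and satisfies $\EE[\kappa_i] = 0$, $\EE[\kappa_i^2] = 1$, and $\EE[\kappa_i^3] = 0$ (the feature motivating its use in \cite{cck2022improvedbootstrap}), the required conditional moment and tail bounds for $V_i$ reduce to moment bounds for $\xi_i - \bar\xi$, which are ensured by $\cA_{n,1}$ and $\cA_{n,2}$. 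Condition \eqref{eq: m2 cond} follows from the identity $\EE[V_{ij}V_{ik}\mid \xi_{1:n}] = (\xi_{ij}-\bar\xi_j)(\xi_{ik}-\bar\xi_k)$ together with $\cA_{n,3}$; condition \eqref{eq: m3 cond} is trivial because the Gaussian third moments vanish and the conditional bootstrap third moment equals $\EE[\kappa_i^3]\,(\xi_{ij}-\bar\xi_j)(\xi_{ik}-\bar\xi_k)(\xi_{i\ell}-\bar\xi_\ell) = 0$.

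Second I would establish Condition~\ref{cond: anti-con rate} for $S_n^Z$. Since $S_n^Z$ is a centered Gaussian with covariance $\Sigma^*$, and $\Sigma^*$ satisfies Condition~\ref{cond: anti-con cov} for $\{\cA,\cB\}$ by hypothesis, the shifted vector $S_n^Z + v$ remains Gaussian with the same covariance and hence still satisfies Condition~\ref{cond: anti-con cov}. Applying Theorem~\ref{thm: anti con} (or Theorem~\ref{col: anti con equal var no cor 1} in the homogeneous-variance case) yields
$$
\sup_{v\in\RR^p,\, s\in\RR}\PP\bigl(\bigl|\cM_\cA^v(S_n^Z) - \cM_\cB^v(S_n^Z) - s\bigr| \le t\bigr) \le 2\,\frac{\EE\bigl[\max_{\ell \in \cS_{\rm p}}|X_\ell - \EE X_\ell|/\sigma_\ell\bigr]}{C_{\cA,\cB}}\, t,
$$
so Condition~\ref{cond: anti-con rate} holds with $\varepsilon_n = 0$ and $r_n = 2\EE[\max_{\ell \in \cS_{\rm p}}|X_\ell - \EE X_\ell|/\sigma_\ell]/C_{\cA,\cB}$. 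This is the step in which the new anti-concentration bounds of Section~\ref{sec: main thm results} play their essential role.

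Finally, under the hypothesis $B_n^2 \log^5(pn) \le c n$, Lemma~\ref{prop: modified thm 3.1} then gives the bound $\lesssim r_n\,(B_n^2\log^3(pn)/n)^{1/4} + \varepsilon_n$, which reproduces precisely $\nu_{n,1}$ as defined in \eqref{eq: nu rates clt}, establishing the claim. The main obstacle is handling the third-moment-matching condition \eqref{eq: m3 cond} uniformly over $j,k,\ell$: this is what forces the use of the specific $\mathrm{Beta}(1/2,3/2)$ multiplier rather than a generic Gaussian multiplier, and it is precisely what allows the rate to scale as $n^{-1/4}$ through $\log^3(pn)$ rather than a higher power of $\log$. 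All remaining steps amount to bookkeeping that translates the conditional bootstrap law into the unconditional Gaussian law via Lemma~\ref{prop: modified thm 3.1}.
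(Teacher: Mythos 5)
Your route fails at the third-moment matching step, and the error is the claim that $\EE[\kappa_i^3]=0$. The standardized ${\rm Beta}(1/2,3/2)$ multiplier in \eqref{eq: X beta mult} has $\EE[\kappa_i^3]=1$ (skewness of ${\rm Beta}(1/2,3/2)$ equals $1$); indeed this nonzero third moment is the entire reason $X^*$ is introduced instead of the Gaussian-multiplier bootstrap $\hat X$ — it makes the conditional third moments of $X^*$ equal to the empirical third moments $(\xi_{ij}-\bar\xi_j)(\xi_{ik}-\bar\xi_k)(\xi_{i\ell}-\bar\xi_\ell)$, which under $\cA_{n,4}$ are close to the population third moments of $Y$. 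Consequently, applying Lemma~\ref{prop: modified thm 3.1} to the pair $(X^*,X)$ would require, in \eqref{eq: m3 cond}, that $\max_{j,k,\ell}\big|n^{-1/2}\sum_{i=1}^n(\xi_{ij}-\bar\xi_j)(\xi_{ik}-\bar\xi_k)(\xi_{i\ell}-\bar\xi_\ell)\big|\lesssim B_n^2\log^{3/2}(pn)$, since the Gaussian side has vanishing third moments. Under $\cA_{n,4}$ this quantity differs from $n^{-1/2}\sum_{i=1}^n\EE[\xi_{ij}\xi_{ik}\xi_{i\ell}]$ by an admissible amount, but that remaining term is of order $\sqrt{n}$ times the average population third moment and does not vanish for general (asymmetric) $\xi_i$. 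So \eqref{eq: m3 cond} is violated for this pair and the randomized-Lindeberg lemma cannot be invoked; your verification of Conditions~\ref{cond: fourth moment}--\ref{cond: 8th inf mm bd}, \eqref{eq: m2 cond}, and the anti-concentration Condition~\ref{cond: anti-con rate} (via Theorem~\ref{thm: anti con}) is otherwise sound, but it is exactly the mismatch of third moments between $X^*$ and the Gaussian $X$ that blocks the argument. (Had you used Gaussian multipliers the third moments would match, but then you would be proving a statement about $\hat X$, i.e.\ essentially Claim~\ref{claim: CLT rate X X_hat}, not about $X^*$.)

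The paper takes a different tool precisely to avoid this: Claim~\ref{claim: CLT rate X_star X} is proved with the Stein-kernel Gaussian comparison Lemma~\ref{lm: gauss comp rate}, which needs only second-moment (covariance) closeness. One takes $\tau$ to be a Stein kernel of $X^*$ conditional on $\xi_{1:n}$, bounds $\EE\big(\max_{j,k}|\tau_{jk}(X^*)-\tfrac1n\sum_i(\xi_{ij}-\bar\xi_j)(\xi_{ik}-\bar\xi_k)|\,\big|\,\xi_{1:n}\big)\lesssim B_n\sqrt{\log p/n}$ via Lemma~4.6 of \cite{koike2021notesdimdep}, adds the deviation controlled by $\cA_{n,3}$ to get $\Delta\lesssim B_n\sqrt{\log(pn)/n}$, and the lemma's $\sqrt{\Delta\log p}$ rate then gives exactly $\nu_{n,1}$. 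The randomized-Lindeberg Lemma~\ref{prop: modified thm 3.1}, with the third-moment matching that the Beta multiplier provides, is reserved for comparing $X^*$ with $Y$ in Claim~\ref{claim: CLT rate X_star Y}.
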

\begin{proof}
    See Section~\ref{sec: proof claim CLT rate X* X}.
\end{proof}
\begin{claim}\label{claim: CLT rate X_star Y}
Under the event $\cA_n$, we have
\begin{equation}\label{eq: CLT rate X* Y}
    \sup_{v \in \RR^p, s \in \RR}\left| \PP(\cM_{\cA}^v({X^*}) - \cM_{\cB}^v({X^*}) \le s \,|\, \xi_{1:n}) - \PP( \cM_{\cA}^v({Y}) - \cM_{\cB}^v({Y} ) \le s )\right| \lesssim  \nu_{n,1}.
\end{equation} 
\end{claim}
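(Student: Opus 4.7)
The plan is to invoke Lemma~\ref{prop: modified thm 3.1} in order to compare the conditional distribution of $X^*$ given $\xi_{1:n}$ with the unconditional distribution of $Y$. Working under the event $\cA_n$, I would apply Lemma~\ref{prop: modified thm 3.1} with an independent sample $(V_1,\ldots,V_n)$ satisfying $V_i \sim \xi_i$ and with $Z_i = \kappa_i(\xi_i - \bar\xi)$ treated as random in $\kappa_i$ only (the observed $\xi_{1:n}$ is fixed). Then $S_n^V$ has the distribution of $Y - \sqrt{n}\,a$ and $S_n^Z$ has the conditional distribution of $X^* - \sqrt{n}\,a$ given $\xi_{1:n}$; the deterministic shift $\sqrt{n}\,a$ is absorbed into the free vector $v$ in $\cM_{\cA}^v, \cM_{\cB}^v$. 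Claim~\ref{claim: CLT rate X_star Y} then reduces to checking the hypotheses of Lemma~\ref{prop: modified thm 3.1} with $r_n \asymp \EE[\max_{\ell \in \cS_{\rm p}}|X_\ell - \EE X_\ell|/\sigma_\ell]/C_{\cA, \cB}$ and $\varepsilon_n \lesssim \nu_{n,1}$.

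\textbf{Moment conditions.} Conditions~\ref{cond: fourth moment}--\ref{cond: 8th inf mm bd} on $V_i$ follow from the sub-exponential assumption \eqref{eq: exp rate sample} and the fourth-moment bound \eqref{eq: mom rate sample} via standard sub-exponential estimates; the same conditions on $Z_i$ follow from the boundedness $\kappa_i \in [-1, 3]$ of the standardized Beta$(1/2,3/2)$ multiplier combined with the sample-level bounds supplied by $\cA_{n,1}$ and $\cA_{n,2}$. The moment-matching conditions \eqref{eq: m2 cond}--\eqref{eq: m3 cond} are where the calibration of Beta$(1/2,3/2)$ is crucial: this distribution is chosen so that the standardized multiplier satisfies $\EE \kappa_i = 0$ and $\EE \kappa_i^2 = \EE \kappa_i^3 = 1$, which yields conditionally
\[
\EE_\kappa[Z_{ij}Z_{ik}] = (\xi_i - \bar\xi)_j(\xi_i - \bar\xi)_k, \qquad \EE_\kappa[Z_{ij}Z_{ik}Z_{i\ell}] = (\xi_i - \bar\xi)_j(\xi_i - \bar\xi)_k(\xi_i - \bar\xi)_\ell.
\]
Comparing with $\EE[V_{ij}V_{ik}] = (\Sigma_i)_{jk}$ and $\EE[V_{ij}V_{ik}V_{i\ell}] = \EE[\xi_{ij}\xi_{ik}\xi_{i\ell}]$, the discrepancies match \eqref{eq: m2 cond} and \eqref{eq: m3 cond} by the very definitions of $\cA_{n,3}$ and $\cA_{n,4}$.

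\textbf{Main obstacle: anti-concentration of $S_n^Z = X^* - \sqrt{n}\,a$.} This is the principal difficulty because $X^* \mid \xi_{1:n}$ is a sum of bounded but non-Gaussian independent random vectors, so Theorem~\ref{thm: anti con} does not apply to it directly. My plan is to transfer the Gaussian anti-concentration from $X$ using Claim~\ref{claim: CLT rate X_star X}. For any $s \in \RR$, $v \in \RR^p$, and $t > 0$, that claim gives
\[
\PP\!\left(\cM_{\cA}^v(X^*) - \cM_{\cB}^v(X^*) \in [s, s{+}t] \mid \xi_{1:n}\right) \le \PP\!\left(\cM_{\cA}^v(X) - \cM_{\cB}^v(X) \in [s, s{+}t]\right) + 2\nu_{n,1},
\]
and Theorem~\ref{thm: anti con}, applied to the Gaussian vector $X$ whose covariance $\Sigma^*$ satisfies Condition~\ref{cond: anti-con cov} and using translation invariance of the L\'evy concentration in $v$, bounds the first term on the right-hand side by $C r_n t$ with the stated $r_n$. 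This verifies Condition~\ref{cond: anti-con rate} with the required $r_n$ and $\varepsilon_n \lesssim \nu_{n,1}$; there is no circularity since Claim~\ref{claim: CLT rate X_star X} is established first, independently of Claim~\ref{claim: CLT rate X_star Y}.

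\textbf{Conclusion.} Feeding these verifications into Lemma~\ref{prop: modified thm 3.1} and using the definition of $\nu_{n,1}$ in \eqref{eq: nu rates clt} yields
\[
\sup_{v, s}\left|\PP(\cM_{\cA}^v(X^*) - \cM_{\cB}^v(X^*) \le s \mid \xi_{1:n}) - \PP(\cM_{\cA}^v(Y) - \cM_{\cB}^v(Y) \le s)\right| \lesssim r_n \left(\frac{B_n^2\log^3(pn)}{n}\right)^{\!1/4} + \nu_{n,1} \lesssim \nu_{n,1},
\]
which is exactly Claim~\ref{claim: CLT rate X_star Y}.
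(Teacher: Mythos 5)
Your proposal is correct and follows essentially the same route as the paper: apply Lemma~\ref{prop: modified thm 3.1} conditionally on $\xi_{1:n}$ with an independent copy of the sample playing the role of $Y$ and $X^*$ playing the role requiring anti-concentration, check the moment-matching conditions via the Beta$(1/2,3/2)$ calibration and the events $\cA_{n,1}$–$\cA_{n,4}$ (the paper cites Lemmas~4.1–4.2 of \cite{cck2022improvedbootstrap} for this), and verify Condition~\ref{cond: anti-con rate} for $X^*$ by transferring the Gaussian anti-concentration of $X$ (Theorem~\ref{thm: anti con}) through Claim~\ref{claim: CLT rate X_star X}, exactly as in the paper's chain of inequalities with $r_n = \EE[\max_{\ell \in \cS_{\rm p}}|X_\ell - \EE X_\ell|/\sigma_\ell]/C_{\cA,\cB}$ and $\varepsilon_n \lesssim \nu_{n,1}$.
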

\begin{proof}
    See Section~\ref{sec: proof claim clt rate X_star Y}.
\end{proof}
\begin{claim}\label{claim: CLT rate X X_hat}
Under the event $\cA_n$, we have
\begin{equation}\label{eq: CLT rate X X_hat}
     \sup_{v \in \RR^p, s \in \RR}\left| \PP( \cM_{\cA}^v({X}) - \cM_{\cB}^v({X} ) \le s) - \PP(\cM_{\cA}^v({\hat{X}}) - \cM_{\cB}^v(\hat{X}) \le s \,|\, \xi_{1:n}) \right| \lesssim \nu_{n,1}.
\end{equation}
\end{claim}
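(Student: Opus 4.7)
The plan is to exploit the fact that, conditional on $\xi_{1:n}$, both $X$ and $\hat X$ are Gaussian vectors sharing the common mean $\sqrt{n}\,a$, but with distinct covariances $\Sigma^*$ and $\hat\Sigma := n^{-1}\sum_{i=1}^n(\xi_i-\bar\xi)(\xi_i-\bar\xi)^\top$ respectively. Because the supremum in \eqref{eq: CLT rate X X_hat} ranges over all $v \in \RR^p$, the common mean can be absorbed into $v$, so without loss of generality I would replace $X$ by $X-\sqrt{n}\,a$ and $\hat X$ by $\hat X-\sqrt{n}\,a$, reducing the problem to a comparison between two centered Gaussian vectors. Note also that $X$ is independent of $\xi_{1:n}$ by construction (the underlying $\eta_i$'s are drawn independently of the $\xi_i$'s), so the unconditional law of $X$ agrees with its conditional law given $\xi_{1:n}$.

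Next I would apply Lemma~\ref{lm: gauss comp rate} conditional on $\xi_{1:n}$, with $V := \hat X - \sqrt{n}\,a$ (whose conditional law is $\cN(0,\hat\Sigma)$) and $Z := X - \sqrt{n}\,a \sim \cN(0,\Sigma^*)$. Classical Gaussian integration-by-parts shows that the constant map $\tau(V)\equiv\hat\Sigma$ serves as a valid Stein kernel for $V$, verifying \eqref{eq: stein kernel cond}. Since $\Sigma^*$ satisfies Condition~\ref{cond: anti-con cov} by assumption, Lemma~\ref{lm: gauss comp rate} gives
\[
\sup_{v,s}\!\left|\PP\big(\cM^v_\cA(\hat X)\!-\!\cM^v_\cB(\hat X)\!\le\! s \,\big|\, \xi_{1:n}\big) \!-\! \PP\big(\cM^v_\cA(X)\!-\!\cM^v_\cB(X)\!\le\! s\big)\right| \!\le\! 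K\!\cdot\!\frac{\EE[\max_{\ell\in\cS_{\rm p}}|Z_\ell|/\sigma_\ell]}{C_{\cA,\cB}}\sqrt{\Delta\log p},
\]
where $\Delta = \max_{j,k\in[p]}|\hat\Sigma_{jk}-\Sigma^*_{jk}|$ since the Stein kernel is deterministic once $\xi_{1:n}$ is fixed, and $\EE[\max_\ell |Z_\ell|/\sigma_\ell] = \EE[\max_\ell|X_\ell-\EE X_\ell|/\sigma_\ell]$ by construction.

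The remaining step is to control $\Delta$ on the event $\cA_n$. Since $\Sigma^*_{jk} = n^{-1}\sum_i (\Sigma_i)_{jk}$ and $\hat\Sigma_{jk} = n^{-1}\sum_i (\xi_{ij}-\bar\xi_j)(\xi_{ik}-\bar\xi_k)$, the definition of $\cA_{n,3}$ yields $\Delta \le CB_n\sqrt{\log(pn)}/\sqrt{n}$, and hence
\[
\sqrt{\Delta\log p}\;\lesssim\; B_n^{1/2}\log^{1/2}(p)\log^{1/4}(pn)\,n^{-1/4}\;\lesssim\; B_n^{1/2}\log^{3/4}(pn)\,n^{-1/4},
\]
using $\log p \le \log(pn)$. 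Matching with the definition of $\nu_{n,1}$ in \eqref{eq: nu rates clt}, this precisely gives the bound $\lesssim \nu_{n,1}$ asserted in \eqref{eq: CLT rate X X_hat}.

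I expect the main obstacle to be primarily a matter of verifying the applicability of Lemma~\ref{lm: gauss comp rate} under the conditional law: one must check that \eqref{eq: stein kernel cond} holds for the conditional Gaussian distribution given $\xi_{1:n}$ with the constant kernel $\hat\Sigma$, and that Condition~\ref{cond: anti-con cov} on $\Sigma^*$ (which is an unconditional assumption) suffices to invoke the lemma conditionally—both of which are routine because the Gaussian Stein identity holds pointwise for every realization of $\hat\Sigma$ and Condition~\ref{cond: anti-con cov} only constrains $Z$'s (unconditional) covariance. Once these two bookkeeping points are in place, the rest is an explicit substitution using $\cA_{n,3}$.
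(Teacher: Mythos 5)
Your proposal is correct and follows essentially the same route as the paper: apply Lemma~\ref{lm: gauss comp rate} conditionally on $\xi_{1:n}$ with the constant Stein kernel $\hat\Sigma$ (the covariance of the Gaussian vector $\hat X$), so that $\Delta=\max_{j,k}|\hat\Sigma_{jk}-\Sigma^*_{jk}|$, and then use the event $\cA_{n,3}$ to bound $\Delta\lesssim B_n\sqrt{\log(pn)/n}$, giving $\sqrt{\Delta\log p}\lesssim (B_n^2\log^3(pn)/n)^{1/4}$ and hence the $\nu_{n,1}$ rate. Your explicit absorption of the common mean $\sqrt{n}\,a$ into the shift $v$ is a bookkeeping step the paper leaves implicit, but it matches the intended argument.
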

\begin{proof}
    See Section~\ref{sec: proof claim clt rate X X_hat}.
\end{proof}
By the triangle inequality, combining \eqref{eq: CLT rate X* X} and \eqref{eq: CLT rate X* Y} shows that \eqref{eq: CLT rate} holds under the event \(\cA_n\). Furthermore, since both sides of   \eqref{eq: CLT rate} are deterministic and $\PP(\cA_n) > 0$, we have that \eqref{eq: CLT rate} holds unconditionally. Combining \eqref{eq: CLT rate X* X}, \eqref{eq: CLT rate X* X} and \eqref{eq: CLT rate X X_hat}, we have that \eqref{eq: CLT rate 1} holds under the event $\cA_n$  by triangle inequality. Besides, by \eqref{eqn:AnP}, we have that  $\cA_n$ holds with probability at least $1-1/(2n^4) - 1/n - 3 ({B_n^2 \log^3(pn) /n})^{1/2}$, and the results of Theorem~\ref{prop: clt gaussian approx rate} hold subsequently. 
\end{proof}
\subsubsection{Proof of Claim~\ref{claim: CLT rate X_star X}}\label{sec: proof claim CLT rate X* X}
 % \begin{lemma}\label{lm: anticon cond for beta mult}
 %   Under the same conditions as Theorem~\ref{prop: clt gaussian approx rate}, let $X^*$ be the Beta multiplier bootstrap statistic defined in \eqref{eq: X beta mult}, and then Condition~\ref{cond: anti-con rate} is satisfied by $X^*$  with $r_n = {\EE [\max_{\ell \in \cS_{\rm p}} |X_{\ell} - \EE(X_{\ell})|/\sigma_{\ell}]}/{C_{\cA, \cB}}$ and $\varepsilon_n = \nu_{n,1}$, where $\nu_{n,1}$ is the rate defined in \eqref{eq: nu rates clt}.
 %   % for the constant  $C = 2 K_0$. 
 % \end{lemma}
We will apply Lemma~\ref{lm: gauss comp rate} to show \eqref{eq: CLT rate X* X}. Specifically, let $\tau(\cdot): \RR^p \rightarrow \RR^{p \times p}$ be the Stein kernel of $X^*$ defined in \eqref{eq: stein kernel cond} conditioned on $\xi_{1:n}$. We will check the rate of $\Delta = \EE(\max_{j,k \in [p]} |\tau_{jk}(V) - \Sigma_{jk}|)$. 
 % that satisfies the condition in \eqref{eq: stein kernel cond}. 
By Lemma~4.6 of \cite{koike2021notesdimdep},
 % the last equation in the proof of Corollary~3.2 in \cite{cck2022improvedbootstrap}, 
under event $\cA_n$, we have that
 \begin{equation}\label{eq: stein beta ineq}
      \EE\Big(\max_{j,k \in [p]} \big|\tau_{jk} (X^*) - \frac{1}{n} \sum_{i=1}^n (\xi_{ij} - \bar\xi_j) (\xi_{ik} - \bar\xi_k)\big|  \, \,\, \Big| \xi_{1:n}\Big) \le C_1 B_n \sqrt{\frac{ \log p}{n}}  
 \end{equation}
 where $C_1 > 0$ is a constant.  Further, under the event $\cA_n$, we have
{\small \begin{align*}
&  \EE\Big(\max_{j,k \in [p]} \left|\tau_{jk} (X^*) - \Sigma_{jk}^*\right| \,\, \Big| \xi_{1:n}\Big) \\
& \  \le  \EE\Big(\max_{j,k \in [p]} \big|\tau_{jk} (X^*) \!-\! \frac{1}{n} \sum_{i=1}^n (\!\xi_{ij} - \bar\xi_j\!)(\!\xi_{ik} - \bar\xi_k)\big|  \Big| \xi_{1:n} \Big) \!+\! \max_{j,k \in [p]} \Big|\frac{1}{n} \sum_{i=1}^n (\!\xi_{ij} - \bar\xi_j) (\!\xi_{ik}\! -\! \bar\xi_k) - \Sigma_{jk}^*\Big| \\
& \ \le C_1 B_n \sqrt{\frac{ \log p}{n}}  + C_2 B_n \sqrt{\frac{\log (pn)}{n}}\\
  & \ \le C_3 B_n \sqrt{\frac{\log (pn)}{n}},
\end{align*}}
where $\Sigma^* = \frac{1}{n} \sum_{i=1}^n \Sigma_i$ with $\Sigma_i = \EE[\xi_i \xi_i^{\top}]$. The first inequality holds by the triangle inequality, the second inequality holds by \eqref{eq: stein beta ineq} together with that event $\cA_{n,3}$ holds,
% the condition that event (36) of \cite{cck2022improvedbootstrap} holds 
% under the event $\cA_n$,
and the last inequality holds for a large enough $C_3$.
 Also, by assumption, the random vector $X$ satisfies the covariance Condition~\ref{cond: anti-con cov}. Applying Lemma~\ref{lm: gauss comp rate}, under the event $\cA_n$, we have that $\Delta = C_3 B_n \sqrt{{\log (pn)}/{n}}$ so that
\begin{equation}\label{eq: bound beta anti-con}
    \begin{aligned}
    &\sup_{v \in \RR^p, s \in \RR}\left| \PP(\cM_{\cA}^v({X^*}) - \cM_{\cB}^v({X^*}) \le s \,|\, \xi_{1:n}) - \PP( \cM_{\cA}^v({X}) - \cM_{\cB}^v({X} ) \le s)\right| \\
    & \qquad \lesssim  \frac{\EE [\max_{\ell \in \cS_{\rm p}} |X_{\ell} - \EE(X_{\ell})|/\sigma_{\ell}]}{C_{\cA, \cB}} \left(\frac{B_n^2 \log^3(pn)}{n}\right)^{1/4} \lesssim \nu_{n,1},
\end{aligned}
\end{equation}
where $C_{\cA,\cB}$ is defined in Condition~\ref{cond: anti-con cov}. Hence \eqref{eq: CLT rate X* X} holds under the event $\cA_n$.
\subsubsection{Proof of Claim~\ref{claim: CLT rate X_star Y}}\label{sec: proof claim clt rate X_star Y}
Define $Y' = n^{-1/2} \sum_{i=1}^n (\xi_i' - \mu + a)$, where $\xi_{1:n}'$ are independent copies of $\xi_{1:n}$. Then $Y'$ and $Y$ are independently and identically distributed. We show \eqref{eq: CLT rate X* Y} by applying  Lemma~\ref{prop: modified thm 3.1} to $X^*$ and $Y'$. We need to verify that the conditions of Lemma~\ref{prop: modified thm 3.1}  are satisfied conditional on $\xi_{1:n}$ under the event $\cA_n$. By Lemma~4.1 and Lemma~4.2 in \cite{cck2022improvedbootstrap}, we have that Conditions~\ref{cond: fourth moment}-\ref{cond: 8th inf mm bd} and the moment conditions \eqref{eq: m2 cond} and \eqref{eq: m3 cond} are satisfied conditional on the event $\cA_n$. Therefore, we are only left with verifying Condition~\ref{cond: anti-con rate} for $X^*$.
Applying Claim~\ref{claim: CLT rate X_star X}, under the event $\cA_n$, for all $v \in \RR^p$, $s \in \RR$, and $t \in (0,\infty)$, we have 
\begin{align*}
    &\PP(\cM_{\cA}^v({X^*}) - \cM_{\cB}^v({X^*}) \le s + t\,|\, \xi_{1:n}) \le \PP( \cM_{\cA}^v({X}) - \cM_{\cB}^v({X} ) \le s + t) + K_0 \nu_{n,1} \\
    &\quad \le  \PP( \cM_{\cA}^v({X}) - \cM_{\cB}^v({X} ) \le s ) + K_0t\cdot\frac{\EE [\max_{\ell \in \cS_{\rm p}} |X_{\ell} - \EE(X_{\ell})|/\sigma_{\ell}]}{C_{\cA, \cB}}   + K_0 \nu_{n,1} \\
    & \quad \le  \PP( \cM_{\cA}^v({X^*}) - \cM_{\cB}^v({X^*} ) \le s \,|\, \xi_{1:n}) + K_0t\cdot\frac{\EE [\max_{\ell \in \cS_{\rm p}} |X_{\ell} - \EE(X_{\ell})|/\sigma_{\ell}]}{C_{\cA, \cB}}  + 2 K_0 \nu_{n,1} ,
\end{align*}
where $K_0 > 0$ is a large enough constant; the first and the third inequalities hold by \eqref{eq: bound beta anti-con}, and the second inequality holds by applying 
% Theorem~\ref{col: anti con equal var no cor 1} and 
Theorem~\ref{thm: anti con} to $X$ since $X$ is a Gaussian random vector. Then we have that Condition~\ref{cond: anti-con rate} is satisfied by $X^*$  with $r_n = {\EE [\max_{\ell \in \cS_{\rm p}} |X_{\ell} - \EE(X_{\ell})|/\sigma_{\ell}]}/{C_{\cA, \cB}}$ and $\varepsilon_n = \nu_{n,1}$ for the constant  $C = 2 K_0$. 

% Next, by Lemma~4.1 and Lemma~4.2 in \cite{cck2022improvedbootstrap}, we have that Conditions~\ref{cond: fourth moment}-\ref{cond: 8th inf mm bd} and the moment conditions \eqref{eq: m2 cond} and \eqref{eq: m3 cond} are satisfied conditioning on the event $\cA_n$. 
Therefore, applying Lemma~\ref{prop: modified thm 3.1}, under the event $\cA_n$ we have that
\begin{align*}
&\sup_{v \in \RR^p, s \in \RR}\left| \PP(\cM_{\cA}^v({X^*}) - \cM_{\cB}^v({X^*}) \le s \,|\, \xi_{1:n}) - \PP( \cM_{\cA}^v({Y}) - \cM_{\cB}^v({Y} ) \le s )\right| \\
    &\quad = \sup_{v \in \RR^p, s \in \RR}\left| \PP(\cM_{\cA}^v({X^*}) - \cM_{\cB}^v({X^*}) \le s \,|\, \xi_{1:n}) - \PP( \cM_{\cA}^v({Y'}) - \cM_{\cB}^v({Y'} ) \le s \,|\, \xi_{1:n} )\right| \\
    & \quad \lesssim  \nu_{n,1},
\end{align*}
where $Y' = n^{-1/2} \sum_{i=1}^n (\xi_i' - \mu + a)$ with $\xi_{1:n}'$ being independent copies of $\xi_{1:n}$. This completes the proof of Claim~\ref{claim: CLT rate X_star Y}.
% Then, \eqref{eq: CLT rate} holds by triangle inequality conditioning on the event $\cA_n$. Furthermore, since both sides of   \eqref{eq: CLT rate} are deterministic, we have that \eqref{eq: CLT rate} holds unconditionally.
\subsubsection{Proof of Claim~\ref{claim: CLT rate X X_hat}}\label{sec: proof claim clt rate X X_hat}
Note that the Stein kernel for a Gaussian random vector is its covariance matrix. Then, by similar arguments of showing \eqref{eq: bound beta anti-con} and Lemma~\ref{lm: gauss comp rate}, conditional on the event $\cA_n$,  we have that 
\begin{align*}
     &\sup_{v \in \RR^p, s \in \RR}\left| \PP( \cM_{\cA}^v({X}) - \cM_{\cB}^v({X} ) \le s) - \PP(\cM_{\cA}^v({\hat{X}}) - \cM_{\cB}^v(\hat{X}) \le s \,|\, \xi_{1:n}) \right| \\
     & \ \lesssim  \frac{\EE [\max_{\ell \in \cS_{\rm p}} |X_{\ell} - \EE(X_{\ell})|/\sigma_{\ell}]}{C_{\cA, \cB}} \Big(\log p\cdot \max_{j,k \in [p]} \Big|\frac{1}{n} \sum_{i=1}^n (\xi_{ij} - \bar\xi_j) (\xi_{ik} - \bar\xi_k) - \Sigma_{jk}^*\Big|\Big)^{1/2}\\
     & \ \lesssim \nu_{n,1},
\end{align*}
and Claim~\ref{claim: CLT rate X X_hat} follows.
% Then by triangle inequality, we have that
%  \begin{align*}
%     &\sup_{v \in \RR^p, s \in \RR}\left| \PP( \cM_{\cA}^v({Y}) - \cM_{\cB}^v({Y} ) \le s) - \PP(\cM_{\cA}^v({\hat{X}}) - \cM_{\cB}^v(\hat{X}) \le s \,|\, \xi_{1:n}) \right|   \\
%     &\qquad \le \sup_{v \in \RR^p, s \in \RR}\left| \PP( \cM_{\cA}^v({Y}) - \cM_{\cB}^v({Y} ) \le s) - \PP( \cM_{\cA}^v({X}) - \cM_{\cB}^v({X} ) \le s) \right| \\
%     & \quad \quad \quad + \sup_{v \in \RR^p, s \in \RR}\left| \PP( \cM_{\cA}^v({X}) - \cM_{\cB}^v({X} ) \le s) - \PP(\cM_{\cA}^v({\hat{X}}) - \cM_{\cB}^v(\hat{X}) \le s \,|\, \xi_{1:n}) \right| \\
%     &\qquad \lesssim \nu_{n,1},
% \end{align*}
% which shows that \eqref{eq: CLT rate 1} holds conditioning on the event $\cA_n$. By \eqref{eqn:AnP}, we have that  $\cA_n$ holds with probability at least $1-1/(2n^4) - 1/n - 3 ({B_n^2 \log^3(pn) /n})^{1/2}$, which completes the proof.

% if your bibliography is in bibtex format, uncomment commands:
\section{Conclusion}\label{sec: conclusion}
To conclude, we derive new dimension-free anti-concentration bounds for the difference between the maxima of two Gaussian random vectors. In contrast to existing literature, our bounds depend only on pairwise correlations and not on the minimum eigenvalue of the covariance matrix.
% By developing new proof techniques, we show that the anti-concentration properties depend on the correlations between two Gaussian random vectors. 
In particular, under the homogeneous variance setting, our bound holds when there is no perfect positive correlation between the two random vectors. Meanwhile, for the heterogeneous variance setting, our bound holds under a mild covariance condition. 
 We also apply our new anti-concentration bounds to establish the central limit theorem for the maximizer of empirical processes indexed by a discrete set, and we propose a valid Gaussian multiplier approximation for the distribution of the maximizer.
We conduct extensive simulation studies to validate our findings empirically. Also, in our simulation studies, we find that the bounds may hold under more relaxed assumptions. We plan to further relax the assumptions in our future work.

%With novel proofs we demonstrate that the anti-concentration properties are influenced by the pairwise correlation structure of the Gaussian vectors. This improves the initial findings by \cite{imaizumi2021gaussian}, which were based on conditioning one maximum on the entire Gaussian vector from the other partition, thereby tying the bound to the reciprocal of the minimum eigenvalue of the covariance matrix. When the Gaussian random vector is standardized to have a homogeneous component-wise variance, our theoretical framework necessitates only the absence of perfectly positively correlated pairs between partitions, a condition validated as optimal through a lower bound case. For cases with heterogeneous variances, we impose the technical condition that either \(\min_{j \in \cB} (\sigma_j - \max_{i \in \cA} \sigma_j^{-1} \sigma_{ij}) > 0\) or \(\min_{i \in \cA} (\sigma_i - \max_{j \in \cB} \sigma_i^{-1} \sigma_{ij}) > 0\) must hold. Simulation studies indicate potential for refining this condition, aiming for anti-concentration bounds akin to those in the uniform variance scenario, even when the current pairwise covariance constraints are violated. Improvements over the current technical conditions will be the focus of our future works.
%%%%%%%%%%%%%%%%%%%%%%%%%%%%%%%%%%%%%%%%%%%%%%
%% Example with single Appendix:            %%
%%%%%%%%%%%%%%%%%%%%%%%%%%%%%%%%%%%%%%%%%%%%%%

\begin{appendix}
\section{Proofs of Technical Lemmas in the Main Text}\label{appx: add proof} %% if no title is needed, leave empty \section*{}.
We provide in Appendix~\ref{appx: add proof} the proofs for technical lemmas in the main text. For notational simplicity, for two random variables $X_1, X_2 \in \RR$, we denote by \(X_1 \equiv X_2\) when \(X_1=X_2\) almost surely.
% for two random variables $X_1, X_2 \in \RR$.

\subsection{Proof of Lemma~\ref{lm: joint max dens}}\label{sec: proof lm joint max dens}
\begin{proof}[\unskip\nopunct]
   We start by proving that case (1). First, the absolute continuity for the joint distribution of $ (M_{\cA}, M_{\cB}) =  (\max_{i \in \cA} X_i, \max_{j \in \cB} X_j)$ follows from the assumption that $|{\rm Corr}(X_i, X_j) | < 1$ for any $i \in \cA$ and $j \in \cB$ together with the fact that for any Borel measurable subset $\cI \in \cB(\RR^2)$, we have 
   $$
   \PP((M_{\cA}, M_{\cB}) \in \cI) \le \sum_{i \in \cA, j \in \cB} \PP((X_i, X_j) \in \cI).
   $$
   
     Next, we prove \eqref{eq: max joint den} by showing that 
     \begin{equation}\label{eq: max joint dens lim}
         \lim_{\varepsilon \downarrow 0}\varepsilon^{-2} \PP(x < M_{\cA} \le x + \varepsilon, y < M_{\cB} \le y+\varepsilon) = f(x, y), \quad \text{for a.e. $(x,y) \in \RR^2$.}
     \end{equation}
     First, note that for any $(x,y) \in \RR^2$, we have: 
    $$
    \{x < M_{\cA} \le x + \varepsilon, y < M_{\cB} \le y+\varepsilon \} = \cup_{q=1}^{|\cA|}\cup_{k=1}^{|\cB|} \cE_{\varepsilon, x, y}^{q,k},
    $$
    where the event $\cE_{\varepsilon, x, y}^{q,k}$ is defined as 
    $$
    \cE_{\varepsilon, x, y}^{q,k} = \left\{\exists I_1 \subseteq \cA, |I_1| =q  \text{ and } I_2 \subseteq \cB, |I_2| = k: \begin{array}{ll}
       x < X_i \le x + \varepsilon,  & \,\, \forall i \in I_1  \\
        X_{i'} \le x , &\,\,   \forall i' \in \cA \backslash I_1 \\
          y < X_j \le y + \varepsilon,  & \,\, \forall j \in I_2  \\
        X_{j'} \le y , &\,\,   \forall j' \in \cB \backslash I_2
    \end{array}  \right\}.
    $$
   By construction, the sets $\cE_{\varepsilon, x, y}^{q,k}$'s are disjoint. 
   Then, we propose the following two claims.
   % that are essential for proving  \eqref{eq: max joint dens lim}.
   \numberwithin{claim}{section}
   \begin{claim}\label{claim: 241}
       For a.e. $(x,y) \in \RR^2$, we have that
        \begin{align*}
       \lim_{\varepsilon \downarrow 0} \varepsilon^{-2}   \PP(\cE_{\varepsilon, x, y}^{1,1} ) = \sum_{i \in \cA,j \in \cB}\PP \Big(\max_{i' \in \cA \backslash \{i\}} X_{i'} \le x, \max_{j' \in \cB \backslash \{j\}} X_{j'} \le y \Big| X_i  = x, X_j = y \Big) \phi_{i,j}(x, y).
   \end{align*}
   \end{claim}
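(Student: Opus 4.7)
The plan is to decompose $\cE^{1,1}_{\varepsilon,x,y}$ into a finite disjoint union indexed by the unique maximizer pair $(i,j) \in \cA \times \cB$, express the probability of each piece as an iterated integral against the bivariate Gaussian density $\phi_{i,j}$, and then recover the limit via the Lebesgue differentiation theorem applied to that integral.

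First I would observe that, directly from its definition, $\cE^{1,1}_{\varepsilon,x,y}$ is the disjoint union over $(i,j) \in \cA \times \cB$ of the events
\[
\cE^{1,1}_{\varepsilon,x,y;i,j} = \Big\{ x < X_i \le x+\varepsilon,\, X_{i'} \le x\,\,\forall i' \in \cA\setminus\{i\},\, y < X_j \le y+\varepsilon,\, X_{j'} \le y\,\,\forall j' \in \cB\setminus\{j\}\Big\}.
\]
Disjointness holds because $X_i > x$ while $X_{i'} \le x$ for $i' \ne i$ pins down $i$ uniquely (and similarly $j$). Hence
\[
\PP(\cE^{1,1}_{\varepsilon,x,y}) = \sum_{i \in \cA, j \in \cB} \PP(\cE^{1,1}_{\varepsilon,x,y;i,j}).
\]

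Next, for each fixed $(i,j)$, I would condition on $(X_i, X_j)$ and write
\[
\PP(\cE^{1,1}_{\varepsilon,x,y;i,j}) = \int_{x}^{x+\varepsilon}\!\!\int_{y}^{y+\varepsilon} G_{i,j}(u,v)\,\phi_{i,j}(u,v)\,dv\,du,
\]
where
\[
G_{i,j}(u,v) := \PP\Big(\max_{i' \in \cA\setminus\{i\}} X_{i'} \le x,\,\max_{j'\in\cB\setminus\{j\}} X_{j'} \le y \,\Big|\, X_i = u,\, X_j = v\Big).
\]
Because $|\Corr(X_i, X_j)| < 1$, the conditional distribution of the remaining coordinates given $(X_i, X_j) = (u,v)$ is a well-defined Gaussian law depending measurably on $(u,v)$, so $G_{i,j}$ is a Borel function bounded by $1$. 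In particular, $(u,v) \mapsto G_{i,j}(u,v)\,\phi_{i,j}(u,v)$ is locally integrable on $\RR^2$.

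Finally, I would apply the Lebesgue differentiation theorem in $\RR^2$: for a.e.\ $(x,y) \in \RR^2$,
\[
\lim_{\varepsilon \downarrow 0} \varepsilon^{-2} \int_{x}^{x+\varepsilon}\!\!\int_{y}^{y+\varepsilon} G_{i,j}(u,v)\phi_{i,j}(u,v)\,dv\,du \; = \; G_{i,j}(x,y)\,\phi_{i,j}(x,y).
\]
Since $\cA\times\cB$ is a finite index set, the null sets outside of which the limit identity fails aggregate to a single null set, and I may interchange the (finite) sum with the limit to conclude
\[
\lim_{\varepsilon \downarrow 0} \varepsilon^{-2} \PP(\cE^{1,1}_{\varepsilon,x,y}) = \sum_{i\in\cA,j\in\cB} G_{i,j}(x,y)\,\phi_{i,j}(x,y),
\]
which is exactly the claimed expression. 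The only mildly delicate point is ensuring that the conditional probability $G_{i,j}$ is a genuinely well-defined Borel function of $(u,v)$ (so that Lebesgue differentiation applies to a specific pointwise version); this is handled by choosing the regular conditional distribution given by the Gaussian conditional law, which is continuous in $(u,v)$. I expect this small measurability bookkeeping to be the main obstacle, but it is resolved cleanly by the non-degeneracy assumption $|\Corr(X_i,X_j)|<1$.
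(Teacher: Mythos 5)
Your opening steps match the paper's proof exactly: the disjoint decomposition of $\cE^{1,1}_{\varepsilon,x,y}$ over $(i,j)\in\cA\times\cB$ and the representation of each piece as a double integral of the conditional probability against $\phi_{i,j}$. The gap is in the limiting step. The integrand $G_{i,j}(u,v)=\PP\big(\max_{i'\in\cA\setminus\{i\}}X_{i'}\le x,\ \max_{j'\in\cB\setminus\{j\}}X_{j'}\le y \mid X_i=u,X_j=v\big)$ is not a single fixed locally integrable function to which the Lebesgue differentiation theorem applies: it depends on the differentiation point $(x,y)$ through the thresholds. For each fixed $(x,y)$ the theorem yields an exceptional null set that itself depends on $(x,y)$, and nothing guarantees that $(x,y)$ lies outside its own exceptional set; you cannot take a union over uncountably many such null sets. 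So the assertion ``by LDT, for a.e.\ $(x,y)$ the averages converge to $G_{i,j}(x,y)\phi_{i,j}(x,y)$'' does not follow as stated.

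What is actually needed --- and what the paper proves --- is one-sided continuity of $(u,v)\mapsto G_{i,j}(u,v)$ at $(u,v)=(x,y)$ for a.e.\ $(x,y)$; together with boundedness and continuity of $\phi_{i,j}$ this gives the limit by an elementary estimate, with no differentiation theorem. This continuity is not automatic under the lemma's hypotheses, which allow the full covariance matrix to be singular (only the pairs $(X_i,X_j)$ with $i\in\cA$, $j\in\cB$ are assumed non-degenerate). Conditionally on $(X_i,X_j)=(u,v)$, some coordinate $X_{i'}$ may be a deterministic affine function of $(u,v)$, so $G_{i,j}(u,v)$ contains indicator factors of the form $\II\{\text{affine function of }(u,v)\le x \text{ or } y\}$, which are discontinuous along lines in the $(u,v)$-plane. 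Your remark that the Gaussian conditional law is ``continuous in $(u,v)$'' does not resolve this: weak continuity of the conditional law does not give continuity of the probability of the closed region when its boundary carries conditional mass. The paper handles exactly this by introducing, for each $(i,j)$, the explicit exceptional set $\cD_{i,j}$ (a finite union of lines where a degenerate residual sits precisely at its threshold), showing it is Lebesgue-null, and proving the iterated right-limits $v\downarrow y$, then $u\downarrow x$, of the conditional probability converge to its value at $(x,y)$ off $\cD_{i,j}$. Your argument is easily repaired when the conditional law is non-degenerate (then $G_{i,j}$ is jointly continuous and no a.e.\ restriction is even needed), but as written it does not cover the degenerate covariance structures that the claim must accommodate.
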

   \begin{proof}
   See Appendix~\ref{sec: proof claim 241}.
   \end{proof}
   \begin{claim}\label{claim: 242}
       For $\cE_{\varepsilon, x, y}^{q,k} $ with  $q \ge 2$ or $k \ge 2$, $\PP(\cE_{\varepsilon, x, y}^{q,k}) = o(\varepsilon^2)$ when $\varepsilon \downarrow 0$ for a.e. $(x,y) \in \RR^2$.
   \end{claim}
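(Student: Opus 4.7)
The plan is to decompose $\PP(\cE^{q,k}_{\varepsilon,x,y})$ via a union bound over the witnessing subsets $I_1 \subseteq \cA$ and $I_2 \subseteq \cB$ with $|I_1|=q$, $|I_2|=k$. Since this sum is finite, the claim reduces to showing that for each fixed pair $(I_1,I_2)$, which necessarily has $q+k \ge 3$, and for a.e.\ $(x,y) \in \RR^2$,
$$
\PP(Z \in B_{\varepsilon,x,y}) = o(\varepsilon^2), \quad \text{where } Z := (X_\ell)_{\ell \in I_1 \cup I_2},\ B_{\varepsilon,x,y} := (x,x+\varepsilon]^{|I_1|} \times (y,y+\varepsilon]^{|I_2|}.
$$
Let $r := \rank(\Sigma^Z)$ and $V := \operatorname{range}(\Sigma^Z)$, so $Z$ is supported on the closed affine subspace $\mu^Z + V$ and admits a uniformly bounded density with respect to $r$-dimensional Hausdorff measure $H^r$ on that subspace (after diagonalizing $\Sigma^Z$ in an orthonormal basis of $V$). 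A direct volume estimate then yields
$$
\PP(Z \in B_{\varepsilon,x,y}) \le C\,H^r\bigl((\mu^Z + V) \cap B_{\varepsilon,x,y}\bigr) \le C'\varepsilon^r,
$$
which already gives $\PP = o(\varepsilon^2)$ whenever $r \ge 3$.

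For the remaining regime $r \in \{1,2\}$ (the case $r=0$ is excluded by $\sigma_i^2 > 0$), I instead exploit the observation that $B_{\varepsilon,x,y}$ contracts to the single corner $p_{x,y} := (x,\ldots,x,y,\ldots,y) \in \RR^{q+k}$ as $\varepsilon \downarrow 0$. Because $\mu^Z+V$ is closed, whenever $p_{x,y} \notin \mu^Z+V$ we have $(\mu^Z+V) \cap B_{\varepsilon,x,y} = \emptyset$ for all sufficiently small $\varepsilon$, forcing the summand to vanish. I therefore introduce the bad set
$$
\cN := \{(x,y) \in \RR^2 : p_{x,y} \in \mu^Z + V\},
$$
which is the preimage of the affine subspace $\mu^Z + V$ under the linear map $(x,y)\mapsto p_{x,y}$, hence itself an affine subset of $\RR^2$. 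Such a set has Lebesgue measure zero unless it equals all of $\RR^2$.

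The main (and only delicate) step is ruling out $\cN = \RR^2$ when $r \le 2$. Supposing $\cN = \RR^2$ and evaluating at $(x,y)=(0,0)$ first yields $\mu^Z \in V$, so the two-dimensional subspace $L := \operatorname{span}(e_{I_1},\, e_{I_2})$, where $e_{I_1} := \sum_{i\in I_1} e_i$ and $e_{I_2} := \sum_{j\in I_2} e_j$, is contained in $V$. Since $e_{I_1}$ and $e_{I_2}$ are linearly independent and $r \le 2$, we must have $V = L$, so $Z - \mu^Z = \alpha\, e_{I_1} + \beta\, e_{I_2}$ almost surely for some random scalars $\alpha,\beta$. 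This forces $X_i - X_{i'}$ to be almost surely constant whenever $i,i'$ lie in the same block of the partition. Because $q \ge 2$ or $k \ge 2$, such a distinct pair exists in $I_1$ or $I_2$, contradicting the standing hypothesis $\Var(X_i - X_{i'}) > 0$ from part~(1) of Lemma~\ref{lm: joint max dens}. Hence $\cN$ has Lebesgue measure zero in all cases, each summand is $o(\varepsilon^2)$ for a.e.\ $(x,y)$, and Claim~\ref{claim: 242} follows after summing the finitely many $(I_1,I_2)$ contributions.
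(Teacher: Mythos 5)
Your proof is correct, and it reaches the same structural dichotomy as the paper's argument (non-degenerate case gives an $O(\varepsilon^3)$ volume bound; degenerate case gives probability zero for small $\varepsilon$ off a measure-zero set of $(x,y)$, with the within-block assumption $\Var(X_i-X_{i'})>0$ ruling out the pathological exceptional set), but you execute it differently. The paper never looks at the full witnessing sub-vector: it bounds $\PP(\cE_{\varepsilon,x,y}^{q,k})$ by probabilities of triples $(X_i,X_{i'},X_j)$ with two indices from one block and one from the other, then handles the degenerate triple by writing the explicit linear relation $c_1X_i+c_2X_{i'}+c_3X_j\equiv c_4$ and checking by hand that $(c_1+c_2,c_3)\neq \mathbf{0}$, so the bad set $\{(c_1+c_2)x+c_3y=c_4\}$ is a genuine line. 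You instead union-bound over the witnessing pairs $(I_1,I_2)$, keep the whole vector $Z=(X_\ell)_{\ell\in I_1\cup I_2}$, and split on $r=\rank(\Sigma^Z)$: for $r\ge 3$ a bounded density with respect to $H^r$ on the support plus a diameter bound on the cube gives $O(\varepsilon^r)$, and for $r\le 2$ you characterize the bad set $\cN$ abstractly as the preimage of the affine support under $(x,y)\mapsto xe_{I_1}+ye_{I_2}$, which is affine and hence null unless it is all of $\RR^2$; the latter forces $\operatorname{span}(e_{I_1},e_{I_2})\subseteq V$ and hence a same-block pair with $X_i-X_{i'}$ a.s.\ constant, contradicting the standing hypothesis. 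Your route avoids the explicit coefficient case analysis and treats all degeneracies of the sub-vector at once, at the price of invoking the singular-Gaussian density with respect to Hausdorff measure, whereas the paper's triple reduction keeps everything three-dimensional and elementary. One cosmetic point: when $r=1$ the step ``$L\subseteq V$ and $r\le 2$ imply $V=L$'' is vacuous because a two-dimensional $L$ cannot sit inside a one-dimensional $V$; the contradiction there is immediate, so nothing is lost, but it is worth phrasing the two subcases separately.
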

   \begin{proof}
   See Appendix~\ref{sec: proof claim 242}.
   \end{proof}
Combining Claim~\ref{claim: 241} and Claim~\ref{claim: 242}, we have that, for a.e. $(x,y) \in \RR^2$, 
   \begin{align*}
           \lim_{\varepsilon \downarrow 0}\varepsilon^{-2} \PP(x < M_{\cA} \le x + \varepsilon, y < M_{\cB} \le y+\varepsilon)  &=  \lim_{\varepsilon \downarrow 0} \varepsilon^{-2}   \sum_{q \in [|\cA|], k \in [|\cB|]}\PP(\cE_{\varepsilon, x, y}^{q,k} ) \\
           & \quad =  \lim_{\varepsilon \downarrow 0} \varepsilon^{-2}   \PP(\cE_{\varepsilon, x, y}^{1,1} )  = f(x, y),
   \end{align*}
   which completes the proof of \eqref{eq: max joint den} for case (1). 

   Next, we show that case (2) holds. The absolute continuity follows from the condition that $\sigma_i > 0$ for any $i \in \cA$, along with the bound that for any Borel measurable subset $\cI \in \cB(\RR)$, 
   $$
   \PP(M_{\cA} \in \cI, M_{\cB} \le y) \le \sum_{i \in \cA} \PP(X_i\in \cI, M_{\cB} \le y) \le \sum_{i \in \cA} \PP(X_i\in \cI) \le |\cA|.
   $$
   
   Then, similar to the proof of case (1), to show that \eqref{eq: max joint part x} holds, we only need to show that 
   \begin{equation}\label{eq: joint max dens par lim}
       \lim_{\varepsilon \downarrow 0}\varepsilon^{-1} \PP(x < M_{\cA} \le x + \varepsilon, M_{\cB} \le y) = f_y(x), \quad \text{for a.e. $x \in \RR$ given any $y \in \RR$,}
   \end{equation} 
   where $ f_y(x) :=\sum_{i \in \cA} \PP \Big(\max_{i' \in \cA \backslash \{i\}} X_{i'} \le x, \max_{j \in \cB } X_j \le y \Big| X_i  = x \Big) \phi_{i}(x)$.
   First, we have the following disjoint decomposition
 that $$
    \{x < M_{\cA} \le x + \varepsilon, M_{\cB} \le y\} = \cup_{k=1}^{|\cA|} \cE_{\varepsilon, x, y }^{k},
    $$
    where each event $\cE_{\varepsilon, x, y}^{k}$ is defined as
    $$
    \cE_{\varepsilon, x, y}^{k} = \left\{\exists I \subseteq \cA, |I| =k: \begin{array}{ll}
       x < X_i\le x + \varepsilon,  & \,\, \forall i \in I \\
        X_{i'} \le x,  &\,\,   \forall i' \in \cA \backslash I \\
           X_{j} \le y,  & \,\, \forall j \in \cB 
            % \xi_{S''} \le z,  & \,\, \forall S'' \in \cC
    \end{array}  \right\}.
    $$
    Then, similar to the proof of case (1), the proof of \eqref{eq: joint max dens par lim} depends on the following two claims.
    \begin{claim}\label{claim: 243}
        Given any $y \in \RR$, we have
        $$
         \lim_{\varepsilon \downarrow 0} \varepsilon^{-1}   \PP(\cE_{\varepsilon, x, y}^{1} ) = \sum_{i \in \cA} \PP \Big(\max_{i' \in \cA \backslash \{i\}} X_{i'} \le x, \max_{j \in \cB } X_j \le y \Big| X_i  = x \Big) \phi_{i}(x) = f_y(x),
        $$
        for a.e. $x \in \RR$.
    \end{claim}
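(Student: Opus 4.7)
\textbf{Proof proposal for Claim~\ref{claim: 243}.} The plan is to establish \eqref{eq: joint max dens par lim} by disintegrating the probability of $\cE_{\varepsilon,x,y}^{1}$ against the marginal density of $X_i$ and applying a one-dimensional Lebesgue differentiation argument. By the definition of $\cE_{\varepsilon,x,y}^{1}$ (exactly one index in $\cA$ falling into $(x,x+\varepsilon]$), I would first write it as the disjoint union
\$
\cE_{\varepsilon,x,y}^{1} \;=\; \bigsqcup_{i\in\cA}\Bigl\{x<X_i\le x+\varepsilon,\;X_{i'}\le x\;\forall\, i'\in\cA\setminus\{i\},\;X_j\le y\;\forall\, j\in\cB\Bigr\}.
\$
Each summand is absolutely continuous in $X_i$ (since $\sigma_i>0$), so conditioning on $X_i = t$ gives
\$
\PP\!\left(x<X_i\le x+\varepsilon,\;\max_{i'\in\cA\setminus\{i\}}X_{i'}\le x,\;\max_{j\in\cB}X_j\le y\right) = \int_{x}^{x+\varepsilon} h_{i,x,y}(t)\,dt,
\$
where $h_{i,x,y}(t) := \PP\bigl(\max_{i'\in\cA\setminus\{i\}}X_{i'}\le x,\,\max_{j\in\cB}X_j\le y\,\big|\,X_i=t\bigr)\,\phi_i(t)$.

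The next step is to divide by $\varepsilon$ and pass to the limit as $\varepsilon\downarrow 0$. Since $h_{i,x,y}(t)\in[0,\phi_i(t)]$ and $\phi_i$ is locally bounded, $h_{i,x,y}(\cdot)$ is locally integrable in $t$. Hence the Lebesgue differentiation theorem yields, for a.e.\ $x\in\RR$,
\$
\lim_{\varepsilon\downarrow 0}\varepsilon^{-1}\int_{x}^{x+\varepsilon} h_{i,x,y}(t)\,dt \;=\; h_{i,x,y}(x).
\$
Here one must be a bit careful because the integrand $h_{i,x,y}(t)$ depends on $x$ through both the endpoint of the integration and through the conditioning event $\{X_{i'}\le x\}$. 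I would resolve this by invoking the continuity of the regular conditional distribution of $(X_{i'})_{i'\ne i}, (X_j)_{j\in\cB}$ given $X_i=t$ in the parameter $t$ (a standard fact for Gaussian conditioning): for each fixed $x$, $t\mapsto h_{i,x,y}(t)$ is continuous at $t=x$, so the differentiation limit holds pointwise (in $x$) at every $x$, rather than only a.e.\ This continuity statement follows directly from the affine form of Gaussian conditional means and the continuity of $\phi_i$.

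Finally, summing the resulting limits over $i\in\cA$ yields
\$
\lim_{\varepsilon\downarrow 0}\varepsilon^{-1}\PP(\cE_{\varepsilon,x,y}^{1}) \;=\; \sum_{i\in\cA} h_{i,x,y}(x) \;=\; f_y(x),
\$
which is exactly the claim. The only delicate point is the joint dependence of the integrand on the running variable $t$ and on the cutoff $x$; the argument above (continuity of $h_{i,x,y}(\cdot)$ at $t=x$) resolves it cleanly, and the rest is essentially the one-dimensional analogue of the reasoning already used to prove Claim~\ref{claim: 241}.
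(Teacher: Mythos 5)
Your overall strategy is the same as the paper's: decompose $\cE_{\varepsilon,x,y}^{1}$ over the unique index $i\in\cA$ landing in $(x,x+\varepsilon]$, disintegrate against $\phi_i$, and pass to the limit via continuity of $u\mapsto h_{i,x,y}(u):=\PP\bigl(\max_{i'\in\cA\setminus\{i\}}X_{i'}\le x,\ \max_{j\in\cB}X_j\le y\mid X_i=u\bigr)\phi_i(u)$ at $u=x$ (only right-continuity is actually needed, since the integral runs over $(x,x+\varepsilon]$). The gap is in your justification of that continuity. You assert it holds at every $x$ as "a standard fact for Gaussian conditioning," but under the hypotheses in force here (case (2) of Lemma~\ref{lm: joint max dens}) some $X_{i'}$ with $i'\ne i$ may be an almost-sure affine function of $X_i$: nothing rules out $|\Corr(X_i,X_{i'})|=1$ for $i'\in\cB$, nor for $i'\in\cA$ as long as $X_{i'}\not\equiv X_i$ (e.g.\ $X_{i'}=X_i+c$ with $c\ne0$, or $X_{i'}=aX_i+b$ with $a\ne1$). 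For such a component the conditional law given $X_i=u$ is a point mass, and $h_{i,x,y}(u)$ contains a factor of the form $\II\{au+b\le x\}$ or $\II\{au+b\le y\}$, which is discontinuous in $u$; right-continuity at $u=x$ then fails exactly for the finitely many $x$ solving the corresponding linear equation. This is precisely why the claim is stated for a.e.\ $x$, and covering degenerate covariance structures is a stated goal of the paper, so this case cannot be assumed away.

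The paper closes this gap by writing $X_{i'}$ in terms of the orthogonal residual $V_{i'}=X_{i'}-\mu_{i'}-\sigma_i^{-2}\sigma_{ii'}(X_i-\mu_i)$, collecting the degenerate residuals into an explicit exceptional set $\cD_i$ of $x$-values (a finite union of points, shown to be well defined using $\sigma_i>0$ and the assumption that no two variables in $\cA$ coincide almost surely), and proving the right-continuity of $h_{i,x,y}$ at $u=x$ only for $x\notin\cD_i$; for the non-degenerate residuals one also has to argue that replacing strict by non-strict inequalities in the limiting event changes nothing, since each marginal boundary $\{V_{i'}=c\}$ is a null set. To repair your proof you would need to add this exceptional-set argument (or an equivalent one); as written, your argument establishes the claim only under the extra assumption $|\Corr(X_i,X_{i'})|<1$ for all $i'\ne i$, which is strictly weaker than what the claim asserts.
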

    \begin{proof}
    See Appendix~\ref{sec: proof claim 243}.
    \end{proof}
    \begin{claim}\label{claim: 244}
        Given any $y \in \RR$, for $k \ge 2$, $\PP(\cE_{\varepsilon, x, y}^{k}) = o(\varepsilon)$ as $\varepsilon \downarrow 0$ for a.e. $x \in \RR$.
    \end{claim}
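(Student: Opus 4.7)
The plan is to prove Claim~\ref{claim: 244} in direct parallel with Claim~\ref{claim: 242} from case~(1): I will reduce the event $\cE_{\varepsilon,x,y}^{k}$ to one that forces two distinct components of $X_{\cA}$ to lie simultaneously inside a length-$\varepsilon$ window, and then exploit the case~(2) hypothesis ``$\Var(X_i-X_j)>0$ or $\EE(X_i-X_j)\neq 0$ for $i\ne j\in\cA$'' to show that this costs an extra factor of $\varepsilon$ (or is outright impossible).

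First I will drop the side constraints $X_{i'}\le x$ for $i'\in\cA\setminus I$ and $X_{j}\le y$ for $j\in\cB$, which only enlarge the event, and then apply a union bound over the finite collection $\{I\subseteq \cA : |I|=k\}$:
\begin{equation*}
\PP(\cE_{\varepsilon,x,y}^{k}) \;\le\; \sum_{\substack{I\subseteq \cA\\ |I|=k}} \PP\!\big(X_i \in (x,x+\varepsilon]\ \forall\, i\in I\big).
\end{equation*}
Since $k\ge 2$, for any such $I$ I can single out two distinct indices $i_{1},i_{2}\in I$ and further bound the summand by $\PP\!\big(X_{i_{1}}\in(x,x+\varepsilon],\,X_{i_{2}}\in(x,x+\varepsilon]\big)$.

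Next I will dispose of this two-point probability using the case~(2) dichotomy. If $\Var(X_{i_{1}}-X_{i_{2}})>0$, then $(X_{i_{1}},X_{i_{2}})$ is a non-degenerate bivariate Gaussian, so its joint density $\phi_{i_{1},i_{2}}$ is globally bounded by some constant $C_{i_{1},i_{2}}$, and hence
\begin{equation*}
\PP\!\big(X_{i_{1}}\in(x,x+\varepsilon],\,X_{i_{2}}\in(x,x+\varepsilon]\big) \;\le\; C_{i_{1},i_{2}}\,\varepsilon^{2}.
\end{equation*}
If instead $\Var(X_{i_{1}}-X_{i_{2}})=0$, then by hypothesis $c:=\EE(X_{i_{1}})-\EE(X_{i_{2}})\neq 0$, so $X_{i_{1}}\equiv X_{i_{2}}+c$ almost surely; the two-point event is then contained in $\{|X_{i_{1}}-X_{i_{2}}|\le\varepsilon\}$ and is empty whenever $\varepsilon<|c|$. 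In either case the summand is $o(\varepsilon)$ as $\varepsilon\downarrow 0$, uniformly in $x$, and summing over the finite collection $\{I\subseteq\cA : |I|=k\}$ yields $\PP(\cE_{\varepsilon,x,y}^{k})=o(\varepsilon)$.

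I do not anticipate a real obstacle: the argument closes in a single paragraph, since the non-degenerate case gives $O(\varepsilon^{2})$ uniformly via the boundedness of a Gaussian density and the degenerate case is trivially zero for small $\varepsilon$. In fact the ``for a.e.\ $x$'' clause in the statement is not needed here; the bound holds for every $x\in\RR$. The only mild care is in combining the two subcases coherently and noting that the constraints outside $\cA$ (and the $\cB$-constraints $X_{j}\le y$) can be discarded for free, which mirrors how Claim~\ref{claim: 242} was handled in case~(1).
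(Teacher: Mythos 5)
Your reduction (drop the $\cB$-constraints and the remaining $\cA$-constraints, union bound over $I$, isolate two indices $i_1\ne i_2\in\cA$) is exactly the paper's first step, and your treatment of the subcases you consider is fine. But your dichotomy is incomplete, and the missing case is precisely the one the ``a.e.\ $x$'' clause exists for. The hypothesis of case~(2) of Lemma~\ref{lm: joint max dens} only requires, for $i\ne j$ in $\cA$, that $\Var(X_i-X_j)>0$ \emph{or} $\EE(X_i-X_j)\ne 0$; it does not rule out pairs with $|\Corr(X_{i_1},X_{i_2})|=1$, i.e.\ $X_{i_2}\equiv aX_{i_1}+b$ with $(a,b)\ne(1,0)$. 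For such a pair one can have $\Var(X_{i_1}-X_{i_2})>0$ while the bivariate distribution is degenerate, so your step ``$\Var(X_{i_1}-X_{i_2})>0$ implies $(X_{i_1},X_{i_2})$ is a non-degenerate bivariate Gaussian with bounded joint density'' is false, and the $O(\varepsilon^2)$ bound does not follow. These degenerate pairs are not a pathology: in the paper's own applications of \eqref{eq: max joint part x} (e.g.\ in the proofs of Lemma~\ref{claim: 251} and Claim~\ref{claim: A31}) the collection indexed by $\cA$ contains affinely related variables.

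Concretely, take $X_{i_2}\equiv -X_{i_1}+c$; then $\Var(X_{i_1}-X_{i_2})=4\sigma_{i_1}^2>0$, yet at $x=c/2$ the event $\{X_{i_1}\in(x,x+\varepsilon],\,X_{i_2}\in(x,x+\varepsilon]\}$ has probability of order $\varepsilon$, so the claimed $o(\varepsilon)$ bound fails at that point — which also shows your closing remark that ``a.e.'' is unnecessary is wrong. The paper handles this case by noting that the degenerate pair is supported on a line $c_1v_1+c_2v_2=c_3$ with $(c_1+c_2,c_3)\ne(0,0)$ (here the hypothesis ``$\Var(X_i-X_j)>0$ or $\EE(X_i-X_j)\ne 0$'' is what excludes $X_{i_1}\equiv X_{i_2}$), so the exceptional set $\cC=\{x:(c_1+c_2)x=c_3\}$ is at most a single point; for $x\notin\cC$ the line misses the square $(x,x+\varepsilon]^2$ for small $\varepsilon$ and the probability is exactly $0$. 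Adding this third subcase (degenerate pair, handled up to a Lebesgue-null set of $x$) would close the gap and recover the paper's argument.
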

    \begin{proof}
    See Appendix~\ref{sec: proof claim 244}.
    \end{proof}
 Combining Claim~\ref{claim: 243} and Claim~\ref{claim: 244}, for any given $y \in \RR$, we have that, for a.e. $x \in \RR$, 
    \begin{align*}
        \lim_{\varepsilon \downarrow 0}\varepsilon^{-1} \PP(x < M_{\cA} \le x + \varepsilon, M_{\cB} \le y) = \lim_{\varepsilon \downarrow 0}\varepsilon^{-1} \sum_{k = 1}^{|\cA|} \PP(\cE_{\varepsilon, x, y}^{k} ) = \lim_{\varepsilon \downarrow 0}\varepsilon^{-1} \PP(\cE_{\varepsilon, x, y}^{1} ) = f_y(x).
    \end{align*}
    Consequently, \eqref{eq: joint max dens par lim} holds, which completes the proof of \eqref{eq: max joint part x}.
   \end{proof}
\subsection{Proof of Lemma~\ref{claim: 251}}\label{sec: proof claim 1}
	\begin{proof}[\unskip\nopunct]
		We show that \eqref{eq: claim g = 0} and \eqref{eq: claim g < H} in Lemma~\ref{claim: 251}  hold
		% that Lemma~\ref{claim: 251} and Claim~2 hold 
		for a.e. $u \in \RR$ and every $j \in \cB$. For a given $j \in \cB$, let
		$$
		\cS_j^+ := \{i \in [p] \backslash \{j\}: 1 - \sigma_j^{-2} \sigma_{i j} > 0\},
		\quad 
		\cS_j^0 := \{i \in [p] \backslash \{j\} : 1 - \sigma_j^{-2} \sigma_{i j} =  0\},
		$$
		and 
		\begin{equation}\label{eq: C_j def}
			\cC_j^1 := \cS_j^+ \cap \{i \in [p] \backslash \{j\}:| {\rm Corr} (X_i, X_{j})| = 1\},
		\end{equation}
		where by assumption we have that $\cA \subseteq \cS_j^+ \backslash \cC_j^1$.
		Then we have
		\begin{align*}
			H_{u,j}&(x) = \PP\Big(\max_{i \in \cA, j' \in \cB \backslash \{j\}} X_{i} \vee (X_{j'} - u) \le x | X_{j} = x+u \Big) \\
			& = \PP \left( X_{i} \le x + u \cdot \II\{i \in \cB \}, \,\, i\in [p] \backslash \{j\} | X_{j} = x+u \right)\\
			% & = \PP \left(V_{i} \le x + u \cdot \II\{\tilde{S} \in \cB \} - \mu_{i} - \Sigma_{S'}^{-1} \Sigma_{S' \tilde{S}} (y + u - \mu_{j}) , \,\, \tilde{S} \in [p] \backslash \{S'\}| X_{j} = y+u \right)\\
			& = \PP\left(
			V_{i} \le x + u \cdot \II\{i \in \cB \} - \mu_{i} - \sigma_{j}^{-2} \sigma_{ij} (x + u - \mu_{j})  ,\, i \in [p] \backslash\{j\} 
			|\, X_{j} = x+u \right)\\
			& \overset{\rm (a)}{=} \PP\left\{\begin{array}{ll}
				(1 - \sigma_{j}^{-2} \sigma_{ij})^{-1} \left( V_{i}  - u \cdot \II\{i \in \cB \} +\mu_{i}  + \sigma_{j}^{-2} \sigma_{ij} (u - \mu_{j}) \right) \le x , &  \, i\in \cS_j^+ \\
				V_{i} \le u \cdot \II\{i \in \cB \} - \mu_{i} - \sigma_{j}^{-2} \sigma_{ij} ( u - \mu_{j})  , &  \, i \in \cS_j^0 
			\end{array} \right\}\\
			& \overset{\rm (b)}{=}  \PP\left\{\begin{array}{ll}
				(1 - \sigma_{j}^{-2} \sigma_{ij})^{-1} \!\left( V_{i}  - u \cdot \II\{i \in \cB \} +\mu_{i}  + \sigma_{j}^{-2} \sigma_{ij} (u - \mu_{j}) \right) \le x ,  &  \, i \in \cS_j^+ \backslash\cC_j^1\\
				(1 - \sigma_{j}^{-2} \sigma_{ij})^{-1} \left(  \mu_{i} - u \cdot \II\{i \in \cB \}  + \sigma_{j}^{-2} \sigma_{ij} (u - \mu_{j}) \right) \le x  , &  \, i \in \cC_j^1\\
				V_{i} \le u \cdot \II\{i \in \cB \} - \mu_{i} - \sigma_{j}^{-2} \sigma_{ij} ( u - \mu_{j}),   &  \, i \in \cS_j^0 
			\end{array} \right\}\\
			& \overset{\rm (c)}{=} \PP\left\{\begin{array}{ll}
				(1 - \sigma_{j}^{-2} \sigma_{ij})^{-1} \left( V_{i} +\mu_{i}  + \sigma_{j}^{-2} \sigma_{ij} (u - \mu_{j}) \right) \le x,   &  \, i \in \cA\\
				(1 - \sigma_{j}^{-2} \sigma_{ij})^{-1} \left( V_{i}   +\mu_{i}  - \sigma_{j}^{-2} \sigma_{ij} \mu_{j} \right) - u \le x,   &  \, i \in \cS_j^+ \backslash\cC_j^1\backslash \cA \\
				(1 - \sigma_{j}^{-2} \sigma_{ij})^{-1} \left(  \mu_{i} - u \cdot \II\{i \in \cB \}  + \sigma_{j}^{-2} \sigma_{ij} (u - \mu_{j}) \right) \le x,   &  \, i \in \cC_j^1\\
				V_{i} \le u \cdot \II\{i \in \cB \} - \mu_{i} - \sigma_{j}^{-2} \sigma_{ij} ( u - \mu_{j}){ ,}   &  \, i \in \cS_j^0 
			\end{array} \right\},
		\end{align*}
		where $V_{i} = X_{i} - \mu_{i} - \sigma_{j}^{-2} \sigma_{ij} (X_{j}  - \mu_{j}) $ is the orthogonal residual of $X_i$ independent of $X_{j}$. Equality (a) follows from the fact that \([p] \backslash \{j\} = \cS_j^+ \cup \cS_j^0\) when condition (A) holds, equality (b) separates variables perfectly correlated with \(X_j\) that are constants conditional on \(X_j = x+u\), and equality (c) follows from the assumption that \(\max_{i \in \cA, j \in \cB} |\Corr(X_i, X_j)| < 1\) such that \(\cA \subseteq \cS_j^+ \backslash \cC_j^1\).
		% Hence it can be seen that Lemma~\ref{claim: 251} follows.
		When $\cC_j^1 \ne \emptyset$, we let 
		$$
		x_j(u) = \max_{i \in \cC_j^1} (1 - \sigma_{j}^{-2} \sigma_{ij})^{-1} \left(  \mu_{i} - u \cdot \II\{i\in \cB \}  + \sigma_{j}^{-2} \sigma_{ij} (u - \mu_{j}) \right),
		$$ 
		and when $\cC_j^1 = \emptyset$, we let $x_j(u) = -\infty$. 
		
		In what follows, we discuss the case when $x_j(u) > -\infty$, and the results for the case $x_j(u) = -\infty$ follow by similar arguments. First, we observe that
		\begin{enumerate}
			\item when $x \ge x_j(u)$: 
			\begin{align*}
				%&H_{u,j}(x) =\\
				&H_{u,j}(x) = \PP\left\{\begin{array}{ll}
					(1 - \sigma_{j}^{-2} \sigma_{ij})^{-1} \left( V_{i} +\mu_{i}  + \sigma_{j}^{-2} \sigma_{ij} (u - \mu_{j}) \right) \le x ,  &  \, i \in \cA\\
					(1 - \sigma_{j}^{-2} \sigma_{ij})^{-1} \left( V_{i}   +\mu_{i}  - \sigma_{j}^{-2} \sigma_{ij} \mu_{j} \right) - u \le x ,  &  \, i \in \cS_j^+ \backslash\cC_j^1\backslash \cA \\
					% (1 - \sigma_{S'}^{-2} \sigma_{S' \tilde{S}})^{-1} \left(  \mu_{i} - u \cdot \II\{\tilde{S} \in \cB \}  + \sigma_{S'}^{-2} \sigma_{S' \tilde{S}} (u - \mu_{j}) \right) \le x   &  ,\, \tilde{S} \in \cC_j\\
					V_{i} \le u \cdot \II\{i \in \cB \} - \mu_{i} - \sigma_{j}^{-2} \sigma_{ij} ( u - \mu_{j}),   &  \, i \in \cS_j^0 
				\end{array} \right\},
			\end{align*}
			\item when $x < x_j(u)$: $H_{u,j}(x) = 0$ as there exists an $i \in \cC_j^1$ that violates the constraint:
			$$
				(1 - \sigma_{j}^{-2} \sigma_{ij})^{-1} \left(  \mu_{i} - u \cdot \II\{i \in \cB \}  + \sigma_{j}^{-2} \sigma_{ij} (u - \mu_{j}) \right) \le x.
			$$
		\end{enumerate}
		Moreover, recall the definition of $g_{u, j} (x) $ is 
		$$
			g_{u, j} (x)  =  \sum_{i \in \cA} \PP \Big(\max_{i' \in \cA, j' \in \cB } X_{i'} \!\vee\! (X_{j'} - u) \!\le\! x \Big| X_i  = x , X_j = x + u \Big) \phi_{i| j}(x | x+ u),
		$$
	where for any $i \in \cA$ we have 
		\begin{align*}
			&\PP \Big(\max_{i' \in \cA, j' \in \cB } X_{i'} \vee (X_{j'} - u) \le x \Big| X_i  = x , X_j = x + u \Big)  \\
   & = \PP \Big(X_{i'} \le x + u \cdot \II\{i' \in \cB \},    \, i' \in [p] \backslash \cup \{i,j\} \Big| X_i  = x , X_j = x + u \Big)  \\
			& = \PP\left\{\begin{array}{ll}
			X_{i'} \le x + u \cdot \II\{i' \in \cB \},  &  \, i' \in [p] \backslash (\cC_j^1 \cup \{i,j\})\\
				X_{i'} \le x + u \cdot \II\{i' \in \cB \},  &  \, i' \in \cC_j^1 
			\end{array} \Bigg| X_i  = x , X_j = x + u  \right\}\\
			& =\PP\left( \!\!\left\{ X_{i'} \le x + u \cdot \II\{i' \in \cB \}, i' \in [p] \backslash (\cC_j^1 \cup \{i,j\})\right\} \cap \left\{ x \ge x_j(u)\right\} \, \Big|  X_i  = x , X_j = x + u  \!\right).
		\end{align*}
		The second equality follows from the fact that \( \cC_j^1 \subseteq [p] \backslash \{i,j\} \) by the assumption \( \cA \subseteq \cS_j^+ \backslash \cC_j^1 \) such that \( i \notin \cC_j^1 \) for any \( i \in \cA \), and the third equality follows from the equivalence for all \( i' \in \cC_j^1 \), conditional on \( X_j = x + u \):
$$
\big\{X_{i'} \le x + u \cdot \II\{i' \in \cB \} \big\} = \Big \{ (1 - \sigma_{j}^{-2} \sigma_{i'j})^{-1} \Big(  \mu_{i'} - u \cdot \II\{i' \in \cB \} + \sigma_{j}^{-2} \sigma_{i'j} (u - \mu_{j}) \Big) \le x \Big \}.
$$
Therefore, $  g_{u, j} (x) = 0 = d H_{u,j}(x) /dx$ when $x < x_j(u)$, and \eqref{eq: claim g = 0} follows.
		
		Next, we move on to show that \eqref{eq: claim g < H}  holds when $x > x_j(u)$. We first rewrite $ H_{u,j}(x)$ as
		\begin{align*}
			H_{u,j}(x) &= \PP\left\{\begin{array}{ll}
				(1 - \sigma_{j}^{-2} \sigma_{ij})^{-1} \left( V_{i} +\mu_{i}  + \sigma_{j}^{-2} \sigma_{ij} (u - \mu_{j}) \right) \le x ,  &  \, i \in \cA\\
				(1 - \sigma_{j}^{-2} \sigma_{ij})^{-1} \left( V_{i}   +\mu_{i}  - \sigma_{j}^{-2} \sigma_{ij} \mu_{j} \right) - u \le x ,  &  \, i \in  \cS_j^+ \backslash\cC_j^1\backslash \cA \\
				% (1 - \sigma_{S'}^{-2} \sigma_{S' \tilde{S}})^{-1} \left(  \mu_{i'} - u \cdot \II\{\tilde{S} \in \cB \}  + \sigma_{S'}^{-2} \sigma_{S' \tilde{S}} (u - \mu_{j}) \right) \le x   &  ,\, \tilde{S} \in \cC_j\\
				V_{i} \le u \cdot \II\{i \in \cB \} - \mu_{i} - \sigma_{j}^{-2} \sigma_{ij} ( u - \mu_{j}) ,  &  \, i \in \cS_j^0 
			\end{array} \right\}\\
			& = \PP\left(\begin{array}{ll} 
				\tilde{V}_{i} \le x  , &  \, i \in \cS_j^+ \backslash\cC_j^1 \\
				\tilde{U}_{i} \le 0 ,&  \, i \in \cS_j^0 
			\end{array} \right) = \PP\left(\max_{i \in \cS_j^+ \backslash\cC_j^1} \tilde{V}_{i}  \le x, \quad \max_{i \in \cS_j^0} \tilde{U}_{i} \le 0\right),
		\end{align*}
		where  
		$$
		\tilde{V}_{i}  = (1 - \sigma_{j}^{-2} \sigma_{ij})^{-1} \left( V_{i}  - u \cdot \II\{i \in \cB \} +\mu_{i}  + \sigma_{j}^{-2} \sigma_{ij} (u - \mu_{j}) \right) , \quad i \in \cS_j^+ \backslash\cC_j^1,
		$$
		$$
		\text{and} \quad \tilde{U}_{i} = V_{i} - u \cdot \II\{i \in \cB \} + \mu_{i} + \sigma_{j}^{-2} \sigma_{i j} ( u - \mu_{j}), \quad i \in \cS_j^0,
		$$
		and the second equality follows from that since $\cA \subseteq \cS_j^+ \backslash\cC_j^1$, we have $\cA \cup  \cS_j^+ \backslash\cC_j^1 \backslash \cA  = \cS_j^+ \backslash\cC_j^1$.
		
		To apply \eqref{eq: max joint part x} of Lemma~\ref{lm: joint max dens} to compute the derivative of $H_{u,j}(x)$ on $x$, we verify that the conditions in case (2) of Lemma~\ref{lm: joint max dens} hold for a.e. $u \in \RR$. First, by the construction of the set $\cC_j^1$ in \eqref{eq: C_j def}, we have that $\Var ( \tilde{V}_{i} ) > 0$ for any $i \in \cS_j^+ \backslash\cC_j^1$. Next, we remove the duplicate terms in $\{\tilde{V}_{i} \}_{i \in \cS_j^+ \backslash\cC_j^1}$, so that for the remaining $i,j \in  \cS_j^+ \backslash\cC_j^1$ and $i \ne j$, it holds that either $\Var(\tilde{V}_i - \tilde{V}_j) > 0$ or $\EE (\tilde{V}_i - \tilde{V}_j)  \ne 0$. To do so, we show that
		 %for a.e. $u \in \RR$, 
		 the duplicate terms only occur within the set $\cS_j^+\backslash \cC_j^1 \backslash \cA$:
		\begin{claim}\label{claim: dup term thm 2.5}
			For a.e. $u \in \RR$, we have the following hold
			\begin{equation}\label{eq: dup term A}
				\tilde{V}_i \not\equiv \tilde{V}_{i'}, \quad \text{for any $i,i' \in \cA$ and $i \ne i'$};
			\end{equation}
			\begin{equation}\label{eq: dup term cross}
				\tilde{V}_i \not\equiv \tilde{V}_{i'}, \quad \text{for any $i \in \cA$ and $i' \in \cS_j^+\backslash \cC_j^1 \backslash \cA$}.
			\end{equation}
		\end{claim}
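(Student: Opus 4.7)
The plan is to decompose $\tilde V_i - \tilde V_{i'}$ into a $u$-independent random component and an affine function of $u$, and then conclude that the a.s.\ equality $\tilde V_i \equiv \tilde V_{i'}$ can hold for at most one value of $u$ (so it fails for a.e.\ $u$) unless a degeneracy is triggered that contradicts the WLOG assumption $\Var(X_i - X_{i'}) > 0$ made in the proof of Theorem~\ref{thm: anti con}.

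Set $\alpha_\ell := (1 - \sigma_j^{-2}\sigma_{\ell j})^{-1}$. A direct expansion of $\tilde V_\ell$ shows that for $\ell \in \cA$ (so $\II\{\ell \in \cB\} = 0$) the coefficient of $u$ in $\tilde V_\ell$ equals $\alpha_\ell \sigma_j^{-2}\sigma_{\ell j}$, while for $\ell \in \cB \setminus \{j\}$ this coefficient simplifies to the clean identity $\alpha_\ell(\sigma_j^{-2}\sigma_{\ell j} - 1) = -1$. Hence the coefficient of $u$ in $\tilde V_i - \tilde V_{i'}$ is $\alpha_i \sigma_j^{-2}\sigma_{ij} - \alpha_{i'}\sigma_j^{-2}\sigma_{i'j}$ in case \eqref{eq: dup term A}, and equals $\alpha_i \sigma_j^{-2}\sigma_{ij} + 1 = \alpha_i$ in case \eqref{eq: dup term cross}.

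For \eqref{eq: dup term cross}, since $i \in \cA \subseteq \cS_j^+ \setminus \cC_j^1$, we have $\alpha_i \in (0,\infty)$, so the $u$-coefficient is strictly nonzero. Then $\tilde V_i - \tilde V_{i'}$ is a random variable (not depending on $u$) plus a nonconstant affine function of $u$, and the a.s.\ equality $\tilde V_i = \tilde V_{i'}$ forces the random part to be a.s.\ constant and that constant to equal a specific affine-in-$u$ value, which admits at most one solution. This yields \eqref{eq: dup term cross} for a.e.\ $u$.

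For \eqref{eq: dup term A}, if the $u$-coefficient is nonzero the same argument applies. If it vanishes, strict monotonicity of $x \mapsto x/(\sigma_j^2 - x)$ on $(-\infty,\sigma_j^2)$ forces $\sigma_{ij} = \sigma_{i'j}$ and hence $\alpha_i = \alpha_{i'}$; the $X_j$ contributions in $V_i - V_{i'}$ cancel exactly, and the deterministic pieces also cancel, leaving $\tilde V_i - \tilde V_{i'} = \alpha_i(X_i - X_{i'})$ (independent of $u$). If this were zero a.s., then $\Var(X_i - X_{i'}) = 0$, contradicting the WLOG assumption. The main obstacle is verifying the clean identity $\alpha_\ell(\sigma_j^{-2}\sigma_{\ell j} - 1) = -1$ for $\ell \in \cB$, which is the mechanism that makes the $u$-coefficient nonvanishing in \eqref{eq: dup term cross} and forces case \eqref{eq: dup term A} to fall back on the non-degeneracy assumption in the subcase where the $u$-coefficient vanishes.
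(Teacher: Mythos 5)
Your proposal is correct and follows essentially the same route as the paper's proof: both view $\tilde V_i-\tilde V_{i'}$ as a $u$-independent mean-zero random part plus an affine function of $u$, show the $u$-coefficient can vanish only in the within-$\cA$ case where injectivity of $x\mapsto x/(\sigma_j^2-x)$ forces $\sigma_{ij}=\sigma_{i'j}$ and hence $\tilde V_i-\tilde V_{i'}=\alpha_i(X_i-X_{i'})$, which contradicts the standing assumption $\Var(X_i-X_{i'})>0$, and otherwise conclude the identity can hold for at most one value of $u$. Your explicit computation that the cross-case coefficient equals $\alpha_i>0$ is just a cleaner form of the paper's direct solve for the unique $u$, so no gap.
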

		\begin{proof}
			See Appendix~\ref{sec: proof claim dup term thm 2.5}.
		\end{proof}
		By Claim~\ref{claim: dup term thm 2.5}, we have that for a.e. $u \in \RR$, the identity $\tilde{V}_i \equiv \tilde{V}_{i'}$ for $i \ne i'$ can only occur to $i,i' \in \cS_j^+ \backslash \cC_j^1 \backslash \cA$. 
		% Define 
		% $$
		%  \tilde{V}_{i}  = (1 - \sigma_{j}^{-2} \sigma_{ij})^{-1} \left( V_{i}  - u \cdot \II\{i \in \cB \} +\mu_{i}  + \sigma_{j}^{-2} \sigma_{ij} (u - \mu_{j}) \right) . 
		% $$
	We denote by \(\tilde\cB_j \subseteq \cS_j^+ \backslash \cC_j^1 \backslash \cA\) the subset with all duplicates of \(\tilde{V}_{i}\)'s removed, i.e., \(\tilde\cB_j\) is any subset of \(\cS_j^+ \backslash \cC_j^1 \backslash \cA\) such that for any \(i, i' \in \tilde\cB_j\) and \(i \ne i'\), we have \(\tilde{V}_{i} \not\equiv \tilde{V}_{i'}\), and if \(\cS_j^+ \backslash \cC_j^1 \backslash \cA \backslash \tilde\cB_j \ne \emptyset\), then for any \(k \in \cS_j^+ \backslash \cC_j^1 \backslash \cA \backslash \tilde\cB_j\), there exists a \(k' \in \tilde\cB_j\) such that \(\tilde{V}_{k} \equiv \tilde{V}_{k'}\). 
		
		Then we have that 
		\begin{align*}
			H_{u,j}(x) &= \PP\left(\begin{array}{ll} 
				\tilde{V}_{i} \le x  , &  \, i \in \cS_j^+ \backslash\cC_j \\
				\tilde{U}_{i} \le 0 ,&  \, i \in \cS_j^0 
			\end{array} \right) 
			%             \PP\left\{\begin{array}{ll}
				% (1 - \sigma_{j}^{-2} \sigma_{ij})^{-1} \left( V_{i} +\mu_{i}  + \sigma_{j}^{-2} \sigma_{ij} (u - \mu_{j}) \right) \le x ,  &  \, i \in \cA\\
				% (1 - \sigma_{j}^{-2} \sigma_{ij})^{-1} \left( V_{i}   +\mu_{i}  - \sigma_{j}^{-2} \sigma_{ij} \mu_{j} \right) - u \le x ,  &  \, i \in \tilde\cB\\
				% % (1 - \sigma_{S'}^{-2} \sigma_{S' \tilde{S}})^{-1} \left(  \mu_{i'} - u \cdot \II\{\tilde{S} \in \cB \}  + \sigma_{S'}^{-2} \sigma_{S' \tilde{S}} (u - \mu_{j}) \right) \le x   &  ,\, \tilde{S} \in \cC_j\\
				% V_{i} \le u \cdot \II\{i \in \cB \} - \mu_{i} - \sigma_{j}^{-2} \sigma_{ij} ( u - \mu_{j}) ,  &  \, i \in \cS_j^0 
				% \end{array} \right\}
			= \PP\left(\begin{array}{ll} 
				\tilde{V}_{i} \le x  , &  \, i \in \cA \cup \tilde\cB_j\\
				\tilde{U}_{i} \le 0 ,&  \, i \in \cS_j^0 
			\end{array} \right) \\
			& = \PP\left(\max_{i \in \cA \cup \tilde\cB_j} \tilde{V}_{i}  \le x, \quad \max_{i \in \cS_j^0} \tilde{U}_{i} \le 0\right),
		\end{align*}
		% where $\tilde{U}_{i} = V_{i} - u \cdot \II\{i \in \cB \} + \mu_{i} + \sigma_{j}^{-2} \sigma_{i j} ( u - \mu_{j}) $. 
		% Then 
		% % applying \eqref{eq: max joint part x} of Lemma~\ref{lm: joint max dens} by taking $y = 0$, 
		% we have that 
		and 
		\begin{align*}
			&  d H_{u,j}(x) / d x  \overset{\rm (d)}{=}  \sum_{i \in \cA \cup \tilde\cB_j} \Bigg \{ \PP\left(\begin{array}{ll} 
				\tilde{V}_{i'} \le x ,  &  \, i' \in \cA \cup \tilde\cB_j\\
				\tilde{U}_{i'} \le 0, &  \, i' \in \cS_j^0 
			\end{array} \Bigg| \tilde{V}_i = x \right) f_{\tilde{V}_i} (x)\Bigg\} \\
			&\quad \overset{\rm (e)}{=}  \sum_{i \in \cA \cup \tilde\cB_j} \Bigg \{ \PP\left(\begin{array}{ll} 
				\tilde{V}_{i'} \le x ,  &  \, i' \in \cS_j^+ \backslash\cC_j\\
				\tilde{U}_{i'} \le 0 , &  \, i' \in \cS_j^0 
			\end{array} \Bigg| \tilde{V}_i = x \right) f_{\tilde{V}_i} (x)\Bigg\} \\
			&\quad   =  \sum_{i \in \cA \cup \tilde\cB_j} \Bigg \{ \PP\!\left(\begin{array}{ll} 
				\tilde{V}_{i'} \le x  , & \, i' \in \cS_j^+ \backslash\cC_j\\
				\tilde{U}_{i'}  \le 0,  &  \, i' \in \cS_j^0 \\
				x_j(u) \le x & 
			\end{array} \Bigg| \tilde{V}_i = x \right) f_{\tilde{V}_i} (x)\Bigg\}  \\
			& \quad \overset{\rm (f)}{=}  \sum_{i \in \cA \cup \tilde\cB_j} \Bigg \{ \PP \Big(\max_{i' \in \cA, j' \in \cB } X_{i'} \vee (X_{j'} - u) \le x \Big| X_i  = x +  u \cdot \II\{i \in \cB \} , X_j = x + u \Big) \\
			& \qquad \times (1 - \sigma_{j}^{-2} \sigma_{ij}) \phi_{i|j} (x +  u \cdot \II\{i \in \cB \} | x+u) \Bigg\}\\
			& \quad \overset{\rm (g)}{\ge} \sum_{i \in \cA} \!\Bigg \{\! \PP \Big(\max_{i' \in \cA, j' \in \cB } X_{i'}\! \vee \!(X_{j'} - u) \le x \Big| X_i  = x , X_j = x + u \Big)  (1 - \sigma_{j}^{-2} \sigma_{ij}) \phi_{i|j} (x | x+u)\!\! \Bigg\} \\
			& \quad \ge  \Big(1- \max_{i \in \cA} \sigma_{j}^{-2} \sigma_{ij}\Big)  g_{u, j} (x) ,
		\end{align*}
		where $f_{\tilde{V}_i} (\cdot)$ denotes the density function of ${\tilde{V}_i} $; equality (d) follows from  \eqref{eq: max joint part x} of Lemma~\ref{lm: joint max dens} by taking $y = 0$, equality (e) follows by the fact that adding back the constraints of the duplicate terms does not change the probability, equality (f) follows from the fact that for any $i \in \cA \cup \tilde\cB_j$, $f_{\tilde{V}_i} (x) = (1 - \sigma_{j}^{-2} \sigma_{ij}) \phi_{i|j} (x +  u \cdot \II\{i \in \cB \} | x+u) $ and the fact that $\tilde{U}_{i}$'s and $\tilde{V}_{i}$'s are independent of $X_j$  by construction, so that we have the equivalence:
		\begin{align*}
			&\PP\!\left(\begin{array}{ll} 
				\tilde{V}_{i'} \le x  , & \, i' \in \cS_j^+ \backslash\cC_j\\
				\tilde{U}_{i'}  \le 0,  &  \, i' \in \cS_j^0 \\
				x_j(u) \le x & 
			\end{array} \Bigg| \tilde{V}_i = x \right) = \PP\!\left(\begin{array}{ll} 
			\tilde{V}_{i'} \le x  , & \, i' \in \cS_j^+ \backslash\cC_j\\
			\tilde{U}_{i'}  \le 0,  &  \, i' \in \cS_j^0 \\
			x_j(u) \le x & 
		\end{array} \Bigg| \tilde{V}_i = x, \,  X_j = x + u \right)\\
	& \quad =  \PP \Big(\max_{i' \in \cA, j' \in \cB } X_{i'} \vee (X_{j'} - u) \le x \Big| X_i  = x +  u \cdot \II\{i \in \cB \} , X_j = x + u \Big),
		\end{align*}
and inequality (g) follows by noting that $1 - \sigma_{j}^{-2} \sigma_{ij} \ge 0$ for all $i \in \cA \cup \tilde\cB_j$ due to condition (A). 
		Hence \eqref{eq: claim g < H} in Lemma~\ref{claim: 251} also follows, which completes the proof.
		% Claim~2 follows by taking $x_{j}(u) = x_{j}^0(u)$ when $\cC_j \ne \emptyset$, and we will take {\red $x_{j}(u) = -\infty$ } when $\cC_j = \emptyset$ for notational convenience. 
	\end{proof}

	\subsection{Proof of Corollary~\ref{col: single max anti con}}\label{sec: proof col single max anti con}
	\begin{proof}[\unskip\nopunct]
		First observe that for any given $t \in \RR$, we have 
		\begin{equation}\label{eq: single max anti con ineq}
			\PP(|\max_{j \in [p]} X_j - t| \le \varepsilon ) \le   \PP(|\max_{j \in [p]} (X_j - t)/\sigma_j| \le \varepsilon/\underline\sigma ),
		\end{equation}
		where $\underline\sigma = \min_{j \in [p]} \sigma_j$ and the inequality follows from the fact that $|\max_{j \in [p]} \underline\sigma(X_j - t)/\sigma_j| \le |\max_{j \in [p]} X_j - t| $. Then we will employ Theorem~\ref{thm: anti con} to establish the single Gaussian maximum inequality by augmenting the scaled Gaussian random vector $(\tilde{X}_j^t)_{j=1}^p : = \big((X_j - t)/ \sigma_j\big)_{j=1}^p$ with i.i.d. standard normal random variables.  Specifically, define the multivariate Gaussian random vector $(\tilde{X}_1^t, \ldots, \tilde{X}_p^t, \tilde{X}_{p+1}, \ldots, \tilde{X}_{p+n})$, where $\tilde{X}_j \overset{\rm i.i.d.}{\sim} \cN(0,1)$ for $j \in [p+n] \backslash [p]$ and $\tilde{X}_i^t \perp\!\!\!\!\perp \tilde{X}_j$ for any $i \in [p]$ and $j \in [p+n] \backslash [p]$. 
		% \eqref{eq: anti con max diff} also recovers the anti-concentration results of a single Gaussian maximum with $\sigma_{ij} \le \sigma_i^2$ for any $i,j$. More specifically, 
		Consider the partition $\cB = [p]$ and $\cA = [p+n]\backslash[p]$, and define $M_{\cA} = \max_{i \in \cA} \tilde{X}_i$ and $M_{\cB} = \max_{j \in \cB} \tilde{X}_j^t$. By Theorem~\ref{thm: anti con}, for all $\varepsilon > 0$, $\delta > 0$ and $t' \in \RR$, we have that 
		\begin{align}
			&\PP(|M_{\cB} - t'| \le \varepsilon )\notag \\
			&\ \le \PP\bigg(\!\Big\{\!|M_{\cB} - M_{\cA} - t' + \EE M_{\cA} |\! \le \varepsilon + \frac{C}{\sqrt{\log n}} \Big\} \!\cap\! \Big\{|M_{\cA} - \EE M_{\cA} |\!\le\! \frac{C}{\sqrt{\log n}}\Big\}\!\bigg) \notag\\
			&\quad +\! \PP \Big(\!|M_{\cA} - \EE M_{\cA} | > \!\frac{C}{\sqrt{\log n}}\Big)\notag\\
			&\ \le \cL\Big(M_{\cB} - M_{\cA}, \varepsilon + \frac{C}{\sqrt{\log n}}\Big) +  C^{-2} \log n \Var(M_{\cA})\notag\\
			& \ \le (2 + \delta) \EE \Big[ {\max_{j \in [p] } \big|\tilde{X}_j^t - \EE(\tilde{X}_j^t) \big| } \Big] {\varepsilon} = (2 + \delta) \EE \Big[ {\max_{j \in [p] } |{X}_j - \mu_j | /\sigma_j} \Big] {\varepsilon},\label{eq: single max anti con uni}
		\end{align}
		where the first inequality follows from triangle inequality, the second inequality follows from the definition of the L\'evy concentration function and Markov's inequality, and
		% $c$ in the last inequality is an arbitrarily small constant, and 
		the last inequality holds by choosing $C > 0$ and $n$ properly large together with the fact that $\log n \Var(M_{\cA}) = O(1)$ as $M_{\cA}$ is the maximum of i.i.d. $\cN(0,1)$ \citep{chatterjee2014superconcentration}. Then taking the infimum over $\delta > 0$, \eqref{eq: single max anti con uni} yields 
		\begin{equation}\label{eq: single max eq var anti con}
			\sup_{t' \in \RR} \PP(|\max_{j \in [p]} \tilde{X}_j^t - t'| \le \varepsilon ) \le 2  \EE \big[ {\max_{j \in [p] } |X_j - \mu_j | /\sigma_j} \big] {\varepsilon}.
		\end{equation}
		Combining \eqref{eq: single max anti con ineq} and \eqref{eq: single max eq var anti con}, we have 
		\begin{align*}
			\sup_{t \in \RR} \PP(|\max_{j \in [p]} X_j - t| \le \varepsilon ) \le    \sup_{t \in \RR} \PP(|\max_{j \in [p]} \tilde{X}_j^t | \le \varepsilon/\underline\sigma )  \le 2  \EE \big[ {\max_{j \in [p] } |X_j - \mu_j | /\sigma_j} \big] {\varepsilon}/\underline\sigma,
		\end{align*}
		which completes the proof of \eqref{eq: single max anti con}.
	\end{proof}
	\subsection{Proof of Lemma~\ref{lm: gauss comp rate}}\label{sec: proof lm gauss comp rate}
	\begin{proof}[\unskip\nopunct]
		The proof is a modification of
		the proof of Theorem~3.2 in \cite{cck2022improvedbootstrap}. Specifically, for any $v \in \RR^p$ and $s \in \RR$, we replace the smoothing function $m^y(x)$ in equation (65) of \cite{cck2022improvedbootstrap} with $\varphi^v_s(x)$ defined in \eqref{eq: smooth fun} and setting $\beta = \delta^{-1} \log p$. Next, we replace the function $h^y(x;t)$ in equation (72) of \cite{cck2022improvedbootstrap} as $h^v_{s,\delta}(x, t)$ in \eqref{eq: smooth indicate fun}. Then, we define the term
		$$
		\cI^v_s = \varphi^v_s(V) - \varphi^v_s(Z),
		$$
		where without loss of generality, we assume that $V$ and $Z$ are independent. Following the  arguments as in Step~2 of the proof for Theorem~3.2 in \cite{cck2022improvedbootstrap}, we let 
		$$
		\Psi(t) = \EE [\varphi_s^v( \sqrt{t} V + \sqrt{1-t}Z)], \quad \text{for all } t \in [0,1].
		$$
		By the multivariate Stein identity \cite{stein1981estimation}, we have that 
		\begin{align*}
			\Psi'(t) &= \frac{1}{2}\sum_{j = 1}^p \EE \left[\partial_j\varphi_s^v( \sqrt{t} V + \sqrt{1-t}Z) \left(\frac{1}{\sqrt{t}}V_j - \frac{1}{\sqrt{1-t}}Z_j\right)\right]\\
			& = \frac{1}{2}\sum_{j,k = 1}^p \EE \left[\partial_j\partial_k\varphi_s^v( \sqrt{t} V + \sqrt{1-t}Z) \left(\tau_{jk}(V) - \Sigma_{jk}\right)\right].
		\end{align*}
		%Recall we have established the following equation for any $x \in \RR^p$ in \eqref{eq: deriv eq}:   
		% Then it suffices for us to show that for any $x \in \RR^p$, 
		%    $$
		%    \partial_j\partial_k\varphi_s^v(x) =  \partial_j\partial_k\varphi_s^v(x)  h^v_{s,\delta}(x, 0).
		%   $$
		% Indeed when $h^v_{s,\delta}(x, 0) = 0$, by the property of the function $\zeta_{\beta}^v$ in \eqref{eq: property zeta}, we have that either $\zeta_{\beta}^v(x) \le s$ or $\zeta_{\beta}^v(x) > s + \delta$, which by the property of the function $\gamma_s$ indicates that $ \partial_j\partial_k\varphi_s^v(x) = 0$. Hence the claim follows. 
		Next,    following the same arguments as in the proof of Theorem~3.2 in \cite{cck2022improvedbootstrap} together with \eqref{eq: deriv eq} in Appendix~\ref{sec: proof claim recursive rate I}, we have that  
		\begin{equation}\label{eq: sec der eq}
				\partial_j\partial_k\varphi_s^v(\sqrt{t} V + \sqrt{1-t}Z) =  \partial_j\partial_k\varphi_s^v(\sqrt{t} V + \sqrt{1-t}Z)  h^{v'}_{s',\delta'}(Z, 0),
		\end{equation}
		where
		$$
		v' = \frac{1}{\sqrt{1-t}}v + \sqrt{\frac{t}{1-t}} V, \quad s' = \frac{1}{\sqrt{1-t}}s, \quad \delta' = \frac{1}{\sqrt{1-t}} \delta.
		$$
		Then, since $Z$ and $V$ are independent, and $Z$ satisfies Condition~\ref{cond: anti-con cov},  by Theorem~\ref{col: anti con equal var no cor 1} and Theorem~\ref{thm: anti con}, we have that 
		$$
		\EE [h^{v'}_{s',\delta'}(Z, 0)|V = x] \le \frac{K_0 \EE[\max_{\ell \in \cS_{\rm p}} |Z_{\ell}|/\sigma_{\ell}]}{C_{\cA,\cB}\sqrt{1-t}} \delta, \quad \forall x \in \RR^p,
		$$
		where $K_0 > 0$ is a universal constant.
		Then  we have that
		\begin{align*}
			|\Psi'(t)| & \le \frac{1}{2}\sum_{j,k = 1}^p \EE \left[h^{v'}_{s',\delta'}(Z, 0)\cdot\left|\partial_j\partial_k\varphi_s^v( \sqrt{t} V + \sqrt{1-t}Z) \right|  \cdot \left| \tau_{jk}(V) - \Sigma_{jk}\right|\right] \\
			& \le \frac{C_0 \delta^{-2} \log p}{2} \EE \left( \EE [h^{v'}_{s',\delta'}(Z, 0)|V ] \max_{j,k \in [p]}|\tau_{jk}(V) - \Sigma_{jk}|\right)\\
			& \le  \frac{K_0 C_0 \delta^{-1} \log p \EE[\max_{\ell \in \cS_{\rm p}} |Z_{\ell}|/\sigma_{\ell}]}{2 C_{\cA,\cB}\sqrt{1-t}} \EE \left(\max_{j,k \in [p]}|\tau_{jk}(V) - \Sigma_{jk}|\right),
		\end{align*}
	where the first inequality follows from \eqref{eq: sec der eq}, and the second inequality follows by applying the derivative bound in \eqref{eq: clt m der sum 1} of  Lemma~\ref{lm: clt third dev bd}.  In turn, we have
		\begin{align*}
			|\EE (\cI_s^v)| &= \left|\int_{0}^1 \Psi(t) dt \right| \le  \frac{K_0 C_0 \delta^{-1} \log p \EE[\max_{\ell \in \cS_{\rm p}} |Z_{\ell}|/\sigma_{\ell}] \Delta}{2 C_{\cA,\cB}} \int_{0}^1 \frac{1}{\sqrt{1-t}} dt\\
			& = K_0 C_0 C_{\cA,\cB}^{-1}\delta^{-1} \log p \EE[\textstyle\max_{\ell \in \cS_{\rm p}} |Z_{\ell}|/\sigma_{\ell}] \Delta.
		\end{align*}
		%Now applying the properties of the functions $\gamma_s$ and $\zeta_{\beta}^v$, {\color{red}EF: What properties? Be specific} 
		Then, for any $v \in \RR^p$ and $s \in \RR$ we have
		\begin{align*}
			&\PP\left(\max_{j \in \cA} (V_j + v_j) - \max_{j' \in \cB} (V_{j'} + v_{j'})  \le s\right)  \le \PP(\zeta_{\beta}^v(V) \le s + \delta) \le \EE (\varphi_{s + \delta}^v (V)) \\
			& \le \EE (\varphi_{s + \delta}^v (Z)) +   |\EE (\cI_s^v)| \le \PP(\zeta_{\beta}^v(Z) \le s + 2\delta) +   |\EE (\cI_s^v)|   \\
			& \le \PP(\max_{j \in \cA} (Z_j + v_j) - \max_{j' \in \cB} (Z_{j'} + v_{j'})  \le s + 3\delta) +   |\EE (\cI_s^v)|   \\
			& \le \PP(\max_{j \in \cA} (Z_j + v_j) - \max_{j' \in \cB} (Z_{j'} + v_{j'})  \le s ) +   |\EE (\cI_s^v)| + K_1 C_{\cA,\cB}^{-1}\EE[\max_{\ell \in \cS_{\rm p}} |Z_{\ell}|/\sigma_{\ell}] \delta  ,
		\end{align*}
		where the last but one inequality is a result of \eqref{eq: property zeta} and the choice of $\beta = \delta^{-1} \log p$, and the last inequality is a result of Theorem~\ref{col: anti con equal var no cor 1} and Theorem~\ref{thm: anti con}, and $K_1 > 0$ is a universal constant. Similarly,
		\begin{align*}
			&\PP\left(\max_{j \in \cA} (V_j + v_j) - \max_{j' \in \cB} (V_{j'} + v_{j'})  \le s\right)  \ge \PP(\zeta_{\beta}^v(V) \le s - \delta) \ge \EE (\varphi_{s - 2\delta}^v (V)) \\
			& \ge \EE (\varphi_{s - 2\delta}^v (Z)) -   |\EE (\cI_s^v)| \ge \PP(\zeta_{\beta}^v(Z) \le s - 2\delta) -   |\EE (\cI_s^v)|   \\
			& \ge \PP(\max_{j \in \cA} (Z_j + v_j) - \max_{j' \in \cB} (Z_{j'} + v_{j'})  \le s - 3\delta) -   |\EE (\cI_s^v)|   \\
			& \ge \PP(\max_{j \in \cA} (Z_j + v_j) - \max_{j' \in \cB} (Z_{j'} + v_{j'})  \le s ) -  |\EE (\cI_s^v)| - K_1 C_{\cA,\cB}^{-1}\EE[\max_{\ell \in \cS_{\rm p}} |Z_{\ell}|/\sigma_{\ell}] \delta  .
		\end{align*}
		Then \eqref{eq: gauss comp rate} holds by taking $\delta = \sqrt{\Delta \log p}$, which completes the proof.
		% and \eqref{eq: gauss comp rate argmax} is a direct result of \eqref{eq: gauss comp rate} due to the equivalance stated in \eqref{eq: equiv argmax}.
	\end{proof}
	\subsection{Proof of Lemma~\ref{prop: modified thm 3.1}}\label{sec: proof prop modified thm 3.1}
	\begin{proof}[\unskip\nopunct]
		The proof of Lemma~\ref{prop: modified thm 3.1} follows the proof of Theorem~3.1 in \cite{cck2022improvedbootstrap} by modifying  Lemma~3.1 in \cite{cck2022improvedbootstrap} using redefined smoothing functions. For ease of presentation, we denote the constants in Conditions~\ref{cond: fourth moment}-\ref{cond: anti-con rate} as \(C_v\), \(C_e\), \(C_b\), and \(C_a\) respectively, and we denote the constant in \eqref{eq: m2 cond} and \eqref{eq: m3 cond} as \(C_m\). Then, for any $\epsilon \in \{0,1\}^n$, we redefine the variable $\varrho_{\epsilon}$ in (21) of \cite{cck2022improvedbootstrap} as 
		$$
		\varrho_{\epsilon} = \sup_{v \in \RR^p, s \in \RR} \left|\PP\big(\cM_{\cA}^v (S_{n,\epsilon}^V)-  \cM_{\cB}^v (S_{n,\epsilon}^V) \le s\big) - \PP\big( \cM_{\cA}^v (S_{n}^Z)-  \cM_{\cB}^v (S_{n}^Z) \le s\big)\right|,
		$$
		where  $S_{n,\epsilon}^V = \frac{1}{\sqrt{n}} \sum_{i=1}^n \big( \epsilon_i V_i + (1-\epsilon_i) Z_i\big)$. We adopt the same sequence of random vectors $\epsilon^0, \ldots, \epsilon^D \in \{0,1\}^n$ as defined in Remark~3.1 of \cite{cck2022improvedbootstrap} and define $I_d = \{i \in [n]: \epsilon^d_i = 1\}$ for $d = 0, \ldots, D$, where $D = {\rm round}(4 \log n) + 1$, and ${\rm round} (\cdot)$ is the rounding operator that rounds the number inside to the nearest integer. For all $i \in [n]$ and $j,k,\ell \in [p]$, we let
		$$
		\begin{gathered}
			\mathcal{E}_{i, j k}^V=\EE\left[V_{i j} V_{i k}\right], \quad \mathcal{E}_{i, j k \ell}^V=\EE\left[V_{i j} V_{i k} V_{i \ell }\right], \\
			\mathcal{E}_{i, j k}^Z=\EE\left[Z_{i j} Z_{i k}\right], \quad \mathcal{E}_{i, j k \ell}^Z=\EE\left[Z_{i j} Z_{i k} Z_{i \ell}\right],
		\end{gathered}
		$$
		and for $n \ge 1$ and $d = 0, \ldots, D$, we consider the event $\cA_d$ defined in Remark~3.1 of \cite{cck2022improvedbootstrap} that
			{\small
		$$
	\begin{aligned}
			\mathcal{A}_d= & \Big\{\max_{j, k \in [p]}\!\Big|\frac{1}{\sqrt{n}}\! \sum_{i=1}^n \epsilon_i^d\!\!\left(\!\mathcal{E}_{i, j k}^V\!-\mathcal{E}_{i, j k}^Z\right)\!\!\Big| \!\leq \mathcal{B}_{n, 1, d}\!\Big\}\!  \bigcap\!\Big\{\max_{j, k, \ell \in [p]}\Big|\frac{1}{\sqrt{n}} \!\sum_{i=1}^n \!\epsilon_i^d\!\!\left(\mathcal{E}_{i, j k \ell}^V-\mathcal{E}_{i, j k \ell}^Z\right)\!\!\Big| \!\leq \mathcal{B}_{n, 2, d}\!\Big\} ,
		\end{aligned}
		$$
	}where $\mathcal{B}_{n, 1, d}, \mathcal{B}_{n, 2, d} > 0$ are some positive rates to be specified later.
		
		Then, we finish the proof in two steps.

		In Step~1, we first establish a recursive inequality of $\rho_{\epsilon^d}$, which is a counterpart of the rate in Lemma~3.1 of \cite{cck2022improvedbootstrap} in our case. Specifically, for any $d = 0, \ldots, D$ and any constant $\delta > 0$ such that 
		\begin{equation}\label{eq: delta scaling}
			C_e B_n\delta^{-1} \log ^2 (pn) \le \sqrt{n},
		\end{equation}
		we show that the following holds under  event $\cA_d$,
		\begin{equation}\label{eq: iter rate rho}
			\begin{aligned}
				\varrho_{\epsilon^d} \lesssim \,\, & r_n \delta + \varepsilon_n +  \frac{B_n^2 \delta^{-4} \log^5 (pn) }{n^2} \\
				&+  \big(r_n \delta + \varepsilon_n +   \EE (\varrho_{\epsilon^{d+1}}|\epsilon^d)\big)  \left( \frac{\cB_{n,1,d} \delta^{-2} \log p}{\sqrt{n}}  + \frac{\cB_{n,2,d} \delta^{-3} \log^2 p}{{n}} + \frac{B_n^2 \delta^{-4} \log^3 (pn)}{n} \right).
			\end{aligned}
		\end{equation}
		Then by the proof of Corollary~3.1 in \cite{cck2022improvedbootstrap}, there exists a constant $K_{\rm iter} > 0$ depending only on $C_v$, $C_e$ and $C_b$ such that by taking 
		\begin{equation}\label{eq: K_iter B_n}
			\cB_{n,1,d+1} \ge \cB_{n,1,d} + K_{\rm iter} B_n \log^{1/2}(pn) \text{ and }  \cB_{n,2,d+1} \ge \cB_{n,2,d} + K_{\rm iter} B_n^2 \log^{3/2}(pn),
		\end{equation}
		we  have a recursive inequality for $\EE (\varrho_{\epsilon^d} \II\{\cA_d\} )$ that  
		\begin{equation}\label{eq: iter rate event}
			\begin{aligned}
				\EE (\varrho_{\epsilon^d} \II\{\cA_d\} ) \lesssim  & \ r_n \delta + \varepsilon_n +  \frac{B_n^2 \delta^{-4} \log^5 (pn) }{n^2} +  \big(r_n \delta + \varepsilon_n +   \EE (\varrho_{\epsilon^{d+1}}  \II\{\cA_{d+1}\} )\big) \\
				& \times \left( \frac{\cB_{n,1,d} \delta^{-2} \log p}{\sqrt{n}}  + \frac{\cB_{n,2,d} \delta^{-3} \log^2 p}{{n}} + \frac{B_n^2 \delta^{-4} \log^3 (pn)}{n} \right).
			\end{aligned}
		\end{equation}  
		
		In Step~2, following \eqref{eq: iter rate event}, we employ an induction argument and follow the proof of Theorem~3.1 in \cite{cck2022improvedbootstrap} to show that \eqref{eq: CLT initial results} holds. 
		
		For the rest of the proof, same as in \cite{cck2022improvedbootstrap}, for ease of notation, we denote  $\PP(\cdot)$ and $\EE(\cdot)$ as $\PP(\cdot | \epsilon^d)$ and $\EE(\cdot |\epsilon^d )$ respectively, and we let $\lesssim$ be that the inequality holds up to a constant depending only on $C_v$, $C_e$, $C_b$, $C_a$ and $C_{\gamma}$.
		
		\subsubsection*{\bf Step~1}
		% To establish the recursive inequality of $\EE(\rho_{\epsilon^d} \II\{\cA_d\})$ for $d = 0, \ldots, D$, by Corollary~3.1 in \cite{cck2022improvedbootstrap}, we only need to show that the rate in Lemma~3.1 of \cite{cck2022improvedbootstrap} holds in our case, and then the recursive inequality follows by properly choosing the recursive rates $\cB_{n, 1,d+1}$ and $\cB_{n, 2,d+1}$ for $d = 0, \ldots, D$.  
		
		% Specifically, for any $d = 0, \ldots, D$ and any constant $\delta > 0$ such that 
		% \begin{equation}\label{eq: delta scaling}
			%     C_e B_n\delta^{-1} \log ^2 (pn) \le \sqrt{n},
			% \end{equation}
		% to establish the recursive inequality for $\EE(\rho_{\epsilon^d} \II\{\cA_d\})$, we only need to show that the following holds under the event $\cA_d$,
		% \begin{equation}\label{eq: iter rate rho}
			%     \begin{aligned}
				%         \varrho_{\epsilon^d} \lesssim \,\, & r_n \delta + \varepsilon_n +  \frac{B_n^2 \delta^{-4} \log^5 (pn) }{n^2} +  \big(r_n \delta + \varepsilon_n +   \EE (\varrho_{\epsilon^{d+1}}|\epsilon^d)\big) \\
				%         & \times \left( \frac{\cB_{n,1,d} \delta^{-2} \log p}{\sqrt{n}}  + \frac{\cB_{n,2,d} \delta^{-3} \log^2 p}{{n}} + \frac{B_n^2 \delta^{-4} \log^3 (pn)}{n} \right),
				%     \end{aligned}
			% \end{equation}
		% up to a constant depending only on $C_v$, $C_e$, $C_b$ and $C_a$.
		
		Similar to the proof of  Lemma~3.1 of \cite{cck2022improvedbootstrap}, to show \eqref{eq: iter rate rho}, we begin by constructing a smoothing approximation function of $\II( \cM_{\cA}^v(\cdot) - \cM_{\cB}^v(\cdot)  \le s)$ for any $v \in \RR^p$ and $s \in \RR$. In particular, given any $s \in \RR$ and $\delta > 0$, we find a five-times continuously differentiable function $\gamma^0: \RR \rightarrow [0,1]$ with derivatives bounded up to the fifth order such that $\gamma^0(t) = 1$ for any $t \le 0$ and $\gamma^0(t) = 0$ for any $t \ge 1$. Then for the given $\delta > 0$, we define $\gamma_s(t) = \gamma_{s, \delta}(t) = \gamma^0(\delta^{-1}(t-s))$. It is not difficult to show that for any $d = 1, \ldots, 5$, there exists an absolute constant $C_{\gamma} \ge 1$ such that $\|\gamma_s^{(d)}\|_{\infty} \le C_{\gamma}\delta^{-d}$, where $\gamma_s^{(d)}$ denotes the $d$-th derivative of $\gamma_s$, and $\II \{t \le s\}\le \gamma_s(t) \le \II\{t \le s + \delta\}$ for any $t \in \RR$. For any vector $v \in \RR^p$, define the function $\zeta_{\beta}^v: \RR^p \rightarrow \RR$ as
		$$
		\zeta_{\beta}^v (x) =  \zeta_{\beta, \cA}^v (x)  = \beta^{-1} \big( \log \big(\textstyle\sum_{j \in \cA} e^{\beta(x_j + v_j)} \big) - \log \big(\textstyle\sum_{j' \in \cB} e^{\beta(x_{j'} + v_{j'})} \big)  \big),
		$$
		% where $\bar\mu = n^{-1/2} \sum_{i=1}^n \mu_i$, 
		and it also holds that
		\begin{equation}\label{eq: property zeta}
			-\beta^{-1} \log p \le \zeta_{\beta}^v (x) - \{\max_{j \in \cA} (x_j + v_j) - \max_{j' \in \cB} (x_{j'} + v_{j'})\} \le \beta^{-1} \log p.
		\end{equation}
		Then, we define the smoothing function as
		\begin{equation}\label{eq: smooth fun}
			\varphi^v_s(x) = \varphi_{s,\beta}^v(x) = \gamma_s (\zeta_{\beta}^v (x)) .
		\end{equation}
		The following lemma bounds the partial derivatives of $\varphi^v_s$:
		\begin{lemma}\label{lm: clt third dev bd}
			Letting $\beta = \delta^{-1} \log p$, for any $j,k, \ell , r, h \in [p]$, there exists functions $U_{jk} (x): \RR^p \rightarrow \RR$, $U_{jk \ell} (x): \RR^p \rightarrow \RR$, $U_{jk \ell r} (x): \RR^p \rightarrow \RR$ and $U_{jk \ell r h} (x): \RR^p \rightarrow \RR$ and universal constants $C_0, C_1, c_1 > 0$ such that for any given $v \in \RR^p$ and $s \in \RR$, and for any $x \in \RR^p$, we have that 
			\begin{equation}\label{eq: clt m der bd 1}
				\begin{aligned}
					|\partial_j \partial_k \varphi^v_s(x)| \le U_{jk} (x), & \quad |\partial_j \partial_k \partial_{\ell} \varphi^v_s(x)| \le U_{jk \ell} (x), \\
					|\partial_j \partial_k \partial_{\ell} \partial_r \varphi^v_s(x)| \le U_{jk \ell r} (x), & \quad  |\partial_j \partial_k \partial_{\ell} \partial_r \partial_h \varphi^v_s(x)| \le U_{jk \ell r h} (x) ,  
				\end{aligned}
			\end{equation}
			% \begin{equation}\label{eq: clt m der bd 2}
				% \end{equation}
			\begin{equation}\label{eq: clt m der sum 1}
				\begin{aligned}
					\sum_{j,k =1}^p U_{jk } (x) \le C_0 \delta^{-2} \log p, &\quad \sum_{j,k, \ell=1}^p U_{jk\ell} (x) \le C_0 \delta^{-3} \log^{2} p, \\
					\sum_{j,k, \ell, r =1}^p U_{jk \ell r} (x) \le C_0 \delta^{-4} \log^3 p, &\quad \sum_{j,k, \ell, r, h=1}^p U_{jk\ell r h } (x) \le C_0 \delta^{-5} \log^{4} p.
				\end{aligned}
			\end{equation}
			%   \begin{equation}\label{eq: clt m der sum 2}    
				% \end{equation}
			Also, for any $x, y \in \RR^p$ such that $\max_{j \in [p]} |y_j| \le \beta^{-1}$, we have
			\begin{equation}\label{eq: clt m der order 2}
				\begin{aligned}
					c_1 U_{jk \ell r} (x) \le U_{jk \ell r} &(x + y) \le C_1 U_{jk \ell r} (x),  \\
					c_1 U_{jk \ell r h} (x) \le U_{jk \ell r h} &(x + y) \le C_1 U_{jk \ell r h} (x).
				\end{aligned}
			\end{equation}
			%  \begin{equation}\label{eq: clt m der order 2}
				% \end{equation}
		\end{lemma}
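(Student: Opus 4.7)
The plan is to reduce the lemma to properties of the softmax weights $\pi^{\cA}_j(x):=e^{\beta(x_j+v_j)}/\sum_{k\in\cA}e^{\beta(x_k+v_k)}$ for $j\in\cA$ (and zero otherwise), with $\pi^{\cB}_j$ defined symmetrically. First I would check by direct computation that $\partial_j\zeta^v_\beta = \pi^{\cA}_j - \pi^{\cB}_j$, and then prove by induction on $k$ that $\partial_{j_1\cdots j_k}\zeta^v_\beta$ equals $\beta^{k-1}$ times a finite signed polynomial of total degree $k$ in a single set of softmax weights (nonzero only when all indices lie in one part). Next, applying the multivariate Fa\`a di Bruno formula to $\varphi^v_s=\gamma_s\circ\zeta^v_\beta$ would give
\begin{equation*}
\partial_{j_1\cdots j_d}\varphi^v_s(x)=\sum_{m=1}^{d}\gamma_s^{(m)}\bigl(\zeta^v_\beta(x)\bigr)\,Q^{(d,m)}_{j_1\cdots j_d}(x),
\end{equation*}
where $Q^{(d,m)}_{j_1\cdots j_d}$ is a sum, over set partitions of $\{j_1,\ldots,j_d\}$ into $m$ blocks, of corresponding products of partial derivatives of $\zeta^v_\beta$, and therefore carries an overall prefactor $\beta^{d-m}$ (since the exponents $|B|-1$ sum to $d-m$) times a softmax monomial. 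I would then take $U_{j_1\cdots j_d}(x)$ to be the sum of absolute values on the right; combined with the standing bound $\|\gamma_s^{(m)}\|_\infty\le C_\gamma\delta^{-m}$, this proves \eqref{eq: clt m der bd 1}.

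To establish the summation bounds \eqref{eq: clt m der sum 1}, I would use the identity $\sum_{j\in\cA}\pi^{\cA}_j\equiv 1$ (and likewise on $\cB$): any softmax monomial, summed over its free indices, is at most one. Consequently, summing $U_{j_1\cdots j_d}(x)$ over $(j_1,\ldots,j_d)\in[p]^d$ contributes a combinatorial (Bell-number) constant times $\delta^{-m}\beta^{d-m}=\delta^{-d}(\log p)^{d-m}$ for each $m$. The dominant term corresponds to $m=1$, yielding the claimed $\delta^{-d}(\log p)^{d-1}$ scaling.

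For the stability bound \eqref{eq: clt m der order 2}, my key observation will be that when $\|y\|_\infty\le\beta^{-1}$ we have $e^{\beta y_j}\in[e^{-1},e]$, so that
\begin{equation*}
\frac{\pi^{\cA}_j(x+y)}{\pi^{\cA}_j(x)}=\frac{e^{\beta y_j}}{\sum_k\pi^{\cA}_k(x)e^{\beta y_k}}\in[e^{-2},e^2],
\end{equation*}
and analogously for $\pi^{\cB}$; hence each polynomial factor $Q^{(d,m)}$ is stable up to a multiplicative constant of order $e^{\pm 2d}$. Moreover $\sum_j|\partial_j\zeta^v_\beta|\le 2$ gives $|\zeta^v_\beta(x+y)-\zeta^v_\beta(x)|\le 2\beta^{-1}$, so to absorb the possibility that $\gamma_s^{(m)}$ transitions between zero and nonzero across the shift I would redefine $U_{jk\ell r}$ and $U_{jk\ell r h}$ using the inflated envelope $\widetilde\gamma_s^{(m)}(t):=\sup_{|\tau|\le 4\beta^{-1}}|\gamma_s^{(m)}(t+\tau)|$ in place of $\gamma_s^{(m)}$. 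This envelope still satisfies $\|\widetilde\gamma_s^{(m)}\|_\infty\le C_\gamma\delta^{-m}$, so the pointwise bound \eqref{eq: clt m der bd 1} and the summation bound \eqref{eq: clt m der sum 1} are preserved, and by its symmetric construction $\widetilde\gamma_s^{(m)}(\zeta(x+y))$ and $\widetilde\gamma_s^{(m)}(\zeta(x))$ are of comparable magnitude whenever either is nonzero, yielding both directions of \eqref{eq: clt m der order 2}. The main obstacle will be the bookkeeping for the fifth-order Fa\`a di Bruno expansion together with the verification that the symmetric envelope inflation really preserves the two-sided stability $c_1 U_{jk\ell r}(x)\le U_{jk\ell r}(x+y)\le C_1 U_{jk\ell r}(x)$; all other ingredients reduce to the softmax identity $\sum_{j\in\cA}\pi^{\cA}_j=1$ and the ratio bound displayed above.
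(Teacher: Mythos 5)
Your Fa\`a di Bruno expansion, the $\sum_{j\in\cA}\pi^{\cA}_j=1$ argument for \eqref{eq: clt m der sum 1}, and the ratio bound $e^{-2}\pi^{\cA}_j(x)\le\pi^{\cA}_j(x+y)\le e^{2}\pi^{\cA}_j(x)$ are all sound and match the skeleton of the paper's inductive computation. The genuine gap is exactly the point you flag as "the main obstacle": your envelopes retain a factor depending on $\gamma_s^{(m)}$ evaluated near $\zeta_\beta^v(x)$, and the inflated envelope $\widetilde\gamma_s^{(m)}(t)=\sup_{|\tau|\le 4\beta^{-1}}|\gamma_s^{(m)}(t+\tau)|$ does \emph{not} give the two-sided stability \eqref{eq: clt m der order 2} with universal constants. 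The reason is that $\widetilde\gamma_s^{(m)}$ is a sup over a window of radius $4\beta^{-1}$, while $\zeta_\beta^v(x+y)$ can differ from $\zeta_\beta^v(x)$ by the full amount $2\beta^{-1}$ (take $y_j=\beta^{-1}$ on $\cA$ and $y_j=-\beta^{-1}$ on $\cB$); the point attaining the sup in the window centered at $\zeta_\beta^v(x+y)$ may lie up to $6\beta^{-1}$ from $\zeta_\beta^v(x)$, hence outside the window centered at $\zeta_\beta^v(x)$. Since ${\rm supp}\,\gamma_s^{(m)}$ is a compact subset of $[s,s+\delta]$ and $\delta\gg\beta^{-1}$, one can choose $x$ with its window entirely to the left of the support and $y$ shifting the window onto the support, so that $U_{jk\ell r}(x)=0<U_{jk\ell r}(x+y)$; no positive $c_1,C_1$ independent of $x,y$ (let alone of $p,\delta,s,v$) can then satisfy $U_{jk\ell r}(x+y)\le C_1U_{jk\ell r}(x)$. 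So the proposed symmetric inflation does not close the argument.

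The fix — and what the paper actually does — is to discard the $\gamma$-dependent factor from the envelopes altogether: define $U_{h_1\cdots h_d}(x)$ as the sum over the Fa\`a di Bruno (or inductive) terms of $\beta^{d_1}\,C_\gamma\delta^{-d_2}$ times the corresponding softmax monomial in $\pi^{\cA}_\cdot(x),\pi^{\cB}_\cdot(x)$ and Kronecker deltas, where $\|\gamma_s^{(d_2)}\|_\infty\le C_\gamma\delta^{-d_2}$ has been substituted for the $\gamma$ factor. Then \eqref{eq: clt m der bd 1} and \eqref{eq: clt m der sum 1} follow exactly as in your plan (each term contributes $\beta^{d_1}\delta^{-d_2}=\delta^{-d}\log^{d_1}p\le\delta^{-d}\log^{d-1}p$ after summing the softmax weights to one), and \eqref{eq: clt m der order 2} becomes immediate from your own $e^{\pm 2}$ ratio bound, giving $e^{-2d}U_h(x)\le U_h(x+y)\le e^{2d}U_h(x)$ with no envelope inflation needed. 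Nothing is lost downstream by making $U$ independent of the level $s$: in the Lindeberg step the localization near $s$ is injected separately through the identity \eqref{eq: deriv eq}, $\partial_j\partial_k\varphi_s^v(x)=\partial_j\partial_k\varphi_s^v(x)\,h^v_{s,\delta}(x,0)$, i.e., the indicator $h^v_{s,\delta}$ carries the anti-concentration gain, and the envelopes only need to be size- and shift-stable.
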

		\begin{proof}
			See Appendix~\ref{sec: proof lm clt third dev bd}.
		\end{proof}
		%The proof of Lemma~\ref{lm: clt third dev bd} is deferred to Section~\ref{sec: proof lm clt third dev bd}.
		{ Then, choosing $\beta = \delta^{-1}\log p$ and $\delta$ that satisfies \eqref{eq: delta scaling}, we replace the function $m^y(\cdot)$  in~\cite{cck2022improvedbootstrap} by $\varphi^v_s (\cdot)$.If \(\delta > 1\), then \(r_n \delta > 1\) and \eqref{eq: iter rate rho} holds trivially. Hence, we assume \(\delta \le 1\) without loss of generality for the rest of the proof. We redefine the function in equation~(72) in the proof of Lemma~3.1 in \cite{cck2022improvedbootstrap} as 
			\begin{equation}\label{eq: smooth indicate fun}
				h^v_{s, \delta}(x, t) \!:= \II\{-\delta - t < \max_{j \in \cA} (x_j \!+\! v_j)\!-\! \max_{j' \in \cB} (x_{j'} \!+\! v_{j'}) \!-\!s \le 2\delta + t \}, \, x \!\in\! \RR^p, \, t \ge 0,
			\end{equation}
			and we adopt the same notation that $W = S_{n, \epsilon^{d+1}}^V$ as defined in (73) of \cite{cck2022improvedbootstrap}. We redefine the error term in (71) of \cite{cck2022improvedbootstrap} as 
			\begin{equation}\label{eq: def I v s}
				\cI^v_s = \varphi_s^v (S^V_{n,\epsilon^d}) - \varphi_s^v (S^Z_{n}).
			\end{equation}
			Then, to show \eqref{eq: iter rate rho}, we first establish in the following claim the counterpart of (74) in \cite{cck2022improvedbootstrap}.
			% , where same as in \cite{cck2022improvedbootstrap}, we use notations $\PP(\cdot)$ and $\EE(\cdot)$ to represent $\PP(\cdot | \epsilon^d)$ and $\EE(\cdot |\epsilon^d )$ for ease of notation, and we denote by $\lesssim$ that the inequality holds up to a constant depending only on $C_v$, $C_e$, $C_b$, $C_a$ and $C_{\gamma}$.
			\begin{claim}\label{claim: recursive approx rate I}
				For $d = 0, \ldots, D$ and the term $\cI_s^v$ defined in \eqref{eq: def I v s}, when $\cA_d$ holds, we have that
				\begin{equation}\label{eq: iter rate approx}
					\begin{aligned}
						\sup_{v \in \RR^p, s \in \RR} |\EE (\cI_s^v)| \lesssim &\   \frac{B_n^2 \delta^{-4} \log^5 (pn) }{n^2} +  \big(r_n \delta + \varepsilon_n +   \EE (\varrho_{\epsilon^{d+1}})\big) \\
						& \times \left( \frac{\cB_{n,1,d} \delta^{-2} \log p}{\sqrt{n}}  + \frac{\cB_{n,2,d} \delta^{-3} \log^2 p}{{n}} + \frac{B_n^2 \delta^{-4} \log^3 (pn)}{n} \right).
					\end{aligned}
				\end{equation}
			\end{claim}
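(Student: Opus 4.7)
The plan is to mirror the randomized Lindeberg swap argument of \cite{cck2022improvedbootstrap} but with the max-difference smoother $\varphi_s^v = \gamma_s \circ \zeta_\beta^v$ in place of their single-max smoother. Conditioning on $\epsilon^d$, I would start from the identity $\cI_s^v = \varphi_s^v(S_{n,\epsilon^d}^V) - \varphi_s^v(S_n^Z)$ and introduce the reference vector $W = S_{n,\epsilon^{d+1}}^V$, which differs from $S_{n,\epsilon^d}^V$ (respectively from $S_n^Z$) in only a small number of coordinates determined by the refinement $\epsilon^{d+1} \leq \epsilon^d$. This lets me write $\EE(\cI_s^v)$ as a sum, over the swapped indices $i$, of expectations of $\varphi_s^v(W + \tfrac{1}{\sqrt{n}} V_i) - \varphi_s^v(W + \tfrac{1}{\sqrt{n}} Z_i)$ plus standard conditioning terms.

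On each summand I would apply a fifth-order Taylor expansion in $V_i$ and $Z_i$ around $W$. The zeroth and first-order terms vanish because $V_i, Z_i$ are centered and independent of $W$. The second- and third-order terms aggregate over $i$ into empirical cross-moment differences that are controlled on the event $\cA_d$ by $\cB_{n,1,d}$ and $\cB_{n,2,d}$; combined with the derivative sums $\sum_{j,k} U_{jk}(x) \lesssim \delta^{-2}\log p$ and $\sum_{j,k,\ell} U_{jk\ell}(x) \lesssim \delta^{-3}\log^2 p$ from \eqref{eq: clt m der sum 1}, these yield the $\cB_{n,1,d}\delta^{-2}\log p/\sqrt{n}$ and $\cB_{n,2,d}\delta^{-3}\log^2 p/n$ contributions appearing in \eqref{eq: iter rate approx}.

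The main technical obstacle is converting the fourth- and fifth-order remainder from a naive $\delta^{-4}\log^3 p$ (or $\delta^{-5}\log^4 p$) worst-case bound into an average bound: the crude estimate would lose the factor $r_n\delta + \varepsilon_n + \EE(\varrho_{\epsilon^{d+1}})$ that is essential for the induction. The key observation is that each fourth/fifth derivative of $\varphi_s^v$ at a point $x$ is supported on $|\cM_{\cA}^v(x) - \cM_{\cB}^v(x) - s| \leq 3\delta$, so it is dominated by $h^v_{s,\delta'}(x, 0)$ for a suitably inflated width $\delta'$. Using the stability property \eqref{eq: clt m der order 2}, this dominance transfers from $W+y$ back to $W$ for small shifts $|y|_\infty \leq \beta^{-1}$, after which I would replace $W$ by $S_{n,\epsilon^{d+1}}^V$ and invoke two bounds: the anti-concentration Condition~\ref{cond: anti-con rate} applied to $S_n^Z$ contributes the $r_n\delta + \varepsilon_n$ factor, and the definition of $\varrho_{\epsilon^{d+1}}$ contributes the $\EE(\varrho_{\epsilon^{d+1}})$ factor, assembling into the product structure of the second line of \eqref{eq: iter rate approx}.

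Finally, to handle the rare event $\{\|V_i\|_\infty \vee \|Z_i\|_\infty > C B_n\log(pn)\}$, I would truncate at this level using Condition~\ref{cond: tail prob} (failure probability $1/n^4$) and Condition~\ref{cond: 8th inf mm bd}; combining these with a $\delta^{-4}\log^5(pn)$ factor from the higher-order derivative sums and with the condition \eqref{eq: delta scaling} produces the uniform $B_n^2\delta^{-4}\log^5(pn)/n^2$ term. Throughout, the hidden constants depend only on $C_v, C_e, C_b, C_a, C_\gamma$, which matches the convention fixed just before the claim, and the fact that $\delta \le 1$ (established above) keeps all the scaling consistent.
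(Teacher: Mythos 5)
Your toolkit (randomized Lindeberg swap, the smoother $\varphi^v_s=\gamma_s\circ\zeta^v_\beta$, the support/stability properties of its derivatives, the truncation at $C_eB_n\log(pn)$, and the transfer of anti-concentration from $S^Z_n$ to the comparison vector via $\varrho_{\epsilon^{d+1}}$) is the same as the paper's, but two steps of your plan fail as written. First, your opening decomposition is not an identity. The comparison is between $S^V_{n,\epsilon^d}$ and $S^Z_n$, so the swap must run over \emph{all} of $I_d=\{i:\epsilon^d_i=1\}$, not over the coordinates where $\epsilon^d$ and $\epsilon^{d+1}$ differ; moreover $|I_d\setminus I_{d+1}|\approx|I_d|/2$ is not ``a small number of coordinates''. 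A Lindeberg argument telescopes through hybrid vectors that change at every step: the paper sets $f(t)=\sum_{i\le|I_d|}\EE\big[\varphi^v_s(W^\sigma_i+tV_{\sigma(i)}/\sqrt n)-\varphi^v_s(W^\sigma_i+tZ_{\sigma(i)}/\sqrt n)\big]$ with a uniformly random ordering $\sigma$ of $I_d$, so that $f(1)=\EE(\cI^v_s)$, $f(0)=f'(0)=0$ (each hybrid $W^\sigma_i$ is independent of the pair $(V_{\sigma(i)},Z_{\sigma(i)})$ being swapped), and the claim reduces to bounding $f^{(2)}(0),f^{(3)}(0),f^{(4)}(\tilde t)$ (Claims~\ref{claim: f t 2nd}--\ref{claim: f t 4th}, obtained by modifying Steps 3--5 of \cite{cck2022improvedbootstrap}). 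If instead you center every single-coordinate swap at the fixed vector $W=S^V_{n,\epsilon^{d+1}}$, the sum of swaps does not reconstruct $\cI^v_s$, and the asserted vanishing of first-order terms is false because $W$ contains $Z_i$ (resp.\ $V_i$) for each swapped index, so it is not independent of the increments you expand in. In the correct argument $W$ never serves as the Taylor center; it enters only later, when $h^{v}_{s,\delta'}(W^\sigma_{\sigma^{-1}(i)},\cdot)$ is replaced by $h^v_{s,\delta'}(W,\cdot)$ via the truncation indicators and \eqref{eq: clt m der order 2}, and then $\EE\,h^v_{s,\delta'}(W,\cdot)\lesssim r_n\delta+\varepsilon_n+\EE(\varrho_{\epsilon^{d+1}})$ by Condition~\ref{cond: anti-con rate} and the definition of $\varrho_{\epsilon^{d+1}}$.

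Second, the factor $r_n\delta+\varepsilon_n+\EE(\varrho_{\epsilon^{d+1}})$ must multiply the second- and third-order terms as well, not only the fourth/fifth-order remainder. Bounding those terms by the moment differences $\cB_{n,1,d},\cB_{n,2,d}$ times the worst-case derivative sums in \eqref{eq: clt m der sum 1}, as you propose, yields only $\cB_{n,1,d}\delta^{-2}\log p/\sqrt n+\cB_{n,2,d}\delta^{-3}\log^2 p/n$ without that factor, which is strictly weaker than \eqref{eq: iter rate approx}; and with the choice of $\delta$ in the induction of Lemma~\ref{prop: modified thm 3.1} these quantities are only $O(1)$ contraction coefficients, so without the multiplicative factor the recursion does not close. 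The remedy is exactly the device you reserve for the remainder: since $\partial_j\partial_k\varphi^v_s(x)=\partial_j\partial_k\varphi^v_s(x)\,h^v_{s,\delta}(x,0)$ as in \eqref{eq: deriv eq} (and likewise for third derivatives), one gets $\sum_{j,k}\EE\big|\partial_j\partial_k\varphi^v_s(W)\big|\lesssim\delta^{-2}\log p\,\EE\,h^v_{s,\delta}(W,0)\lesssim\delta^{-2}\log p\,(r_n\delta+\varepsilon_n+\EE(\varrho_{\epsilon^{d+1}}))$, which restores the product structure in all three contributions of \eqref{eq: iter rate approx}. With these two corrections your outline coincides with the paper's proof.
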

			\begin{proof}
				See Appendix~\ref{sec: proof claim recursive rate I}.
			\end{proof}
			Now applying the properties of the functions $\gamma_s$ and $\zeta_{\beta}^v$, for any $v \in \RR^p$ and $s \in \RR$, we have
			\begin{align*}
				&\PP\left(\cM_{\cA}^v (S_{n,\epsilon^d}^V) - \cM_{\cB}^v (S_{n,\epsilon^d}^V) \le s\right)  \le \PP(\zeta_{\beta}^v(S_{n,\epsilon^d}^V) \le s + \delta) \le \EE \big(\varphi_{s + \delta}^v (S_{n,\epsilon^d}^V)\big) \\
				&\quad \le \EE (\varphi_{s + \delta}^v (S_n^Z)) +   |\EE (\cI_s^v)| \le \PP(\zeta_{\beta}^v(S_n^Z) \le s + 2\delta) +   |\EE (\cI_s^v)|   \\
				&\quad \le \PP(\cM_{\cA}^v (S_n^Z) - \cM_{\cB}^v (S_n^Z)  \le s + 3\delta) +   |\EE (\cI_s^v)|   \\
				&\quad \le \PP(\cM_{\cA}^v (S_n^Z) - \cM_{\cB}^v (S_n^Z)  \le s ) +   |\EE (\cI_s^v)| + C_a ( 3 r_n \delta  + \varepsilon_n ),
			\end{align*}
			where the last inequality holds by Condition~\ref{cond: anti-con rate}. Similarly, we have
			\begin{align*}
				&\PP\left(\cM_{\cA}^v (S_{n,\epsilon^d}^V) - \cM_{\cB}^v (S_{n,\epsilon^d}^V) \le s\right)  \ge \PP(\zeta_{\beta}^v(S_{n,\epsilon^d}^V) \le s - \delta) \ge \EE \big(\varphi_{s - 2\delta}^v (S_{n,\epsilon^d}^V)\big) \\
				&\quad \ge \EE (\varphi_{s - 2\delta}^v (S_n^Z)) -   |\EE (\cI_s^v)| \ge \PP(\zeta_{\beta}^v(S_n^Z) \le s - 2\delta) -   |\EE (\cI_s^v)|   \\
				&\quad \ge \PP(\cM_{\cA}^v (S_n^Z) - \cM_{\cB}^v (S_n^Z)  \le s - 3\delta) -   |\EE (\cI_s^v)|   \\
				&\quad \ge \PP(\cM_{\cA}^v (S_n^Z) - \cM_{\cB}^v (S_n^Z)  \le s ) -   |\EE (\cI_s^v)| - C_a (3 r_n \delta  + \varepsilon_n).
			\end{align*}
			Taking the supremum over $v \in \RR^p$ and $s \in \RR$, we have that 
			$$
			\varrho_{\epsilon^d} \lesssim  \sup_{v \in \RR^p, s \in \RR}|\EE (\cI_s^v)|  +   r_n \delta  + \varepsilon_n,
			$$
			which along with \eqref{eq: iter rate approx} gives \eqref{eq: iter rate rho}, and \eqref{eq: iter rate event} follows consequently by Corollary~3.1 of \cite{cck2022improvedbootstrap}.
			% Then by the proof of Corollary~3.1 in \cite{cck2022improvedbootstrap}, there exists a constant $K_{\rm iter} > 0$ depending only on $C_v$, $C_e$ and $C_b$ such that by taking $\cB_{n,1,d+1} \ge \cB_{n,1,d} + K_{\rm iter} B_n \log^{1/2}(pn)$ and $\cB_{n,2,d+1} \ge \cB_{n,2,d} + K_{\cB} B_n^2 \log^{3/2}(pn)$, we further have that 
			%   \begin{equation}\label{eq: iter rate event}
				%        \begin{aligned}
					%         \EE (\varrho_{\epsilon^d} \II\{\cA_d\} ) \lesssim  & \ r_n \delta + \varepsilon_n +  \frac{B_n^2 \delta^{-4} \log^5 (pn) }{n^2} +  \big(r_n \delta + \varepsilon_n +   \EE (\varrho_{\epsilon^{d+1}}  \II\{\cA_{d+1}\} )\big) \\
					%         & \times \left( \frac{\cB_{n,1,d} \delta^{-2} \log p}{\sqrt{n}}  + \frac{\cB_{n,2,d} \delta^{-3} \log^2 p}{{n}} + \frac{B_n^2 \delta^{-4} \log^3 (pn)}{n} \right),
					%     \end{aligned}
				%   \end{equation}
			%   and the recursive inequality is established.
			\subsubsection*{\bf Step~2}
			Following the proof of Theorem~3.1 in \cite{cck2022improvedbootstrap}, we employ \eqref{eq: iter rate event} to show \eqref{eq: CLT initial results} through an inductive arguments. Let $\cB_{n,1,d}$ and $\cB_{n,2,d}$ be the same as (30) of \cite{cck2022improvedbootstrap} that
			$$
			\cB_{n,1,d} = C_1 (d+1) B_n \log^{1/2} (pn) \quad \text{and} \quad  \cB_{n,2,d} = C_1 (d+1) B_n^2 \log^{3/2} (pn),
			$$
			where $C_1 = C_m + K_{\rm iter}$, and recall that $K_{\rm iter}$ is the constant in \eqref{eq: K_iter B_n}.
			For each $d = 0, 1, \ldots, D$, we redefine 
			$$
			f_d = \inf \Big\{t \ge 1: \EE(\varrho_{\epsilon^d} \II(\cA_d)) \le t \Big(r_n \Big(\frac{B_n^2 \log^3(pn)}{n}\Big)^{1/4}+ \varepsilon_n\Big)\Big\},
			$$
			and  take 
			$$
			\delta = \delta_d = \frac{B_n^{1/2} \log^{3/4}(pn) ((d+1)f_{d+1})^{1/3}}{n^{1/4}},
			$$
			which satisfies \eqref{eq: delta scaling} under the condition $C_e B_n^2 \log^5(pn) \le n$. Then, we  have
			\begin{align*}
				&\frac{B_n^2 \delta^{-4} \log^5 (pn) }{n^2} \le \frac{\log^2(pn)}{n} \le r_n \left(\frac{B_n^2 \log^3(pn)}{n}\right)^{1/4} ,\\
				& \frac{\cB_{n,1,d} \delta^{-2} \log p}{\sqrt{n}}  \le \frac{C_1 (d+1)^{1/3}}{f_{d+1}^{2/3}}, \quad \\
				& \frac{\cB_{n,2,d} \delta^{-3} \log^2 p}{{n}} \bigvee \frac{B_n^2 \delta^{-4} \log^3 (pn)}{n} \le \frac{C_1 \vee 1}{f_{d+1}}.
			\end{align*}
			Combining the above rates with \eqref{eq: iter rate event} and following identical arguments as in the proof of Theorem~3.1 in \cite{cck2022improvedbootstrap}, we have that \eqref{eq: CLT initial results} holds as desired. }
	\end{proof}

	\section{Lemmas in Section~\ref{sec: proof col anti con eq var no cor 1}}\label{sec: proof tech lms}
	% We provide the proofs for the technical lemmas of the main theorems in this section. 
	% In this section we provide the supporting lemmas that facilitate the proofs in Section~\ref{sec: proof col anti con eq var no cor 1}. 
 Lemma~\ref{lm: anti con equal var no cor 1 dens} characterizes the joint density of \((M_{\cA}, M_{\cB})\) when there exist perfectly negatively correlated pairs between \(\cA\) and \(\cB\) such that the joint distribution is not absolutely continuous everywhere on $\RR^2$. Lemma~\ref{lm: anti con equal var no cor 1} provides the anti-concentration rate for \(M_{\cB} - M_{\cA}\) based on Lemma~\ref{lm: anti con equal var no cor 1 dens}. Lemma~\ref{lm: NA NB equiv} establishes the one-to-one relationship between the sets \(\cN_{\cA}\) and \(\cN_{\cB}\) defined in \eqref{eq: def Na Nb}, ensuring that Lemma~\ref{lm: anti con equal var no cor 1} is applicable. The proofs of Lemmas~\ref{lm: anti con equal var no cor 1 dens}-\ref{lm: NA NB equiv} are in Sections~\ref{sec: proof lm anti con equal var no cor 1 dens}, \ref{sec: proof lm anti con equal var no cor 1} and \ref{sec: proof lm NA NB equiv} respectively. 
	\begin{lemma}\label{lm: anti con equal var no cor 1 dens}
		Suppose that $(Z_1, Z_2, \ldots, Z_d)^{\top}\in\RR^d$ is a  Gaussian random vector with mean $(\mu_1,...,\mu_d)^\top$, homogeneous component-wise variance $\sigma^2 > 0$, and $ |\Corr (Z_i, Z_j)| < 1$ for all $i \ne j$. Define $M_{\cA} = \max_{i\in[d]} Z_i$ and $M_{\cB} = \max_{i\in[d]} \{- Z_i + c_i\}$, where $c_i$'s are constants, and let $\bar{c} = \max_{i\in[d]} c_i$ and $ \cI^* = \argmax_{i \in [d]} c_i$. Then, we have 
		\begin{enumerate}
			\item Let $\cL =  \{(x,y) \in \RR^2: y = - x + \bar{c}\}$. The joint distribution of $(M_{\cA}, M_{\cB})$ is absolutely continuous on $\RR^2 \backslash \cL$ with respect to the Lebesgue measure on $\RR^2$, and a version of its density is
			\begin{equation}\label{eq: dens joint max R2}
				\begin{aligned}
					\tilde{f}(x,  y) = \sum_{(i,j) \in [d]^2: i \ne j} & \PP\!\left(\!\!\begin{array}{c}
						\max_{i' \in [d]\backslash\{i\}} Z_{i'} \le x,    \\
						\max_{j' \in [d] \backslash \{j\}} -Z_{j'}+c_{j'} \le y 
					\end{array}  \! \bigg| Z_i = x, Z_j = - y + c_j  \!\right)\\
					& \times \phi_{i,j} (x, -y + c_j) ,
				\end{aligned}
			\end{equation}
			where $\phi_{i,j}(\cdot,\cdot)$ is the joint density for $(Z_i, Z_j)$. 
			
			\item  On the line  $\cL$ defined above, consider the distribution of $M_\cA$ when $(M_{\cA}, M_{\cB}) = (M_{\cA}, -M_{\cA}+ \bar{c} )$. We have that the distribution of $M_\cA$ on $\cL$ is absolutely continuous with respect to the Lebesgue measure on $\RR$, and a version of its density is  
			\begin{equation}\label{eq: dens joint max R1}
				\begin{aligned}
					& \tilde{f}(x) = \frac{d}{dx} \PP(M_{\cA} \le x, M_{\cB} = -M_{\cA}+ \bar{c} )\\
					&\ =\left\{
					\begin{array}{ll}
						\PP(x + c_i - \bar{c} \le Z_i \le x, \forall i \ne i^* | Z_{i^*} = x) \phi_{i^*} (x), & \,\text{if $|\cI^*|=1$ and $\cI^* = \{i^*\}$}, \\
						0, & \,\text{if $|\cI^*|>1$} .
					\end{array}   \right.
				\end{aligned}
			\end{equation}
			where $\phi_i (\cdot)$ is the marginal density for $Z_i$.
		\end{enumerate}
	\end{lemma}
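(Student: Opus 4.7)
The plan is to mirror the proof of Lemma~\ref{lm: joint max dens}, carefully accounting for the perfect negative correlation between $Z_i$ and $-Z_i + c_i$. First I would view $(M_\cA, M_\cB)$ as the pair of subvector maxima of the augmented Gaussian vector $W = (Z_1, \ldots, Z_d, -Z_1 + c_1, \ldots, -Z_d + c_d)$; cross pairs $(Z_i, -Z_j + c_j)$ with $i \ne j$ satisfy $|\Corr(Z_i, -Z_j + c_j)| = |\Corr(Z_i, Z_j)| < 1$, but same-index pairs are perfectly negatively correlated, so Lemma~\ref{lm: joint max dens}(1) does not apply directly. Next I would establish the a.s.\ inequality $M_\cA + M_\cB \ge \bar{c}$: from $-Z_i + c_i \ge -M_\cA + c_i$, maximizing over $i$ gives $M_\cB \ge -M_\cA + \bar{c}$. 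Hence the law of $(M_\cA, M_\cB)$ is supported in the closed half-plane $\{y \ge -x + \bar{c}\}$, and the two parts of the lemma reduce to analyzing the open region $\{y > -x + \bar{c}\}$ (Part~1) and the line $\cL$ (Part~2) separately.

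For Part~1, fix $(x,y)$ with $y > -x + \bar{c}$ and decompose
\[
\{x < M_\cA \le x+\varepsilon,\, y < M_\cB \le y+\varepsilon\} = \bigcup_{1 \le q, k \le d} \cE^{q,k}_{\varepsilon,x,y},
\]
where $\cE^{q,k}_{\varepsilon,x,y}$ records which subsets of sizes $q, k$ achieve the two maxima inside the small box. I would replay the arguments of Claims~\ref{claim: 241}--\ref{claim: 242} to show that only the $(1,1)$ terms survive the $\varepsilon^{-2}$ limit. For pairs $(i,j)$ with $i \ne j$ the bivariate density $\phi_{i,j}(x, -y + c_j)$ is nondegenerate and yields the displayed contribution to \eqref{eq: dens joint max R2}. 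The $i = j$ contribution would require $Z_i \in (x, x+\varepsilon]$ and $Z_i \in [c_i - y - \varepsilon,\, c_i - y)$ simultaneously, forcing $x < c_i - y$, i.e.\ $x + y < c_i \le \bar{c}$, which contradicts $x + y > \bar{c}$; hence the same-index terms vanish identically on the relevant domain. Below $\cL$ there is no mass to distribute, and one checks directly that the conditional probabilities in \eqref{eq: dens joint max R2} vanish on $\{y < -x + \bar{c}\}$, so the formula is consistent throughout $\RR^2 \setminus \cL$.

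For Part~2, I would characterize $\{(M_\cA, M_\cB) \in \cL\} = \{M_\cA + M_\cB = \bar{c}\}$ explicitly: equality in $M_\cB \ge -M_\cA + \bar{c}$ holds iff $Z_j \ge M_\cA - (\bar{c} - c_j)$ for every $j$, with equality at some index $j^\dagger$. For any $j \in \cI^*$ the constraint reads $Z_j \ge M_\cA$, which combined with $Z_j \le M_\cA$ forces $Z_j = M_\cA$. When $|\cI^*| = 1$ with $\cI^* = \{i^*\}$ this singles out $Z_{i^*} = M_\cA$, and the event reduces to $\bigcap_{j \ne i^*}\{Z_{i^*} + c_j - \bar{c} \le Z_j \le Z_{i^*}\}$; conditioning on $Z_{i^*} = z$ and differentiating the resulting
\[
\int_{-\infty}^{x}\phi_{i^*}(z)\,\PP(z + c_j - \bar{c} \le Z_j \le z,\, \forall j \ne i^* \mid Z_{i^*} = z)\,dz
\]
in $x$ produces the first branch of \eqref{eq: dens joint max R1}. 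When $|\cI^*| \ge 2$, the equality $Z_j = M_\cA$ must hold for at least two distinct indices of $\cI^*$, which is a tie of probability zero under the nondegeneracy $|\Corr(\cdot,\cdot)| < 1$; hence the singular density on $\cL$ is identically $0$, giving the second branch.

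The main obstacle will be justifying the $\varepsilon \downarrow 0$ limit in Part~1 uniformly as $(x,y)$ approaches $\cL$: although the same-index terms vanish identically in the open region, one must verify that the arguments of Claims~\ref{claim: 241}--\ref{claim: 242} go through a.e.\ without interference from the boundary. I would handle this by working on compact subsets of $\{y > -x + \bar{c}\}$ where the bivariate densities $\phi_{i,j}$ are uniformly continuous and bounded, noting that $\cL$ has two-dimensional Lebesgue measure zero so no mass is lost by excising it. A secondary subtlety in Part~2 is the well-definedness and continuity in $x$ of $\PP(x + c_j - \bar{c} \le Z_j \le x,\,\forall j \ne i^* \mid Z_{i^*} = x)$; the hypothesis $|\Corr(Z_i, Z_j)| < 1$ makes the conditional distribution of $(Z_j)_{j \ne i^*}$ given $Z_{i^*} = x$ a non-singular Gaussian, which justifies the differentiation under the integral and delivers the stated density.
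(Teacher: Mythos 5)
Your proposal is correct, and it departs from the paper's proof in ways worth recording. For Part~1 you follow the same skeleton as the paper (decompose the $\varepsilon$-box event by which index sets attain the two maxima, show the higher-order pieces are $o(\varepsilon^2)$, and pass to the limit in the $(1,1)$ piece a.e.), but your deterministic inequality $M_{\cB}\ge -M_{\cA}+\bar c$ is a genuine simplification: it shows at once that all mass lives on or above $\cL$, so the same-index (perfectly negatively correlated) pairs contribute nothing strictly above $\cL$ — exactly your empty-interval argument, since $x+y>\bar c\ge c_i$ kills $\{Z_i\in(x,x+\varepsilon]\}\cap\{Z_i\in[c_i-y-\varepsilon,c_i-y)\}$ — and that the lines $\{x+y=c_k\}$ with $c_k<\bar c$ carry no mass at all. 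The paper instead treats every line $\cL_k$ separately and shows by the event decomposition that $\PP\{x<M_{\cA}\le x+\varepsilon,\,M_{\cB}=-M_{\cA}+c_k\}=0$ for $k\notin\cI^*$; your support observation subsumes that step. (You still inherit from the paper's Claims the need to handle degenerate triples and a.e.\ continuity of the conditional probabilities by excising null sets of $(x,y)$; your ``replay'' is where that work lives.) For Part~2 your route is genuinely different and cleaner: you identify the event $\{M_{\cB}=-M_{\cA}+\bar c\}$ exactly as $\bigcap_{j\ne i^*}\{Z_{i^*}+c_j-\bar c\le Z_j\le Z_{i^*}\}$ when $|\cI^*|=1$, and then read off the density from the disintegration over $Z_{i^*}$, rather than re-running an $\varepsilon\downarrow 0$ limit over decomposed events as the paper does; and for $|\cI^*|\ge 2$ you get the zero branch immediately because the event forces a tie $Z_j=Z_{j'}$ between two variables with $|\Corr|<1$ and positive variance of the difference, which has probability zero. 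One small imprecision: the conditional law of the vector $(Z_j)_{j\ne i^*}$ given $Z_{i^*}=x$ need not be non-singular even under pairwise $|\Corr|<1$ (only each univariate conditional is), so your justification via non-singularity is off; this is harmless, because the identity $\PP(M_{\cA}\le x,\,M_{\cB}=-M_{\cA}+\bar c)=\int_{-\infty}^{x}\PP(z+c_j-\bar c\le Z_j\le z,\ \forall j\ne i^*\mid Z_{i^*}=z)\,\phi_{i^*}(z)\,dz$ needs only the existence of regular conditional probabilities and already exhibits the integrand as a version of the density, without any continuity or differentiation-under-the-integral argument.
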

	\begin{lemma}\label{lm: anti con equal var no cor 1}
		Let $(Z_1, Z_2, \ldots, Z_d)^{\top}\in\RR^d$ be a  Gaussian random vector with mean $(\mu_1,...,\mu_d)^{\top}$ and homogeneous component-wise variance $\sigma^2 > 0$. Also, assume that $ |\Corr (Z_i, Z_j)| < 1$ for any $i \ne j$. Let $M_{\cA} = \max_{i\in[d]} Z_i$ and $M_{\cB} = \max_{i\in[d]}\{ - Z_i + c_i\}$, where $c_i$'s are constants. 
		% Define $\bar\lambda_1 = \min_{\mu \in \RR} \max_{i=1}^d |\mu_i - \mu|$ and $\bar\lambda_2 = \min_{\mu \in \RR} \max_{i=1}^d |-\mu_i + c_i - \mu|$,
		Then for any $\varepsilon >0$, we have that
		% there exists a fixed constant $C > 0$ such that 
		\begin{equation}\label{eq: anti con max diff equal var}
			% \sup_{t \in \RR}\PP\Big(\big| M_{\cB} - M_{\cA} - t\big| \le \varepsilon \Big) 
			\cL(M_{\cB} - M_{\cA}, \varepsilon) \le 3 \EE \left(\max_{i \in [d]} |Z_i - \mu_i| /\sigma \right) \frac{\varepsilon}{(1 +  \underline\rho )\sigma},
		\end{equation} 
		where $ \underline\rho =  \min_{i \ne j}\Corr(Z_i, Z_j)$. 
		% Also at $t = 0$, we have a refined bound
		% \begin{equation}\label{eq: anti con max diff equal var t 0}
			%   \PP\Big(\big| M_{\cB} - M_{\cA} \big| \le \varepsilon \Big) \le C  \left\{\EE \left(\max_i |Z_i - \mu_i|  \right) +  \bar\lambda_1 \wedge \bar\lambda_2  \right\}\frac{\varepsilon}{(1 + \underline\rho )\sigma^2}.
			% \end{equation} 
	\end{lemma}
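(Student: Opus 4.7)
The plan is to split the joint distribution of $(M_\cA,M_\cB)$ into its absolutely continuous part on $\RR^2\setminus\cL$ and its singular part on the line $\cL=\{(x,y):y=-x+\bar c\}$ using Lemma~\ref{lm: anti con equal var no cor 1 dens}, bound each contribution separately, and combine. With $\bar c=\max_i c_i$, the constraint $|y-x-t|\le\varepsilon$ restricted to $\cL$ becomes $|x-(\bar c-t)/2|\le\varepsilon/2$, so for any $t\in\RR$ and $\varepsilon>0$,
\begin{equation*}
\PP(|M_\cB-M_\cA-t|\le\varepsilon)=\underbrace{\iint\limits_{\substack{|y-x-t|\le\varepsilon\\(x,y)\notin\cL}}\tilde f(x,y)\,dx\,dy}_{=:\mathrm{I}_{\mathrm{ac}}}+\underbrace{\int_{(\bar c-t-\varepsilon)/2}^{(\bar c-t+\varepsilon)/2}\tilde f(x)\,dx}_{=:\mathrm{I}_{\mathrm{sing}}}.
\end{equation*}
The target is $\mathrm{I}_{\mathrm{ac}}\le 2\,\EE(\max_i|Z_i-\mu_i|/\sigma)\cdot\varepsilon/[\sigma(1+\underline\rho)]$ and $\mathrm{I}_{\mathrm{sing}}\le \EE(\max_i|Z_i-\mu_i|/\sigma)\cdot\varepsilon/[\sigma(1+\underline\rho)]$, whose sum yields the factor of $3$ in \eqref{eq: anti con max diff equal var}.

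For $\mathrm{I}_{\mathrm{ac}}$ I would mirror the proof of Theorem~\ref{thm: anti con} applied to the stacked Gaussian vector $X=(Z_1,\ldots,Z_d,-Z_1+c_1,\ldots,-Z_d+c_d)^\top$ with partition $\cA=[d]$, $\cB=[2d]\setminus[d]$. In this encoding the within-$\cB$ normalized covariances equal $\Corr(Z_j,Z_{j'})\le 1$, and for cross-partition pairs with $i\ne j$ one has $\sigma-\sigma^{-1}\Cov(Z_i,-Z_j+c_j)=\sigma(1+\Corr(Z_i,Z_j))\ge\sigma(1+\underline\rho)$. Crucially, Lemma~\ref{lm: anti con equal var no cor 1 dens}(1) writes $\tilde f$ as a sum only over $i\ne j$, so after the change of variables $u=y-x$, the inner sum defining the analogue of $g_{u,j}(\cdot)$ ranges over $i\in[d]\setminus\{j\}$ and every correlation appearing in the Lemma~\ref{claim: 251} argument is strictly less than one. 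The monotonicity bound $g_{u,j}(x)\le(1-\max_{i\ne j}\sigma^{-2}\sigma_{ij})^{-1}dH_{u,j}(x)/dx$ therefore still holds, and running the subsequent integration-by-parts and Cauchy--Schwarz steps verbatim yields the claimed bound on $\mathrm{I}_{\mathrm{ac}}$ with $C_{\cA,\cB}=\sigma(1+\underline\rho)$.

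For $\mathrm{I}_{\mathrm{sing}}$, Lemma~\ref{lm: anti con equal var no cor 1 dens}(2) gives $\tilde f(x)=0$ unless $\cI^*=\argmax_i c_i$ is a singleton $\{i^*\}$, in which case $\tilde f(x)=\PP(x+c_i-\bar c\le Z_i\le x,\,\forall i\ne i^*\mid Z_{i^*}=x)\phi_{i^*}(x)\le\phi_{i^*}(x)\le 1/(\sqrt{2\pi}\sigma)$. Hence $\mathrm{I}_{\mathrm{sing}}\le\varepsilon/(\sqrt{2\pi}\sigma)$. Using the trivial lower bounds $\EE(\max_i|Z_i-\mu_i|/\sigma)\ge\EE(|Z_1-\mu_1|/\sigma)=\sqrt{2/\pi}$ and $1/(1+\underline\rho)\ge 1/2$ (which follows from $\underline\rho\le 1$), one obtains $\mathrm{I}_{\mathrm{sing}}\le \EE(\max_i|Z_i-\mu_i|/\sigma)\cdot\varepsilon/[\sigma(1+\underline\rho)]$, and taking the supremum over $t$ gives \eqref{eq: anti con max diff equal var}.

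The main obstacle is verifying that the Theorem~\ref{thm: anti con} machinery transfers cleanly to $\mathrm{I}_{\mathrm{ac}}$ despite the missing diagonal pairs in $\tilde f$. The original proof implicitly uses the full joint density summed over all $(i,j)\in\cA\times\cB$, while here the sum omits the perfectly anti-correlated pairs $(Z_i,-Z_i+c_i)$. The resolution should be that these excluded pairs contribute \emph{only} to the singular component on $\cL$, which we have already handled separately, so nothing is lost by restricting to $i\ne j$; but one must track the construction of $g_{u,j}$ and $H_{u,j}$ carefully to confirm that Lemma~\ref{claim: 251}'s monotonicity still applies with the restricted index set and that no cross-term is missed in the integration-by-parts step.
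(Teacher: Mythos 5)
Your proposal is correct and follows essentially the same route as the paper's proof: the identical decomposition via Lemma~\ref{lm: anti con equal var no cor 1 dens} into the absolutely continuous part off the line $\cL$ and the singular part on $\cL$, the same bound $\tilde f(x)\le \phi_{i^*}(x)\le 1/(\sqrt{2\pi}\sigma)$ combined with $\EE(\max_i|Z_i-\mu_i|/\sigma)\ge\sqrt{2/\pi}$ and $1+\underline\rho\le 2$ for the singular term, and the same Theorem~\ref{thm: anti con}-style $g_{u,j}$/$H_{u,j}$ argument with effective constant $\sigma(1+\underline\rho)$ for the absolutely continuous term. The one step you flag is real but resolves exactly as you anticipate: Lemma~\ref{claim: 251} cannot be cited verbatim since the anti-correlated partner $Z_j$ sits in $\cA$ (so $\cA\subseteq\cS_j^+\setminus\cC_j^1$ fails) and still enters $H_{u,j}$ through the conditioning on $\tilde{Z}_j=x$; the paper re-derives the monotonicity in Claim~\ref{claim: A31}, where that partner contributes only the deterministic threshold $x\ge (c_j-u)/2$ and the bound holds with factor $(1+\underline\rho)^{-1}$ against the sum restricted to $i\ne j$, which is precisely your sketched resolution.
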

	\begin{lemma}\label{lm: NA NB equiv}
		Let $(X_1,\ldots,X_p)^{\top}\in\RR^d$ be a Gaussian random vector such that $\Corr(X_i, X_j) < 1$ for any $i \ne j$. Let $\{\cA, \cB\}$ be a nontrivial partition of $[p]$, and let $\cN_{\cA}$ and $\cN_{\cB}$ be defined  in \eqref{eq: def Na Nb}. Then, we have $|\cN_{\cA}| = |\cN_{\cB}| = d$ for some integer $d \le p$, and there exist constants $c_1, \ldots, c_d$ such that 
		\begin{equation}\label{eq: equal NA NB}
			\{X_j\}_{j \in \cN_{\cB}} = \{-X_i + c_i\}_{i \in \cN_{\cA}}.
		\end{equation}
		Moreover, we have $|\Corr(X_i, X_{i'})| < 1$ for any $i,i' \in \cN_{\cA}$ and $i \ne i'$, and  $|\Corr(X_j, X_{j'})| < 1$ for any $j,j' \in \cN_{\cB}$ and $j \ne j'$.
	\end{lemma}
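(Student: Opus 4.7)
The plan is to produce a natural bijection $\phi:\cN_{\cA}\to\cN_{\cB}$ witnessing the perfect negative correlation, and then leverage the observation that composing two affine almost-sure identities can only yield a perfectly \emph{positively} correlated pair, which the hypothesis $\Corr(X_i,X_j)<1$ for $i\neq j$ forbids. Throughout I will use that under the ambient homogeneous-variance setting, $\Corr(X_i,X_j)=-1$ forces $X_j=-X_i+c$ almost surely for some constant~$c$, and $\Corr(X_i,X_j)=1$ forces $X_j=X_i+c$.

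First I would define $\phi(i)$, for each $i\in\cN_{\cA}$, as ``the'' $j\in\cB$ with $\Corr(X_i,X_j)=-1$. To justify that this is well-defined I must rule out two such partners: if both $j_1,j_2\in\cB$ had $\Corr(X_i,X_{j_k})=-1$, then $X_{j_k}=-X_i+c_k$ a.s. for $k=1,2$, so $X_{j_1}-X_{j_2}$ is a.s.\ constant and hence $\Corr(X_{j_1},X_{j_2})=1$, contradicting the standing hypothesis. By the symmetric argument I get a map $\psi:\cN_{\cB}\to\cN_{\cA}$, and the same cancellation shows $\psi\circ\phi=\mathrm{id}_{\cN_{\cA}}$ and $\phi\circ\psi=\mathrm{id}_{\cN_{\cB}}$: e.g.\ if $\phi(i_1)=\phi(i_2)=j$ with $i_1\neq i_2$, then $X_{i_1}$ and $X_{i_2}$ are both almost-surely affine images of $-X_j$, forcing $\Corr(X_{i_1},X_{i_2})=1$, contradiction. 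This immediately yields $|\cN_{\cA}|=|\cN_{\cB}|=:d$, and reading off the constants along $\phi$ gives $\{X_j\}_{j\in\cN_{\cB}}=\{-X_i+c_i\}_{i\in\cN_{\cA}}$ for suitable $c_1,\ldots,c_d$.

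For the ``moreover'' clause I need to strengthen $\Corr(X_i,X_{i'})<1$ (which is already given) to $|\Corr(X_i,X_{i'})|<1$ within $\cN_{\cA}$ (and symmetrically within $\cN_{\cB}$). The only thing to exclude is $\Corr(X_i,X_{i'})=-1$ for distinct $i,i'\in\cN_{\cA}$. If this occurred, then $X_{i'}=-X_i+c$ a.s., while by construction $X_{\phi(i)}=-X_i+c_i$ a.s. Subtracting, $X_{i'}-X_{\phi(i)}$ is constant, so $\Corr(X_{i'},X_{\phi(i)})=1$ with $i'\in\cA$ and $\phi(i)\in\cB$ and $i'\neq\phi(i)$, contradicting the standing hypothesis. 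The analogous argument using $\psi$ handles pairs inside $\cN_{\cB}$.

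The bulk of the argument is bookkeeping; the one delicate point—the main obstacle, such as it is—is being careful that the ``composition of affine relations'' argument is applied to pairs that actually straddle $\cA$ and $\cB$, so that the standing hypothesis $\Corr(X_i,X_j)<1$ for $i\neq j$ can be invoked to derive a contradiction. Organising the case analysis around the four types of pairs (within $\cN_{\cA}$, within $\cN_{\cB}$, cross-partition non-matching, cross-partition matching) is the cleanest way to make sure every contradiction lands on a pair allowed by the hypothesis.
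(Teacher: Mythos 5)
Your proposal is correct and follows essentially the same route as the paper: uniqueness of the perfectly negatively correlated partner across the partition (two such partners would be perfectly positively correlated with each other, contradicting the hypothesis), yielding the bijection and hence $|\cN_{\cA}|=|\cN_{\cB}|$, and the same transitivity argument rules out $\Corr(X_i,X_{i'})=-1$ within $\cN_{\cA}$ or $\cN_{\cB}$. Your phrasing via almost-sure affine identities $X_j=-X_i+c$ (valid in the homogeneous-variance context where the lemma is invoked, and needed anyway for the slope $-1$ in \eqref{eq: equal NA NB}) is just a restatement of the paper's correlation-transitivity step.
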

	% We provide the proofs for Lemmas~\ref{lm: anti con equal var no cor 1 dens}-\ref{lm: NA NB equiv} in the following sections.
	\subsection{Proof of Lemma~\ref{lm: anti con equal var no cor 1 dens}}\label{sec: proof lm anti con equal var no cor 1 dens}
	\begin{proof}[\unskip\nopunct]
		For $k \in [d]$, define the line $\cL_k = \{(x,y) \in \RR^2: y = -x + c_k\}$. Then, we prove the results (1) and (2) of  Lemma~\ref{lm: anti con equal var no cor 1 dens} by characterizing the densities on $\RR^2 \backslash (\cup_k \cL_k)$ and $\cup_k \cL_k$ respectively.

		\subsubsection{Distribution on $\RR^2 \backslash (\cup_k \cL_k)$}
		
		\begin{proof}[\unskip\nopunct]\renewcommand{\qedsymbol}{}
			
			First, we show the absolute continuity of the joint distribution on $\RR^2 \backslash (\cup_k \cL_k)$. For
			% any $(x,y) \in \RR^2 \backslash (\cup_k \cL_k)$ and 
			any Borel measurable subset $\cI \subseteq \RR^2 \backslash (\cup_k \cL_k)$, 
			% such that $(x,y) \in \cI$, when $|\cI|$ is small enough, we have $\cI \cap  (\cup_k \cL_k) = \emptyset $. Then 
			we have 
			\begin{align*}
				\PP\left((M_{\cA}, M_{\cB})\in \cI\right) &\le \sum_{(i,j) \in [d]^2: i \ne j} \PP((Z_i, -Z_j + c_j) \in \cI) + \sum_{i=1}^d  \PP((Z_i, -Z_i + c_i) \in \cI) \\
				& = \sum_{(i,j) \in [d]^2: i \ne j} \PP((Z_i, -Z_j + c_j) \in \cI), 
			\end{align*}
			where the equality holds because $\PP((Z_i, -Z_i + c_i) \in \cI) = 0$ for any $i \in [d]$ and $\cI \subseteq \RR^2 \backslash (\cup_k \cL_k)$. Combined with the fact that $(Z_i, Z_j)$ is non-degenerate for any $i \ne j$, the absolute continuity follows.
			
			Next, we show that for a.e. $(x,y) \in \RR^2 \backslash (\cup_k \cL_k)$, we have
			\begin{equation}\label{eq: dens lim}
				\lim_{\varepsilon \downarrow 0}\varepsilon^{-2} \PP(x < M_{\cA} \le x + \varepsilon, y < M_{\cB} \le y+\varepsilon) = \tilde{f}(x, y).
			\end{equation}
			For any $(x,y) \in \RR^2 \backslash (\cup_k \cL_k)$,  we have 
			$$
			\{x < M_{\cA} \le x + \varepsilon, y < M_{\cB} \le y+\varepsilon \} = \cup_{p=1}^{d}\cup_{q=1}^{d} \cE_{\varepsilon, x, y}^{p,q},
			$$
			where the event $\cE_{\varepsilon, x, y}^{p,q}$ is defined as
			$$
			\cE_{\varepsilon, x, y}^{p,q} = \left\{\exists I_1 \subseteq [d], |I_1| =p  \text{ and } I_2 \subseteq [d], |I_2| = q: \begin{array}{ll}
				x < Z_i \le x + \varepsilon,  & \,\, \forall i \in I_1,  \\
				Z_{i'} \le x , &\,\,   \forall i' \notin I_1, \\
				y < - Z_j + c_j \le y + \varepsilon,  & \,\, \forall j \in I_2,  \\
				- Z_{j'} + c_{j'}\le y , &\,\,   \forall j' \notin I_2.
			\end{array}  \right\},
			$$
			and $\cE_{\varepsilon, x, y}^{p,q}$'s are disjoint. Then  \eqref{eq: dens lim} follows by showing
			\begin{equation}\label{eq: claim 1 lemma A2}
				\lim_{\varepsilon \downarrow 0} \varepsilon^{-2}   \PP(\cE_{\varepsilon, x, y}^{1,1} )  =  \tilde{f}(x, y), \quad \text{for a.e. $(x,y) \in \RR^2 \backslash (\cup_k \cL_k)$;} 
			\end{equation}
			\begin{equation}\label{eq: claim 2 lemma A2}
				\text{When either $p > 1$ or $q > 1$, $\PP (\cE_{\varepsilon, x, y}^{p,q} ) = o(\varepsilon^2)$, for a.e. $(x,y) \in \RR^2 \backslash (\cup_k \cL_k)$.}
			\end{equation}

			We show \eqref{eq: claim 1 lemma A2} first. For $p = q = 1$ and $(x,y) \in \RR^2 \backslash (\cup_k \cL_k)$, we have the disjoint decomposition
			$$
			\cE_{\varepsilon, x, y}^{1,1} \!= \!\!\!\displaystyle\bigcup_{i\in [d]} \displaystyle\bigcup_{j\in [d]} \!\!\!\left\{\! \max_{i' \in [d] \backslash \{i\}} \!Z_{i'}\! \le x, \max_{j' \in [d] \backslash \{j\} }\! \!-Z_{j'} \!+\! c_{j'}\! \le \!y, x < \!Z_i\! \le x\!+\! \varepsilon, y <\!- Z_j \!+\! c_j\!\le y + \varepsilon\! \right\}\!,
			$$
			and note that for $i = 1, \ldots, d$, for small enough $\varepsilon > 0$, by $(x,y) \notin \cup_k \cL_k$, we have
			\begin{align*}
				& \PP\left\{ \max_{i' \in [d] \backslash \{i\} } Z_{i'} \le x, \max_{i' \in [d] \backslash \{i\}} -Z_{i'} + c_{i'} \le y, x < Z_i \le x+ \varepsilon, y <- Z_i + c_i\le y + \varepsilon \right\} \\
				& \le \PP\left\{ x < Z_i \le x+ \varepsilon, y <- Z_i + c_i\le y + \varepsilon \right\}  = 0.
			\end{align*}
			Hence, we have
			\begin{align*}
				& \PP(\cE_{\varepsilon, x, y}^{1,1}) \\
				&= \!\!\!\! \!\!\!\! \sum_{(i,j) \in [d]^2 : i \ne j}\!\! \!\! \!\!  \PP\!\!\left\{ \max_{i' \in [d] \backslash \{i\} } \!Z_{i'} \!\le x, \max_{j' \in [d] \backslash \{j\} } \!\!\!-Z_{j'} \!+\! c_{j'} \!\le y, x < Z_i \le x+ \varepsilon, y \!<\!- Z_j + c_j\le y + \varepsilon \!\right\}\\
				& = \!\!\!\! \!\!\!\! \sum_{(i,j) \in [d]^2 : i \ne j} \int_{x}^{x+\varepsilon} \!\int_{y}^{y+\varepsilon} \PP\left\{ \max_{i' \in [d] \backslash \{i\}} Z_{i'} \le x, \max_{j' \in [d]\backslash \{j\} }\!\!\!-Z_{j'} + c_{j'} \le y| Z_i = u, Z_j \!=\! - v + c_j\! \right\} \\
				& \quad \quad \quad \quad \quad \quad \quad \quad \quad  \times \phi_{i,j}(u,-v + c_j) du dv.
			\end{align*}
			Then, we  show that for a.e. $(x,y) \in \RR^2 \backslash (\cup_k \cL_k)$, the following holds for any $i,j \in [d] $ and $ i \ne j$,
			\begin{equation}\label{eq: cor -1 dens lim non L}
				\begin{aligned}
					& \lim_{u \downarrow x, v \downarrow y} \PP\left\{ \max_{i' \in [d] \backslash \{i\}} Z_{i'} \le x, \max_{j' \in [d] \backslash \{j\}} -Z_{j'} + c_{j'} \le y| Z_i = u, Z_j = - v + c_j \right\} \\
					& = \PP\left\{ \max_{i' \in [d] \backslash \{i\} } Z_{i'} \le x, \max_{j' \in [d]\backslash \{j\} } -Z_{j'} + c_{j'} \le y| Z_i = x, Z_j = - y + c_j \right\}. 
				\end{aligned}
			\end{equation}
			For $i' \ne i$ and $j' \ne j$, we first define the residuals of orthogonal projections of $Z_{i'}$ and $-Z_{j'} + c_{j'}$ onto $(Z_i, -Z_j + c_j)$ as
			$$
			V_{i'} = Z_{i'} - \mu_{i'} - a_{i'i}(Z_i -  \mu_i) + a_{i'j}(Z_j - \mu_j),
			$$
			$$
			\text{and} \quad  U_{j'} = -Z_{j'} +  \mu_{j'} + a_{j'i}(Z_i -  \mu_i) - a_{j'j}(Z_j - \mu_j),
			$$ 
			where for any $\ell \in [d]$, we define $(a_{\ell i}, a_{\ell j})$ as,
			\begin{align*}
				& (a_{\ell i}, a_{\ell j})^{\top} \\
				& = \left(\EE[ (Z_i -\mu_i, -Z_j + \mu_j)^{\top}(Z_i -\mu_i, -Z_j + \mu_j)]\right)^{-1} \EE[(Z_i -\mu_i, -Z_j + \mu_j)^{\top} (Z_{\ell} - \mu_{\ell})]. 
			\end{align*}
			Correspondingly, we define the set $\cD_{i,j} = \cD_{i,j}^1 \cup \cD_{i,j}^2$, where 
			\begin{align*}
				\cD_{i,j}^1 &= \cup_{i': V_{i'}\equiv 0} \left\{(x,y) \in \RR^2: (1-a_{i'i})x - a_{i'j} y -\mu_{i'}+a_{i'i}\mu_{i} - a_{i'j}(\mu_j - c_j) = 0\right\} ,\\
				\cD_{i,j}^2 & = \cup_{j': U_{j'}\equiv 0} \left\{(x,y) \in \RR^2: a_{j'i}x +(1+a_{j'j} )y +\mu_{j'} - c_{j'} - a_{j'i}\mu_{i} + a_{j'j}(\mu_j - c_j) = 0\right\} .
			\end{align*}
			We  show that the set $\cD_{i,j}$ has a Lebesgue measure of 0 on $\RR^2$, and \eqref{eq: cor -1 dens lim non L} holds for any $(x,y) \in \RR^2\backslash \cD_{i,j}$.
			
			We first show that $(1-a_{i'i}, -a_{i'j}) \ne \mathbf{0}$ and $(a_{j'i}, 1 + a_{j'j}) \ne \mathbf{0}$ for any $i' \ne i$ and $j' \ne j$ such that $V_{i'}$ and $U_{j'}$ degenerate to 0, which implies that $\cD_{i,j}$ is a union of lines or empty sets that has a Lebesgue measure of 0 in $\RR^2$. For $i' \ne i $ such that $ V_{i'} \equiv 0$, we have that $a_{i' j} \ne 0$,  
			% when $a_{i'j} = 0$, we have that $|a_{i'i}| < 1$ 
			because otherwise $Z_i$ and $Z_{i'}$ are perfectly correlated, which contradicts our assumption. Similarly, for $j' \ne j$ such that $U_{j'} \equiv 0$, we have that $a_{j'i} \ne 0$
			% when $a_{j'i} = 0$,  $|a_{j'j}| < 1$ 
			as $Z_j$ and $Z_{j'}$ are not perfectly correlated. Hence the claim follows and  $\cD_{i,j}$ has a Lebesgue measure of 0 on $\RR^2$.
			
			Then we consider any $(x, y) \in \RR^2 \backslash \cD_{i,j}$ and show that \eqref{eq: cor -1 dens lim non L} holds. Define the sets $J_1 = \{i' \in [d]: a_{i'i} \le 0\}$, $J_1' = \{i' \in [d]: a_{i'i} \ge 0\}$, $J_2 = \{i' \in [d]: a_{i'j} \le 0\}$, and $J_2' = \{i' \in [d]: a_{i'j} \ge 0\}$. For any $(u,v)$ that is close enough to $(x,y)$, we have $(u,y) \notin \cD_{i,j}$, and hence
			\begin{align}
				& \lim_{v \downarrow y }\PP \Big(\max_{i' \in [d] \backslash \{i\}} Z_{i'} \le x, \max_{j' \in [d] \backslash \{j\}} -Z_{j'} + c_{j'} \le y \Big| Z_i = u, Z_j  = -v + c_j \Big)  \notag \\
				& = \lim_{v \downarrow y } \PP \left\{ \begin{array}{ll}
					V_{i'} \le x - \mu_{i'} - a_{i'i}(u -  \mu_i) + a_{i'j}(c_j - v - \mu_j), & i' \in [d] \backslash i\\
					U_{j'} \le y - c_{j'} +  \mu_{j'} + a_{j'i}(u -  \mu_i) - a_{j'j}(c_j - v - \mu_j) , &  j' \in [d] \backslash j
				\end{array}\right\} \notag\\
				& = \PP \left\{ \begin{array}{ll}
					V_{i'} \le x - \mu_{i'} - a_{i'i}(u -  \mu_i) + a_{i'j}(c_j - y - \mu_j), & i' \in [d] \backslash i, \ i' \in J_2\\
					U_{j'} \le y - c_{j'} +  \mu_{j'} + a_{j'i}(u -  \mu_i) - a_{j'j}(c_j - y - \mu_j) , & j' \in [d] \backslash j,\  j' \in J_2' \\
					V_{i'} < x - \mu_{i'} - a_{i'i}(u -  \mu_i) + a_{i'j}(c_j - y - \mu_j), & i' \in [d] \backslash i,\  i' \notin J_2\\
					U_{j'} < y - c_{j'} +  \mu_{j'} + a_{j'i}(u -  \mu_i) - a_{j'j}(c_j - y - \mu_j) , &  j' \in [d] \backslash j,\  j' \notin J_2'
				\end{array} \right\} \notag\\
				& = \PP \left\{ \begin{array}{ll}
					V_{i'} \le x - \mu_{i'} - a_{i'i}(u -  \mu_i) + a_{i'j}(c_j - y - \mu_j), & i' \in [d] \backslash i\\
					U_{j'} \le y - c_{j'} +  \mu_{j'} + a_{j'i}(u -  \mu_i) - a_{j'j}(c_j - y - \mu_j) , & j' \in [d] \backslash j
				\end{array} \right\},\label{eq: cor -1 joint dens y lim}
			\end{align}
			where the last equality holds since we exclude the discontinuous points for degenerate $V_{i'}$ and $U_{j'}$ at 0 by excluding the set $\cD_{i,j}$.
			
			Furthermore, by similar arguments, we have that
			\begin{align}
				& \lim_{u \downarrow x }\PP \left\{ \begin{array}{ll}
					V_{i'} \le x - \mu_{i'} - a_{i'i}(u -  \mu_i) + a_{i'j}(c_j - y - \mu_j), & i' \in [d] \backslash i\\
					U_{j'} \le y - c_{j'}+  \mu_{j'} + a_{j'i}(u -  \mu_i) - a_{j'j}(c_j - y - \mu_j) , &  j' \in [d] \backslash j 
				\end{array} \right\} \notag\\
				&= \PP \left\{ \begin{array}{ll}
					V_{i'} \le x - \mu_{i'} - a_{i'i}(x -  \mu_i) + a_{i'j}(c_j - y - \mu_j), & i' \in [d] \backslash i, \  i' \in J_1\\
					U_{j'} \le y - c_{j'} +  \mu_{j'} + a_{j'i}(x -  \mu_i) - a_{j'j}(c_j - y - \mu_j) , & j' \in [d] \backslash j,\  j' \in J_1' \\
					V_{i'} < x - \mu_{i'} - a_{i'i}(x -  \mu_i) + a_{i'j}(c_j - y - \mu_j), & i' \in [d] \backslash i,\  i' \notin J_1\\
					U_{j'} < y - c_{j'} +  \mu_{j'} + a_{j'i}(x -  \mu_i) - a_{j'j}(c_j - y - \mu_j) , &  j' \in [d] \backslash j,\  j' \notin J_1'
				\end{array} \right\} \notag\\
				& = \PP \left\{ \begin{array}{ll}
					V_{i'} \le x - \mu_{i'} - a_{i'i}(x -  \mu_i) + a_{i'j}(c_j - y - \mu_j), & i' \in [d] \backslash i\\
					U_{j'} \le y - c_{j'} +  \mu_{j'} + a_{j'i}(x -  \mu_i) - a_{j'j}(c_j - y - \mu_j) , & j' \in [d] \backslash j
				\end{array} \right\} \notag \\
				& = \PP\left\{ \max_{i' \in [d] \backslash \{i\} } Z_{i'} \le x, \max_{j' \in [d] \backslash \{ j\} } -Z_{j'} + c_{j'} \le y| Z_i = x, Z_j = - y + c_j \right\} \label{eq: cor -1 joint dens x lim}.
			\end{align}
			Combining \eqref{eq: cor -1 joint dens y lim} and \eqref{eq: cor -1 joint dens x lim}, we have that \eqref{eq: cor -1 dens lim non L} holds for a.e. $(x,y) \in \RR^2 \backslash (\cup_k \cL_k)$. Hence we have that, for a.e. $(x,y) \in \RR^2 \backslash (\cup_k \cL_k)$,
			\begin{align*}
				&\lim_{\varepsilon \downarrow 0} \varepsilon^{-2}   \PP(\cE_{\varepsilon, x, y}^{1,1} ) \\
				& = \!\!\!\! \!\!\!\!\sum_{(i,j) \in [d]^2: i \ne j } \!\!\!\PP\!\left\{ \!\max_{i' \in [d] \backslash \{i\}} \!Z_{i'} \!\le\! x, \max_{j' \in [d] \backslash \{j\}} \!\!-Z_{j'}\! +\! c_{j'} \!\le y| Z_i\! =\! x, Z_j \!=\! - y + c_j \!\!\right\}\!\phi_{i,j}(x,- y + c_j),
			\end{align*}
			and \eqref{eq: claim 1 lemma A2} follows. 
			
			Then, we show \eqref{eq: claim 2 lemma A2}. For $\cE_{\varepsilon, x, y}^{p,q}$ with either $p > 1$ or $q > 1$, $\PP (\cE_{\varepsilon, x, y}^{p,q} )$ will be upper bounded by summation of terms of the form 
			$$
			\PP\left(\begin{array}{ll}
				x < Z_i, Z_j \le x + \varepsilon\\
				y < -Z_{i'} + c_{i'}\le y + \varepsilon 
			\end{array}\right) \text{ or }   \PP\left(\begin{array}{ll}
				x < Z_{i'}\le x + \varepsilon\\
				y < - Z_i + c_i, -Z_j + c_j \le y + \varepsilon
			\end{array}\right), \quad i \ne j.
			$$
			We will discuss $ \PP\left(\begin{array}{ll}
				x < Z_i, Z_j \le x + \varepsilon\\
				y < -Z_{i'} + c_{i'}\le y + \varepsilon 
			\end{array}\right)$ for any $i,i',j \in [d]$ and $i \ne j$, and the results for $\PP\left(\begin{array}{ll}
				x < Z_{i'}\le x + \varepsilon\\
				y < - Z_i + c_i, -Z_j + c_j \le y + \varepsilon
			\end{array}\right)$ follow with similar arguments. First consider when $i' \notin \{i,j\}$, and we discuss the scenarios where $(Z_i, Z_j, -Z_{i'} + c_{i'})$ is degenerate and non-degenerate respectively. When $(Z_i, Z_j, -Z_{i'} + c_{i'})$ is non-degenerate, we have that 
   \begin{align*}
       \PP\left(\begin{array}{ll}
				x < Z_i, Z_j \le x + \varepsilon\\
				y < -Z_{i'} + c_{i'}\le y + \varepsilon 
			\end{array}\right) & = \int_{x}^{x+\varepsilon} \int_{x}^{x+\varepsilon} \int_{-y - \varepsilon+c_{i'}}^{-y +c_{i'}} \phi_{i,j,i'} (u,v,u') du dv du'\\
   & = O(\varepsilon^3) = o(\varepsilon^2), \quad \text{for all $(x,y) \in \RR^2$ as $\varepsilon \downarrow 0$},
   \end{align*}
 where $\phi_{i,j,i'} (\cdot)$ denotes the joint density of $(Z_i, Z_j, Z_{i'})$, and the second equality is due to the bound $\sup_{(u,v,u') \in \RR^3} \phi_{i,j,i'} (u,v,u') < + \infty$.
		When $(Z_i, Z_j, -Z_{i'} + c_{i'})$ is degenerate, there exists a non-zero vector $(a_1, a_2, a_3, a_4) \in \RR^4$ such that with probability 1, we have
			$$
			(Z_i, Z_j, -Z_{i'} + c_{i'}) \in \cP := \{(v_1, v_2, v_3) \in \RR^3: a_1\cdot  v_1 + a_2\cdot v_2 + a_3 \cdot v_3 = a_4 \},
			$$
			where $a_3 \ne 0$ by the assumption that $|\Corr(Z_i,Z_j)| < 1$ for any $i \ne j$. Then define the line $\cC = \{(x,y) \in \RR^2: (a_1 + a_2) \cdot x + a_3 \cdot y = a_4\}$, and for any $(x,y) \in \RR^2 \backslash \cC$, we have
			\begin{align*}
				\PP\left(\begin{array}{ll}
					x < Z_i, Z_j \le x + \varepsilon\\
					y < -Z_{i'} + c_{i'}\le y + \varepsilon 
				\end{array}\right) &= \PP\!\left( \!\left\{\begin{array}{ll}
					x < Z_i, Z_j \le x + \varepsilon\\
					y < -Z_{i'} + c_{i'}\le y + \varepsilon 
				\end{array}\right\} \cap \big\{\!(Z_i, Z_j, -Z_{i'} + c_{i'}) \!\in \cP\big\}\!\!\right) \\
				& = 0, \quad \text{for small enough $\varepsilon > 0$}.
			\end{align*}
			% which will be of order $O(\varepsilon^3) = o(\varepsilon^2)$, 
			
			Next we consider when $i' \in \{i,j\}$, and we assume without loss of generality that $i = i'$. Because $(x,y) \notin \cup_k \cL_k$,  for small enough $\varepsilon > 0$, we have that
			$$
			\left\{ (v_1,v_2) \in \RR^2:  x < v_1 \le x + \varepsilon, \quad y <v_2\le y + \varepsilon  \right\} \cap \cup_k \cL_k = \emptyset.
			$$
			Then, since $(Z_i, -Z_i + c_i) \in \cup_k \cL_k$ with probability 1, we have that
			$$
			\PP\left(\begin{array}{ll}
				x < Z_i, Z_j \le x + \varepsilon\\
				y < -Z_{i'} + c_{i'}\le y + \varepsilon 
			\end{array}\right) \le \PP \left( x < Z_i \le x + \varepsilon,  y < -Z_{i} + c_{i}\le y + \varepsilon \right) = 0,
			$$
			when $\varepsilon > 0$ is small enough. This completes the proof of \eqref{eq: claim 2 lemma A2}.
			
			Combining  \eqref{eq: claim 1 lemma A2} and  \eqref{eq: claim 2 lemma A2}, for a.e. $(x,y) \in \RR^2 \backslash (\cup_k \cL_k)$, we have that 
			\begin{align*}
				&\lim_{\varepsilon \downarrow 0}\varepsilon^{-2} \PP(x < M_{\cA} \le x + \varepsilon, y < M_{\cB} \le y+\varepsilon)\\
				& \quad =  \lim_{\varepsilon \downarrow 0}\varepsilon^{-2}  \sum_{p = 1}^d \sum_{q = 1}^d \PP (\cE_{\varepsilon, x, y}^{p,q} ) =   \lim_{\varepsilon \downarrow 0} \varepsilon^{-2}   \PP(\cE_{\varepsilon, x, y}^{1,1} )  =  \tilde{f}(x, y),
			\end{align*}
			and \eqref{eq: dens lim} follows.
		\end{proof}
		
		\subsubsection{Distribution on $\cup_k \cL_k$}
		
		\begin{proof}[\unskip\nopunct]\renewcommand{\qedsymbol}{}
			
			Now we consider the distribution of $(M_{\cA}, M_{\cB})$ on the line $\cL_k$ for each $k =1, \ldots , d$. For any Borel measurable subset $\cI \subseteq \RR$, we have that $\PP(M_{\cA} \in \cI, M_{\cB} = - M_{\cA} + c_k) \le \PP(M_{\cA} \in \cI) \le \sum_i \PP(Z_i \in \cI)$, and hence the absolute continuity holds. 
			
			Now for any given $k \in [d]$ and any $x \in \RR$, we consider the disjoint decomposition
			\begin{equation}\label{eq: lm cor -1 L decomp}
				\{x < M_{\cA} \le x + \varepsilon, M_{\cB} = -M_{\cA} + c_k \} = \cup_{p=1}^{d}\cup_{q=1}^{d} \cE_{\varepsilon, k, x}^{p,q},
			\end{equation}
			where the event $\cE_{\varepsilon,k, x}^{p,q}$ is
			\begin{align*}
				\cE_{\varepsilon, k,  x}^{p,q} &= \left\{
				\begin{array}{l}
					\exists I_1 \subseteq [d], |I_1| =p \\
					\exists I_2 \subseteq [d], |I_2| = q 
				\end{array}
				: \begin{array}{ll}
					x < Z_i \le x + \varepsilon,  & \,\, \forall i \in I_1  \\
					Z_{i'} \le x , &\,\,   \forall i' \notin I_1 \\
					-(x + \varepsilon ) + c_k \le - Z_j + c_j < -x + c_k,  & \,\, \forall j \in I_2  \\
					- Z_{j'} + c_{j'} < -(x + \varepsilon ) + c_k , &\,\,   \forall j' \notin I_2\\
					M_{\cB} = -M_{\cA} + c_k
				\end{array}  \right\}\\
				& = \left\{
				\begin{array}{l}
					\exists I_1 \subseteq [d], |I_1| =p \\
					\exists I_2 \subseteq [d], |I_2| = q 
				\end{array}
				: \begin{array}{ll}
					x < Z_i \le x + \varepsilon,  & \,\, \forall i \in I_1  \\
					Z_{i'} \le x , &\,\,   \forall i' \notin I_1 \\
					x + c_j - c_k < Z_j  \le x + \varepsilon + c_j - c_k ,  & \,\, \forall j \in I_2  \\
					Z_{j'}  > x + \varepsilon + c_{j'} - c_k , &\,\,   \forall j' \notin I_2\\
					M_{\cB} = -M_{\cA} + c_k
				\end{array}  \right\}.
			\end{align*}
			Recall the set $ \cI^* = \argmax_{i \in [d]} c_i$,  $\bar{c} = \max_{i\in[d]} c_i$ and the line  $\cL =  \{(x,y) \in \RR^2: y = - x + \bar{c}\} = \cup_{k \in \cI^*} \cL_k$. We  first show that for $k \notin \cI^*$,  $\PP( \cE_{\varepsilon, k,  x}^{p,q}) = 0$ for any $p,q$ when $\varepsilon > 0$ is small enough.
			Suppose that $k \notin \cI^*$. Then, there exists an $i \in [d]$ such that $c_i > c_k$. If $i \notin I_2$, we have $x + \varepsilon + c_i -c_k   > x + \varepsilon $, and if $i \in I_2$, we have that $x + c_i -c_k > x + \varepsilon$ for small enough $\varepsilon > 0$. Hence when $\varepsilon $ is small enough, we have that the conditions $Z_i > x + \varepsilon$ and  $Z_i \le x + \varepsilon$ cannot hold simultaneously. This implies that $\PP( \cE_{\varepsilon, k,  x}^{p,q}) = 0$ for any $p,q$. Therefore, combining with the decomposition \eqref{eq: lm cor -1 L decomp}, for any $k \notin \cI^*$, $\PP \{x < M_{\cA} \le x + \varepsilon, M_{\cB} = -M_{\cA} + c_k \} = 0$ when $\varepsilon$ is small enough. This also implies that $(M_{\cA}, M_{\cB})$ is absolutely continuous with respect to the Lebesgue measure of $\RR^2$ on $\cL_k$ for $k \notin \cI^*$, and we have that result (1) of Lemma~\ref{lm: anti con equal var no cor 1 dens} follows. 
			
			Finally, we consider the line $\cL = \cup_{k \in \cI^*} \cL_k$. For any given $k \in \cI^*$ (note that $\cE_{\varepsilon, k,  x}^{p,q}$ denotes the same set for any $k \in \cI^*$) and any $p,q \ge 1$, if there exists $i \ne j$ such that $i, j \in I_1 \cup I_2$, we have that $\PP(\cE_{\varepsilon, k,  x}^{p,q}) = O(\varepsilon^2)$ as $(Z_i, Z_j)$ is non-degenerate. Similarly, we have $\PP(\cE_{\varepsilon, k,  x}^{p,q}) = O(\varepsilon^2) = o (\varepsilon)$ for either $p > 1$ or $q > 1$.  Therefore, we can write
			$$
			\PP(\cup_{p=1}^{d}\cup_{q=1}^{d} \cE_{\varepsilon, k, x}^{p,q}) = \PP (\cE_{\varepsilon, k,  x}^{1,1} ) + o(\varepsilon)= \sum_{i \in [d]} \PP(\cE_{\varepsilon,  x}^{(i)} ) + o(\varepsilon),
			$$
			where 
			$$
			\cE_{\varepsilon,  x}^{(i)}  = \left\{
			\begin{array}{ll}
				c_i = \bar{c}, & \\
				x < Z_i \le x + \varepsilon,  &  \\
				x + \varepsilon + c_{i'} - \bar{c} < Z_{i'} \le x , &\,\,   \forall i' \ne i 
			\end{array}  \right\}.
			$$
			For each $i \in [d]$, it can be seen that $\PP(\cE_{\varepsilon, x}^{(i)}) = 0$ if $i \notin \cI^*$. 
			Furthermore, if $|\cI^*| > 1$, for each $i \in [d]$, there exists $i' \in \cI^*$ and $i' \ne i$ such that $x + \varepsilon + c_{i'} - \bar{c}  = x + \varepsilon \ge x$, and $\PP(\cE_{\varepsilon,  x}^{(i)}) = 0$. Hence, we have that the density is 0 for any $x \in \RR$ if $|\cI^*| > 1$. 
			
			We now consider $|\cI^*| = 1$ and $\cI^* = \{i^*\}$ for some $i^* \in [d]$. We have 
			\begin{align*}
				\PP(\cE_{\varepsilon,  x}^{(i^*)})  &= \PP\left\{
				\begin{array}{ll}
					x < Z_{i^*} \le x + \varepsilon,  &  \\
					x + \varepsilon + c_{i} - \bar{c} < Z_{i} \le x , &\,\,   \forall i \ne i^*
				\end{array}  \right\}\\
				& = \int_x^{x+\varepsilon} \PP \left(x + \varepsilon + c_{i} - \bar{c} < Z_{i} \le x ,\,   \forall i \ne i^* | Z_{i^*} = u \right) \phi_{i^*} (u)du\\
				& = \int_x^{x+\varepsilon} \PP \left(x  + c_{i} - \bar{c} \le Z_{i} \le x ,\,   \forall i \ne i^* | Z_{i^*} = u \right) \phi_{i^*} (u)du \\
				& \quad - \int_x^{x+\varepsilon} \PP \left(x + c_{i} - \bar{c} \le Z_i \le x+\varepsilon + c_{i} - \bar{c}, \,   \forall i \ne i^* | Z_{i^*} = u \right) \phi_{i^*} (u)du\\
				& = \int_x^{x+\varepsilon} \PP \left(x + c_{i} - \bar{c} \le Z_{i} \le x ,\,   \forall i \ne i^* | Z_{i^*} = u \right) \phi_{i^*} (u)du + O(\varepsilon^2),
			\end{align*}
			where the $O(\varepsilon^2)$ term in the last equality follows as $(Z_i, Z_{i^*})$ is non-degenerate for any $i \ne i^*$, which also gives
			$$
			\lim_{u \downarrow x} \PP \left(x + c_{i} - \bar{c} \le Z_{i} \le x ,\,   \forall i \ne i^* | Z_{i^*} = u \right) = \PP \left(x + c_{i} - \bar{c} \le Z_{i} \le x ,\,   \forall i \ne i^* | Z_{i^*} = x \right).
			$$
			Hence for $x \in \RR$, we have 
			\begin{align*}
				& \lim_{\varepsilon \downarrow 0}\frac{1}{\varepsilon} \PP  \{x < M_{\cA} \le x + \varepsilon, M_{\cB} = -M_{\cA} + c_k \} =  \lim_{\varepsilon \downarrow 0}\frac{1}{\varepsilon} \PP (\cE_{\varepsilon,  x}^{(i^*)}) \\
				& =  \PP \left(x + c_{i} - \bar{c} \le Z_{i} \le x ,\,   \forall i \ne i^* | Z_{i^*} = x \right) \phi_{i^*} (x),
			\end{align*}
			and result (2) of Lemma~\ref{lm: anti con equal var no cor 1 dens} holds. \end{proof}
	\end{proof}

	\subsection{Proof of Lemma~\ref{lm: anti con equal var no cor 1}}\label{sec: proof lm anti con equal var no cor 1}
	\begin{proof}[\unskip\nopunct]
		% Without loss of generality, we can assume that $\bar\lambda_2 \le \bar\lambda_1$, and that $Z_i$'s have means $(\mu_i - \mu^*_{\cA})$'s, and denote the new shift by $c_i' = c_i - \mu_{\cA}^* - \mu_{\cB}^*$ , where $\mu^*_{\cA} = \arg\min_{\mu \in \RR} \max_{i=1}^d |\mu_i - \mu|$ and $\mu_{\cB}^* = \arg\min_{\mu \in \RR} \max_{i=1}^d|-\mu_i + c_i - \mu|$, and we slightly abuse the notation and denote $\mu_i - \mu^*_{\cA}$ by $\mu_i$ and $c_i'$ by $c_i$ for notational convenience. 
		Recall that we define $\bar{c} = \max_i c_i$, the set $\cI^* = \argmax_i c_i$ and the line  $\cL =  \{(x,y) \in \RR^2: y = - x + \bar{c}\}$. By Lemma~\ref{lm: anti con equal var no cor 1 dens}, for any $t \in \RR$, we have that
		\begin{align}
			& \PP(|M_{\cB} - M_{\cA} - t | \le \varepsilon) = \int_{(x,y) \in \cL^c: |y-x-t| \le \varepsilon } \tilde{f}(x,y) dx dy+ \int_{|-2x+\bar{c}-t|\le \varepsilon} \tilde{f}(x) dx \notag\\
			& = \int_{|y-x-t| \le \varepsilon } \tilde{f}(x,y) dx dy+ \int_{-\frac{t-\bar{c}}{2} - \frac{\varepsilon}{2}}^{-\frac{t-\bar{c}}{2} + \frac{\varepsilon}{2}} \tilde{f}(x) dx \notag\\
			& \overset{u = y - x}{ =\joinrel= } \underbrace{\int_{|u-t| \le \varepsilon } \int_{\RR}\tilde{f}(x,x+u) dx du }_{= {\rm I_1}} + \underbrace{\int_{-\frac{t-\bar{c}}{2} - \frac{\varepsilon}{2}}^{-\frac{t-\bar{c}}{2} + \frac{\varepsilon}{2}} \tilde{f}(x) dx}_{= {\rm I_2}}, \label{eq: decomp I1 I2}
		\end{align}
		where the second equality follows from the fact that $\cL$ has measure 0 on $\RR^2$, and $\tilde{f}(x,y)$ is finite everywhere. We consider the terms ${\rm I_1}$ and ${\rm I_2}$ independently. For ${\rm I_1}$, we have that 
		\begin{align*}
			{\rm I}_1 \!&=\! \int_{t- \varepsilon }^{t + \varepsilon} \!\!\int_{\RR}  \sum_{(i,j) \in [d]^2: i \ne j} \bigg\{ \PP\!\left(\!\!\begin{array}{c}
				\max_{i' \in [d] \backslash \{i\}} Z_{i'} \le x   \\
				\max_{j' \in [d]\backslash\{ j\}} (-Z_{j'}+c_{j'} )\le x + u 
			\end{array}  \! \bigg|\! \begin{array}{c}
				Z_i = x \\
				Z_j = - x - u + c_j 
			\end{array} \!\!\right)\\
			& \quad \quad \quad \quad \times \phi_{i,j} (x, -x - u + c_j) \bigg\} d x d u\\
			& =  \int_{t- \varepsilon }^{t + \varepsilon} \sum_{j \in [d]}\sum_{i \in [d] \backslash \{j\}}\int_{\RR}  \PP\left( \begin{array}{c}
				\max_{i' \in [d]\backslash \{i\}} Z_{i'} \le x   \\
				\max_{j' \in [d]\backslash \{j\}} \tilde{Z}_{j'} \le x 
			\end{array}  \bigg| \begin{array}{c}
				Z_i = x \\
				\tilde{Z}_j = x
			\end{array} \right) \phi_{Z_i,\tilde{Z}_j}^u (x, x) d x d u\\
			& =  \int_{t- \varepsilon }^{t + \varepsilon} \sum_{j \in [d]}\int_{\RR}  \sum_{i \in [d] \backslash \{j\} } \PP\left( \begin{array}{c}
				\max_{i' \in [d] \backslash \{i\}} Z_{i'} \le x   \\
				\max_{j' \in [d]\backslash \{j\}} \tilde{Z}_{j'} \le x 
			\end{array}  \bigg| \begin{array}{c}
				Z_i = x \\
				\tilde{Z}_j = x
			\end{array} \right) \phi_{Z_i|\tilde{Z}_j}^u (x| x)\phi_{\tilde{Z}_j}^u (x) d x d u,
		\end{align*}
		where $\tilde{Z}_j = -Z_j + c_j - u$ for $j \in [d]$; $\phi_{Z_i,\tilde{Z}_j}^u (\cdot)$ denotes the joint density of $(Z_i, \tilde{Z}_j)$; $\phi_{\tilde{Z}_j}^u (\cdot)$ denotes the marginal density of $\tilde{Z}_j$; $\phi_{Z_i|\tilde{Z}_j}^u (\cdot)$ denotes the conditional density of $Z_i$ on $\tilde{Z}_j$, and the second equality follows from 
		$$
		\phi_{Z_i,\tilde{Z}_j}^u (x, x) = \phi_{i,j} (x, -x - u + c_j).
		$$
		% Following similar steps as in the proof of Theorem~\ref{thm: anti con},
		Then, we let
		\begin{align*}
			g_{u, j}(x) &=  \sum_{i \in [d] \backslash \{j\}} \PP\left( \begin{array}{c}
				\max_{i' \in [d] \backslash  \{i\} } Z_{i'} \le x   \\
				\max_{j' \in [d]\backslash \{ j\}} \tilde{Z}_{j'} \le x 
			\end{array}  \bigg| \begin{array}{c}
				Z_i = x \\
				\tilde{Z}_j = x
			\end{array} \right) \phi_{Z_i|\tilde{Z}_j}^u (x| x),\\
			H_{u,j}(x) &= \PP\left( \begin{array}{c}
				\max_{i \in [d]} Z_{i} \le x   \\
				\max_{j' \in [d] \backslash \{j\}} \tilde{Z}_{j'} \le x 
			\end{array}  \bigg|
			\tilde{Z}_j = x \right),
		\end{align*}
		and we have the following claim for $g_{u, j}(x)$ and $H_{u,j}(x)$.
		% \begin{align*}
			%     % \textbf{Claim 3: }& \text{for every $j \in \cB$ and a.e. $u \in \RR$, $H_{u,j}(x)$ is non-decreasing on $x \in \RR$;}\\
			%     \textbf{Claim 2: }& \text{for every $j \in \cB$ and a.e. $u \in \RR$}, \, g_{u, j}(x) \le (1+\underline\rho)^{-1} d  H_{u,j}(x)/ dx, \,\, \text{$\forall x \ne (c_j - u)/2$.}
			% \end{align*}
		\begin{claim}\label{claim: A31}
			{For every $j \in \cB$ and a.e. $u \in \RR$}, we have that
			\begin{equation}\label{eq: claim 2 g = 0}
				g_{u, j}(x) = 0, \quad \forall x < (c_j - u)/2,
			\end{equation}
			and 
			\begin{equation}\label{eq: claim 2 g < H}
				g_{u, j}(x) \le (1+\underline\rho)^{-1} d  H_{u,j}(x)/ dx, \quad \forall x > (c_j - u)/2,
			\end{equation}
			where $\underline{\rho} = \min_{i \ne j} \rho_{ij}$ with  $\rho_{ij} = \Corr(Z_i, Z_j) = \sigma^{-2} {\rm Cov} (Z_i, Z_j)$.
		\end{claim}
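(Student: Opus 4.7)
The plan is to mirror the proof of Lemma~\ref{claim: 251}, adapting to the reflected variables $\tilde{Z}_{j'} = -Z_{j'} + c_{j'} - u$. The crucial identity I would use throughout is that $\tilde{Z}_j = x$ is equivalent to $Z_j = c_j - u - x$, which turns the conditioning on $\tilde{Z}_j = x$ into conditioning on a specific value of $Z_j$ depending on both $x$ and $u$. To establish \eqref{eq: claim 2 g = 0}, for each $i \in [d]\setminus\{j\}$ the event $\max_{i' \in [d]\setminus\{i\}} Z_{i'} \le x$ inside $g_{u,j}(x)$ includes the constraint $Z_j \le x$, which under the conditioning $\tilde{Z}_j = x$ becomes $c_j - u - x \le x$, i.e., $x \ge (c_j - u)/2$. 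Hence every summand of $g_{u,j}(x)$ vanishes when $x < (c_j - u)/2$.

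For \eqref{eq: claim 2 g < H} I would work in the regime $x > (c_j - u)/2$. The key technical step is a change of variables: for each $i \ne j$, introduce the residual $V_i = Z_i - \mu_i - \rho_{ij}(Z_j - \mu_j)$, which is independent of $Z_j$. Conditional on $Z_j = c_j - u - x$, the events defining $H_{u,j}(x)$ rewrite as
\begin{equation*}
H_{u,j}(x) = \PP\bigl(a_i(x) \le V_i \le b_i(x),\ \forall i \in [d]\setminus\{j\}\bigr),
\end{equation*}
where the upper endpoint $b_i(x) = (1+\rho_{ij})x - \mu_i - \rho_{ij}(c_j - u - \mu_j)$ (with slope $b_i'(x) = 1 + \rho_{ij}$) encodes $Z_i \le x$, and the lower endpoint $a_i(x) = (\rho_{ij}-1)x + c_i - u - \mu_i - \rho_{ij}(c_j - u - \mu_j)$ (with slope $a_i'(x) = \rho_{ij} - 1$) encodes $\tilde{Z}_i \le x$. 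Similarly, after unfolding the conditioning on $\{Z_i = x,\ \tilde{Z}_j = x\}$ and using $\phi_{Z_i|\tilde{Z}_j}^u(x|x) = f_{V_i}(b_i(x))$, the $i$-th summand of $g_{u,j}(x)$ becomes
\begin{equation*}
G_i(x) = \PP\bigl(a_{i'}(x) \le V_{i'} \le b_{i'}(x),\ \forall i' \in [d]\setminus\{i,j\}\,\big|\, V_i = b_i(x)\bigr)\, f_{V_i}(b_i(x)).
\end{equation*}

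Then, following the derivative strategy of Lemma~\ref{claim: 251} and applying part~(2) of Lemma~\ref{lm: joint max dens} (after removing duplicates within $\{V_i\}_{i\ne j}$ as in the treatment of $\tilde{\cB}_j$ there), a Leibniz-type computation on the box $[a_i(x), b_i(x)]$ yields
\begin{equation*}
\frac{dH_{u,j}(x)}{dx} = \sum_{i\ne j}\Bigl[(1+\rho_{ij})\, G_i(x) + (1-\rho_{ij})\, \widetilde{G}_i(x)\Bigr],
\end{equation*}
where $\widetilde{G}_i(x) \ge 0$ collects the analogous lower-boundary contribution at $V_i = a_i(x)$. Dropping the nonnegative $\widetilde{G}_i$ terms and using $1+\rho_{ij} \ge 1 + \underline{\rho} > 0$ (which holds since $|\rho_{ij}|<1$), I conclude $dH_{u,j}(x)/dx \ge (1+\underline{\rho})\, g_{u,j}(x)$, yielding \eqref{eq: claim 2 g < H}.

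The main obstacle I anticipate is the rigorous justification of the boundary-derivative formula, particularly when some of the residuals $\{V_i\}_{i\ne j}$ degenerate onto one another ($V_i \equiv V_{i'}$ is possible even though $\Corr(Z_i,Z_{i'}) < 1$, because the projection onto $Z_j$ can coincide). Handling these duplicates requires the same careful pruning step used in the proof of Lemma~\ref{claim: 251} before Lemma~\ref{lm: joint max dens} can be invoked cleanly. The ``a.e.\ $u$'' qualification in the claim conveniently accommodates the measure-zero set of $u$ for which boundaries coincide (i.e., $x = (c_i - u)/2$ for some $i$) or at which such degeneracies cause the formal derivative to be non-standard.
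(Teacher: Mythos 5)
Your proposal is correct and follows essentially the same route as the paper: the same residuals $V_i = Z_i - \mu_i - \rho_{ij}(Z_j-\mu_j)$, the same observation that the constraint $Z_j \le x$ under the conditioning $\tilde{Z}_j = x$ forces $x \ge (c_j-u)/2$ (giving \eqref{eq: claim 2 g = 0}), and the same boundary-derivative bound on $H_{u,j}$ with slopes $1+\rho_{ij}$ (kept) and $1-\rho_{ij}$ (dropped as nonnegative), followed by $1+\rho_{ij}\ge 1+\underline\rho$. The only difference is cosmetic: the paper formalizes your Leibniz step by rewriting the box constraints as $\{U_i\le x,\ \tilde U_i\le x\}$ and invoking Lemma~\ref{lm: joint max dens}(2) after exactly the a.e.-$u$ de-duplication you anticipate (showing duplicates can only occur among the lower-boundary variables), so your deferred pruning step is precisely what the paper carries out.
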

		\begin{proof}
			See Appendix~\ref{sec: proof claim A31}.
		\end{proof}
		
		% We will revisit and prove Claim~\ref{claim: A31}
		% % and Claim~4 
		% at the end of the proof. 
		Following Claim~\ref{claim: A31}, together with the fact that 
		$$
		\left|\int_{\RR}  \!\sum_{(i,j) \in [d]^2 : i \ne j}\!\!\!\! \!\!\! \PP\!\!\left(\!\begin{array}{c}
			\displaystyle \max_{i' \in [d] \backslash \{i\}} Z_{i'} \le x   \\
			\displaystyle \max_{j' \in [d] \backslash \{j\} } \!\!-Z_{j'}+c_{j'} \le x + u 
		\end{array}   \Bigg|\! \begin{array}{c}
			Z_i = x \\
			Z_j \!=\! - x \!-\! u \!+\! c_j\! 
		\end{array} \!\!\right)\!\!\phi_{i,j} (x, -x \!-\! u \!+\! c_j) d x\right|\!\! <\! \infty,
		$$
		we have 
		\begin{align}
			{\rm I}_1 &= \int_{t-\varepsilon}^{t+\varepsilon} \sum_{j \in [d]} \int_{\RR} g_{u,j} (x) \phi_{\tilde{Z}_j}^u (x) dx du \notag \\
			& \overset{\rm (a)}{=} \int_{t-\varepsilon}^{t+\varepsilon} \sum_{j \in [d]} \int_{(c_j - u)/2}^{+\infty} g_{u,j} (x) \phi_{\tilde{Z}_j}^u (x) dx du \notag\\
			& \overset{\rm (b)}{\le} \frac{1}{1+\underline\rho}\int_{t-\varepsilon}^{t+\varepsilon} \sum_{j \in [d]} \int_{(c_j - u)/2}^{+\infty} \phi_{\tilde{Z}_j}^u (x) d H_{u,j}(x) du \notag\\
			& = \frac{1}{1+\underline\rho}\!\int_{t-\varepsilon}^{t+\varepsilon} \sum_{j \in [d]} \Big\{\phi_{\tilde{Z}_j}^u (x) H_{u,j}(x) \Big|_{\frac{c_j - u}{2}}^{+\infty} \notag\\
			& \quad \quad \quad  \quad \quad \quad \quad \quad + \!\!\int_{(c_j - u)/2}^{+\infty}\!\!H_{u,j}(x)\phi_{\tilde{Z}_j}^u (x)  \frac{x+\mu_j-c_j+u}{\sigma^2}  dx  \!\Big\}\!d u \notag\\
			& \overset{\rm (c)}{\le} \frac{1}{(1+\underline\rho)}\! \!\int_{t-\varepsilon}^{t+\varepsilon} \!\!\!\int_{\RR}\!\! \Big\{\!\! \sum_{j \in [d]}\!\PP\Big( 
			% \begin{array}{c}
				%    \max_{i} Z_{i} \le x   \\
				\max_{j' \in [d]\backslash \{j\}} \!\tilde{Z}_{j'} \!\le\! x 
				% \end{array}  
			\Big|
			\tilde{Z}_j \!=\! x \!\Big)\!\frac{|x+\mu_j-c_j+u| }{\sigma^2} \phi_{\tilde{Z}_j}^u \!(x) \!\!
			%        \\
			%        &\quad + \sum_{i}\PP\left( \begin{array}{c}
				%    \max_{i' \ne i} Z_{i'} \le x   \\
				%      \max_{j} \tilde{Z}_{j} \le x 
				% \end{array}  \bigg|
			%        {Z}_i = x \right) \phi_{{Z}_i}^u (x) 
			\Big\}dx du \notag\\
			& = \frac{1}{(1+\underline\rho)\sigma^2} \int_{t-\varepsilon}^{t+\varepsilon} \EE \Big( \sum_{j \in [d]} \II\Big\{\max_{j' \in [d] \backslash \{j\}} \tilde{Z}_{j'} \le \tilde{Z}_j \Big\} |\tilde{Z}_j - \tilde\mu_j |\Big)  du \notag\\
			& \overset{\rm (d)}{\le} \frac{1}{(1+\underline\rho)\sigma^2} \int_{t-\varepsilon}^{t+\varepsilon} \EE \big( \max_{j \in [d]} |\tilde{Z}_j - \tilde\mu_j |\big)  du \notag\\
			& =2 \EE \big( \max_{i \in [d]} |Z_i - \mu_i|\big)  \frac{\varepsilon}{(1+\underline\rho)\sigma^2}, \label{eq: I1 bound}
			% (1+\underline\rho)^{-1}\left(\EE \big( \max_i |Z_i - \mu_i|\big) + \bar\lambda_2 + |t|+ \varepsilon \right)\varepsilon/\sigma^2,
		\end{align}
		where $\tilde{\mu}_j = \EE (\tilde{Z}_j) = -\mu_j + c_j - u$; equality (a) follows from \eqref{eq: claim 2 g = 0}; inequality (b) holds because of \eqref{eq: claim 2 g < H};
		% and the fact that
		% $$
		% \phi_{\tilde{Z}_j}^u (x) H_{u,j}(x) \Big|^{(c_j - u)/2}_{-\infty} \ge 0,
		% $$ 
		% $\phi_{\tilde{Z}_j}^u (x) H_{u,j}(x) \Big|_{(c_j - u)/2^+}^{(c_j - u)/2^-} \le 0$ and $\phi_{\tilde{Z}_j}^u (x)$ is continuous everywhere, 
		inequality (c) follows from $ H_{u,j}(x) \le \PP\big(\max_{j' \in [d]\backslash \{j\}} \tilde{Z}_{j'} \le x \big|\tilde{Z}_j = x \!\big)$ and the fact that
		$$
		\phi_{\tilde{Z}_j}^u (x) H_{u,j}(x) \Big|_{(c_j - u)/2}^{+\infty} \le 0, \quad \forall j \in [d],
		$$ 
		and inequality (d) follows by the assumption $|\Corr(Z_j, Z_{j'})| < 1$ for any $j' \ne j$ such that $\PP(\max_{j' \ne j} \tilde{Z}_{j'} = \tilde{Z}_j) = 0$ for any $j \in [d]$. 
		
		For term ${\rm I}_2$, by Lemma~\ref{lm: anti con equal var no cor 1 dens}, when $|\cI^*| > 1$, we have ${\rm I}_2 = 0$; When $\cI^* = \{i^*\}$ for some $i^* \in [d]$,  we have 
		\begin{align}
			{\rm I}_2 &= \int_{-\frac{t-\bar{c}}{2} - \frac{\varepsilon}{2}}^{-\frac{t-\bar{c}}{2} + \frac{\varepsilon}{2}}  \PP\big(x + c_i - \bar{c} \le Z_i \le x, \forall i \ne i^* | Z_{i^*} = x\big) \phi_{i^*} (x)  dx \notag\\
			& \le \int_{-\frac{t-\bar{c}}{2} - \frac{\varepsilon}{2}}^{-\frac{t-\bar{c}}{2} + \frac{\varepsilon}{2}} \phi_{i^*} (x) dx \le \frac{\varepsilon}{\sqrt{2 \pi}\sigma} \notag \\
			&\le (1+\underline\rho)^{-1}\EE \big( \max_i |Z_i - \mu_i|\big) \varepsilon/\sigma^2,\label{eq: I2 bound}
		\end{align}
		where the second inequality holds by the bound on the Gaussian density function, and the last inequality holds by the fact that 
		$$
		(1+\underline\rho)^{-1} \EE[\max_i |Z_i - \mu_i| / \sigma ]\ge  (1+\underline\rho)^{-1} \EE[  |Z_1 - \mu_1| / \sigma] = (1+\underline\rho)^{-1} \sqrt{2/\pi} \ge 1/\sqrt{2\pi}.
		$$
		Then combining \eqref{eq: decomp I1 I2}, \eqref{eq: I1 bound} and \eqref{eq: I2 bound}, for any $t \in \RR$, we have that 
		$$
		\PP(|M_{\cB} - M_{\cA} - t | \le \varepsilon) \le 3 (1+\underline\rho)^{-1}\EE \big( \max_i |Z_i - \mu_i|\big) \varepsilon/\sigma^2,
		$$
		and \eqref{eq: anti con max diff equal var} holds as desired.
		% following very similar arguments as in the proof of Theorem~\ref{thm: anti con}, by discussing values of $t$ and $\varepsilon$ and noting the fact that $(1+\underline\rho)^{-1} \ge 1/2$, we have the resulting bound \eqref{eq: anti con max diff equal var} and \eqref{eq: anti con max diff equal var t 0} hold.

	\end{proof}
	\subsection{Proof of Lemma~\ref{lm: NA NB equiv}}\label{sec: proof lm NA NB equiv}
	\begin{proof}[\unskip\nopunct]
		We first show that $|\cN_{\cA}| = |\cN_{\cB}|$. 
		For any $i \in \cN_{\cA}$, by the definitions of $\cN_{\cA}$ and $\cN_{\cB}$ in \eqref{eq: def Na Nb}, there exists some $j \in \cN_{\cB}$ such that $\Corr(X_i, X_j) = - 1$. Now assume that there also exists some other $j' \in \cN_{\cB}, j' \ne j$ such that $\Corr(X_i, X_{j'}) = -1$, then we have that $\Corr(X_{j}, X_{j'}) = 1$, which contradicts our assumption that $\Corr(X_{j}, X_{j'}) < 1$ for all $j,j' \in \cB$ and $j \ne j'$. Hence, for any $i \in \cN_{\cA}$, $X_i$ is perfectly negatively correlated with one and only one $X_j$ with $j \in \cN_{\cB}$. By the same argument, for any $j \in \cN_{\cB}$, $X_j$ is perfectly negatively correlated with one and only one $X_i$ with $i \in \cN_{\cA}$, and hence we have $|\cN_{\cA}| = |\cN_{\cB}|$. Besides, by the definition of $\cN_{\cA}$ and $\cN_{\cB}$, there exist constants $c_i$'s such that $\{X_j\}_{j \in \cN_{\cB}} = \{-X_i + c_i\}_{i \in \cN_{\cA}}$, and \eqref{eq: equal NA NB} holds.
		
		Meanwhile, for any $i, i' \in \cN_{\cA}$ and $i \ne i'$, suppose that $\Corr(X_i, X_{i'}) = -1$. Then by our assumption, there exists some $j \in \cN_{\cB}$ such that $\Corr(X_i, X_j) = -1$, and we  have that $\Corr(X_{i'}, X_j) = 1$, which contradicts our assumption that $\Corr(X_{i}, X_{j}) < 1$ for any $i \ne j$. Hence we have that $|\Corr(X_i, X_{i'})| < 1$ for any $i, i' \in \cN_{\cA}$ and $i \ne i'$. The same result holds for $\cN_{\cB}$ by the same argument, which completes our proof.
	\end{proof}
	\section{Proofs of Claims and Auxiliary Lemmas in Appendix}
	%\begin{proof}[\unskip\nopunct]\renewcommand{\qedsymbol}{}
	%We provide in this section the proofs of the supporting claims and auxiliary lemmas  used in previous sections.
	%\end{proof}
	\subsection{Proofs of claims for Lemma~\ref{lm: joint max dens}}\label{sec: proof support claims lm joint max dens}
	\subsubsection{Proof of Claim~\ref{claim: 241}}\label{sec: proof claim 241}
	\begin{proof}[\unskip\nopunct]
		We first  
		% $\cE_{\varepsilon, x, y}^{1,1}$, where we 
		decompose  $\cE_{\varepsilon, x, y}^{1,1}$ into disjoint events as  
		$$
		\cE_{\varepsilon, x, y}^{1,1} = \cup_{i \in \cA} \cup_{j \in \cB} \Big\{ \max_{i'\in \cA \backslash \{i\}} X_{i'} \le x, \max_{j' \in \cB \backslash \{j\}} X_{j'} \le y , x < X_i \le  x+ \varepsilon , y < X_j \le y + \varepsilon \Big\}, 
		$$
		and we have 
		\begin{align*}
			& \PP(\cE_{\varepsilon, x, y}^{1,1} )  =  \sum_{i \in \cA, j \in \cB} \PP \Big( \max_{i' \in \cA \backslash \{i\}} X_{i'} \le x, \max_{j' \in \cB \backslash \{j\}} X_{j'} \le y , x < X_i \le  x+ \varepsilon , y < X_j \le y + \varepsilon \Big)\\
			& \quad = \!\!\! \!\!\! \sum_{i \in \cA, j \in \cB}  \int_{x}^{x + \varepsilon} \!\! \!\int_{y}^{y + \varepsilon} \!\! \PP \Big(\max_{i' \in \cA \backslash \{i\}} X_{i'} \le x, \max_{j' \in \cB \backslash \{j\}} X_{j'} \le y \Big| X_i = u, X_j  = v \Big) \phi_{i,j}(u, v) du d v,
		\end{align*}
		where the last equality holds by the assumption that $(X_i, X_j)$ are non-degenerate for any $i \in \cA$ and $j \in \cB$, and $\phi_{i,j}(u, v)$ is the joint density at $(X_i, X_j) = (u,v)$.
		
		Then, we prove Claim~\ref{claim: 241} by showing that  
		% for the mapping $(u, v) \mapsto \PP \big(\max_{i' \in \cA \backslash \{i\}} X_{i'} \le x, \max_{j' \in \cB \backslash \{j\}} X_{j'} \le y \Big| X_i = u, X_j  = v \big)$, we have
		\begin{equation}\label{eq: right cont joint max dens}
			\begin{aligned}
				& \lim_{u \downarrow x, v \downarrow y} \PP \Big(\max_{i' \in \cA \backslash \{i\}} X_{i'} \le x, \max_{j' \in \cB \backslash \{j\}} X_{j'} \le y \Big| X_i  = u, X_j = v \Big) \\
				&\quad = \PP \Big(\max_{i' \in \cA \backslash \{i\}} X_{i'} \le x, \max_{j' \in \cB \backslash \{j\}} X_{j'} \le y \Big| X_{i}  = x, X_j = y \Big),
			\end{aligned}
		\end{equation}
		for a.e. $(x, y) \in \RR^2$ and every $i \in \cA, j \in \cB$. Specifically, for every $i \in \cA$ and $j \in \cB$, we  construct a set $\cD_{i,j} \subseteq \RR^2$ such that $\cD_{i,j} $ has a Lebesgue measure of 0 in $\RR^2$, and for any $(x,y) \in \RR^2 \backslash \cD_{i,j}$, \eqref{eq: right cont joint max dens} holds. 
		
		To construct the set $\cD_{i,j}$, for $i' \in [p] \backslash \{i, j\}$, we first define the residual from orthogonal projection of $X_{i'}$ onto $(X_i, X_j)$ as 
		$$
		U_{i'} = X_{i'} - \mu_{i'} - a_{i' i} (X_i - \mu_i) - a_{i' j} (X_j - \mu_j),
		$$
		where $ (a_{i' i}, a_{i' j}) = \Sigma_{i', i j} \Sigma_{ij}^{-1}$
		with $\Sigma_{ij} = \EE [(X_i - \mu_{i}, X_j - \mu_{j})^{\top} (X_i - \mu_{i}, X_j - \mu_{j})]$ and $\Sigma_{i', i j} = \EE [(X_{i'}- \mu_{i'})(X_i - \mu_{i}, X_j - \mu_{j})]$. We have that $U_{i'}$ is independent of $(X_i, X_j)$.
		Then, we consider the $U_{i'}$'s that are degenerate, i.e., $U_{i'} \equiv 0$, and we define $\cD_{i,j}$ as
		$$
		\cD_{i,j} = \displaystyle\bigcup_{i' \notin \{i,j\}: U_{i'} \equiv 0}\!\!\big\{\! (x,y) \!\in \RR^2: ( \II_{\cA}\{i'\} - a_{i'i})\cdot x + ( \II_{\cB}\{i'\} - a_{i' j} )\cdot y = \mu_{i'} - a_{i'i} \mu_i - a_{i'j} \mu_{j} \big\},
		$$
		where $\cD_{i,j} = \emptyset$ if  $U_{i'} \not\equiv 0$ for any $i' \notin \{i,j\}$. 
		We first show that $\cD_{i,j} $ is a union of lines or empty sets and has a Lebesgue measure of 0 in $\RR^2$. To do so, it suffices to show that $( \II_{\cA}\{i'\} - a_{i'i}, \II_{\cB}\{i'\} - a_{i'j} ) \ne \mathbf{0}$ for any $i' \in [p] \backslash \{i,j\}$ such that $ U_{i'} \equiv 0$. When $U_{i'} \equiv 0$, we have 
		$$
		X_{i'} - \mu_{i'} \equiv a_{i' i} (X_i - \mu_i) + a_{i' j} (X_j - \mu_j).
		$$
		For $(a_{i' i}, a_{i' j})$, since $\sigma_{i'}^2 > 0$, at least one of $a_{i' i}$ and  $a_{i' j}$ is non-zero. When $a_{i' i} a_{i' j} \ne 0$, we have $( \II_{\cA}\{i'\} - a_{i'i}, \II_{\cB}\{i'\} - a_{i'j} ) \ne \mathbf{0}$. When $a_{i' i} a_{i' j} = 0$, since ${\rm Var}(X_{i'} - X_i), {\rm Var}(X_{i'} - X_{j}) > 0$, we have that either $a_{i' i} \ne 1$ or $ a_{i' j} \ne 1$, and $( \II_{\cA}\{i'\} - a_{i'i}, \II_{\cB}\{i'\} - a_{i'j} ) \ne \mathbf{0}$. Hence we conclude that $\cD_{i,j}$ is a union of lines or empty sets and has a Lebesgue measure of 0 in $\RR^2$. 
		
		Next, we show that \eqref{eq: right cont joint max dens} holds for any $(x, y) \in \RR^2 \backslash \cD_{i,j}$. Define the sets $J_1 = \{i' \in [p] \backslash \{i,j\}: a_{i' j} \le 0\}$, $J_1^c = [p] \backslash \{i,j\} \backslash J_1$,  $J_2 = \{i' \in [p] \backslash \{i,j\}: a_{i' i} \le 0\}$, and $J_2^c =  [p] \backslash \{i,j\} \backslash J_2 $. Then we have that
		\begin{align*}
			& \lim_{v \downarrow y }\PP \Big(\max_{i' \in \cA \backslash \{i\}} X_{i'} \le x, \max_{j' \in \cB \backslash \{j\}} X_{j'} \le y \Big| X_i = u, X_j  = v \Big)  \\
			& = \lim_{v \downarrow y } \PP \left\{ \begin{array}{ll}
				U_{i'} \le x -  \mu_{i'} - a_{i' i} (u - \mu_i) - a_{i' j} (v - \mu_{j}), &\,\, \forall i' \in \cA\backslash \{i\}\\
				U_{j'} \le y -  \mu_{j'} - a_{j' i} (u - \mu_i) - a_{j' j} (v - \mu_{j}) , & \,\, \forall j' \in \cB \backslash \{j\}
			\end{array}\Bigg| X_i  = u, X_j  = v \right\}\\
			& = \lim_{v \downarrow y } \PP \left\{ \begin{array}{ll}
				U_{i'} \le x -  \mu_{i'} - a_{i' i} (u - \mu_i) - a_{i' j} (v - \mu_{j}), &\,\, \forall i' \in \cA\backslash \{i\}\\
				U_{j'} \le y -  \mu_{j'} - a_{j' i} (u - \mu_i) - a_{j' j} (v - \mu_{j}) , & \,\, \forall j' \in \cB \backslash \{j\}
			\end{array} \right\}\\
			& = \PP \left\{ \begin{array}{ll}
				U_{i'} \le x -  \mu_{i'} - a_{i' i} (u - \mu_i) - a_{i' j} (y - \mu_{j}), &\,\, \forall i' \in (\cA\backslash \{i\} ) \cap J_1\\
				U_{j'} \le y -  \mu_{j'} - a_{j' i} (u - \mu_i) - a_{j' j} (y - \mu_{j}) , & \,\, \forall j' \in (\cB \backslash \{j\})  \cap J_1 \\
				U_{i'} < x -  \mu_{i'} - a_{i' i} (u - \mu_i) - a_{i' j} (y - \mu_{j}), &\,\, \forall i' \in (\cA\backslash \{i\})  \cap J_1^c \\
				U_{j'} < y -  \mu_{j'} - a_{j' i} (u - \mu_i) - a_{j' j} (y - \mu_{j}) , & \,\, \forall j' \in (\cB \backslash \{j\})  \cap J_1^c
			\end{array} \right\}.
		\end{align*}
		We note that each $U_{i'}$ either degenerates to 0 or has a non-degenerate Gaussian distribution, and the mapping is discontinuous only for degenerate $U_{i'}$'s at $0$. When $\varepsilon$ is sufficiently small, since $(x, y ) \in \RR^2 \backslash \cD_{i,j}$, we have 
		$$
		\II_{\cA}\{i'\} \cdot x + \II_{\cB}\{i'\} \cdot y -  \mu_{i'} - a_{i' i} (u - \mu_i) - a_{i' j} (y - \mu_{j}) \ne 0, \quad \forall i' \in [p] \backslash\{i,j\}.
		$$
		Hence by continuity, we have 
		\begin{align}
			& \lim_{v \downarrow y }\PP \Big(\max_{i' \in \cA \backslash \{i\}} X_{i'} \le x, \max_{j' \in \cB \backslash \{j\}} X_{j'} \le y \Big| X_i  = u, X_j = v \Big) \notag \\ 
			&\quad =  \PP \left\{ \begin{array}{ll}
				U_{i'} \le x -  \mu_{i'} - a_{i' i} (u - \mu_i) - a_{i' j} (y - \mu_{j}), &\,\, \forall i' \in \cA\backslash \{i\}\\
				U_{j'} \le y -  \mu_{j'} - a_{j' i} (u - \mu_i) - a_{j' j} (y - \mu_{j}) , & \,\, \forall j' \in \cB \backslash \{j\}
			\end{array} \right\} \notag\\
			& \quad = \PP \Big(\max_{i' \in \cA \backslash \{i\}} X_{i'} \le x, \max_{j' \in \cB \backslash \{j\}} X_{j'} \le y \Big| X_i  = u, X_j = y \Big).  \label{eq: claim C1 y lim}
		\end{align}
		By similar arguments, we also have that
		\begin{align}
			& \lim_{u \downarrow x }\PP \Big(\max_{i' \in \cA \backslash \{i\}} X_{i'} \le x, \max_{j' \in \cB \backslash \{j\}} X_{j'} \le y \Big| X_i  = u, X_j = y \Big) \notag \\
			&\quad = \lim_{u \downarrow x } \PP \left\{ \begin{array}{ll}
				U_{i'} \le x -  \mu_{i'} - a_{i' i} (u - \mu_i) - a_{i' j} (y - \mu_{j}), &\,\, \forall i' \in \cA\backslash \{i\}\\
				U_{j'} \le y -  \mu_{j'} - a_{j' i} (u - \mu_i) - a_{j' j} (y - \mu_{j}) , & \,\, \forall j' \in \cB \backslash \{j\}
			\end{array} \right\} \notag\\
			&\quad = \PP \left\{ \begin{array}{ll}
				U_{i'} \le x -  \mu_{i'} - a_{i' i} (x- \mu_i) - a_{i' j} (y - \mu_{j}), &\,\, \forall i' \in (\cA\backslash \{i\} ) \cap J_2\\
				U_{j'} \le y -  \mu_{j'} - a_{j' i} (x - \mu_i) - a_{j' j} (y - \mu_{j}) , & \,\, \forall j' \in (\cB \backslash \{j\})  \cap J_2 \\
				U_{i'} < x -  \mu_{i'} - a_{i' i} (x - \mu_i) - a_{i' j} (y - \mu_{j}), &\,\, \forall i' \in (\cA\backslash \{i\})  \cap J_2^c \\
				U_{j'} < y -  \mu_{j'} - a_{j' i} (x - \mu_i) - a_{j' j} (y - \mu_{j}) , & \,\, \forall j' \in (\cB \backslash \{j\})  \cap J_2^c
			\end{array} \right\} \notag \\
			&\quad = \PP \left\{ \begin{array}{ll}
				U_{i'} \le x -  \mu_{i'} - a_{i' i} (x - \mu_i) - a_{i' j} (y - \mu_{j}), &\,\, \forall i' \in \cA\backslash \{i\}\\
				U_{j'} \le y -  \mu_{j'} - a_{j' i} (x - \mu_i) - a_{j' j} (y - \mu_{j}) , & \,\, \forall j' \in \cB \backslash \{j\}
			\end{array} \right\} \notag \\
			&\quad = \PP \Big(\max_{i' \in \cA \backslash \{i\}} X_{i'} \le x, \max_{j' \in \cB \backslash \{j\}} X_{j'} \le y \Big| X_i  = x, X_j = y \Big). \label{eq: claim C1 x lim}
		\end{align}
		Combining \eqref{eq: claim C1 y lim} and \eqref{eq: claim C1 x lim}, we have that \eqref{eq: right cont joint max dens} holds for any $(x, y) \in \RR^2 \backslash \cD_{i,j}$, which further gives that
		\begin{align*}
			\lim_{\varepsilon \downarrow 0} \varepsilon^{-2}   \PP(\cE_{\varepsilon, x, y}^{1,1} ) = \sum_{i,j}\PP \Big(\max_{i' \in \cA \backslash \{i\}} X_{i'} \le x, \max_{j' \in \cB \backslash \{j\}} X_{j'} \le y \Big| X_i  = x, X_j = y \Big) \phi_{i,j}(x, y),
		\end{align*}
		for a.e. $(x, y) \in \RR^2 $. This completes the proof.
	\end{proof}
	\subsubsection{Proof of Claim~\ref{claim: 242}}\label{sec: proof claim 242}
	\begin{proof}[\unskip\nopunct]
		% Now we move on to show that for $\cE_{\varepsilon, x, y}^{q,k} $ with either $q > 1$ or $k > 1$, $\PP(\cE_{\varepsilon, x, y}^{q,k}) = o(\varepsilon^2)$ when $\varepsilon \downarrow 0$. 
		First, observe that for $\cE_{\varepsilon, x, y}^{q,k} $ with either $q > 1$ or $k > 1$, $\PP(\cE_{\varepsilon, x, y}^{q,k})$ is bounded by a sum of terms of the form
		$$
		\PP\left(\begin{array}{l}
  x < X_{i}\le x + \varepsilon \\
			x < X_{i'} \le x + \varepsilon \\
			y < X_j\le y + \varepsilon 
		\end{array}\right) ,\quad  i, i' \in \cA,\quad i \ne i',  \quad j \in \cB , 
		$$
		$$
		\text{ or }   \PP\left(\begin{array}{l}
			x < X_i \le x + \varepsilon\\
   y < X_{j} \le y + \varepsilon \\
			y <  X_{j'}\le y + \varepsilon 
		\end{array}\right), \quad i \in \cA , \quad j, j' \in \cB, \quad j \ne j'.
		$$
		We consider $  \PP\left(\begin{array}{l}
  x < X_{i}, X_{i'}\le x + \varepsilon \\
			y < X_j\le y + \varepsilon 
		\end{array}\right)$ for $i, i' \in \cA$, $i \ne i'$ and $j \in \cB$ first, and we discuss the scenarios when $(X_i, X_{i'}, X_{j})^{\top}$ is degenerate and non-degenerate respectively.
		
		When $(X_i, X_{i'}, X_{j})^{\top}$ is non-degenerate, we have that  
		\begin{equation}\label{eq: dens res event bound 1}
			\PP\left(\begin{array}{l}
   x < X_{i} \le x + \varepsilon \\
				x < X_{i'} \le x + \varepsilon \\
				y < X_j\le y + \varepsilon 
			\end{array}\right) = O(\varepsilon^3) = o(\varepsilon^2), \quad  i, i' \in \cA,\quad i \ne i',  \quad j \in \cB, 
		\end{equation}
		as $\varepsilon \downarrow 0$ for any $(x,y) \in \RR^2$. 
		
		When $(X_i, X_{i'}, X_{j})^{\top}$  is degenerate, we have that there exists a non-zero vector $(c_1,c_2,c_3, c_4)$ such that $c_1 \cdot X_i + c_2 \cdot X_{i'} + c_3 \cdot X_j \equiv c_4$, and we define the two planes 
		$$
		\cP^1 = \{(v_1,v_2,v_3) \in \RR^3: v_1 = v_2\}, \, \cP^2 = \{(v_1,v_2,v_3) \in \RR^3: c_1 \cdot v_1 + c_2 \cdot v_2 + c_3 \cdot v_3 = c_4\},
		$$
		and their  intersection is 
		$$
		\cP^1 \cap \cP^2  = \{(v_1,v_2,v_3) \in \RR^3: (c_1 + c_2) v_1 + c_3 v_3= (c_1 + c_2) v_2 + c_3 v_3 = c_4\}.
		$$
		Then, we have that $(x,x,y) \in \cP^1$ and $(X_i, X_{i'}, X_{j})^{\top} \in \cP^2$ with probability 1, and for any $(x,y) \in \RR^2$  such that $ (c_1 + c_2) x + c_3 y \ne c_4$, the point $(x,x,y)$ does not fall onto the plane $\cP^2$.  Then, when $\varepsilon$ is sufficiently small, we have that 
		$$
		\{(v_1,v_2,v_3) \in \RR^3: x < v_1 \le x+\varepsilon,x <  v_2 \le x+\varepsilon, y < v_3 \le y+\varepsilon\} \cap  \cP^2= \emptyset,
		$$
		which implies 
		\begin{equation}\label{eq: dens res event bound 2}
			\PP\left(\begin{array}{l}
   x < X_i \le x + \varepsilon\\
				x <  X_{i'} \le x + \varepsilon \\
				y < X_j \le y + \varepsilon
			\end{array}\right) = 0, \quad  i, i' \in \cA,\quad i \ne i',  \quad j \in \cB.
		\end{equation}
		Hence it suffices to show that $(c_1+c_2, c_3) \ne \mathbf{0}$ so that $ (c_1 + c_2) x + c_3 y = c_4$ occurs with a Lebesgue measure of 0 on $\RR^2$. When $c_3 \ne 0$, we have $(c_1+c_2, c_3) \ne \mathbf{0}$.    
		%  When $c_3 \ne 0$, the intersection of the two planes $\cP^1$ and $\cP^2$ is the line: 
		%  $$
		%  \cP^1 \cap \cP^2  = \{(v_1,v_2,v_3) \in \RR^3: (c_1 + c_2) v_1 + c_3 v_3= (c_1 + c_2) v_2 + c_3 v_3 = c_4\},
		%  $$
		% which has measure 0 in $\RR^3$. 
		When $c_3 = 0$, since $(c_1,c_2,c_3,c_4)$ is non-zero, $(c_1,c_2)$ is non-zero, because otherwise, $c_4 = c_1 = c_2 = c_3 = 0$. If $c_1 + c_2 = 0$, then we have that ${\rm Var}(X_i - X_{i'}) = 0$, which contradicts our assumption that ${\rm Var} (X_i - X_j) > 0$ for any $i \ne j$. Hence we have that $c_1 + c_2 \ne 0$, and $(c_1+c_2, c_3) \ne \mathbf{0}$.
		
		Therefore, combining \eqref{eq: dens res event bound 1} and \eqref{eq: dens res event bound 2}, for a.e. $(x,y) \in \RR^2$,  we have that 
		$$
		\PP\left(\begin{array}{l}
			x < X_i \le x + \varepsilon \\
   x < X_{i'} \le x + \varepsilon \\
			y < X_j \le y + \varepsilon
		\end{array}\right) = o(\epsilon^2), \quad \text{for any } i, i' \in \cA, \, i \ne i' \text{ and } j \in \cB.
		$$
		%  and the intersection of $\cP^1$ and $\cP^2$ is the line: 
		%   $$
		%   \cP^1 \cap \cP^2  = \{(v_1,v_2,v_3) \in \RR^3: (c_1 + c_2) v_1 = (c_1 + c_2) v_2  = c_4\}.
		%   $$
		% Then when $ (c_1 + c_2) x + c_3 y \ne c_4$, we know that $(x,x,y)$ does not fall onto the plane $\cP^2$, and when $\varepsilon$ is sufficiently small, we have that $\{(v_1,v_2,v_3) \in \R^3: x < v_1,v_2 \le x+\varepsilon, y < v_3 \le y+\varepsilon\} \cap  \cP^2= \emptyset$ and in turn 
		% $$
		% \PP\left(\begin{array}{l}
			%       x < X_i, X_{i'} \le x + \varepsilon \\
			%        y < X_j \le y + \varepsilon
			%   \end{array}\right) = 0, \quad \text{for any } i, i' \in \cA, \, i \ne i' \text{ and } j \in \cB.
		%   $$
		
		Following similar arguments, we have that  $ \PP\left(\begin{array}{l}
			x < X_i \le x + \varepsilon\\
   y < X_{j} \le y + \varepsilon \\
			y < X_{j'}\le y + \varepsilon 
		\end{array}\right)  = o(\varepsilon^2)$ for any $i \in \cA$, $j,j' \in \cB$, $j \ne j'$. Hence 
		% except for at most a finite number of lines with measure 0,
		we have that $\PP(\cE_{\varepsilon, x, y}^{q,k}) = o(\varepsilon^2)$ when $\varepsilon \downarrow 0$ for a.e. $(x,y) \in \RR^2$ when $q > 1$ or $k > 1$, which completes the proof. 
	\end{proof}

	\subsubsection{Proof of Claim~\ref{claim: 243}}\label{sec: proof claim 243}
	\begin{proof}[\unskip\nopunct]
		We first decompose $\cE_{\varepsilon, x, y}^{1}$ into disjoint events,  
		$$
		\cE_{\varepsilon, x, y}^{1} = \bigcup_{i \in \cA} \Big\{ \max_{i' \in \cA \backslash \{i\}} X_{i'} \le x, \max_{j \in \cB } X_j \le y, x < X_i\le  x+ \varepsilon \Big\}, 
		$$
		and we have
		\begin{equation}\label{eq: E1 int}
			\PP(\cE_{\varepsilon, x, y}^{1} )  =  \sum_{i \in \cA}  \int_{x}^{x + \varepsilon}\PP \Big(\max_{i' \in \cA \backslash \{i\}} X_{i'} \le x, \max_{j \in \cB} X_j \le y \Big| X_i  = u \Big) \phi_{i}(u) du.
		\end{equation}
		We  show that the mapping $u \mapsto \PP \Big(\max_{i' \in \cA \backslash \{i\}} X_{i'} \le x, \max_{j \in \cB} X_j \le y \Big| X_i  = u \Big) $ is right continuous at $x$ for all $i \in \cA$ and a.e. $x \in \RR$. Specifically, given $i \in \cA$, for every $i' \in [p] \backslash \{i\}$, define the orthogonal residual  $V_{i'} = X_{i'} - \mu_{i'} - \sigma_{i}^{-2} \sigma_{i i'} (X_i  - \mu_i) $, where $\sigma_{i i'} = \EE [(X_i- \mu_i)(X_{i'} - \mu_{i'})]$. Then, define the set 
		$$
		\cD_i = \cup_{i' \in [p] \backslash \{i\}: V_{i'} \equiv 0} \big\{ x \in \RR: (\II_{\cA} (i') - \sigma_i^{-2} \sigma_{i i'}) \cdot  x -\mu_{i'} + \sigma_{i}^{-2} \sigma_{i i'} \cdot \mu_i + \II_{\cB}(i') \cdot y = 0\big\},
		$$
		where $\cD_i = \emptyset$ if there is no $i' \in [p] \backslash \{i\}$ such that $V_{i'} \equiv 0$. We  show that $\cD_i$ has a Lebesgue measure of 0 on $\RR$, and for every $x \in \RR^2 \backslash \cD_i$, we have
		\begin{equation}\label{eq: joint dens par right cont}
			\begin{aligned}
				& \lim_{u \downarrow x }\PP \Big(\max_{i' \in \cA 
					\backslash \{i\}} X_{i'} \le x, \max_{j \in \cB } X_j  \le y \Big| X_i  = u \Big)  \\
				& \quad = \PP \Big(\max_{i'\in \cA \backslash \{i\}} X_{i'} \le x, \max_{j\in \cB } X_j \le y \Big| X_i  = x \Big).
			\end{aligned}
		\end{equation}
		
		We first show that $\cD_i$ is a union of points or empty sets. Indeed, for $i' \in [p] \backslash \{i\}$, if $V_{i'} \equiv 0$, i.e., $(X_{i'}, X_i)$ is degenerate, we have $\sigma_{i}^{-2} \sigma_{i i'} \ne 0$. Meanwhile, for $i' \in \cA$ such that $V_{i'} \equiv 0$, if $\sigma_{i}^{-2} \sigma_{i i'} = 1$, we have $\mu_{i'} \ne \mu_i$, since otherwise, we have $X_{i'} \equiv X_i$, which contradicts our assumption that $X_{i'} \not\equiv X_i$ for any $i,i' \in \cA$ and $i \ne i'$. Hence, we have that $\cD_i$ is a properly defined union of points or empty sets with a Lebesgue measure of 0 on $\RR$.
		
		Next, we show that \eqref{eq: joint dens par right cont} holds for any $x \in \RR \backslash \cD_i$. Define the sets $J = \{i'  \in [p] \backslash \{i\}: \sigma_{i i' } \le 0\}$ and $J^c = [p] \backslash \{i\} \backslash J$. Then for any $x \in \RR \backslash \cD_i$, we have 
		\begin{align*}
			& \lim_{u \downarrow x }\PP \Big(\max_{i' \in \cA \backslash \{i\}} X_{i'} \le x, \max_{j \in \cB } X_j  \le y \Big| X_i  = u \Big)  \\
			& = \lim_{u \downarrow x } \PP \left\{ \begin{array}{ll}
				V_{i'} \le x -  \mu_{i'} - \sigma_{i}^{-2} \sigma_{i i'} (u - \mu_i) , &\,\, \forall i' \in \cA\backslash \{i\}\\
				V_{j} \le y - \mu_{j} - \sigma_{i}^{-2} \sigma_{i j} (u - \mu_i) , & \,\, \forall j \in \cB
			\end{array} \right\}\\
			& = \PP \left\{ \begin{array}{ll}
				V_{i'} \le x -  \mu_{i'} - \sigma_{i}^{-2} \sigma_{i i'} (x - \mu_i), &\,\, \forall i' \in (\cA\backslash \{i\} ) \cap J\\
				V_{j} \le y -  \mu_{j} - \sigma_{i}^{-2} \sigma_{i j} (x - \mu_i), & \,\, \forall j \in \cB   \cap J \\
				% V_{\tilde{S}''} \le z -  \mu_{\tilde{S}''} - \Sigma_{S}^{-1} \Sigma_{S \tilde{S}''} (x - \mu_i), & \,\, \forall \tilde{S}'' \in \cC  \cap J \\
				V_{i'} < x -  \mu_{i'} - \sigma_{i}^{-2} \sigma_{i i'} (x - \mu_i), &\,\, \forall i' \in (\cA\backslash \{i\})  \cap J^c \\
				V_{j} < y -  \mu_{j} - \sigma_{i}^{-2} \sigma_{i j} (x - \mu_i) , & \,\, \forall j \in \cB \cap J^c
				% V_{\tilde{S}''} < z -  \mu_{\tilde{S}''} - \Sigma_{S}^{-1} \Sigma_{S \tilde{S}''} (x - \mu_i) , & \,\, \forall \tilde{S}'' \in \cC \cap J^c
			\end{array} \right\}\\
			& = \PP \left\{ \begin{array}{ll}
				V_{i'} \le x -  \mu_{i'} - \sigma_{i}^{-2} \sigma_{i i'} (x - \mu_i), &\,\, \forall i' \in \cA\backslash \{i\}\\
				V_{j} \le y -  \mu_{j} - \sigma_{i}^{-2} \sigma_{ij} (x - \mu_i), & \,\, \forall j \in \cB
				% V_{\tilde{S}''} \le z -  \mu_{\tilde{S}''} - \Sigma_{S}^{-1} \Sigma_{S \tilde{S}''} (x - \mu_i), & \,\, \forall \tilde{S}'' \in \cC
			\end{array} \right\} \\
			& = \PP \Big(\max_{i'\in \cA \backslash \{i\}} X_{i'} \le x, \max_{j\in \cB } X_j \le y \Big| X_i  = x \Big), 
		\end{align*}
		and \eqref{eq: joint dens par right cont} holds.
		
		Combining \eqref{eq: E1 int} and \eqref{eq: joint dens par right cont}, for a.e. $x \in \RR$, we have
		\begin{align*}
			\lim_{\varepsilon \downarrow 0} \varepsilon^{-1}   \PP(\cE_{\varepsilon, x, y}^{1} ) = \sum_{i \in \cA} \PP \Big(\max_{i' \in \cA \backslash \{i\}} X_{i'} \le x, \max_{j \in \cB } X_j \le y \Big| X_i  = x \Big) \phi_{i}(x),
		\end{align*}
		which completes the proof.
	\end{proof}
	\subsubsection{Proof of Claim~\ref{claim: 244}}\label{sec: proof claim 244}
	\begin{proof}[\unskip\nopunct]
		We show that for $k \ge 2$, $\PP(\cE_{\varepsilon, x, y}^{k}) = o(\varepsilon)$ as $\varepsilon \downarrow 0$ for a.e. $x \in \RR$. When $|\cA| = 1$, the results hold trivially. When $|\cA| > 1$, we have that $\PP(\cE_{\varepsilon, x, y}^{k})$ is upper bounded by a sum of terms of the form $\PP(x < X_i \le x + \varepsilon, x < X_{i'} \le x + \varepsilon )$, where $i, i' \in \cA$ and $i \ne i'$.  When $(X_i, X_{i'})$ is non-degenerate, we have $\PP(x < X_i \le x + \varepsilon,x < X_{i'} \le x + \varepsilon ) = O(\varepsilon^2) = o(\varepsilon)$ for all $x \in \RR$, and Claim~\ref{claim: 244} holds. When $(X_i, X_{i'})$ is degenerate, there exists a non-zero vector $(c_1, c_2, c_3)$ such that $c_1 \cdot X_i + c_2 \cdot X_{i'} = c_3$. Note that $(c_1, c_2) \ne \mathbf{0}$, and $(c_1 + c_2)$ and $c_3$ cannot both be 0, since otherwise, we have  $X_i \equiv X_{i'}$, which contradicts our assumption. Hence the set $\cC := \{x \in \RR: (c_1 + c_2) \cdot x = c_3\}$ is either a single point or an empty set with a Lebesgue measure of 0 on $\RR$. When $x \in \RR \backslash \cC$, and $\varepsilon$ is sufficiently small, we have
		$$
		\{(v_1, v_2) \in \RR^2: x < v_1 \le x + \varepsilon, x < v_2 \le x + \varepsilon\} \cap \{(v_1, v_2) \in \RR^2: c_1 \cdot v_1 + c_2 \cdot v_2 = c_3\}= \emptyset,
		$$
		and $\PP(x < X_i \le x + \varepsilon, x <X_{i'} \le x + \varepsilon ) = 0$. Hence, Claim~\ref{claim: 244} follows.
	\end{proof}

	\subsection{Proof of Claim~\ref{claim: dup term thm 2.5} for Lemma~\ref{claim: 251}}\label{sec: proof claim dup term thm 2.5}
	\begin{proof}[\unskip\nopunct]
		We begin with $i, i'\in \cA$ and $i \ne i'$, and we  show that \eqref{eq: dup term A} holds for a.e. $u \in \RR$, i.e.,
		\begin{equation}\label{eq: id var check}
			\frac{  V_{i}  +\mu_{i}  + \sigma_{j}^{-2} \sigma_{ij} (u - \mu_{j}) }{1 - \sigma_{j}^{-2} \sigma_{ij}} \not\equiv \frac{V_{i'}  +\mu_{i'}  + \sigma_{j}^{-2} \sigma_{i' j} (u - \mu_{j}) }{1 - \sigma_{j}^{-2} \sigma_{i' j} }, \quad \text{ $\forall i, i'\in \cA$ and $i \ne i'$}.
		\end{equation}
		% We prove this by considering .
		Assume that the two random variables on the left-hand side (LHS) and RHS of \eqref{eq: id var check} are identical, i.e., $\tilde{V}_i \equiv \tilde{V}_{i'}$. Then, since $\EE V_{i} = \EE V_{i'} = 0$, we have that 
		\begin{align*}
			&(1 - \sigma_{j}^{-2} \sigma_{ij} )^{-1}V_{i}  =  (1 - \sigma_{j}^{-2} \sigma_{ij} )^{-1} \left(X_{i} - \mu_{i} - \sigma_{j}^{-1} \sigma_{ij} (X_{j}  - \mu_{j})\right)\\
			&  \equiv (1 - \sigma_{j}^{-2} \sigma_{i'j} )^{-1} \left(X_{i'} - \mu_{i'} - \sigma_{j}^{-1} \sigma_{i' j} (X_{j}  - \mu_{j})\right) =(1 - \sigma_{j}^{-2} \sigma_{i' j} )^{-1}V_{i'}.
		\end{align*}
		Then, if 
		$$
		(1 - \sigma_{j}^{-2} \sigma_{ij} )^{-1} \sigma_{j}^{-2} \sigma_{ij} = (1 - \sigma_{j}^{-2} \sigma_{i' j} )^{-1} \sigma_{j}^{-2} \sigma_{i' j} ,
		$$
		by some algebra, we have that $(1 - \sigma_{j}^{-2} \sigma_{ij} )^{-1} = (1 - \sigma_{j}^{-2} \sigma_{i'j} )^{-1} $, and hence $X_{i} - \mu_{i} \equiv X_{i'} - \mu_{i'} $, which contradicts the assumption that ${\rm Var} (X_{i} -X_{i'}  ) > 0$. Hence, 
		$$
		(1 - \sigma_{j}^{-2} \sigma_{ij} )^{-1} \sigma_{j}^{-2} \sigma_{ij} \ne (1 - \sigma_{j}^{-2} \sigma_{i' j} )^{-1} \sigma_{j}^{-2} \sigma_{i' j} ,
		$$
		which indicates that $\tilde{V}_i \equiv \tilde{V}_{i'}$ only holds at
		$$
		u = \left(\frac{\sigma_{j}^{-2} \sigma_{ij}}{1 - \sigma_{j}^{-2} \sigma_{ij}} - \frac{\sigma_{j}^{-2} \sigma_{i'j}}{1 - \sigma_{j}^{-2} \sigma_{i' j}}\right)^{-1} \left(\frac{\mu_{i'} } {1 - \sigma_{j}^{-2} \sigma_{i' j}} -\frac{\mu_{i} } {1 - \sigma_{j}^{-2} \sigma_{ij}} \right)  + \mu_{j}.
		$$
		Therefore, \eqref{eq: id var check} and equivalently \eqref{eq: dup term A} holds for a.e. $u \in \RR$.
		
		Secondly, for $i \in \cA$ and $i' \in \cS_j^+\backslash \cC_j \backslash \cA$, we show that \eqref{eq: dup term cross} holds for a.e. $u \in \RR$. In particular, if $\tilde{V}_i \equiv \tilde{V}_{i'}$ for $i \in \cA$ and $i' \in \cS_j^+\backslash \cC_j \backslash \cA$, we have
		$$
		\frac{  V_{i}  +\mu_{i}  + \sigma_{j}^{-2} \sigma_{ ij } (u - \mu_{j}) }{1 - \sigma_{j}^{-2} \sigma_{ij}} \equiv \frac{V_{i'}  + \mu_{i'}  - \sigma_{j}^{-2} \sigma_{i' j}  \mu_{j} }{1 - \sigma_{j}^{-2} \sigma_{i' j} } - u.
		$$
		This implies that 
		$$
		u = \frac{1 - \sigma_{j}^{-2} \sigma_{ij}}{1 - \sigma_{j}^{-2} \sigma_{i' j} } (\mu_{i'}  - \sigma_{j}^{-2} \sigma_{i' j}  \mu_{j} ) - (\mu_{i}  - \sigma_{j}^{-2} \sigma_{ij}  \mu_{j} ),
		$$
		which is a single point with a Lebesgue measure of 0 on $\RR$. Hence, \eqref{eq: dup term cross} holds for a.e. $u \in \RR$, which completes the proof.
	\end{proof}
	\subsection{ Proof of Lemma~\ref{lm: clt third dev bd}}\label{sec: proof lm clt third dev bd}
	\begin{proof}[\unskip\nopunct]
		For $j,k \in [p]$, we let
		\begin{align*}
			&\delta_{jk} = \II\{j = k\}, \quad \pi^{\cA}_j (x) = \frac{e^{\beta( x_j + v_j)}}{\sum_{j' \in \cA}e^{\beta( x_{j'} + v_{j'})}} , \quad \pi^{\cB}_j (x) = \frac{e^{\beta( x_j + v_j)}}{\sum_{j' \in \cB}e^{\beta( x_{j'} + v_{j'})}} .
			%   & \pi_{j}(x) = \left\{ \begin{array}{ll}
				%     \pi^{\cA}_j (x)  &,  \quad j \in \cA, \\
				%     -\pi^{\cB}_j (x)  &,  \quad j  \in \cB.
				% \end{array}\right.;\\
			% & w_{jk}(x) = \left\{ \begin{array}{ll}
				%      \delta_{jk}\pi^{\cA}_j (x) - \pi^{\cA}_j (x) \pi^{\cA}_k (x)  &,  \quad j,k \in \cA, \\
				%     - \delta_{jk}\pi^{\cB}_j (x) + \pi^{\cB}_j (x) \pi^{\cB}_k (x)  &,  \quad j,k \in \cB, \\
				%      0 &, \quad \text{otherwise}.
				% \end{array}\right..
			% & q_{jk\ell}^{\cA}(x) = \delta_{j k} \delta_{j \ell} \pi^{\cA}_j (x)- \delta_{jk} \pi^{\cA}_j (x) \pi^{\cA}_{\ell} (x) - \pi^{\cA}_j (x) \pi^{\cA}_k (x) (\delta_{j\ell} + \delta_{k \ell}) + 2  \pi^{\cA}_j (x)  \pi^{\cA}_k (x)  \pi^{\cA}_{\ell} (x);\\
			% & q_{jk\ell}^{\cB}(x) = \delta_{j k} \delta_{j \ell} \pi^{\cB}_j (x)- \delta_{jk} \pi^{\cB}_j (x) \pi^{\cB}_{\ell} (x) - \pi^{\cB}_j (x) \pi^{\cB}_k (x) (\delta_{j\ell} + \delta_{k \ell}) + 2  \pi^{\cB}_j (x)  \pi^{\cB}_k (x)  \pi^{\cB}_{\ell} (x);\\
			% & q_{jk\ell}(x) = \left\{ \begin{array}{ll}
				%     q_{jk\ell}^{\cA}(x)  &,  \quad j,k,\ell \in \cA, \\
				%   - q_{jk\ell}^{\cB}(x)  &,  \quad j,k,\ell \in \cB, \\
				%      0 &, \quad \text{otherwise}.
				% \end{array}\right..
		\end{align*}
		We  show by induction that for any $d \ge 1$,  $h = (h_1, \ldots, h_d)^{\top} \in [p]^d$, the partial derivative $\partial_{h_1} \ldots \partial_{h_d} \varphi^v_s(x)$ is the summation of $K_d$ terms of the  form 
		\begin{equation}\label{eq: varphi parital forms}
			\begin{aligned}
				\varphi^{(h)}_k(x)  = & (-1)^{q^k} \beta^{d_1^k} \gamma_s^{(d_2^k)}\big(\zeta_{\beta}(x)\big) \prod_{j \in \mathcal{O}^k_{\cA}}\big\{ \II_{\cA}(h_j )\pi_{h_j}^{\cA}(x) \big\} \\
				& \times \prod_{\ell \in \mathcal{O}^k_{\cB}} \big\{ \II_{\cB}(h_{\ell} )\pi_{h_{\ell}}^{\cB}(x)\big\} \prod_{r \in  \bar\cO^k} \delta_{h_{r} h_{\sigma_k(r)}} ,
			\end{aligned}
		\end{equation}
		where $K_d \le 2^d d!$, $k=1, \ldots, K_d$, $q^k = 1, 2$, $0 \le d_1^k \le d-1$ and $1 \le d_2^k  \le d$ are integers such that $d_1^k + d_2^k = d$, $\cO^k \subseteq [d]$ is a non-empty subset of $[d]$, $\bar{\cO}^k = [d] \backslash \cO^k$, $\cO^k = \cO^k_{\cA} \cup \cO^k_{\cB}$ is a partition of $\cO^k$, and $\sigma_k(\cdot)$ is a mapping from $\bar\cO^k$ to $\cO^k$. 
		Then by some algebra, when $d = 1$, for any $j \in [p]$, we have that 
		\begin{align*}
			\partial_j \varphi^v_s(x) & 
			% = \gamma_s' (\zeta_{\beta}(x)) \pi_j (x) 
			= \gamma_s' (\zeta_{\beta}(x)) \II_{\cA} (j)\pi_j^{\cA} (x) - \gamma_s' (\zeta_{\beta}(x)) \II_{\cB} (j) \pi_j^{\cB} (x),
			% \partial_j \partial_k \varphi(x) & = \gamma_s'' (\zeta_{\beta}(x)) \pi_j (x)\pi_k (x) + \beta  \gamma_s' (\zeta_{\beta}(x)) w_{jk}(x),\\
			%  \partial_j \partial_k \partial_{\ell} \varphi(x) & = \gamma_s''' (\!\zeta_{\beta}(x)\!) \pi_j (x)\pi_k (x) \pi_{\ell} (x)\! +\! \beta \gamma_s'' (\zeta_{\beta}(x)\!) \big(\pi_j (x) w_{k\ell}(x) \!+\! \pi_k (x) w_{j\ell} (x)\!+\! \pi_{\ell} (x) w_{jk}(x) \!\big)\\
			%  & \quad + \beta^2 \gamma_s' (\zeta_{\beta}(x))q_{jk\ell}(x).
		\end{align*}
		which is a special case of \eqref{eq: varphi parital forms} with $K_1 = 2$. Now assume that \eqref{eq: varphi parital forms} holds for $d$. Then, for $d + 1$ and any $h \in [p]^{d+1}$, denote by $h_{1:d} = (h_j)_{j=1}^d$. By some algebra, we have that, for any $j, j' \in [p]$, \
		\begin{align}
			\partial_{j'} \big(\II_{\cA} (j) \pi_j^{\cA} (x) \big) & = \beta \left\{  \delta_{jj'}\pi^{\cA}_j (x) - \pi^{\cA}_j (x) \pi^{\cA}_{j'} (x) \right\} \II_{\cA} (j)\II_{\cA} (j'),  \\
			\partial_{j'} \big(\II_{\cB} (j) \pi_j^{\cB} (x) \big) & = \beta \left\{  \delta_{jj'}\pi^{\cB}_j (x) - \pi^{\cB}_j (x) \pi^{\cB}_{j'} (x) \right\} \II_{\cB} (j)\II_{\cB} (j').
		\end{align}
		Then we have that
		\begin{align*}
			&\partial_{h_{d+1}} \varphi^{(h_{1:d})}_k(x) \\
			& =  (-1)^{q^k} \bigg\{ \beta^{d_1^k} \gamma_s^{(d_2^k + 1)}\big(\zeta_{\beta}(x)\big) \!\!\!\prod_{j \in \mathcal{O}^k_{\cA} \cup \{{d+1}\}}\!\!\big\{ \II_{\cA}(h_j )\pi_{h_j}^{\cA}(x) \big\}\!\! \! \prod_{\ell \in \mathcal{O}^k_{\cB}} \!\!\!\big\{ \II_{\cB}(h_{\ell} )\pi_{h_{\ell}}^{\cB}(x)\big\} \!\prod_{r \in  \bar\cO^k} \delta_{h_{r} h_{\sigma_k(r)}} \\
			& \quad -  \beta^{d_1^k} \gamma_s^{(d_2^k + 1)}\big(\zeta_{\beta}(x)\big)\!\! \prod_{j \in \mathcal{O}^k_{\cA} }\!\!\!\big\{ \II_{\cA}(h_j )\pi_{h_j}^{\cA}(x) \big\} \!\!\!\prod_{\ell \in \mathcal{O}^k_{\cB} \cup \{{d+1}\} } \!\!\!\big\{ \II_{\cB}(h_{\ell} )\pi_{h_{\ell}}^{\cB}(x)\big\} \!\!\!\prod_{r \in  \bar\cO^k} \delta_{h_{r} h_{\sigma_k(r)}} \\
			& \quad +\! \sum_{j \in \cO^k} \delta_{h_{d+1}, h_{j}} \cdot  \beta^{d_1^k + 1} \gamma_s^{(d_2^k)}\big(\zeta_{\beta}(x)\!\big) \!\!\!\!\prod_{j' \in \mathcal{O}^k_{\cA} }\!\!\!\!\big\{ \II_{\cA}(h_{j'} )\pi_{h_{j'}}^{\cA}(x) \big\}\!\!\! \!\prod_{\ell \in \mathcal{O}^k_{\cB}}\! \!\!\big\{ \II_{\cB}(h_{\ell} )\pi_{h_{\ell}}^{\cB}(x)\big\}\!\!\! \prod_{r \in  \bar\cO^k}\!\! \delta_{h_{r} h_{\sigma_k(r)}}\\
			& \quad - |\cO^k_{\cA}|\beta^{d_1^k + 1} \gamma_s^{(d_2^k)}\big(\zeta_{\beta}(x)\big) \!\!\!\prod_{j' \in \mathcal{O}^k_{\cA} \cup \{d+1\}}\!\!\!\big\{ \II_{\cA}(h_{j'} )\pi_{h_{j'}}^{\cA}(x) \big\}\!\!\! \prod_{\ell \in \mathcal{O}^k_{\cB}} \!\!\big\{ \II_{\cB}(h_{\ell} )\pi_{h_{\ell}}^{\cB}(x)\big\} \prod_{r \in  \bar\cO^k} \delta_{h_{r} h_{\sigma_k(r)}}\\
			& \quad - |\cO^k_{\cB}|\beta^{d_1^k + 1} \gamma_s^{(d_2^k)}\big(\zeta_{\beta}(x)\big) \!\!\!\prod_{j' \in \mathcal{O}^k_{\cA} }\!\!\!\big\{ \II_{\cA}(h_{j'} )\pi_{h_{j'}}^{\cA}(x) \big\}\!\!\! \!\prod_{\ell \in \mathcal{O}^k_{\cB} \cup \{d+1\} } \!\!\!\big\{ \II_{\cB}(h_{\ell} )\pi_{h_{\ell}}^{\cB}(x)\big\} \!\!\prod_{r \in  \bar\cO^k} \!\!\delta_{h_{r} h_{\sigma_k(r)}} \bigg\}.
		\end{align*}
		Hence, we have that \eqref{eq: varphi parital forms}  holds for $d+1$, and $K_{d+1} \le 2(d+1)K_d \le 2^{d+1} (d+1)!$.
		
		Furthermore, for any $d \in [5]$ and $h = (h_1, \ldots, h_d)^{\top} \in [p]^d$, let
		$$
		U_h (x) = \sum_{k=1}^{K_d}  \beta^{d_1^k} C_{\gamma}\delta^{-d_2^k}\!\!\prod_{j \in \mathcal{O}^k_{\cA}}\!\!\big\{ \II_{\cA}(h_j )\pi_{h_j}^{\cA}(x) \big\} \!\!\prod_{\ell \in \mathcal{O}^k_{\cB}} \!\!\big\{ \II_{\cB}(h_{\ell} )\pi_{h_{\ell}}^{\cB}(x)\big\} \!\!\prod_{r \in  \bar\cO^k} \!\! \delta_{h_{r} h_{\sigma_k(r)}}.
		$$
		We have that
		$$
		|\partial_{h_1} \ldots \partial_{h_d} \varphi^v_s(x)| = \left|\sum_{k=1}^{K_d}  \varphi^{(h)}_k(x)\right|  \le \sum_{k=1}^{K_d} | \varphi^{(h)}_k(x) | \le U_h (x) .
		$$
		Meanwhile, we note that for any $x,y \in \RR^p$ with $\max_{j \in [p]} |y_j| \le \beta^{-1}$, we have that 
		$$
		\sum_{j \in \cA} \pi^{\cA}_j(x) = 1, \quad \sum_{j \in \cB} \pi^{\cB}_j(x) = 1,
		$$
		$$
		e^{-2} \pi_j^{\cA}(x ) \le \pi_j^{\cA}(x + y) \le e^2 \pi_j^{\cA}(x ), \quad  e^{-2} \pi_j^{\cB}(x ) \le \pi_j^{\cB}(x + y) \le e^2 \pi_j^{\cB}(x ).
		$$
		Then, we  have
		$$
		\sum_{h_1 = 1}^p \ldots  \sum_{h_d = 1}^p  U_h (x) \le K_d C_{\gamma}  \left( \delta^{-d} \vee (\beta^1 /\delta^{d-1} ) \vee \ldots \vee (\beta^{d-1} /\delta)\right) \le K_d' \delta^{-d} \log^{d-1} p;
		$$
		$$
		e^{-2d}  U_h (x) \le  U_h (x + y) \le e^{2d}  U_h (x),
		$$
		where $K_d' = C_{\gamma} K_d$. Hence, we have that \eqref{eq: clt m der bd 1}-\eqref{eq: clt m der order 2} hold by taking  small enough $c_1$ and large enough  $C_0, C_1$, which completes the proof.
	\end{proof}
	\subsection{Proof of Claim~\ref{claim: recursive approx rate I} for Lemma~\ref{prop: modified thm 3.1}} \label{sec: proof claim recursive rate I}
	\begin{proof}
		We show \eqref{eq: iter rate approx} by modifying the rates in equations (76), (77), and (78) of \cite{cck2022improvedbootstrap} with our redefined smoothing functions and the anti-concentration condition. 
		Specifically, we redefine the function $f: [0,1] \rightarrow \RR$ in Step~1 of the proof of Lemma~3.1 in \cite{cck2022improvedbootstrap} by
		$$
		f(t) = \sum_{i=1}^{|I_d|} \EE \Big[\varphi_s^v\Big(W_i^{\sigma} + \frac{t V_{\sigma(i)}}{\sqrt{n}}\Big) - \varphi_s^v\Big(W_i^{\sigma} + \frac{t Z_{\sigma(i)}}{\sqrt{n}}\Big) \Big], \quad \forall t \in [0,1],
		$$
		where $\sigma$ is a random function with uniform distribution on the set of all one-to-one functions mapping $\{1,\ldots, |I_d|\}$ to $I_d$, and $\sigma$ is independent of $V_1,\ldots, V_n, Z_1,\ldots,Z_n$ and $\epsilon^{d+1}$; $W_i^{\sigma}$ is defined as 
		$$
		W_i^{\sigma} = \frac{1}{\sqrt{n}}\sum_{j=1}^{i-1} V_{\sigma(j)} + \frac{1}{\sqrt{n}} \sum_{j=i+1}^{|I_d|} Z_{\sigma(j)} + \frac{1}{\sqrt{n}} \sum_{j \notin I_d} Z_j, \quad \forall i = 1,\ldots , |I_d|.
		$$
		Then, following the proof of Lemma~3.1 in \cite{cck2022improvedbootstrap}, to show the validity of \eqref{eq: iter rate approx}, we only need to establish bounds on the derivatives of the function $f(\cdot)$ up to the fourth order. Specifically, we prove the following claims to establish the counterparts of the rates in equations (76), (77), and (78) of \cite{cck2022improvedbootstrap}.
		\begin{claim}\label{claim: f t 2nd}
			Under the event $\cA_d$, we have that
			\begin{equation}\label{eq: f t 2nd}
				\begin{aligned}
					|f^{(2)}(0) | \lesssim   &\  \frac{B_n^2 \delta^{-4} \log^5 (pn) }{n^2} \\
					& +  \big(r_n \delta + \varepsilon_n +   \EE (\varrho_{\epsilon^{d+1}})\big) \left(\frac{\cB_{n,1,d} \delta^{-2} \log p}{\sqrt{n}}  + \frac{B_n^2 \delta^{-4} \log^3 (pn)}{n} \right).
				\end{aligned}
			\end{equation}
		\end{claim}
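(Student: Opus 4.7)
The plan is to mimic the proof of equation~(76) in \cite{cck2022improvedbootstrap}, substituting our anti-concentration input (Theorem~\ref{thm: anti con} fed through Condition~\ref{cond: anti-con rate}) and the derivative bounds for the difference-of-maxima smoothing function $\varphi_s^v$ from Lemma~\ref{lm: clt third dev bd}. Differentiating $f$ twice at $t=0$ and using the fact that $V_{\sigma(i)}$ and $Z_{\sigma(i)}$ are independent of $W_i^\sigma$ conditional on $\sigma$ reduces the problem to controlling
$$f^{(2)}(0) \;=\; \frac{1}{n}\sum_{i=1}^{|I_d|}\sum_{j,k=1}^p \EE\!\bigl[\partial_j\partial_k\varphi_s^v(W_i^\sigma)\bigr]\cdot\bigl(\mathcal{E}^V_{\sigma(i),jk} - \mathcal{E}^Z_{\sigma(i),jk}\bigr),$$
where the outer expectation averages over $\sigma$ and the $V_\ell, Z_\ell$'s entering $W_i^\sigma$.

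First, I would truncate to $\{\max_i\|V_i\|_\infty\vee\|Z_i\|_\infty \le CB_n\log(pn)\}$ using Condition~\ref{cond: tail prob}; combining the crude bound $\sum_{j,k}U_{jk}(x)\le C\delta^{-2}\log p$ from Lemma~\ref{lm: clt third dev bd} with Condition~\ref{cond: fourth moment} shows the residual is $O(B_n^2\delta^{-4}\log^5(pn)/n^2)$, matching the first term of \eqref{eq: f t 2nd}. On the truncated event, for each fixed $(j,k)$ I would run Abel summation in $i$ along the random ordering $\sigma$: writing $b_i$ for $\EE[\partial_j\partial_k\varphi_s^v(W_i^\sigma)\,|\,\sigma]$ and $c_i$ for $\mathcal{E}^V_{\sigma(i),jk}-\mathcal{E}^Z_{\sigma(i),jk}$, the full sum $\sum_i c_i = \sum_{i\in I_d}(\mathcal{E}^V_{i,jk}-\mathcal{E}^Z_{i,jk})$ is dominated by $\sqrt{n}\,\cB_{n,1,d}$ under $\cA_d$, so it suffices to bound $|b_1|$ (times this partial sum) plus a telescoping $\sum_i|b_{i+1}-b_i|$ term.

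For $|b_1|$, since $\partial_j\partial_k\varphi_s^v$ vanishes off an $O(\delta)$-window of $\{\zeta_\beta^v = s\}$, I would factor $\partial_j\partial_k\varphi_s^v(x) = \partial_j\partial_k\varphi_s^v(x)\, h^v_{s,\delta}(x,0)$ exactly as in \eqref{eq: sec der eq}, bound the pointwise derivative by $U_{jk}(x)$, and sum $\sum_{j,k}U_{jk}\le C\delta^{-2}\log p$. The remaining factor $\EE[h^v_{s,\delta}(W_1^\sigma,0)]$ is an anti-concentration probability for $W_1^\sigma$, which I transport from $S_n^Z$ via the two-step bound $\EE[h^v_{s,\delta}(W_1^\sigma,0)]\lesssim r_n\delta + \varepsilon_n + \EE(\varrho_{\epsilon^{d+1}})$, first invoking Condition~\ref{cond: anti-con rate} on the Gaussian $S_n^Z$ and then absorbing the discrepancy between $S_n^Z$ and $W_1^\sigma$ into $\varrho_{\epsilon^{d+1}}$. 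Combined with $\sqrt{n}\,\cB_{n,1,d}/n$ this produces the factor $\cB_{n,1,d}\delta^{-2}\log p/\sqrt{n}$ in \eqref{eq: f t 2nd}. For the increments $b_{i+1}-b_i$, a one-step Taylor expansion along $(V_{\sigma(i)}-Z_{\sigma(i)})/\sqrt{n}$ invokes the third-derivative bound $\sum_{j,k,\ell}U_{jk\ell}\le C\delta^{-3}\log^2 p$ times the truncated increment $B_n\log(pn)/\sqrt{n}$; summing over $i\le n$ yields the auxiliary $B_n^2\delta^{-4}\log^3(pn)/n$ factor.

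The principal obstacle is uniform anti-concentration along the interpolation path $i\mapsto W_i^\sigma$: these fields are not Gaussian, so Theorem~\ref{thm: anti con} does not apply directly. The resolution requires pairing Condition~\ref{cond: anti-con rate} (which does apply to the Gaussian $S_n^Z$) with the recursive swap quantity $\EE(\varrho_{\epsilon^{d+1}})$, which is exactly why the induction is set up as it is. The scale-stability estimate \eqref{eq: clt m der order 2} and the choice $\beta=\delta^{-1}\log p$ are essential: under \eqref{eq: delta scaling}, each truncated swap of size at most $CB_n\log(pn)/\sqrt{n}$ stays within the $\beta^{-1}$ tolerance of Lemma~\ref{lm: clt third dev bd}, so the $U_{jk\ell}$ bounds transfer legitimately between $W_i^\sigma$ and its nearby shifts, and the Taylor increments in the Abel sum can be controlled uniformly in $i$.
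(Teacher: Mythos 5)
Your treatment of the leading term is essentially the paper's bound on $\cI_{2,1}$: on $\cA_d$ you combine the identity $\partial_j\partial_k\varphi_s^v=\partial_j\partial_k\varphi_s^v\,h^v_{s,\delta}(\cdot,0)$ with $\sum_{j,k}U_{jk}\lesssim\delta^{-2}\log p$ and transfer anti-concentration from $S_n^Z$ at cost $2\EE(\varrho_{\epsilon^{d+1}})$, which together with $|\sum_{i\in I_d}(\cE^V_{i,jk}-\cE^Z_{i,jk})|\le\sqrt n\,\cB_{n,1,d}$ yields the factor $\cB_{n,1,d}\delta^{-2}\log p/\sqrt n$. One caveat even here: the paper evaluates the second derivatives at $W=S^V_{n,\epsilon^{d+1}}$, precisely so that the discrepancy from $S_n^Z$ is exactly $\varrho_{\epsilon^{d+1}}$; anchoring at $W_1^\sigma$, as you do, does not connect to $\varrho_{\epsilon^{d+1}}$ without further argument.

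The remainder term is where your proposal has a genuine gap. First, the Abel/summation-by-parts step needs control of the partial sums $\sum_{\ell\le i}c_\ell$ along the random ordering $\sigma$ for every $i$, but the event $\cA_d$ only controls the full sum over $I_d$; no maximal inequality over $\sigma$ is available in this scheme. Second, even granting that, the arithmetic does not close: each single swap changes $\sum_{j,k}|b_{i,jk}|$ by at most (anti-concentration factor)$\cdot\delta^{-3}\log^2 p\cdot B_n\log(pn)/\sqrt n$, and telescoping from $b_1$ to $b_i$ accumulates up to $n$ such swaps, so after inserting the $1/n$ prefactor and $|c_{i,jk}|\lesssim B_n$ the resulting bound is larger than your claimed $B_n^2\delta^{-4}\log^3(pn)/n$ by roughly a factor $n^{3/2}\delta$; "summing over $i\le n$" does not yield that term. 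The paper (adapting Step~3 of the reference) avoids any long-range telescoping: for each fixed $i$ it compares the second derivative at the leave-one-out point with the same point shifted by a single increment of size $O(B_n\log(pn)/\sqrt n)$; since the increment is mean-zero and independent, the net correction is second order and is controlled by the \emph{fourth}-derivative sums $\sum_{j,k,\ell,r}U_{jk\ell r}\lesssim\delta^{-4}\log^3 p$ paired with $\EE|V_{i\ell}V_{ir}|$ and the window-enlarged indicator $h^v_{s,\delta}\big(W,4C_eB_n\log(pn)/\sqrt n\big)$, made admissible by \eqref{eq: delta scaling} and \eqref{eq: clt m der order 2} and then bounded via Condition~\ref{cond: anti-con rate} plus $2\EE(\varrho_{\epsilon^{d+1}})$; this is what produces the term $B_n^2\delta^{-4}\log^3 p\,(r_n\delta+\varepsilon_n+\EE\varrho_{\epsilon^{d+1}})/n$, while the untruncated part, handled through the eighth-moment Condition~\ref{cond: 8th inf mm bd} rather than Condition~\ref{cond: tail prob} alone, gives the $B_n^2\delta^{-4}\log^5(pn)/n^2$ term. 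Your argument never invokes the fourth-derivative bounds $U_{jk\ell r}$, and without them the $1/n$ factor in the second term of \eqref{eq: f t 2nd} cannot be reached.
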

		\begin{proof}
			See Appendix~\ref{sec: proof claim f t 2nd}.
		\end{proof}
		\begin{claim}\label{claim: f t 3rd}
			Under the event $\cA_d$, we have that
			\begin{equation}\label{eq: f t 3rd}
				\begin{aligned}
					|f^{(3)}(0) | \lesssim  &\ \frac{B_n^2 \delta^{-4} \log^5 (pn) }{n^2} \\
					& +  \big(r_n \delta + \varepsilon_n +   \EE (\varrho_{\epsilon^{d+1}})\big) \left( \frac{\cB_{n,2,d} \delta^{-3} \log^2 p}{{n}} + \frac{B_n^3 \delta^{-5} \log^5 (pn)}{n^{3/2}} \right).
				\end{aligned}
			\end{equation}
		\end{claim}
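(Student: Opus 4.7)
The plan is to adapt the third-derivative estimate from Step~3 of the proof of Lemma~3.1 in \cite{cck2022improvedbootstrap}, with our smoothing function $\varphi_s^v$ replacing their $m^y$, and with the derivative bounds supplied by Lemma~\ref{lm: clt third dev bd} and the anti-concentration input supplied by Condition~\ref{cond: anti-con rate}. First, by a direct chain-rule calculation,
\begin{equation*}
f^{(3)}(0) = \frac{1}{n^{3/2}}\sum_{i=1}^{|I_d|}\sum_{j,k,\ell=1}^p \EE\bigl[\partial_j\partial_k\partial_\ell \varphi_s^v(W_i^{\sigma})\bigl(V_{\sigma(i),j}V_{\sigma(i),k}V_{\sigma(i),\ell} - Z_{\sigma(i),j}Z_{\sigma(i),k}Z_{\sigma(i),\ell}\bigr)\bigr].
\end{equation*}
Since $W_i^{\sigma}$ is independent of $(V_{\sigma(i)},Z_{\sigma(i)})$ conditionally on $\sigma$ and $\epsilon^{d+1}$, I can factor out the third-moment differences $\EE[V_{\sigma(i),j}V_{\sigma(i),k}V_{\sigma(i),\ell}]-\EE[Z_{\sigma(i),j}Z_{\sigma(i),k}Z_{\sigma(i),\ell}]$ and rewrite the sum as an average over $i\in I_d$, so that the aggregate third-moment difference is exactly the quantity controlled by the event $\cA_d$ (up to a factor $n^{-1/2}$), giving a factor of $\cB_{n,2,d}$.

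Second, to avoid losing a factor of $p^3$ when summing $p^3$ third-order partial derivatives, I use the bound $\sum_{j,k,\ell}|\partial_j\partial_k\partial_\ell\varphi_s^v(W_i^{\sigma})|\le \sum_{j,k,\ell}U_{jk\ell}(W_i^{\sigma})\le C_0\delta^{-3}\log^2 p$ from \eqref{eq: clt m der sum 1}, but weighted by the indicator that $W_i^{\sigma}$ lies in the ``active'' strip where the third derivative is nonzero—namely within $O(\delta)$ of the level set $\{x:\cM_\cA^v(x)-\cM_\cB^v(x)=s\}$, as captured by the function $h^{v'}_{s',\delta'}$ introduced in \eqref{eq: smooth indicate fun} (this is the same device used for Claim~\ref{claim: recursive approx rate I}). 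The expectation of this indicator against $W_i^{\sigma}$ is then bounded by the smooth-CDF of $W_i^{\sigma}$ on an interval of length $O(\delta)$, which, after replacing $W_i^{\sigma}$ by $S_n^Z$ at the cost of $\EE(\varrho_{\epsilon^{d+1}})$ (using the definition of $\varrho$), is controlled by Condition~\ref{cond: anti-con rate}, yielding the factor $r_n\delta+\varepsilon_n+\EE(\varrho_{\epsilon^{d+1}})$.

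Third, to handle the tails where $\|V_{\sigma(i)}\|_\infty$ or $\|Z_{\sigma(i)}\|_\infty$ exceeds the threshold $C B_n\log(pn)$, I use a standard truncation: on the bulk event (of probability $1-1/n^4$ by Condition~\ref{cond: tail prob}), the perturbation $tV_{\sigma(i)}/\sqrt{n}$ stays small relative to $\delta$ so that the derivative bound \eqref{eq: clt m der order 2} with ``$y$'' replaced by the increment gives a stable estimate, while on the tail event, Condition~\ref{cond: 8th inf mm bd} supplies the eighth-moment bound needed to absorb the tail contribution. Combining the bulk contribution $\tfrac{\cB_{n,2,d}\delta^{-3}\log^2 p}{n}\,(r_n\delta+\varepsilon_n+\EE(\varrho_{\epsilon^{d+1}}))$ with the tail terms $\tfrac{B_n^2\delta^{-4}\log^5(pn)}{n^2}$ and $\tfrac{B_n^3\delta^{-5}\log^5(pn)}{n^{3/2}}$ (the latter coming from the product of three bulk bounds on $V_{\sigma(i)}$, each of order $B_n\log(pn)$, divided by $n^{3/2}$, multiplied by $\delta^{-3}\log^2 p$) yields \eqref{eq: f t 3rd}.

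The main obstacle will be the second step: the naive summation over $(j,k,\ell)$ gives a weak bound proportional to $\delta^{-3}\log^2p$ times a crude probability estimate, and one has to carefully insert the indicator $h^{v'}_{s',\delta'}$ and exchange $W_i^{\sigma}$ for $S_n^Z$ to unlock the anti-concentration factor $r_n\delta+\varepsilon_n$ and the recursive term $\EE(\varrho_{\epsilon^{d+1}})$. This requires tracking how Lemma~\ref{lm: clt third dev bd}'s derivative bounds localize to the active strip, precisely as in Claim~\ref{claim: recursive approx rate I}, and is the only place where the anti-concentration of the difference of two maxima (rather than a single maximum) enters nontrivially.
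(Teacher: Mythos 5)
Your outline follows the same route as the paper's proof (the randomized Lindeberg argument of \cite{cck2022improvedbootstrap}, with $\varphi_s^v$ in place of their smoothing function, the derivative bounds of Lemma~\ref{lm: clt third dev bd}, the localization indicator \eqref{eq: smooth indicate fun}, and Condition~\ref{cond: anti-con rate} supplying the factor $r_n\delta+\varepsilon_n+\EE(\varrho_{\epsilon^{d+1}})$), but there is a genuine gap in the middle step. After writing $f^{(3)}(0)=n^{-3/2}\sum_i\sum_{j,k,\ell}\EE\big[\partial_j\partial_k\partial_\ell\varphi_s^v(W_i^{\sigma})\big(\cE^V_{\sigma(i),jk\ell}-\cE^Z_{\sigma(i),jk\ell}\big)\big]$, you claim you can ``factor out'' the third-moment differences so that the aggregate difference controlled on $\cA_d$ appears directly. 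This does not work as stated: the weights $\EE[\partial_j\partial_k\partial_\ell\varphi_s^v(W_i^{\sigma})\mid\sigma]$ depend on $i$ (and on $\sigma$), so $\sum_i w_i\,(\cE^V_{\sigma(i),jk\ell}-\cE^Z_{\sigma(i),jk\ell})$ cannot be bounded by $\max_i|w_i|$ times $\big|\sum_i(\cE^V_{i,jk\ell}-\cE^Z_{i,jk\ell})\big|$. To exploit the cancellation encoded in $\cA_d$ (the $\cB_{n,2,d}$ bound) one must first replace $W_i^{\sigma}$ by a common vector $W=S^V_{n,\epsilon^{d+1}}$ and control the replacement error by Taylor-expanding the third derivative; that error is precisely the paper's second term $\cI_{3,2}$, which involves fifth-order partials $\partial_j\partial_k\partial_\ell\partial_r\partial_h\varphi_s^v$ (summed via \eqref{eq: clt m der sum 1} to $\lesssim\delta^{-5}\log^4p$, localized by $h^v_{s,\delta}$ and stabilized by \eqref{eq: clt m der order 2}), and it --- not the tail truncation --- is the source of the term $B_n^3\delta^{-5}\log^5(pn)\,n^{-3/2}\,(r_n\delta+\varepsilon_n+\EE(\varrho_{\epsilon^{d+1}}))$ in \eqref{eq: f t 3rd}.

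Relatedly, your own accounting of that term is internally inconsistent: three truncated factors of order $B_n\log(pn)$, divided by $n^{3/2}$ and multiplied by the third-derivative sum $\delta^{-3}\log^2p$, gives $B_n^3\delta^{-3}\log^5(pn)\,n^{-3/2}$, off by $\delta^{-2}$ from the stated bound; and bounding the third moments in absolute value this way would in any case forfeit the cancellation that the event $\cA_d$ is there to deliver, making the appeal to $\cB_{n,2,d}$ in your first step moot. The tail event handled through Conditions~\ref{cond: tail prob} and~\ref{cond: 8th inf mm bd} essentially contributes only the $B_n^2\delta^{-4}\log^5(pn)\,n^{-2}$ remainder. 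Once you add the $W_i^{\sigma}\to W$ decoupling with its fifth-derivative Taylor remainder (the one-order-higher analogue of \eqref{eq: equiv 4th deriv phi} and \eqref{eq: step 3 mid I22}), your plan coincides with the paper's proof, which is a direct modification of Step~4 (not Step~3, which handles $f^{(2)}(0)$) of \cite{cck2022improvedbootstrap}.
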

		\begin{proof}
			See Appendix~\ref{sec: proof claim f t 3rd}.
		\end{proof}
		\begin{claim}\label{claim: f t 4th}
			Under the event $\cA_d$, we have that
			\begin{equation}\label{eq: f t 4th}
				|f^{(4)}(\tilde{t}) | \lesssim   \frac{B_n^2 \delta^{-4} \log^3 p}{n^2} +  \big(r_n \delta + \varepsilon_n +   \EE (\varrho_{\epsilon^{d+1}})\big) \frac{B_n^2 \delta^{-4} \log^3 p}{n} , \quad \text{where $\tilde{t} \in (0,1)$}.
			\end{equation}
		\end{claim}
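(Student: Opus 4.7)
The plan is to mirror the structure used in the proofs of Claims~\ref{claim: f t 2nd} and~\ref{claim: f t 3rd}, but at the fourth-derivative level and without any moment-matching cancellation, since $f^{(4)}(\tilde t)$ is the Taylor remainder and is evaluated at an intermediate point $\tilde t\in(0,1)$ rather than at $0$. I would first expand $f^{(4)}(\tilde t)$ by the chain rule, writing it as
\[
f^{(4)}(\tilde t)=\sum_{i=1}^{|I_d|}\frac{1}{n^{2}}\sum_{j,k,\ell,r\in[p]}\EE\!\left[\partial_{j}\partial_{k}\partial_{\ell}\partial_{r}\varphi_s^v\!\Big(W_i^{\sigma}+\tfrac{\tilde t V_{\sigma(i)}}{\sqrt n}\Big)\cdot V_{\sigma(i),j}V_{\sigma(i),k}V_{\sigma(i),\ell}V_{\sigma(i),r}\right]
\]
minus the analogous term with $V$ replaced by $Z$. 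The three workhorses for controlling this are: (i) Lemma~\ref{lm: clt third dev bd}, which gives $|\partial_{j}\partial_{k}\partial_{\ell}\partial_{r}\varphi_s^v(x)|\le U_{jk\ell r}(x)$ with the summation bound $\sum_{j,k,\ell,r}U_{jk\ell r}(x)\lesssim \delta^{-4}\log^{3}p$ and the stability bound $U_{jk\ell r}(x+y)\asymp U_{jk\ell r}(x)$ whenever $\|y\|_\infty\le\beta^{-1}=\delta/\log p$; (ii) the fourth-moment control $\EE[V_{ij}^4]+\EE[Z_{ij}^4]\lesssim B_n^{2}$ from Condition~\ref{cond: fourth moment}; and (iii) the truncation $\|V_i\|_\infty\vee\|Z_i\|_\infty\le C_e B_n\log(pn)$ which holds with probability at least $1-1/n^{4}$ by Condition~\ref{cond: tail prob}.

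The main step is to split the expectation with respect to the truncation event $\cT_i=\{\|V_{\sigma(i)}\|_\infty\vee\|Z_{\sigma(i)}\|_\infty\le C_e B_n\log(pn)\}$. On $\cT_i^c$, I would use Cauchy--Schwarz together with the eighth-moment bound in Condition~\ref{cond: 8th inf mm bd} and the probability bound $\PP(\cT_i^c)\le 1/n^4$, and also the global bound $\sum_{j,k,\ell,r}U_{jk\ell r}\lesssim \delta^{-4}\log^{3}p$. Summed over $i\in I_d$ (of size at most $n$) and divided by $n^{2}$, this delivers the tail contribution $B_n^{2}\delta^{-4}\log^{3}p/n^{2}$. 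On $\cT_i$, the perturbation $\tilde t V_{\sigma(i)}/\sqrt n$ has infinity norm $\lesssim B_n\log(pn)/\sqrt n\ll \beta^{-1}$ (using $B_n^2\log^5(pn)\le cn$), so \eqref{eq: clt m der order 2} lets me replace $U_{jk\ell r}$ evaluated along the perturbed chain by its value at the reference point $S_{n,\epsilon^{d+1}}^V$ at the cost of a universal multiplicative constant.

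Here is where the anti-concentration factor enters. By the explicit form \eqref{eq: varphi parital forms} in the proof of Lemma~\ref{lm: clt third dev bd}, each summand in the derivative expansion carries a factor $\gamma_s^{(d_2)}(\zeta_\beta^v(\cdot))$, which is supported on $\zeta_\beta^v\in[s,s+\delta]$, and hence by \eqref{eq: property zeta} on the slab $\{\cM_{\cA}^v-\cM_{\cB}^v\in[s-\delta,s+2\delta]\}$. Thus the relevant $U_{jk\ell r}$-bound can be multiplied by the indicator $h_{s,\delta}^v(S_{n,\epsilon^{d+1}}^V,0)$ of this slab (compare \eqref{eq: smooth indicate fun}); taking expectations converts the indicator into a slab probability for the mixed sum $S_{n,\epsilon^{d+1}}^V$. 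A triangle-inequality argument identical to the one used for $\cI_s^v$ in the proof of Claim~\ref{claim: recursive approx rate I} bounds this slab probability by $r_n\delta+\varepsilon_n+\EE(\varrho_{\epsilon^{d+1}})$, via Condition~\ref{cond: anti-con rate} applied to $S_n^Z$ plus the definition of $\varrho_{\epsilon^{d+1}}$. Combining with $\sum_{j,k,\ell,r}U_{jk\ell r}\lesssim\delta^{-4}\log^{3}p$, the fourth-moment bound $|\EE[V_{\sigma(i),j}V_{\sigma(i),k}V_{\sigma(i),\ell}V_{\sigma(i),r}]|\lesssim B_n^{2}$, and summing $i\in I_d$ then dividing by $n^{2}$, yields the main contribution $\big(r_n\delta+\varepsilon_n+\EE(\varrho_{\epsilon^{d+1}})\big)B_n^{2}\delta^{-4}\log^{3}p/n$. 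The $Z$-half is handled identically using Condition~\ref{cond: fourth moment} for $Z$.

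The main obstacle is the last step: the product $V_{\sigma(i),j}V_{\sigma(i),k}V_{\sigma(i),\ell}V_{\sigma(i),r}$ is correlated with the slab-indicator $h_{s,\delta}^v(\cdot,0)$ through $W_i^\sigma$ (since $W_i^\sigma$ depends on other $V_{\sigma(j)}$'s and on $Z$'s). Decoupling requires either conditioning on $W_i^\sigma$ and using the independence of $V_{\sigma(i)}$ from $W_i^\sigma$, or using Cauchy--Schwarz together with the second-moment form of the slab probability; care is needed to avoid inflating $B_n^{2}$ to $B_n^{4}$ and to avoid extra logarithmic factors. I expect that mimicking the decoupling bookkeeping already carried out for $f^{(3)}$ in the proof of Claim~\ref{claim: f t 3rd}, combined with the stability estimate \eqref{eq: clt m der order 2} to ensure that the random shift by $\tilde t V_{\sigma(i)}/\sqrt n$ does not move us across the slab, will give the required bound~\eqref{eq: f t 4th}.
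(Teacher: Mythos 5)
Your proposal follows essentially the same route as the paper's proof, which likewise modifies Step~5 of \cite{cck2022improvedbootstrap} by splitting on the truncation event, inserting the slab indicator through the support of the derivative terms, invoking the stability bound \eqref{eq: clt m der order 2}, decoupling via the independence of $V_{\sigma(i)}$ from $W_{\sigma^{-1}(i)}^{\sigma}$, and then applying Condition~\ref{cond: anti-con rate} together with the fourth-moment bound. The only cosmetic difference is that the paper transfers the slab indicator to $W$ with an enlarged width $4C_e B_n \log(pn)/\sqrt{n}$ (which is at most $\delta$ under the scaling condition) rather than width $0$ as you wrote --- precisely the adjustment your closing remark about the shift not crossing the slab anticipates.
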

		\begin{proof}
			See Appendix~\ref{sec: proof claim f t 4th}.
		\end{proof}
		The rates \eqref{eq: f t 2nd}-\eqref{eq: f t 4th} are obtained by modifying Step~3, Step~4 and Step~5 of the proof of Lemma~3.1 in \cite{cck2022improvedbootstrap} correspondingly. Combining \eqref{eq: f t 2nd}-\eqref{eq: f t 4th}, following the same arguments as in Step~1 of the proof of Lemma~3.1 in \cite{cck2022improvedbootstrap}, we have that \eqref{eq: iter rate approx} holds. \end{proof}

	\subsubsection{Proof of Claim~\ref{claim: f t 2nd} via modification of Step~3 in \cite{cck2022improvedbootstrap}}\label{sec: proof claim f t 2nd}
	In Step~3 of the proof of Lemma~3.1 in \cite{cck2022improvedbootstrap}, to show the new rate with respect to equation (76) in that paper, we first need to show that equation (80) of \cite{cck2022improvedbootstrap} still holds for our redefined smoothing functions. Namely, we first show that 
	\begin{equation}\label{eq: deriv eq}
		\partial_j\partial_k\varphi_s^v(x) =  \partial_j\partial_k\varphi_s^v(x)  h^v_{s,\delta}(x, 0), \quad \forall x \in \RR^p.
	\end{equation}
	Indeed when $h^v_{s,\delta}(x, 0) = 0$, by the property of the function $\zeta_{\beta}^v$ in \eqref{eq: property zeta}, we have that either $\zeta_{\beta}^v(x) \le s$ or $\zeta_{\beta}^v(x) > s + \delta$, which by the property of the function $\gamma_s$ indicates that $ \partial_j\partial_k\varphi_s^v(x) = 0$. Hence the claim follows. Then for the error term $\cI_{2,1}$ defined below equation (79) in \cite{cck2022improvedbootstrap}, we have that 
	\begin{align*}
		&\sum_{j,k \in [p]} \EE[|\partial_j \partial_k \varphi^v_s(W)|]  =\!\! \sum_{j,k \in [p]} \!\EE[ h^v_{s,\delta}(W, 0) |\partial_j \partial_k \varphi^v_s(W)|] \le \!\!\sum_{j,k \in [p]}\! \EE[ h^v_{s,\delta}(W, 0) |U_{jk}(W)|] \\
		& \quad \le C_0 \delta^{-2} \log p \EE\big(h^v_{s,\delta}(W, 0)\big) \le C_0 \delta^{-2} \log p \left(C_a (3 r_n \delta + \varepsilon_n) +  2 \EE (\varrho_{\epsilon^{d+1}})\right),
	\end{align*}
 where the last inequality is by Condition~\ref{cond: anti-con rate} such that 
	\begin{align*}
		\EE\big(h^v_{s,\delta}(W, 0)\big)& \le \EE\big(h^v_{s,\delta}(S_n^Z, 0) \big) + 2 \EE (\varrho_{\epsilon^{d+1}} ) \le C_a (3 r_n \delta + \varepsilon_n) +  2 \EE (\varrho_{\epsilon^{d+1}}).
	\end{align*}  
	Hence,  replacing the rate in equation (81) of \cite{cck2022improvedbootstrap}, we have that
	$$
	|\cI_{2,1}| \lesssim \frac{\cB_{n,1,d} \delta^{-2} \log p}{\sqrt{n}} \left( r_n \delta + \varepsilon_n  +  \EE (\varrho_{\epsilon^{d+1}})\right).
	$$
	For the error term $\cI_{2,2}$ defined below equation (79) in \cite{cck2022improvedbootstrap}, we define 
	$$
	\chi^V_i = \II\{\|V_i\|_{\infty} \le C_e B_n \log(pn)\}, \quad \chi^Z_i = \II\{\|Z_i\|_{\infty} \le C_e B_n \log(pn)\},
	$$
	which correspond to the indicators $\tilde{V}_i$ and $\tilde{Z}_i$ defined in Step~3 of \cite{cck2022improvedbootstrap}. Then when $\chi^V_i = 1$, we have that $h_{s,\delta}^v (W_{\sigma^{-1}(i)}^{\sigma}, 2C_e B_n \log(pn)/\sqrt{n}) = 0$. This indicates $h_{s,\delta}^v (W_{\sigma^{-1}(i)}^{\sigma} + \hat{t}V_i/\sqrt{n}, 0) = 0$ for any $\hat{t} \in (0,1)$.
	% where $W_{\sigma^{-1}(i)}^{\sigma}$ and $\sigma(\cdot)$ are the same as defined in Step~1 of \cite{cck2022improvedbootstrap}. 
	Then following the same arguments as establishing \eqref{eq: deriv eq}, for all $i \in I_d$ and $ j, k, \ell, r \in [p]$ we have that
	\begin{equation}\label{eq: equiv 4th deriv phi}
		\begin{aligned}
			& \chi^V_i \Big| \partial_{j}\partial_k \partial_{\ell} \partial_r \varphi_s^v\Big(W_{\sigma^{-1}(i)}^{\sigma} + \frac{\hat{t}V_i}{\sqrt{n}}\Big) V_{i\ell} V_{i r}\Big|\\
			&  =  \chi^V_i h_{s,\delta}^v \big(W_{\sigma^{-1}(i)}^{\sigma}, 2C_e B_n \log(pn)/\sqrt{n}\big)  \Big| \partial_{j}\partial_k \partial_{\ell} \partial_r \varphi_s^v\Big(W_{\sigma^{-1}(i)}^{\sigma} + \frac{\hat{t}V_i}{\sqrt{n}}\Big) V_{i\ell} V_{i r}\Big| .
		\end{aligned}
	\end{equation}
	Then equation (82) in \cite{cck2022improvedbootstrap} also holds in our setting by replacing $h^y(W_{\sigma^{-1}(i)}^{\sigma};x)$ with $h_{s,\delta}^v \big(W_{\sigma^{-1}(i)}^{\sigma}, 2C_e B_n \log(pn)/\sqrt{n}\big)$. Following similar arguments, we have that
	\begin{align*}
		\EE & [h_{s,\delta}^v \big(W_{\sigma^{-1}(i)}^{\sigma}, 2C_e B_n \log(pn)/\sqrt{n}\big) U_{jk\ell r}(W_{\sigma^{-1}(i)}^{\sigma})] \EE [|V_{i \ell} V_{i r}|]\\
		&  \lesssim \EE  [ \chi^V_i  \chi^Z_i h_{s,\delta}^v \big(W_{\sigma^{-1}(i)}^{\sigma}, 2C_e B_n \log(pn)/\sqrt{n}\big) U_{jk\ell r}(W_{\sigma^{-1}(i)}^{\sigma})] \EE [|V_{i \ell} V_{i r}|]\\
		& \lesssim \EE  [ \chi^V_i  \chi^Z_i h_{s,\delta}^v \big(W, 4C_e B_n \log(pn)/\sqrt{n}\big) U_{jk\ell r}(W_{\sigma^{-1}(i)}^{\sigma})] \EE [|V_{i \ell} V_{i r}|]\\
		& \lesssim \EE  [ h_{s,\delta}^v \big(W, 4C_e B_n \log(pn)/\sqrt{n}\big) U_{jk\ell r}(W)] \EE [|V_{i \ell} V_{i r}|] \numberthis \label{eq: step 3 mid I22},
	\end{align*}
	which gives the equivalence of equation (83) of \cite{cck2022improvedbootstrap} in our setting. Also by    \eqref{eq: delta scaling}, we have that $ C_e B_n \log (pn) /\sqrt{n} \le \delta$, and again by Condition~\ref{cond: anti-con rate} we have 
	\begin{align*}
		\EE\big(h_{s,\delta}^v \big(W, 4C_e B_n \log(pn)/\sqrt{n}\big)\big)& \le \EE\big(h^v_{s,\delta}(S_n^Z,  4C_e B_n \log(pn)/\sqrt{n}) \big) + 2 \EE (\varrho_{\epsilon^{d+1}} )\\
		& \lesssim r_n \delta + \varepsilon_n +   \EE (\varrho_{\epsilon^{d+1}}),
	\end{align*}
	and then following the  arguments in Step~3 of \cite{cck2022improvedbootstrap}, we have 
	$$
	|\cI_{2,2}| \lesssim \frac{B_n^2 \delta^{-4} \log^3 p}{n} \big(r_n \delta + \varepsilon_n +   \EE (\varrho_{\epsilon^{d+1}})\big) + \frac{B_n^2 \delta^{-4} \log^5 (pn)}{n^2} .
	$$
	Hence,  \eqref{eq: f t 2nd} follows.

	\subsubsection{Proof of Claim~\ref{claim: f t 3rd} via modification of Step~4 in \cite{cck2022improvedbootstrap}}\label{sec: proof claim f t 3rd}
	The modification of Step~4 is very similar to that of Step~3. For the error terms $\cI_{3,1}$ and $\cI_{3,2}$ defined in Step~4 of \cite{cck2022improvedbootstrap},  applying our redefined anti-concentration Condition~\ref{cond: anti-con rate} and following the same steps as in Step~3, we have
	$$
	|\cI_{3,1}| \lesssim \frac{\cB_{n,2,d} \delta^{-3} \log^2 p}{{n}} \left( r_n \delta + \varepsilon_n  +  \EE (\varrho_{\epsilon^{d+1}})\right).
	$$
	For the error term $\cI_{3,2}$, following similar arguments as in showing \eqref{eq: equiv 4th deriv phi} and \eqref{eq: step 3 mid I22}, for any $j,k,\ell, r, h \in [p]$ and $i \in I_d$, we have that
	\begin{align*}
		&\EE\left(\chi_i^V \left|\partial_j \partial_k \partial_{\ell} \partial_r \partial_h \varphi_s^v (W_{\sigma^{-1}(i)}^{\sigma} + \frac{\hat{t}V_i}{\sqrt{n}}) V_{ir} V_{ih}\right|\right) \\
		& \quad \lesssim \EE \left(h_{s,\delta}^v \big(W, 4C_e B_n \log(pn)/\sqrt{n}\big) U_{jk\ell r h}(W) \right) \EE(|V_{ir} V_{ih}|),
	\end{align*}
	where $\hat{t} \in (0,1)$. By Condition~\ref{cond: anti-con rate} and following the arguments in Step~4 of \cite{cck2022improvedbootstrap}, we have
	$$
	|\cI_{3,2}| \lesssim \frac{B_n^3 \delta^{-5} \log^5 (pn)}{n^{3/2}} \big(r_n \delta + \varepsilon_n +   \EE (\varrho_{\epsilon^{d+1}})\big) + \frac{B_n^2 \delta^{-4} \log^5 (pn)}{n^2} ,
	$$
	which implies that  \eqref{eq: f t 3rd} holds.

	\subsubsection{Proof of Claim~\ref{claim: f t 4th} via modification of Step~5 in \cite{cck2022improvedbootstrap}}\label{sec: proof claim f t 4th}
	
	For the error terms $\cI_{4,1}$ and $\cI_{4,2}$ defined at the beginning of Step~5 in \cite{cck2022improvedbootstrap}, following the same arguments for showing \eqref{eq: equiv 4th deriv phi} and \eqref{eq: step 3 mid I22}, we have that 
	\begin{align*}
		& \frac{1}{n^2} \sum_{i \in I_d} \sum_{j,k,\ell,r \in [p]}  \EE \left(\chi^V_i \Big| \partial_{j}\partial_k \partial_{\ell} \partial_r \varphi_s^v\Big(W_{\sigma^{-1}(i)}^{\sigma} + \frac{\tilde{t}V_i}{\sqrt{n}}\Big) V_{i j} V_{i k} V_{i\ell} V_{i r}\Big|\right)\\
		& \quad \lesssim \frac{1}{n^2}\!\!\!\sum_{j,k,\ell,r \in [p]}\!\! \!\EE\! \left(h_{s,\delta}^v \big(W, 4C_e B_n \log(pn)/\sqrt{n}\big) U_{jk\ell r }(W) \!\right) \!\!\max_{j,k,\ell, r \in [p]} \sum_{i \in I_d} \EE \left(|V_{i j} V_{i k} V_{i\ell} V_{i r}|\right)\\
		& \quad \lesssim \frac{B_n^2 \delta^{-4} \log^3 p}{n} \big(r_n \delta + \varepsilon_n +   \EE (\varrho_{\epsilon^{d+1}})\big).
	\end{align*}
	Following the arguments in Step~5 of \cite{cck2022improvedbootstrap}, we have 
	$$
	|\cI_{4,1}| \lesssim \frac{B_n^2 \delta^{-4} \log^3 (p)}{n} \big(r_n \delta + \varepsilon_n +   \EE (\varrho_{\epsilon^{d+1}})\big) + \frac{B_n^2 \delta^{-4} \log^3 (p)}{n^2} .
	$$
	The bound for $\cI_{4,2}$ follows similar arguments, and \eqref{eq: f t 4th} holds. 
	
	% and the Slepian interpolant as $Z(t) = \sum_{i=1}^n Z_i(t)$, $t \in [0,1]$, where 
	% $$
	% Z_i(t) := \frac{1}{\sqrt{n}} \left\{\sqrt{t}[\sqrt{v} (\xi_i - \mu_i )+ \sqrt{1-v} (\eta_i - \mu_i)] + \sqrt{1-t}(W_i - \mu_i)\right\},
	% $$
	\subsection{Proof of Claim~\ref{claim: A31} for Lemma~\ref{lm: anti con equal var no cor 1}}\label{sec: proof claim A31}
	\begin{proof}[\unskip\nopunct]
		% Now we go back to show that Claim~\ref{claim: A31}
		% % and Claim~4 
		% holds for a.e. $u \in \RR$. 
		Recall that $\tilde{Z}_j = -Z_j + c_j - u$ for $j \in [d]$.    Given $j \in [d]$, for $i \in [d] \backslash \{j\}$, define the orthogonal residuals of projecting $Z_i$ and $\tilde{Z}_i$ onto $\tilde{Z}_j$ as $V_i = Z_i - \mu_i + \rho_{ij}(\tilde{Z}_j + \mu_j - c_j + u)$ and $\tilde{V}_i = - (Z_i - \mu_i) - \rho_{ij}(\tilde{Z}_j + \mu_j - c_j + u) = - V_i$. We have
		\begin{align*}
			H_{u,j}(x) &=  \PP\left( \begin{array}{ll}
				V_{i} \le x  - \mu_i + \rho_{ij}(\tilde{Z}_j + \mu_j - c_j + u) , & \, i \in [d] \backslash j \\
				\tilde{V}_i \le x + \mu_{i} - c_{i} + u - \rho_{ij}(\tilde{Z}_j + \mu_j - c_j + u), & \, i \in [d] \backslash j\\
				c_j - u - x \le x  & 
			\end{array}  \Bigg|
			\tilde{Z}_j = x \right)\\
			& =  \PP\left( \begin{array}{ll}
				V_{i} \le x  - \mu_i + \rho_{ij}(x + \mu_j - c_j + u) , & \, i \in [d] \backslash j\\
				- V_i \le x + \mu_{i} - c_{i} + u - \rho_{ij}(x + \mu_j - c_j + u), & \, i \in [d] \backslash j\\
				c_j - u - x \le x  & 
			\end{array} \right)\\
			& = \PP\left( \begin{array}{ll}
				U_i \le x , & \, i \in [d] \backslash j \\
				\tilde{U}_i \le x, & \, i \in [d] \backslash j\\
				(c_j - u)/2 \le x & 
			\end{array} \right),
		\end{align*}
		where 
		$$
		U_i \!= \!(1 + \rho_{ij})^{-1} (V_i + \mu_i - \rho_{ij}(\mu_j-c_j+u)\!),\, 
		\tilde{U}_i = \!(1 - \rho_{ij})^{-1} (\!-V_i - \mu_i + c_i  + \rho_{ij}(\mu_j-c_j)\!) - u.
		$$ 
		% Then it can be seen that $H_{u,j}(x)$ is non-decreasing on $x \in \RR$ and Claim~3 follows.
		When $x < (c_j - u)/2$, we have that $H_{u,j}(x) = g_{u, j}(x) = 0$, and \eqref{eq: claim 2 g = 0} holds.

		Next, we   show that  \eqref{eq: claim 2 g < H}  holds. When $x > (c_j - u)/2$, we have that 
		$$
		H_{u,j}(x) = \PP \big(\textstyle\max_{i \in [d]\backslash\{j\} } U_i \le x, \textstyle\max_{i \in [d]\backslash\{j\} } \tilde{U}_i \le x \big) .
		$$
		To apply \eqref{eq: max joint part x} of Lemma~\ref{lm: joint max dens} to bound  $d H_{u,j}(x) / d x$, we remove the duplicate terms in $\{U_i, \tilde{U}_i\}_{i \in [d]\backslash \{j\}}$. We first show that for any $i \ne i'$, $U_i - U_{i'} \not\equiv 0$ for a.e. $u \in \RR$. Indeed, if $U_i \equiv U_{i'}$, then $(1 + \rho_{ij})^{-1} V_i \equiv (1 + \rho_{i'j})^{-1} V_{i'} $, and by similar arguments for showing \eqref{eq: id var check} in  Claim~\ref{claim: dup term thm 2.5}, we have that $\rho_{ij} \ne \rho_{i'j}$, since otherwise we  have $Z_i - \mu_i \equiv Z_{i'} - \mu_{i'}$, which contradicts our assumption. Then we have that $U_i \equiv U_{i'}$ holds only at 
		$$
		u = \left(\frac{\rho_{ij}}{1+\rho_{ij}} -\frac{\rho_{i'j}}{1+\rho_{i'j}} \right)^{-1} \left(\frac{\mu_i - \rho_{ij}(\mu_j-c_j)}{1+\rho_{ij}}-\frac{\mu_{i'} - \rho_{i'j}(\mu_j-c_j)}{1+\rho_{i'j}}\right).
		$$
		Similarly, for any $i, i' \in [d]\backslash \{j\}$, if $U_i \equiv \tilde{U}_{i'}$, we have that 
		$$
		u = \frac{1+\rho_{ij}}{1-\rho_{i'j}}(  c_{i'} - \mu_{i'}  + \rho_{i'j}(\mu_j-c_j)) - (\mu_i - \rho_{ij}(\mu_j-c_j)).
		$$
		Hence, for a.e. $u \in \RR$, the duplicate terms may only be in $\{\tilde{U}_i\}_{i\ne j}$. Denote by $\tilde\cB_j \subseteq [d]$ the subset with duplicate terms in $\{\tilde{U}_i\}_{i\in [d] \backslash j}$ removed, i.e., let \(\tilde\cB_j\) be any subset of \([d] \backslash \{j\}\) such that for any \(i, i' \in \tilde\cB_j\) and \(i \ne i'\), we have \(\tilde{U}_i \not\equiv \tilde{U}_{i'}\). If \([d] \backslash \{j\} \backslash \tilde\cB_j \ne \emptyset\), then for any \(k \in [d] \backslash \{j\} \backslash \tilde\cB_j\), there exists a \(k' \in \tilde\cB_j\) such that \(\tilde{U}_k \equiv \tilde{U}_{k'}\). Then, by $x > (c_j - u)/2$, applying \eqref{eq: max joint part x} of Lemma~\ref{lm: joint max dens}, we have that
		\begin{align*}
			d H_{u,j}(x)/d x &= d \PP\left( \begin{array}{ll}
				U_i \le x , & \, i \ne j \\
				\tilde{U}_i \le x, & \, i \in \tilde\cB_j
			\end{array} \right) /d x= \underbrace{\sum_{i \in [d] \backslash \{j\}} \PP\left( \begin{array}{ll}
					U_{i'} \le x , & \, i' \ne i, j \\
					\tilde{U}_{i'} \le x, & \, i' \in \tilde\cB_j
				\end{array} \bigg| U_i = x \right) f_{U_{i}} (x) }_{\rm I_1 }\\
			& \quad \underbrace{+ \sum_{i \in \tilde\cB_j} \PP\left( \begin{array}{ll}
					U_{i'} \le x , & \, i' \ne j \\
					\tilde{U}_{i'} \le x, & \, i' \in \tilde\cB_j \backslash \{i\}
				\end{array} \bigg| \tilde{U}_i = x \right) f_{\tilde{U}_{i}} (x) }_{\rm I_2}\\
			& \overset{\rm (a)}{\ge} \sum_{i \in [d] \backslash \{j\}} \PP\left( \begin{array}{ll}
				U_{i'} \le x , & \, i' \ne i, j \\
				\tilde{U}_{i'} \le x, & \, i' \ne j\\
				(c_j - u)/2 \le x & 
			\end{array} \Bigg| U_i = x \right) f_{U_{i}} (x) \\
			& \overset{\rm (b)}{=} \sum_{i \in [d] \backslash \{j\}} \PP\left( \begin{array}{c}
				\max_{i' \ne i} Z_{i'} \le x   \\
				\max_{j' \ne j} \tilde{Z}_{j'} \le x 
			\end{array}  \bigg| \begin{array}{c}
				Z_i = x \\
				\tilde{Z}_j = x
			\end{array} \right) (1 + \rho_{ij})\phi_{Z_i|\tilde{Z}_j}^u (x| x) \\
			& \ge (1 + \underline\rho) g_{u, j}(x),
		\end{align*}
		where $f_{U_i}(\cdot)$ and $f_{\tilde{U}_i}(\cdot)$ denote the marginal densities for $U_i$ and $\tilde{U}_i$ respectively; inequality (a) holds as the terms in ${\rm I}_2$ are positive, and adding back constraints on the duplicate terms in $\{\tilde{U}_{i'}\}_{i' \in [d] \backslash\{j\}}$ does not change the value of ${\rm I}_1$; equality (b) holds by the equivalence that 
		$$
		\{(c_j - u)/2 \le x | \tilde{Z}_j = x\} =   \{Z_j \le x | \tilde{Z}_j = x\} .
		$$
		Therefore, \eqref{eq: claim 2 g < H} holds for a.e. $u \in \RR$, which completes the proof.
	\end{proof}
	\subsection{Log-Sobolev inequalities for Gaussian random vectors}\label{sec: log sob ineq}
	\begin{proof}[\unskip\nopunct]\renewcommand{\qedsymbol}{}
		We provide the Log-Sobolev inequality for Gaussian random vectors \citep{Berestycki2019ConcentrationOM} for self-completeness. This lemma is essential in establishing tail probability bounds for Gaussian maximal statistics.
	\end{proof}
	\begin{lemma}\label{lm: log-sob ineq Gaussian}
		Let \(Z = (Z_1, \ldots, Z_d)^{\top}\) be a standard Gaussian random vector on \(\RR^d\), and let \(F: \RR^d \rightarrow \RR\) be an \(L\)-Lipschitz function. Then, we have \(\EE |F(Z) |< \infty\) and 
		$$
		\PP\big(F(Z) \ge \EE (F(Z) ) + r\big) \le \exp\big(-r^2 / (2L^2)\big), \quad \forall r > 0.
		$$
	\end{lemma}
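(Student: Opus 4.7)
The plan is to invoke the classical Herbst argument using the Gaussian logarithmic Sobolev inequality, which for the standard Gaussian measure $\gamma_d$ on $\RR^d$ states that for any sufficiently smooth $g$,
\begin{equation*}
\operatorname{Ent}_{\gamma_d}(g^2) := \EE[g^2(Z) \log g^2(Z)] - \EE[g^2(Z)]\log \EE[g^2(Z)] \le 2\, \EE\|\nabla g(Z)\|_2^2.
\end{equation*}
First, since $F$ is $L$-Lipschitz we have $|F(z)| \le |F(0)| + L\|z\|_2$, which immediately yields $\EE|F(Z)| < \infty$. By centering we may assume $\EE F(Z) = 0$.

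Next, to justify calculus on $F$ I would reduce to the smooth case by mollification: replace $F$ with $F_\varepsilon = F * \rho_\varepsilon$ for a smooth Gaussian mollifier $\rho_\varepsilon$; then $F_\varepsilon$ is smooth, still $L$-Lipschitz (so $\|\nabla F_\varepsilon\|_2 \le L$ almost everywhere), and $F_\varepsilon(Z) \to F(Z)$ in $L^1$ and almost surely. Any concentration bound obtained uniformly in $\varepsilon$ therefore passes to the limit.

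The core step is Herbst's argument. For $\lambda > 0$ set $\psi(\lambda) = \log \EE[e^{\lambda F(Z)}]$. Apply the log-Sobolev inequality to $g = e^{\lambda F/2}$, so that $\|\nabla g\|_2^2 = (\lambda^2/4)\, e^{\lambda F}\|\nabla F\|_2^2 \le (\lambda^2 L^2/4)\, e^{\lambda F}$. This gives
\begin{equation*}
\lambda\, \EE[F(Z)\, e^{\lambda F(Z)}] - \EE[e^{\lambda F(Z)}]\, \log \EE[e^{\lambda F(Z)}] \le \tfrac{\lambda^2 L^2}{2}\, \EE[e^{\lambda F(Z)}].
\end{equation*}
Dividing by $\lambda^2 \EE[e^{\lambda F(Z)}]$ and recognizing the left side as $(d/d\lambda)(\psi(\lambda)/\lambda)$ yields the differential inequality $(\psi/\lambda)'(\lambda) \le L^2/2$. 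Since $\psi(\lambda)/\lambda \to \EE F(Z) = 0$ as $\lambda \downarrow 0$, integrating gives $\psi(\lambda) \le \lambda^2 L^2/2$.

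Finally, I would close the argument with the Chernoff bound: for any $\lambda > 0$,
\begin{equation*}
\PP(F(Z) \ge r) \le e^{-\lambda r}\EE[e^{\lambda F(Z)}] \le \exp\!\left(-\lambda r + \tfrac{\lambda^2 L^2}{2}\right),
\end{equation*}
and optimizing at $\lambda = r/L^2$ produces the claimed bound $\exp(-r^2/(2L^2))$. The main subtlety is the non-smoothness of $F$; the mollification step is standard but must be carried out with care so that the constant $L$ in $\|\nabla F_\varepsilon\|_2 \le L$ is preserved and the resulting bound survives the limit $\varepsilon \downarrow 0$. Everything else is a clean application of log-Sobolev plus Markov.
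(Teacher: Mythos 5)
Your proof is correct: the Herbst argument (apply the Gaussian log-Sobolev inequality to $g=e^{\lambda F/2}$, integrate the differential inequality for $\psi(\lambda)/\lambda$, then Chernoff-bound and optimize at $\lambda=r/L^2$), together with the mollification step that preserves the Lipschitz constant and the observation that $|F(z)|\le |F(0)|+L\|z\|_2$ gives integrability, is exactly the standard derivation. The paper does not prove the lemma itself but simply cites Theorems~9 and~11 of \cite{Berestycki2019ConcentrationOM}, whose content is precisely this log-Sobolev-plus-Herbst argument, so your write-up is a self-contained version of the same approach.
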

	\begin{proof}
		The inequality holds by Theorem~9 and Theorem~11 in \citep{Berestycki2019ConcentrationOM}.
	\end{proof}
\section{Additional Simulation Results}\label{sec: app add plot}
In this section, we present additional simulation results deferred from Section~\ref{sec: simu}. Specifically, we explore how the Lévy concentration function \(\cL(M_{\cB} - M_{\cA}, \varepsilon)\) scales with \(1 - \bar{\rho}\), where \(\bar{\rho} = \max_{i \in \cA, j \in \cB} \Corr(X_i, X_j)\). We consider both full rank and low rank settings. In the full rank setting, we generate \((X_{\cA}, X_{\cB})\) by setting \(\sigma_j^2 = 1\) for all \(j =1,\ldots, p\) and \(\text{Cov}(X_i, X_j) = \rho\) for all \(i, j \in [p]\) with \(i \neq j\). We vary \(\rho\) across 100 equidistant values between 0.9 and 0.99 to explore how \(\cL(M_{\cB} - M_{\cA}, \varepsilon)\) scales with \(1 - \bar{\rho} = 1 - \rho\). For the low rank setting,  we generate \(X_{\cA}\) and \(X_{\cB}\) by \eqref{eq: simu Xa Xb gen}. Here, the entries of \(\Gamma_{\cA}\) and \(\Gamma_{\cB}\) are drawn independently from a standard Gaussian distribution, and the row norms of \(\Gamma_{\cA}\) and \(\Gamma_{\cB}\) are normalized to unity. We repeat the generation of \(\Gamma_{\cA}\) and \(\Gamma_{\cB}\) 100 times under the low rank setting to create 100 different values of \(\bar{\rho}\) and examine the scaling of \(\cL(M_{\cB} - M_{\cA}, \varepsilon)\) with respect to \(1/\sqrt{1-\bar{\rho}}\) and \(1/(1-\bar{\rho})\) respectively. For all settings, the L\'evy concentration function $\cL(M_{\cB} - M_{\cA}, \varepsilon)$ is evaluated at $\varepsilon = 0.05$ over 500 independent repetitions of the generation scheme.

The results of the full rank setting are shown in Figure~\ref{fig: levy scale 1 - rho}(a) and (b). The L\'evy concentration function scales linearly with $1/\sqrt{1 - \bar\rho}$ across all settings of $p \in \{2, 2000, 3000, 4000\}$ in Figure~\ref{fig: levy scale 1 - rho}(a), while a sublinear relationship is observed between the L\'evy concentration function and $1 / (1-\bar\rho)$ in Figure~\ref{fig: levy scale 1 - rho}(b). The results suggest the potential to improve the extra factor of $\sqrt{1 - \bar\rho}$ in the anti-concentration rate of Theorem~\ref{col: anti con equal var no cor 1} and Theorem~\ref{thm: anti con}.

Figure~\ref{fig: levy scale 1 - rho}(c) and (d) display the results from the low rank setting. When \(p = 2\), where \(M_{\cB} - M_{\cA} = X_2 - X_1\) reduces to a Gaussian random variable, we observe that \(\cL(X_2 - X_1, \varepsilon)\) increases as \(1/\sqrt{1-\bar\rho}\) or \(1/(1-\bar\rho)\) increase. For \(p \ge 3\), this trend is less pronounced, indicating that there may be room to refine the anti-concentration bounds when the correlations are non-uniform.
\begin{figure}[htbp]
    \centering
    \begin{tabular}{cc}
         (a) $\cL(\max_{j \in [p]} X_j, \varepsilon)$ & (b) Density curves \\
         \!\!\!\!\includegraphics[width = 202 pt]{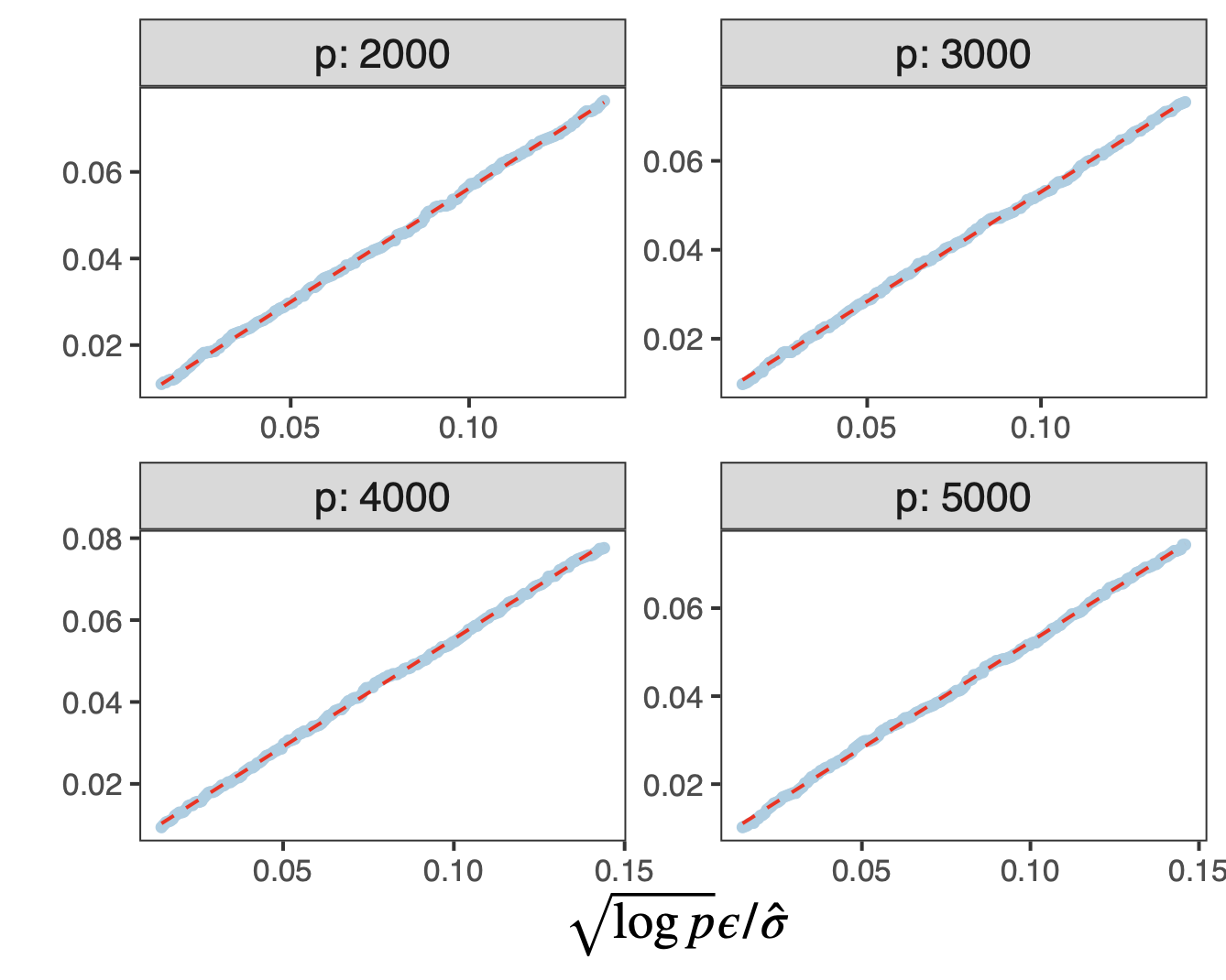}&  \!\!\!\!\!\includegraphics[width = 202 pt]{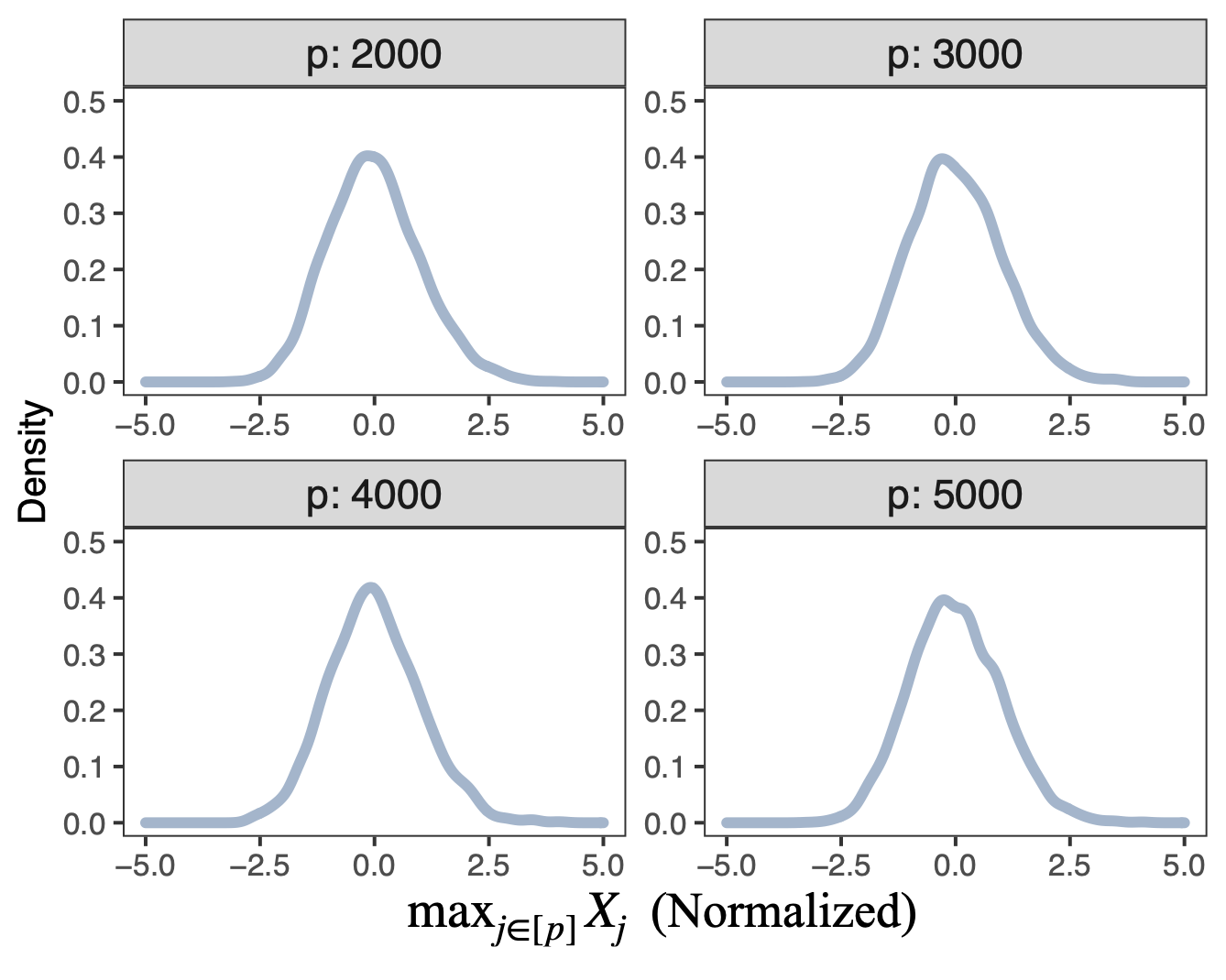}
    \end{tabular}
    \caption{Anti-concentration results of the single maximum $\max_{j \in [p]} X_j$ as a benchmark for the homogeneous variance case with $\mu = \mathbf{0}$. (a) Empirical evaluation of the L\'evy concentration function scaling linearly with the interval length $\varepsilon$; (b) For the density curves, $\max_{j \in [p]}X_j$ is normalized by the sample mean and standard deviation for comparability. }\label{fig: reference single max}
\end{figure}
\begin{figure}[htbp]
    \centering
    \begin{tabular}{cc}
         (a) $\cL(M_{\cB} - M_{\cA}, 0.05)$ & (b) $\cL(M_{\cB} - M_{\cA}, 0.05)$ \\
         \!\!\!\!\includegraphics[width = 202 pt]{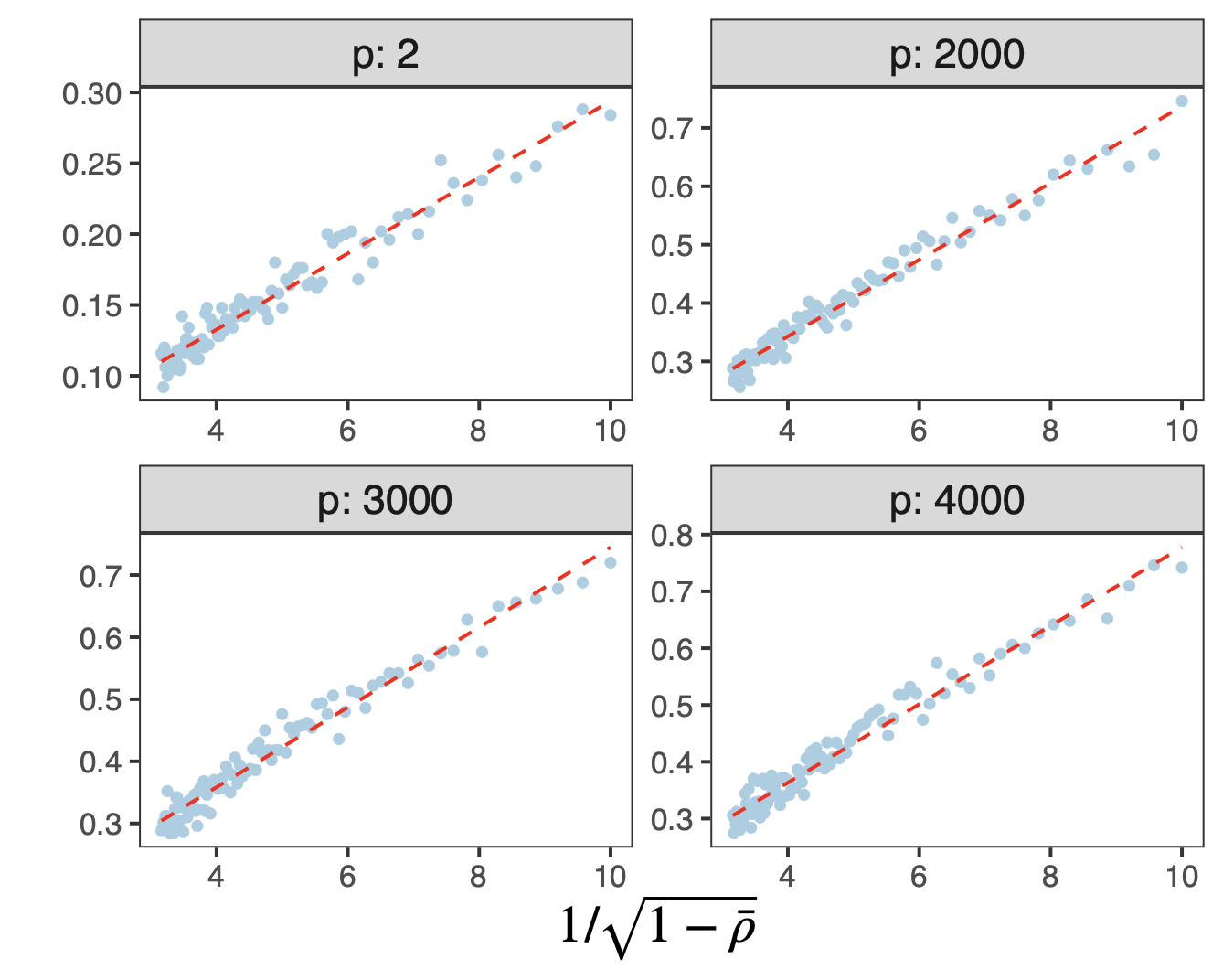}&  \!\!\!\!\!\includegraphics[width = 202 pt]{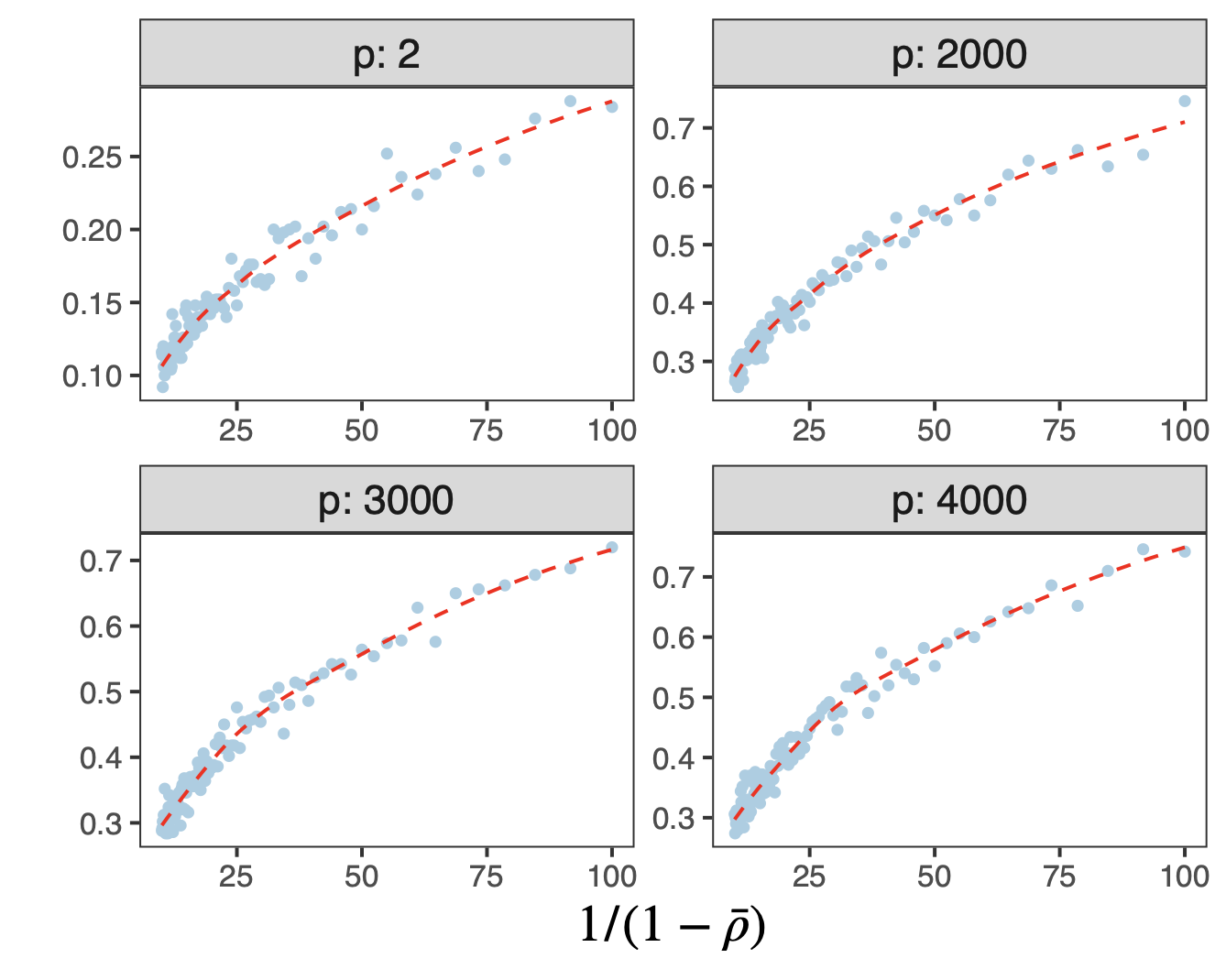}\\
         (c) $\cL(M_{\cB} - M_{\cA}, 0.05)$ & (d) $\cL(M_{\cB} - M_{\cA}, 0.05)$ \\
         \!\!\!\!\includegraphics[width = 202 pt]{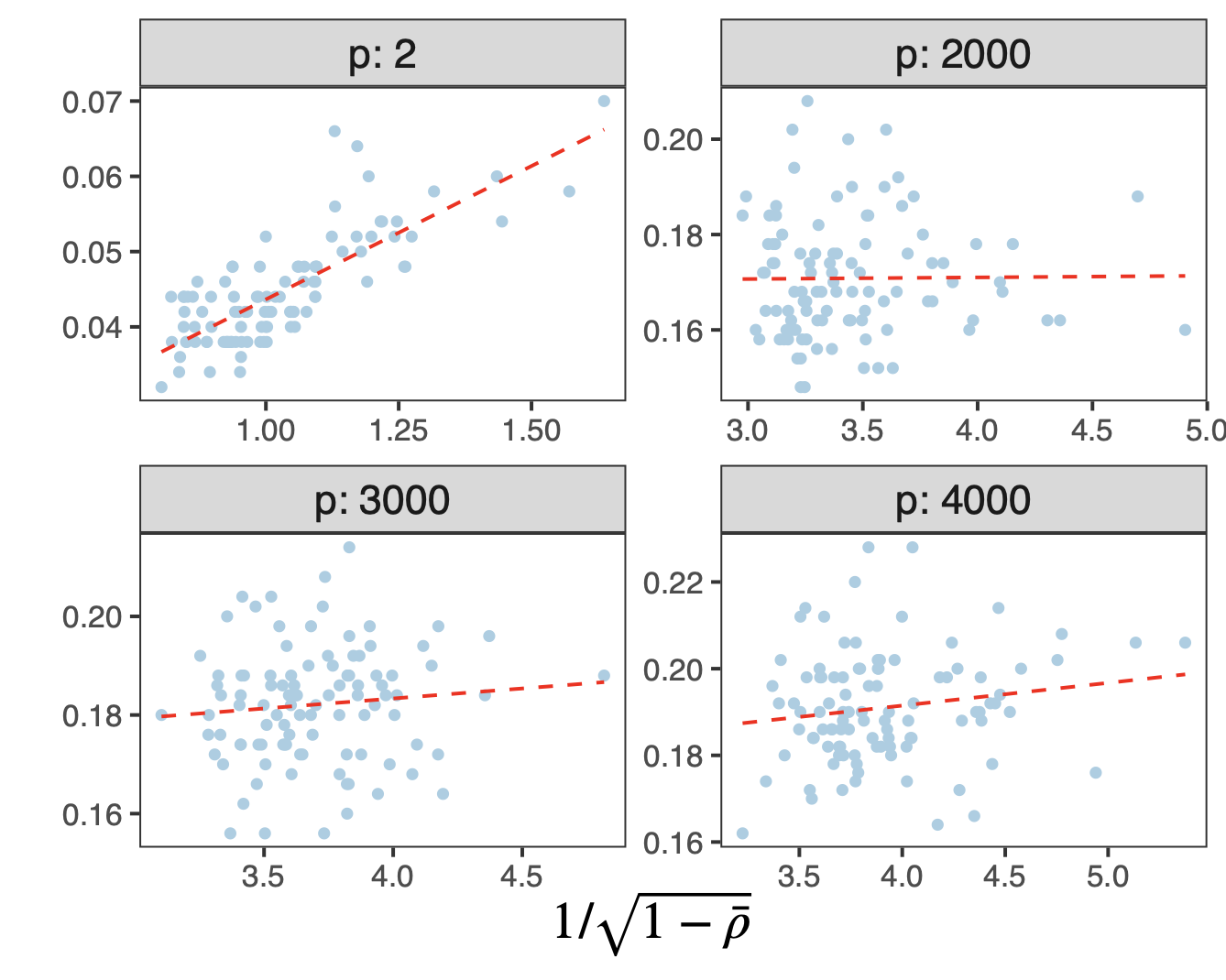}&  \!\!\!\!\!\includegraphics[width = 202 pt]{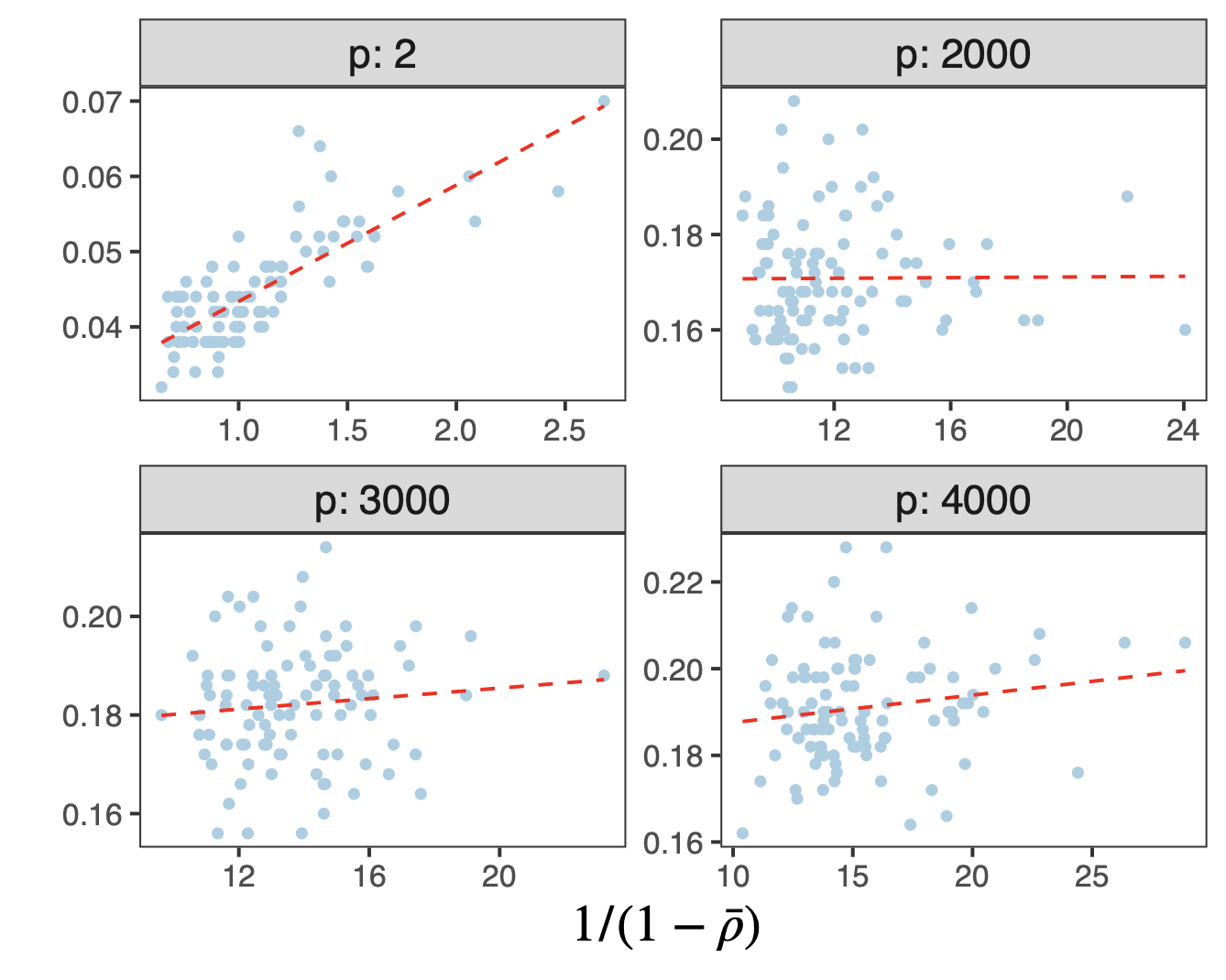}
    \end{tabular}
    \caption{Scaling of the L\'evy concentration function at $\varepsilon = 0.05$ with $1/\sqrt{1-\bar\rho}$ and $1/({1-\bar\rho})$ respectively. (a) and (b) present the results under the full rank setting, where $\sigma_j^2 = 1$ for all $j \in [p]$ and ${\rm Cov}(X_i, X_j) = \rho$ for all \(i, j \in [p]\) with \(i \neq j\). The L\'evy concentration grows linearly with $1/\sqrt{1-\bar\rho}$ and sublinearly with $1/({1-\bar\rho})$. (c) and (d) present the results under the low rank setting. The L\'evy concentration increases with $1/\sqrt{1-\bar\rho}$ and $1/({1-\bar\rho})$ at $p = 2$, while no significant relationship is observed when $p > 2$. }\label{fig: levy scale 1 - rho}
\end{figure}
\end{appendix}
\newpage
\bibliographystyle{imsart-nameyear} % Style BST file 
	\bibliography{sn-bibliography.bib}

\begin{thebibliography}{45}
% BibTex style file: imsart-nameyear.bst, 2017-11-03
% Default style options (sort=1,type=nameyear).
% Used options (sort=1,type=nameyear).

\bibitem[\protect\citeauthoryear{Bai, Santos and
  Shaikh}{2019}]{bai2019practical}
\begin{barticle}[author]
\bauthor{\bsnm{Bai},~\bfnm{Yuehao}\binits{Y.}},
  \bauthor{\bsnm{Santos},~\bfnm{Andres}\binits{A.}} \AND
  \bauthor{\bsnm{Shaikh},~\bfnm{Azeem}\binits{A.}}
(\byear{2019}).
\btitle{A practical method for testing many moment inequalities}.
\bjournal{University of Chicago, Becker Friedman Institute for Economics
  Working Paper}
\bvolume{2019-116}.
\end{barticle}
\endbibitem

\bibitem[\protect\citeauthoryear{Belloni and Oliveira}{2018}]{belloni2018high}
\begin{bmisc}[author]
\bauthor{\bsnm{Belloni},~\bfnm{Alexandre}\binits{A.}} \AND
  \bauthor{\bsnm{Oliveira},~\bfnm{Roberto~I.}\binits{R.~I.}}
(\byear{2018}).
\btitle{A high dimensional Central Limit Theorem for martingales, with
  applications to context tree models}.
\end{bmisc}
\endbibitem

\bibitem[\protect\citeauthoryear{Berestycki and
  Nickl}{2019}]{Berestycki2019ConcentrationOM}
\begin{barticle}[author]
\bauthor{\bsnm{Berestycki},~\bfnm{Nathanael}\binits{N.}} \AND
  \bauthor{\bsnm{Nickl},~\bfnm{Richard}\binits{R.}}
(\byear{2019}).
\btitle{Concentration of measure}.
\bjournal{High-Dimensional Statistics}.
\end{barticle}
\endbibitem

\bibitem[\protect\citeauthoryear{Carbery and
  Wright}{2001}]{carbery2001distributional}
\begin{barticle}[author]
\bauthor{\bsnm{Carbery},~\bfnm{Anthony}\binits{A.}} \AND
  \bauthor{\bsnm{Wright},~\bfnm{James}\binits{J.}}
(\byear{2001}).
\btitle{Distributional and {$L_q$} norm inequalities for polynomials over
  convex bodies in {$\mathbb{R}^n$}}.
\bjournal{Mathematical research letters}
\bvolume{8}
\bpages{233--248}.
\end{barticle}
\endbibitem

\bibitem[\protect\citeauthoryear{Chatterjee}{2014}]{chatterjee2014superconcentration}
\begin{bbook}[author]
\bauthor{\bsnm{Chatterjee},~\bfnm{Sourav}\binits{S.}}
(\byear{2014}).
\btitle{Superconcentration and related topics}
\bvolume{15}.
\bpublisher{Springer}.
\end{bbook}
\endbibitem

\bibitem[\protect\citeauthoryear{Chen and Zhou}{2020}]{chen2020robinf}
\begin{barticle}[author]
\bauthor{\bsnm{Chen},~\bfnm{Xi}\binits{X.}} \AND
  \bauthor{\bsnm{Zhou},~\bfnm{Wen-Xin}\binits{W.-X.}}
(\byear{2020}).
\btitle{Robust inference via multiplier bootstrap}.
\bjournal{Ann. Statist.}
\bvolume{48}
\bpages{1665--1691}.
\bdoi{10.1214/19-AOS1863}
\bmrnumber{4124339}
\end{barticle}
\endbibitem

\bibitem[\protect\citeauthoryear{Chernozhukov, Chetverikov and
  Kato}{2013}]{cck2013aos}
\begin{barticle}[author]
\bauthor{\bsnm{Chernozhukov},~\bfnm{Victor}\binits{V.}},
  \bauthor{\bsnm{Chetverikov},~\bfnm{Denis}\binits{D.}} \AND
  \bauthor{\bsnm{Kato},~\bfnm{Kengo}\binits{K.}}
(\byear{2013}).
\btitle{{Gaussian approximations and multiplier bootstrap for maxima of sums of
  high-dimensional random vectors}}.
\bjournal{The Annals of Statistics}
\bvolume{41}
\bpages{2786 -- 2819}.
\bdoi{10.1214/13-AOS1161}
\end{barticle}
\endbibitem

\bibitem[\protect\citeauthoryear{Chernozhukov, Chetverikov and
  Kato}{2014}]{confidencebands2014aos}
\begin{barticle}[author]
\bauthor{\bsnm{Chernozhukov},~\bfnm{Victor}\binits{V.}},
  \bauthor{\bsnm{Chetverikov},~\bfnm{Denis}\binits{D.}} \AND
  \bauthor{\bsnm{Kato},~\bfnm{Kengo}\binits{K.}}
(\byear{2014}).
\btitle{{Anti-concentration and honest, adaptive confidence bands}}.
\bjournal{The Annals of Statistics}
\bvolume{42}
\bpages{1787 -- 1818}.
\bdoi{10.1214/14-AOS1235}
\end{barticle}
\endbibitem

\bibitem[\protect\citeauthoryear{Chernozhukov, Chetverikov and
  Kato}{2015}]{Chernozhukov2013ComparisonAA}
\begin{barticle}[author]
\bauthor{\bsnm{Chernozhukov},~\bfnm{Victor}\binits{V.}},
  \bauthor{\bsnm{Chetverikov},~\bfnm{Denis}\binits{D.}} \AND
  \bauthor{\bsnm{Kato},~\bfnm{Kengo}\binits{K.}}
(\byear{2015}).
\btitle{Comparison and anti-concentration bounds for maxima of Gaussian random
  vectors}.
\bjournal{Probability Theory and Related Fields}
\bvolume{162}
\bpages{47--70}.
\end{barticle}
\endbibitem

\bibitem[\protect\citeauthoryear{Chernozhukov, Chetverikov and
  Kato}{2016}]{CHERNOZHUKOV20163632}
\begin{barticle}[author]
\bauthor{\bsnm{Chernozhukov},~\bfnm{Victor}\binits{V.}},
  \bauthor{\bsnm{Chetverikov},~\bfnm{Denis}\binits{D.}} \AND
  \bauthor{\bsnm{Kato},~\bfnm{Kengo}\binits{K.}}
(\byear{2016}).
\btitle{Empirical and multiplier bootstraps for suprema of empirical processes
  of increasing complexity, and related Gaussian couplings}.
\bjournal{Stochastic Processes and their Applications}
\bvolume{126}
\bpages{3632-3651}.
\bnote{In Memoriam: Evarist Gin{\'e}}.
\bdoi{https://doi.org/10.1016/j.spa.2016.04.009}
\end{barticle}
\endbibitem

\bibitem[\protect\citeauthoryear{Chernozhukov, Chetverikov and
  Kato}{2017}]{cck2017aop}
\begin{barticle}[author]
\bauthor{\bsnm{Chernozhukov},~\bfnm{Victor}\binits{V.}},
  \bauthor{\bsnm{Chetverikov},~\bfnm{Denis}\binits{D.}} \AND
  \bauthor{\bsnm{Kato},~\bfnm{Kengo}\binits{K.}}
(\byear{2017}).
\btitle{{Central limit theorems and bootstrap in high dimensions}}.
\bjournal{The Annals of Probability}
\bvolume{45}
\bpages{2309 -- 2352}.
\bdoi{10.1214/16-AOP1113}
\end{barticle}
\endbibitem

\bibitem[\protect\citeauthoryear{Chernozhuokov
  et~al.}{2022}]{cck2022improvedbootstrap}
\begin{barticle}[author]
\bauthor{\bsnm{Chernozhuokov},~\bfnm{Victor}\binits{V.}},
  \bauthor{\bsnm{Chetverikov},~\bfnm{Denis}\binits{D.}},
  \bauthor{\bsnm{Kato},~\bfnm{Kengo}\binits{K.}} \AND
  \bauthor{\bsnm{Koike},~\bfnm{Yuta}\binits{Y.}}
(\byear{2022}).
\btitle{{Improved central limit theorem and bootstrap approximations in high
  dimensions}}.
\bjournal{The Annals of Statistics}
\bvolume{50}
\bpages{2562 -- 2586}.
\bdoi{10.1214/22-AOS2193}
\end{barticle}
\endbibitem

\bibitem[\protect\citeauthoryear{Costello, Tao and
  Vu}{2006}]{tao2006littlewood}
\begin{barticle}[author]
\bauthor{\bsnm{Costello},~\bfnm{Kevin~P.}\binits{K.~P.}},
  \bauthor{\bsnm{Tao},~\bfnm{Terence}\binits{T.}} \AND
  \bauthor{\bsnm{Vu},~\bfnm{Van}\binits{V.}}
(\byear{2006}).
\btitle{Random symmetric matrices are almost surely nonsingular}.
\bjournal{Duke Math. J.}
\bvolume{135}
\bpages{395--413}.
\bdoi{10.1215/S0012-7094-06-13527-5}
\bmrnumber{2267289}
\end{barticle}
\endbibitem

\bibitem[\protect\citeauthoryear{Daisuke~Kurisu and
  Shao}{2023}]{kato2023wildbtstrp}
\begin{barticle}[author]
\bauthor{\bsnm{Daisuke~Kurisu},~\bfnm{Kengo~Kato}\binits{K.~K.}} \AND
  \bauthor{\bsnm{Shao},~\bfnm{Xiaofeng}\binits{X.}}
(\byear{2023}).
\btitle{Gaussian Approximation and Spatially Dependent Wild Bootstrap for
  High-Dimensional Spatial Data}.
\bjournal{Journal of the American Statistical Association}
\bvolume{0}
\bpages{1-13}.
\bdoi{10.1080/01621459.2023.2218578}
\end{barticle}
\endbibitem

\bibitem[\protect\citeauthoryear{Deng and Zhang}{2020}]{Deng2017BeyondGA}
\begin{barticle}[author]
\bauthor{\bsnm{Deng},~\bfnm{Hang}\binits{H.}} \AND
  \bauthor{\bsnm{Zhang},~\bfnm{Cun-Hui}\binits{C.-H.}}
(\byear{2020}).
\btitle{{Beyond Gaussian approximation: Bootstrap for maxima of sums of
  independent random vectors}}.
\bjournal{The Annals of Statistics}
\bvolume{48}
\bpages{3643 -- 3671}.
\bdoi{10.1214/20-AOS1946}
\end{barticle}
\endbibitem

\bibitem[\protect\citeauthoryear{Fox, Kwan and
  Sauermann}{2021}]{FOX_KWAN_SAUERMANN_2021}
\begin{barticle}[author]
\bauthor{\bsnm{Fox},~\bfnm{Jacob}\binits{J.}},
  \bauthor{\bsnm{Kwan},~\bfnm{Matthew}\binits{M.}} \AND
  \bauthor{\bsnm{Sauermann},~\bfnm{Lisa}\binits{L.}}
(\byear{2021}).
\btitle{Combinatorial anti-concentration inequalities, with applications}.
\bjournal{Math. Proc. Cambridge Philos. Soc.}
\bvolume{171}
\bpages{227--248}.
\bdoi{10.1017/s0305004120000183}
\bmrnumber{4299587}
\end{barticle}
\endbibitem

\bibitem[\protect\citeauthoryear{Giessing}{2023}]{Giessing2023AnticoncentrationOS}
\begin{bmisc}[author]
\bauthor{\bsnm{Giessing},~\bfnm{Alexander}\binits{A.}}
(\byear{2023}).
\btitle{Anti-concentration of Suprema of Gaussian Processes and Gaussian Order
  Statistics}.
\end{bmisc}
\endbibitem

\bibitem[\protect\citeauthoryear{Han, Xu and
  Zhou}{2018}]{zhou2018gaussiancomprandmat}
\begin{barticle}[author]
\bauthor{\bsnm{Han},~\bfnm{Fang}\binits{F.}},
  \bauthor{\bsnm{Xu},~\bfnm{Sheng}\binits{S.}} \AND
  \bauthor{\bsnm{Zhou},~\bfnm{Wen-Xin}\binits{W.-X.}}
(\byear{2018}).
\btitle{{On Gaussian comparison inequality and its application to spectral
  analysis of large random matrices}}.
\bjournal{Bernoulli}
\bvolume{24}
\bpages{1787 -- 1833}.
\bdoi{10.3150/16-BEJ912}
\end{barticle}
\endbibitem

\bibitem[\protect\citeauthoryear{He et~al.}{2023}]{zhou2023smoothedquantreg}
\begin{barticle}[author]
\bauthor{\bsnm{He},~\bfnm{Xuming}\binits{X.}},
  \bauthor{\bsnm{Pan},~\bfnm{Xiaoou}\binits{X.}},
  \bauthor{\bsnm{Tan},~\bfnm{Kean~Ming}\binits{K.~M.}} \AND
  \bauthor{\bsnm{Zhou},~\bfnm{Wen-Xin}\binits{W.-X.}}
(\byear{2023}).
\btitle{Smoothed quantile regression with large-scale inference}.
\bjournal{Journal of Econometrics}
\bvolume{232}
\bpages{367-388}.
\bdoi{https://doi.org/10.1016/j.jeconom.2021.07.010}
\end{barticle}
\endbibitem

\bibitem[\protect\citeauthoryear{Imaizumi and Kato}{2019}]{kengo2019chianticon}
\begin{barticle}[author]
\bauthor{\bsnm{Imaizumi},~\bfnm{Masaaki}\binits{M.}} \AND
  \bauthor{\bsnm{Kato},~\bfnm{Kengo}\binits{K.}}
(\byear{2019}).
\btitle{A simple method to construct confidence bands in functional linear
  regression}.
\bjournal{Statist. Sinica}
\bvolume{29}
\bpages{2055--2081}.
\bmrnumber{3970347}
\end{barticle}
\endbibitem

\bibitem[\protect\citeauthoryear{Imaizumi and
  Otsu}{2021}]{imaizumi2021gaussian}
\begin{bmisc}[author]
\bauthor{\bsnm{Imaizumi},~\bfnm{Masaaki}\binits{M.}} \AND
  \bauthor{\bsnm{Otsu},~\bfnm{Taisuke}\binits{T.}}
(\byear{2021}).
\btitle{On Gaussian Approximation for M-Estimator}.
\end{bmisc}
\endbibitem

\bibitem[\protect\citeauthoryear{Kato and Kurisu}{2020}]{kato2020bootstrap}
\begin{barticle}[author]
\bauthor{\bsnm{Kato},~\bfnm{Kengo}\binits{K.}} \AND
  \bauthor{\bsnm{Kurisu},~\bfnm{Daisuke}\binits{D.}}
(\byear{2020}).
\btitle{Bootstrap confidence bands for spectral estimation of L{\'e}vy
  densities under high-frequency observations}.
\bjournal{Stochastic Processes and their Applications}
\bvolume{130}
\bpages{1159--1205}.
\end{barticle}
\endbibitem

\bibitem[\protect\citeauthoryear{Kato and Sasaki}{2018}]{kato2018cbdecon}
\begin{barticle}[author]
\bauthor{\bsnm{Kato},~\bfnm{Kengo}\binits{K.}} \AND
  \bauthor{\bsnm{Sasaki},~\bfnm{Yuya}\binits{Y.}}
(\byear{2018}).
\btitle{Uniform confidence bands in deconvolution with unknown error
  distribution}.
\bjournal{J. Econometrics}
\bvolume{207}
\bpages{129--161}.
\bdoi{10.1016/j.jeconom.2018.07.001}
\bmrnumber{3856764}
\end{barticle}
\endbibitem

\bibitem[\protect\citeauthoryear{Kato and Sasaki}{2019}]{kato2019cbnonpar}
\begin{barticle}[author]
\bauthor{\bsnm{Kato},~\bfnm{Kengo}\binits{K.}} \AND
  \bauthor{\bsnm{Sasaki},~\bfnm{Yuya}\binits{Y.}}
(\byear{2019}).
\btitle{Uniform confidence bands for nonparametric errors-in-variables
  regression}.
\bjournal{J. Econometrics}
\bvolume{213}
\bpages{516--555}.
\bdoi{10.1016/j.jeconom.2019.05.021}
\bmrnumber{4023921}
\end{barticle}
\endbibitem

\bibitem[\protect\citeauthoryear{Klivans, O'Donnell and
  Servedio}{2008}]{klivans2008gaussurf}
\begin{binproceedings}[author]
\bauthor{\bsnm{Klivans},~\bfnm{Adam~R.}\binits{A.~R.}},
  \bauthor{\bsnm{O'Donnell},~\bfnm{Ryan}\binits{R.}} \AND
  \bauthor{\bsnm{Servedio},~\bfnm{Rocco~A.}\binits{R.~A.}}
(\byear{2008}).
\btitle{Learning Geometric Concepts via Gaussian Surface Area}.
In \bbooktitle{2008 49th Annual IEEE Symposium on Foundations of Computer
  Science}
\bpages{541-550}.
\bdoi{10.1109/FOCS.2008.64}
\end{binproceedings}
\endbibitem

\bibitem[\protect\citeauthoryear{Koike}{2021}]{koike2021notesdimdep}
\begin{barticle}[author]
\bauthor{\bsnm{Koike},~\bfnm{Yuta}\binits{Y.}}
(\byear{2021}).
\btitle{Notes on the dimension dependence in high-dimensional central limit
  theorems for hyperrectangles}.
\bjournal{Japanese Journal of Statistics and Data Science}
\bvolume{4}
\bpages{257--297}.
\bdoi{10.1007/s42081-020-00096-7}
\end{barticle}
\endbibitem

\bibitem[\protect\citeauthoryear{Kuchibhotla, Mukherjee and
  Banerjee}{2021}]{Kuchibhotla2021cltimp}
\begin{barticle}[author]
\bauthor{\bsnm{Kuchibhotla},~\bfnm{Arun~Kumar}\binits{A.~K.}},
  \bauthor{\bsnm{Mukherjee},~\bfnm{Somabha}\binits{S.}} \AND
  \bauthor{\bsnm{Banerjee},~\bfnm{Debapratim}\binits{D.}}
(\byear{2021}).
\btitle{High-dimensional {CLT}: improvements, non-uniform extensions and large
  deviations}.
\bjournal{Bernoulli}
\bvolume{27}
\bpages{192--217}.
\bdoi{10.3150/20-BEJ1233}
\bmrnumber{4177366}
\end{barticle}
\endbibitem

\bibitem[\protect\citeauthoryear{Kwan et~al.}{2023}]{kwan2023anticonramsy}
\begin{barticle}[author]
\bauthor{\bsnm{Kwan},~\bfnm{Matthew}\binits{M.}},
  \bauthor{\bsnm{Sah},~\bfnm{Ashwin}\binits{A.}},
  \bauthor{\bsnm{Sauermann},~\bfnm{Lisa}\binits{L.}} \AND
  \bauthor{\bsnm{Sawhney},~\bfnm{Mehtaab}\binits{M.}}
(\byear{2023}).
\btitle{Anticoncentration in {R}amsey graphs and a proof of the
  {E}rd\H{o}s-{M}c{K}ay conjecture}.
\bjournal{Forum Math. Pi}
\bvolume{11}
\bpages{Paper No. e21, 74}.
\bdoi{10.1017/fmp.2023.17}
\bmrnumber{4634105}
\end{barticle}
\endbibitem

\bibitem[\protect\citeauthoryear{Le~Roux, Bienaise and
  Durand}{2019}]{le2019combinatorial}
\begin{bbook}[author]
\bauthor{\bsnm{Le~Roux},~\bfnm{Brigitte}\binits{B.}},
  \bauthor{\bsnm{Bienaise},~\bfnm{Sol{\`e}ne}\binits{S.}} \AND
  \bauthor{\bsnm{Durand},~\bfnm{Jean-Luc}\binits{J.-L.}}
(\byear{2019}).
\btitle{Combinatorial inference in geometric data analysis}.
\bpublisher{Chapman and Hall/CRC}.
\end{bbook}
\endbibitem

\bibitem[\protect\citeauthoryear{Littlewood and Offord}{1938}]{littlewood1938}
\begin{barticle}[author]
\bauthor{\bsnm{Littlewood},~\bfnm{J.~E.}\binits{J.~E.}} \AND
  \bauthor{\bsnm{Offord},~\bfnm{A.~C.}\binits{A.~C.}}
(\byear{1938}).
\btitle{On the {N}umber of {R}eal {R}oots of a {R}andom {A}lgebraic
  {E}quation}.
\bjournal{J. London Math. Soc.}
\bvolume{13}
\bpages{288--295}.
\bdoi{10.1112/jlms/s1-13.4.288}
\bmrnumber{1574980}
\end{barticle}
\endbibitem

\bibitem[\protect\citeauthoryear{Liu, Fang and Lu}{2023}]{liu2023lagrangian}
\begin{barticle}[author]
\bauthor{\bsnm{Liu},~\bfnm{Yue}\binits{Y.}},
  \bauthor{\bsnm{Fang},~\bfnm{Ethan~X}\binits{E.~X.}} \AND
  \bauthor{\bsnm{Lu},~\bfnm{Junwei}\binits{J.}}
(\byear{2023}).
\btitle{Lagrangian inference for ranking problems}.
\bjournal{Operations research}
\bvolume{71}
\bpages{202--223}.
\end{barticle}
\endbibitem

\bibitem[\protect\citeauthoryear{Lopes}{2022}]{lopes2022mbsqrtn}
\begin{barticle}[author]
\bauthor{\bsnm{Lopes},~\bfnm{Miles~E.}\binits{M.~E.}}
(\byear{2022}).
\btitle{Central limit theorem and bootstrap approximation in high dimensions:
  near {$1/\sqrt{n}$} rates via implicit smoothing}.
\bjournal{Ann. Statist.}
\bvolume{50}
\bpages{2492--2513}.
\bdoi{10.1214/22-aos2184}
\bmrnumber{4505371}
\end{barticle}
\endbibitem

\bibitem[\protect\citeauthoryear{Lopes}{2024}]{lopes2024improved}
\begin{bmisc}[author]
\bauthor{\bsnm{Lopes},~\bfnm{Miles~E.}\binits{M.~E.}}
(\byear{2024}).
\btitle{Improved Rates of Bootstrap Approximation for the Operator Norm: A
  Coordinate-Free Approach}.
\end{bmisc}
\endbibitem

\bibitem[\protect\citeauthoryear{Lopes, Lin and
  M\"{u}ller}{2020}]{lopes2020decayclt}
\begin{barticle}[author]
\bauthor{\bsnm{Lopes},~\bfnm{Miles~E.}\binits{M.~E.}},
  \bauthor{\bsnm{Lin},~\bfnm{Zhenhua}\binits{Z.}} \AND
  \bauthor{\bsnm{M\"{u}ller},~\bfnm{Hans-Georg}\binits{H.-G.}}
(\byear{2020}).
\btitle{Bootstrapping max statistics in high dimensions: near-parametric rates
  under weak variance decay and application to functional and multinomial
  data}.
\bjournal{Ann. Statist.}
\bvolume{48}
\bpages{1214--1229}.
\bdoi{10.1214/19-AOS1844}
\bmrnumber{4102694}
\end{barticle}
\endbibitem

\bibitem[\protect\citeauthoryear{Lévy}{1954}]{levy1954}
\begin{bbook}[author]
\bauthor{\bsnm{Lévy},~\bfnm{Paul}\binits{P.}}
(\byear{1954}).
\btitle{Théorie de l’Addition de Variables Aléatoires}.
\bpublisher{Gauthier-Villars}.
\end{bbook}
\endbibitem

\bibitem[\protect\citeauthoryear{Meka, Nguyen and
  Vu}{2016}]{meka2016anticonpolyn}
\begin{barticle}[author]
\bauthor{\bsnm{Meka},~\bfnm{Raghu}\binits{R.}},
  \bauthor{\bsnm{Nguyen},~\bfnm{Oanh}\binits{O.}} \AND
  \bauthor{\bsnm{Vu},~\bfnm{Van}\binits{V.}}
(\byear{2016}).
\btitle{Anti-concentration for polynomials of independent random variables}.
\bjournal{Theory Comput.}
\bvolume{12}
\bpages{Paper No. 11, 16}.
\bdoi{10.4086/toc.2016.v012a011}
\bmrnumber{3542863}
\end{barticle}
\endbibitem

\bibitem[\protect\citeauthoryear{Nazarov}{2003}]{nazarov2003maximal}
\begin{binproceedings}[author]
\bauthor{\bsnm{Nazarov},~\bfnm{Fedor}\binits{F.}}
(\byear{2003}).
\btitle{On the Maximal Perimeter of a Convex Set in {$\mathbb{R}^n$} with
  Respect to a Gaussian Measure}.
In \bbooktitle{Geometric Aspects of Functional Analysis: Israel Seminar
  2001-2002}
\bpages{169--187}.
\bpublisher{Springer}.
\end{binproceedings}
\endbibitem

\bibitem[\protect\citeauthoryear{Neykov, Lu and
  Liu}{2019}]{neykov2019combinatorial}
\begin{barticle}[author]
\bauthor{\bsnm{Neykov},~\bfnm{Matey}\binits{M.}},
  \bauthor{\bsnm{Lu},~\bfnm{Junwei}\binits{J.}} \AND
  \bauthor{\bsnm{Liu},~\bfnm{Han}\binits{H.}}
(\byear{2019}).
\btitle{Combinatorial inference for graphical models}.
\bjournal{The Annals of Statistics}
\bvolume{47}
\bpages{795--827}.
\end{barticle}
\endbibitem

\bibitem[\protect\citeauthoryear{Pan and Zhou}{2021}]{pan2021quantreg}
\begin{barticle}[author]
\bauthor{\bsnm{Pan},~\bfnm{Xiaoou}\binits{X.}} \AND
  \bauthor{\bsnm{Zhou},~\bfnm{Wen-Xin}\binits{W.-X.}}
(\byear{2021}).
\btitle{Multiplier bootstrap for quantile regression: non-asymptotic theory
  under random design}.
\bjournal{Inf. Inference}
\bvolume{10}
\bpages{813--861}.
\bdoi{10.1093/imaiai/iaaa006}
\bmrnumber{4312082}
\end{barticle}
\endbibitem

\bibitem[\protect\citeauthoryear{Razborov and
  Viola}{2013}]{Razborov2013realadv}
\begin{barticle}[author]
\bauthor{\bsnm{Razborov},~\bfnm{Alexander}\binits{A.}} \AND
  \bauthor{\bsnm{Viola},~\bfnm{Emanuele}\binits{E.}}
(\byear{2013}).
\btitle{Real advantage}.
\bjournal{ACM Trans. Comput. Theory}
\bvolume{5}
\bpages{Art. 17, 8}.
\bdoi{10.1145/2540089}
\bmrnumber{3146710}
\end{barticle}
\endbibitem

\bibitem[\protect\citeauthoryear{Rudelson and
  Vershynin}{2008}]{rudelson2008littlewood}
\begin{barticle}[author]
\bauthor{\bsnm{Rudelson},~\bfnm{Mark}\binits{M.}} \AND
  \bauthor{\bsnm{Vershynin},~\bfnm{Roman}\binits{R.}}
(\byear{2008}).
\btitle{The {L}ittlewood-{O}fford problem and invertibility of random
  matrices}.
\bjournal{Adv. Math.}
\bvolume{218}
\bpages{600--633}.
\bdoi{10.1016/j.aim.2008.01.010}
\bmrnumber{2407948}
\end{barticle}
\endbibitem

\bibitem[\protect\citeauthoryear{Stein}{1981}]{stein1981estimation}
\begin{barticle}[author]
\bauthor{\bsnm{Stein},~\bfnm{Charles~M}\binits{C.~M.}}
(\byear{1981}).
\btitle{Estimation of the mean of a multivariate normal distribution}.
\bjournal{The Annals of Statistics}
\bpages{1135--1151}.
\end{barticle}
\endbibitem

\bibitem[\protect\citeauthoryear{Yao and Lopes}{2023}]{yao2023bsteigen}
\begin{barticle}[author]
\bauthor{\bsnm{Yao},~\bfnm{Junwen}\binits{J.}} \AND
  \bauthor{\bsnm{Lopes},~\bfnm{Miles~E.}\binits{M.~E.}}
(\byear{2023}).
\btitle{Rates of bootstrap approximation for eigenvalues in high-dimensional
  {PCA}}.
\bjournal{Statist. Sinica}
\bvolume{33}
\bpages{1461--1481}.
\bdoi{10.5705/ss.202020.0457}
\bmrnumber{4596848}
\end{barticle}
\endbibitem

\bibitem[\protect\citeauthoryear{Ylvisaker}{1968}]{Ylvisaker1968tangencies}
\begin{barticle}[author]
\bauthor{\bsnm{Ylvisaker},~\bfnm{Donald}\binits{D.}}
(\byear{1968}).
\btitle{A note on the absence of tangencies in {G}aussian sample paths}.
\bjournal{Ann. Math. Statist.}
\bvolume{39}
\bpages{261--262}.
\bdoi{10.1214/aoms/1177698528}
\bmrnumber{226725}
\end{barticle}
\endbibitem

\bibitem[\protect\citeauthoryear{Ylvisarer}{1965}]{Ylvisarer1965expectzero}
\begin{barticle}[author]
\bauthor{\bsnm{Ylvisarer},~\bfnm{N.~Donald}\binits{N.~D.}}
(\byear{1965}).
\btitle{The expected number of zeros of a stationary {G}aussian process}.
\bjournal{Ann. Math. Statist.}
\bvolume{36}
\bpages{1043--1046}.
\bdoi{10.1214/aoms/1177700077}
\bmrnumber{177458}
\end{barticle}
\endbibitem

\end{thebibliography}
\end{document}